\DeclareMathAlphabet{\mathpzc}{OT1}{pzc}{m}{it}
\newcolumntype{L}{>{\centering\arraybackslash}m{2cm}}
\let\footnote=\endnote
\definecolor{brightmaroon}{rgb}{0.76, 0.13, 0.28}
\definecolor{brickred}{rgb}{0.8, 0.25, 0.33}
\definecolor{bostonuniversityred}{rgb}{0.8, 0.0, 0.0}
\newcommand\hl[1]{%
  \bgroup
  \hskip0pt\color{red!80!black}%
  #1%
  \egroup
}
\newtheorem{newproposition}{Proposition}
\newlength\tmplen
\newcommand\lrlineno[2][1]{%
\linenumbers%
\setcounter{linenumber}{#1}%
\leftlinenumbers%
\color{white}#2\color{black}%
\par%
\setcounter{linenumber}{#1}%
\setbox0=\vbox{#2}%
\setlength\tmplen{\dimexpr\dp0+\ht0+\the\dp\strutbox}%
\vspace*{-\tmplen}%
\rightlinenumbers%
#2%
\par%
}
\begin{document}

\RUNAUTHOR{Liu, Ye, and Lee}

\RUNTITLE{HDSL Under Approximate Sparsity with Applications to Nonsmooth Estimation and Regularized Neural Networks}

\TITLE{High-Dimensional Learning under Approximate Sparsity with Applications to   Nonsmooth Estimation and  Regularized  Neural Networks}

\ARTICLEAUTHORS{%
\AUTHOR{Hongcheng Liu}
\AFF{Department of Industrial and Systems Engineering, University of Florida, Gainesville, FL 32611, \EMAIL{liu.h@ufl.edu}} 
\AUTHOR{Yinyu Ye}
\AFF{Department of Management Science and Engineering, Stanford University, Stanford, CA 94305, \EMAIL{yyye@stanford.edu}}
\AUTHOR{Hung Yi Lee}
\AFF{Department of Industrial and Systems Engineering, University of Florida, Gainesville, FL 32611, \EMAIL{hungyilee@ufl.edu}} 
} 

\ABSTRACT{%
High-dimensional statistical learning (HDSL) has wide applications in data analysis, operations research, and decision-making. Despite the availability of multiple theoretical frameworks, most existing HDSL schemes stipulate the following two conditions: (a) the sparsity, and (b) the restricted strong convexity (RSC). This paper generalizes both conditions via the use of the folded concave penalty (FCP). More specifically, we consider an M-estimation problem where (i) the (conventional) sparsity is relaxed into the approximate sparsity and (ii) the RSC is completely absent. We show that the FCP-based regularization leads to poly-logarithmic sample complexity; the training data size is only required to be poly-logarithmic in the problem dimensionality. This finding can facilitate the analysis of two important classes of models that are currently less understood: the high-dimensional nonsmooth learning and the (deep) neural networks (NN).  For both problems, we show that the poly-logarithmic sample complexity can be maintained. In particular, our results indicate that the generalizability of NNs under over-parameterization can be theoretically ensured with the aid of regularization.}
%


\KEYWORDS{Neural network, folded concave penalty, high-dimensional learning, folded concave penalty, support vector machine, nonsmooth learning, restricted strong convexity} \HISTORY{}

\maketitle

%


\section{Introduction}

This paper is concerned with  {\it high-dimensional statistical learning} (HDSL), which refers to the problems  of estimating  a large number of parameters with few training data. The HDSL problems are found in wide applications ranging from imaging, bioinformatics, and deep learning, etc. A standard setup of the HDSL    is summarized below:  We are given a sequence of $n$-many i.i.d. sample observations, denoted    $Z_i$, $i=1,...,n$. Those observations are copies of a random vector $\mathcal Z$, which has  unknown support $\mathcal W\subseteq\Re^q$ (for some   positive integer $q$) and an unknown probability distribution.  In addition to the sample observations above, we are also given a   function     $L(\boldsymbol\beta,Z_i)$, where $L:\,\Re^p\times\mathcal W\rightarrow\Re$ measures the statistical loss with respect to the data point $Z_i$ and the vector of fitting parameters  $\boldsymbol \beta:=(\beta_j)\in\Re^p$. Here, the positive integer $p$ is  called the problem dimensionality (which is equal to the number of fitting parameters).  Throughout this paper, we assume  that $L$  is measurable and deterministic,     the expectation $\mathbb E[L(\boldsymbol\beta,\, \mathcal Z)]$ over $\mathcal Z$ is  well-defined  for all $\boldsymbol\beta\in\Re^p$, and $\inf_{\boldsymbol\beta}~\mathbb E[L(\boldsymbol\beta,\, \mathcal Z)]>-\infty$.   {\label{added sentence}\Copy{one sentence}{Though no convexity assumption is imposed explicitly,  many of our results are mainly useful when  $L(\,\cdot\,,z)$ is convex.}} Given the above, it is often essential to estimate the solution to the following {\it population-level problem} in many applications: 
\begin{align}\boldsymbol\beta^*\in\underset{{\boldsymbol\beta\in\Re^p}}{\arg\,\inf}~\left\{\mathbb L(\boldsymbol\beta):=\mathbb E[L(\boldsymbol\beta,\, \mathcal Z)]\right\}.\label{population-level model M-estimation}
\end{align} 
Here,  $\boldsymbol\beta^*$  
is intuitively the vector of fitting parameters which yields the smallest population-level statistical loss (a.k.a., population risk). Therefore, $\boldsymbol\beta^*$  is considered the target of estimation and  referred to as the vector of ``true parameters''.   The HDSL problem of interest is then how to estimate (or approximate)  $\boldsymbol\beta^*$, given the {\it a-priori}  knowledge of the samples $\mathbf Z_1^n:=(Z_1,Z_2,...,Z_n)$ and the formulation of   $L$, when $p\geq n$. We are especially interested in the more challenging case where the sample size $n$ is much smaller than the dimensionality $p$ (i.e.,   $p\gg n$). In measuring the approximation quality (a.k.a., recovery quality) of   an estimator $\widehat{\boldsymbol\beta}\in\Re^p$, we consider a metric of generalization error calculated as $\mathbb L(\widehat{\boldsymbol\beta})-\inf_{\boldsymbol\beta}~\mathbb L({\boldsymbol\beta})$.  This metric is the same  as the {\it excess risk}, which is discussed by \cite{bartlett2006a}, \cite{koltchinskii2010a}, and \cite{on2008a}, among others, as an important, if not the primary, measure of generalization performance for their results.

For the HDSL problems above, most traditional schemes are not applicable, because they usually stipulate that $n>p$. For example, one popularly adopted scheme is to construct a surrogate for the population-level formulation in \eqref{population-level model M-estimation} through the sample average approximation (SAA) below: 
\begin{align}\boldsymbol\beta^{SAA}\in\underset{\boldsymbol\beta}{\arg\,\inf}\,\left\{\mathcal L_n(\boldsymbol\beta,\mathbf Z_1^n):=\frac{1}{n}\sum_{i=1}^n L(\boldsymbol\beta,\, Z_i)\right\},\label{test new results}
\end{align}
where the objective function $\mathcal L_n(\boldsymbol\beta,\mathbf Z_1^n)$ is often also called the {\it empirical risk function} in the context of statistical and machine learning.  The SAA entails  desirable computational and statistical properties \citep[many of which are discussed by][and references therein]{shapiro2014a} but is not designed for handling  high dimensionality. Indeed, the best known upper bound on the approximation error of the SAA solution  is of the order $\mathcal O(\sqrt{p/n})$, where $\mathcal O(\cdot)$ hides some quantities independent of, or poly-logarithmic in, ``$\,\cdot\,$''. Consequently, the {\it estimator} of the true parameters generated by  solving the SAA, as well as by most other traditional statistical learning approaches, may incur non-trivial  errors  when $p\gg n$.

To address high dimensionality, several statistical schemes have already been made available. \cite[See][for  excellent reviews.]{buehlmann2011a,fan2014b} Among them, this paper follows  and   generalizes one of the most successful HDSL techniques introduced by \cite{fan2001a} and \cite{zhang2010a} as in the formulation below:
 \begin{align}
\inf_{\boldsymbol\beta\in\Re^p}& \left\{\mathcal L_{n,\lambda}(\boldsymbol\beta,\, \mathbf Z_1^n):=\mathcal L_n(\boldsymbol\beta,\, \mathbf Z_1^n)+\sum_{j=1}^pP_\lambda(\vert\beta_j\vert)\right\},
\label{general formulation}
\end{align}
where   $P_\lambda:\,\Re_+\rightarrow\Re_+$ is a term of sparsity-inducing regularization in the form of a {\it folded concave penalty} (FCP). One  mainstream  special case  of the existing FCPs, called the {\it minimax concave penalty} (MCP) \citep{zhang2010a}, is of our particular consideration.  The  MCP is formulated as  
\begin{align}P_\lambda(\theta)=\int_{0}^{\theta}\frac{[a\lambda-t]_+}{a}dt,\qquad\theta\geq 0,\label{FCP penalty formulation}
\end{align} with $[\cdot]_+:=\max\{0,\,\,\cdot\,\}$ and tuning parameters $a,\,\lambda>0$.  (Hereafter, we use the term ``FCP'' to refer to the MCP exclusively.) Eq.\ \eqref{general formulation} is nonconvex, to which the local and/or global solutions have been shown to  entail desirable statistical performance \citep{loh2015a,wang2013a,wang2014a,zhang2012a,p2017a}. {\label{added discussion on FCP parameters}\Copy{to understand the roles copy}{To understand the roles of the tuning parameters $a$ and $\lambda$ to the FCP, we may observe that its first derivative, $P_\lambda'(\theta)$,  is a non-increasing function with $P_\lambda'(0)=\lambda$ and $P_\lambda'(\theta)=0$ for all $\theta\geq a\lambda$. This means that $\lambda$ determines how intense the penalty is to induce a fitting parameter that is almost zero to  be exactly zero. The intensity of this penalty becomes smaller as the magnitude of the corresponding fitting parameter increases. Once the absolute value of that parameter is beyond the threshold $a\lambda$, the penalty becomes a constant and thus (locally) ineffective.  Furthermore, we also observe that $P_{\lambda}''(\theta)=-\frac{1}{a}$ for all $\theta\in(0,\,a\lambda)$ and $P_{\lambda}''(\theta)=0$ for all $\theta>a\lambda$. Therefore, $a$  determines the curvature of the FCP near the origin.}}

{\label{added discussion reference 2}\Copy{Alternative sparsity-inducing penalties sen}{Alternative sparsity-inducing penalties, such as  the smoothly clipped absolute deviation (SCAD) introduced by \cite{fan2001a}, the least absolute shrinkage and selection operator  (Lasso)    proposed by \cite{tibshirani1994a},  and the bridge penalty (a.k.a., the $\ell_{\mathbf q}$ penalty  with $0<\mathbf q < 1$) as discussed by \cite{frank1993a}, have all been shown to be very effective in HDSL  by many results due to \cite{fan2001a,bickel2009a,fan2011a,fan2014b,loh2015a,raskutti2011a,n2012a,wang2013a,wang2014a,zhang2012a,zou2006a,zou2008a,liu2017a,liu2018a} and \cite{p2017a}, to name only a few.  Many of those results provide {\it oracle inequalities}, which ``relates the performance of a real estimator with that of an ideal estimator'' \citep{candes2006a}.
 \cite{ndiaye2017a}, \cite{l2010a}, \cite{fan2001a}, \cite{chen2010a}, and \cite{liu2017a} have presented   thresholding rules and bounds on the number of nonzero dimensions for a high-dimensional linear regression problem with different penalty functions.

\indent Despite the availability of several analytical frameworks for HDSL in the current literature, most existing HDSL theories require the two assumptions below, which are sometimes overly critical, to guarantee any generalization performance:

\begin{itemize}
\item[{\it (A)}.] The satisfaction of the (conventional) sparsity condition, written as $\Vert \boldsymbol\beta^*\Vert_0\ll p$, where  $\Vert\cdot\Vert_0$ denotes the number of nonzero entries of a vector. 
\item[{\it (B)}.]  The satisfaction of regularity conditions on the eigenvalues of the Hessian matrix of $L(\,\cdot\,,\mathcal Z)$ in the form of the {\it restricted strong convexity} (RSC)  \citep{n2012a}, the {\it restricted isotropic property} (RIP)  \citep{candes2007a}, or the {\it restricted eigenvalue} (RE) condition  \citep{bickel2009a}. 
\end{itemize}
}}
The sparsity assumption essentially means that few  dimensions ``matter'' despite that the total number of dimensions is very high. Meanwhile, the RSC, RIP, and RE can all be  interpretable as the stipulation that $\mathcal L(\,\cdot\,,\mathbf Z_1^n)$ is {\it strongly convex} everywhere in some subset of $\Re^p$. The RSC is implied by the RE and RIP for some choices of parameters \citep{n2012a,de2009a}.  Except for some special cases of the  generalized linear models \cite[as discussed by, e.g.,][]{bickel2009a}, when both   (A) and (B) above are violated, little is known about the performance of \eqref{general formulation} or that of most other HDSL schemes in terms of their generalization performance in general.   \cite{n2012a}  has considered HDSL under weak sparsity, but the RSC is still assumed for establishing the generalization error bounds.
 
 In contrast to the literature, this paper is  concerned with the effectiveness of \eqref{general formulation} in  addressing the HDSL problems when   the RSC is completely absent and the traditional sparsity is relaxed into the approximate sparsity (A-sparsity) as below. 
 \begin{assumption}\label{Assumption A-sparsity original}
 $\mathbb L(\boldsymbol\beta^*_{{\varepsilon_{A}}})-\inf_{\boldsymbol\beta}\mathbb L(\boldsymbol\beta)\leq{\varepsilon_{A}}$ and $s:=\Vert\boldsymbol\beta^*_{\varepsilon_{A}}\Vert_0\ll p$
for some  $\varepsilon_{A}\geq 0$, $\boldsymbol\beta_{\varepsilon_{A}}^*:\,\Vert \boldsymbol\beta_{\varepsilon_{A}}^*\Vert_\infty\leq R$, and $R\geq 1$. 
\end{assumption}
 {\label{corrected statement A-sparsity}\Copy{Intuition Assumption A-sparsity Copy}{Intuitively, Assumption \ref{Assumption A-sparsity original}   means that, although $\boldsymbol\beta^{*}$ can be dense,   replacing most of the nonzero entries of $\boldsymbol\beta^{*}$ by zero does not cause the population risk to increase too much.}}  It is evident that, if $\varepsilon_A=0$, Assumption \ref{Assumption A-sparsity original} is reduced to the (traditional) sparsity.  
 
 In certain applications of HDSL (e.g., the deep neural networks to be discussed subsequently),  it is more convenient to consider a (slight)    generalization to Assumption \ref{Assumption A-sparsity original}  in the following.

\begin{assumption}\label{Assumption A-sparsity}
 $\mathbb L(\boldsymbol\beta^*_{{\varepsilon_{A}}})-L_g^*\leq{\varepsilon_{A}}$ and $s:=\Vert\boldsymbol\beta^*_{\varepsilon_{A}}\Vert_0\ll p$
for some    $\varepsilon_{A}\geq 0$, $\boldsymbol\beta_{\varepsilon_{A}}^*:\,\Vert \boldsymbol\beta_{\varepsilon_{A}}^*\Vert_\infty\leq R$,  $L_g^*\leq \inf_{\boldsymbol\beta}~\mathbb L(\boldsymbol\beta)$, and $R\geq 1$.
\end{assumption}
Apparently,   Assumption \ref{Assumption A-sparsity} is more general than Assumption \ref{Assumption A-sparsity original}, and the two are equivalent when  $L_g^*=\inf_{\boldsymbol\beta}~\mathbb L({\boldsymbol\beta})$.  Hereafter, both Assumptions \ref{Assumption A-sparsity original} and \ref{Assumption A-sparsity} are referred to as  A-sparsity when there is no ambiguity. Without loss of generality, we let $s> 1$ throughout   this paper.

{\Copy{to copy 2}{The assumption of $\Vert \boldsymbol\beta_{\varepsilon_A}^*\Vert_\infty\leq R$ is non-critical.  It is comparable to, if not less restrictive than, some common  assumptions in the literature. For example,  in addressing HDSL under (the conventional) sparsity,  \cite{p2017a}  and \cite{loh2015a} both assume the    estimator and the vector of true parameters to be contained within a convex and bounded  set of $\{\boldsymbol\beta:\,\vert\boldsymbol\beta \vert\leq R_{\ell_1}\}$  for some $R_{\ell_1}>0$. Verifiably, under their assumptions, $ \Vert \boldsymbol\beta_{\varepsilon_A}^*\Vert_\infty\leq R$ holds with some $R\leq R_{\ell_1}$. Furthermore, we later show that our generalization error bounds depend only logarithmically on $R$. Thus, it is flexible to pick the value of $R$ in practice; we only need to have a coarse estimation of an upper bound on $\Vert \boldsymbol\beta_{\varepsilon_A}^*\Vert_\infty$. Even if $R$ overestimates $\Vert \boldsymbol\beta_{\varepsilon_A}^*\Vert_\infty$ too much, the performance of the proposed scheme would probably not be impacted significantly.}}

We believe that the flexibility of A-sparsity and the relaxation of the RSC can allow the HDSL theories to cover a more comprehensive class of applications. Indeed, as we are to articulate  later, our results on HDSL under A-sparsity can facilitate the comprehension of two  important  classes of problems whose theoretical underpinnings are currently lacking from the literature: (i) A high-dimensional nonsmooth learning problem (nonsmooth HDSL), that is, an HDSL problem with a nonsmooth empirical risk function, and (ii)   a (deep and over-parameterized) neural network (NN) model.

{\color{black} {\label{Weak sparsity discussion 1}\Copy{Weak sparsity discussion 1 content}{More general forms of sparsity,   such as the weak sparsity assumption \citep{n2012a}, have been discussed previously.}}} However, the only existing discussions on simultaneously  relaxing both the sparsity   and the RSC assumptions are due to \cite{liu2018a}, to our knowledge.  Their results imply that  the excess risk of an estimator $\widehat{\boldsymbol\beta}\in\Re^p$ generated as a certain stationary point to the  formulation \eqref{general formulation} can be bounded by {${\mathcal O}\left(\frac{\sqrt{\ln p}}{n^{1/4}}\cdot\left(1+\sqrt{\varepsilon_A}\right)+\varepsilon_{A}\right)$}\label{to change formula}.
 This bound is  reduced to ${\mathcal O}\left(\frac{\sqrt{\ln p}}{n^{1/4}}\right)$ when $\varepsilon_{A}=0$.  In contrast, our findings   {in the current paper} can strengthen the previous results. More specifically,  we relax the subgaussian assumption stipulated by  \cite{liu2018a} and impose the weaker, subexponential, condition instead. In addition, the assumption of twice-differentiability made by \cite{liu2018a} is also  weakened. In the more general settings, we further show that sharper error bounds can be achieved at a stationary point that {(a)} satisfies a set of significant subspace second-order necessary conditions (S$^3$ONC) to be formalized subsequently, and {(b)} has an objective function value no worse than that of the   solution  to the Lasso problem, formulated below:
\begin{align}
\min_{\boldsymbol\beta\in\Re^p} \left\{\mathcal L_n(\boldsymbol\beta,\, \mathbf Z_1^n)+\sum_{j=1}^p \lambda \cdot \vert\beta_j\vert \right\}.\label{Lasso problem}
\end{align}
 We are to discuss   some S$^3$ONC-guaranteeing algorithms to meet the first requirement  soon afterwards. To meet the second requirement, we may always  initialize the S$^3$ONC-guaranteeing algorithm with a solution to   \eqref{Lasso problem}, which is often polynomial-time solvable if $\mathcal L_n(\,\cdot\,,\mathbf Z_1^n)$ is convex. 
 
 Our new bounds on those S$^3$ONC solutions are summarized below.  First, in the case where $\varepsilon_{A}=0$, we can bound  the excess risk  by  ${\mathcal O}\left(\frac{{\ln p}}{n^{2/3}}+\frac{\sqrt{\ln p}}{n^{1/3}}\right)$, which is   better than the aforementioned result  by \cite{liu2018a} in terms of the dependance on $n$. Second, when $\varepsilon_{A}$ is nonzero, the excess risk is then bounded by
 \begin{align}
\mathbb L(\widehat{\boldsymbol\beta})-\inf_{\boldsymbol\beta}~\mathbb L(\boldsymbol\beta)\leq{\mathcal O}\left(\frac{\ln p}{n^{2/3}}+{\frac{\sqrt{\ln p}}{n^{1/3}}}+\sqrt{\frac{\varepsilon_{A}}{n^{1/3}}}+\varepsilon_{A}\right).\label{lasso initialized}
\end{align}
Third, if we further relax the requirement  above and consider an arbitrary S$^3$ONC solution, then the excess risk becomes
\begin{align}
\mathbb L(\widehat{\boldsymbol\beta})-\inf_{\boldsymbol\beta}~\mathbb L(\boldsymbol\beta)\leq{\mathcal O}\left(\frac{{\ln p}}{n^{2/3}}+\sqrt{\frac{{\ln p}}{n}}+\frac{1}{n^{1/3}}+\sqrt{\frac{\Gamma+\varepsilon_{A}}{n^{1/3}}}+\Gamma+\varepsilon_{A}\right),\label{all solutions}
\end{align}
where $\Gamma\geq 0$  is (an underestimation of) the suboptimality gap that this  S$^3$ONC solution incurs in minimizing $\mathcal L_{n,\lambda}(\,\cdot\,,\mathbf Z_1^n)$ (as defined in \eqref{general formulation}).

 Admittedly,  our    excess risk bounds are  less  appealing than the generalizability results made available in some important previous works by \cite{p2017a,raskutti2011a}, and \cite{n2012a}, etc., under the assumption of the RSC. In contrast, we argue that  our results are established under  a more general  set of conditions  and can complement the existing results in the HDSL problems beyond the RSC. {\label{added comments suboptimality intro}\Copy{para in Gamma}{It is also worth noting that \eqref{all solutions} is in the parameterization of  $\Gamma$, which can only  be explicitly controlled when $\mathcal L_n(\,\cdot,\,\mathbf Z_1^n)$ is convex  in general. Nonetheless, we argue that, in some interesting special cases, one may still control $\Gamma$ despite the absence of convexity. One of such examples is presented in this paper  as we discuss the theoretical applications of HDSL under A-sparsity to the NNs in Sections \ref{sec theoretical app} and \ref{additional theoretical results}.}}

The S$^3$ONC   is a necessary condition for local minimality. {Compared to the second-order KKT conditions, the S$^3$ONC is weaker and potentially easier computable.}
To generate a solution that satisfies the S$^3$ONC  admits pseudo-polynomial-time algorithms, such as the variants of Newton's method proposed  by \cite{haeser2017a,bian2015a,Ye1992a,Ye1998a} and \cite{nesterov2006a}. All those algorithms provably ensure a   $\gamma_{opt}$-approximation (with a user-specified error tolerance $\gamma_{opt}>0$) to the second-order KKT conditions at  the best-known iteration complexity of the rate ${\mathcal O}(1/\gamma_{opt}^3)$. The second-order KKT conditions then  imply the S$^3$ONC.  To add to the current solution schemes, we derive a  new  gradient-based method that provably guarantees the S$^3$ONC. In contrast to the literature, the iteration complexity of this new algorithm is   ${\mathcal O}(1/\gamma_{opt}^2)$, which improves upon the existing alternatives. Due to  the gradient-based nature of the proposed algorithm, it does not access the Hessian matrix or its inverse. {Therefore, we think that this gradient-based algorithm may be of some independent interest.}

\subsection{Some theoretical applications}
 
 As mentioned, our results on HDSL under A-sparsity can be employed in the analysis of two important classes of statistical and machine learning models: (a)  nonsmooth HDSL, and (b) deep NNs. Some additional details are  provided below.

\subsubsection{Nonsmooth HDSL.}\label{HDNL setup}
Although several special cases of HDSL with nonsmoothness, such as high-dimensional least absolute regression, high-dimensional quantile regression, and  high-dimensional support vector machine (SVM) have been discussed by \cite{wang2013b}, \cite{belloni2011a}, \cite{zhang2016b,zhang2016c} and  \cite{peng2016a}, there exist few theories that apply to scenarios without  an everywhere differentiable loss function in general, especially when non-differentiability may occur  at, or in a near neighborhood of, the vector of true parameters. 

 In contrast, our theories on HDSL under A-sparsity can be utilized to understand the generalization performance of a flexible set of nonsmooth HDSL problems. Indeed, their nonsmooth statistical loss functions can be approximated by another formulation that preserves the continuous differentiability, and the resulting approximation error can then be handled through the notion of A-sparsity. Analyzing this approximation leads to the following bound on the excess risk at an S$^3$ONC solution when the vector of true parameters is A-sparse in the sense of Definition \ref{Assumption A-sparsity original}:
\begin{align}
{\mathcal O}\left(\frac{{\ln p}}{n^{3/4}}+\frac{\sqrt{\ln p}}{n^{1/4}}+\sqrt{\frac{\varepsilon_{A}}{n^{1/4}}}+\varepsilon_{A}\right).\label{Nonsmooth rate}
\end{align}
In particular, under the conventional sparsity assumption (that is, when $\varepsilon_{A}=0$), the rate above   becomes ${\mathcal O}\left(\frac{{\ln p}}{n^{3/4}}+\frac{\sqrt{\ln p}}{n^{1/4}}\right)$.
To our knowledge, this is perhaps the first generic theory for the high-dimensional M-estimation problems in which the empirical risk function may not be everywhere differentiable.

\subsubsection{Regularized neural network.}\label{RNN setup}
 
The NNs have been frequently discussed and widely applied in recent literature \citep{schmidhuber2015a,lecun2015a,yarotsky2017a}. Despite  the  frequent and exciting advancements in the NN-related algorithms, models, and applications, the development of their theoretical underpinnings is seemingly lagging behind.   \cite{devore1989a}, \cite{yarotsky2017a}, \cite{mhaskar2016a}, and \cite{mhaskar1996a}, etc., have explicated the expressive power of the NNs in the  approximation of different types of functions. As for the generalizability of NNs, one of the focuses of this paper, effective theoretical frameworks have been discussed by \cite{cao2019generalization,li2018learning,brutzkus2017sgd,allen2019learning,wang2019learning,daniely2017sgd,neyshabur2015norm,bartlett2017spectrally,hardt2015train, zhang2016understanding, li2018tighter, jakubovitz2019generalization}, among others. However, for the vast majority of the existing results on the deep NNs, the generalization error bounds grow polynomially in the dimensionality (which is equal to the number of fitting parameters  and is also called the network size) and sometimes even increase exponentially in the depth of the network.   Such a high sensitivity to dimensionality and depth is inconsistent with the empirical performance of the NNs in many practical applications, where over-parameterization and deep architectures are   common and often preferred by practitioners.

In contrast, we  analyze the NNs through the lens of HDSL under A-sparsity and  consider an FCP-regularized NN training formulation  as
 a special case of \eqref{general formulation} in binary classification. Our results indicate that the NN's generalization errors at local solutions can  be both poly-logarithmic   in the number of fitting parameters   and  polynomial  in the network depth. Thus, we think that the results herein can facilitate understanding the powerful performance of the NNs in practice, especially for the over-parameterized and deep models.  \cite{barron2018approximation} have shown the existence  of fitting parameters for an NN with ramp activation functions to achieve the poly-logarithmic sample complexity. Compared with \cite{barron2018approximation}, our analysis may present better flexibility in the choice of activation functions and provide more insights towards the computability of the desired fitting parameters in training a deep NN to ensure the proven error bounds.

More specifically, we show that  the generalization error incurred by an S$^3$ONC solution to the FCP-regularized training formulation of an NN is bounded by
\begin{align}
{\mathcal O}\left(\underbrace{\frac{{s_A}\cdot \mathcal D\cdot \ln p }{n^{2/3}}+\sqrt{\frac{{s_A}\cdot \mathcal D \cdot \ln p}{n}}+\frac{1}{n^{1/3}}}_{\text{$\mathcal O\left(n^{-1/3}+ n^{-1/2}\mathcal D\cdot \ln p\right)$}} +\underbrace{\Gamma}_{\text{Suboptimality gap}} +  \underbrace{\Omega({s_A})}_{\text{Representability gap}} +  \underbrace{\sqrt{\frac{\Gamma+ \Omega({s_A}) }{n^{1/3}}}}_{\text{Interaction term}}\right),\label{general inequality NN bound}
\end{align}
for any fixed ${s_A}:\,1\leq {s_A}\leq p$,
with overwhelming probability. Here, $\mathcal D$ is the number of NN layers,   $\Gamma\geq 0$ is the suboptimality gap incurred by the S$^3$ONC solution of consideration, and $\Omega({p'})$, for any ${p'}:\,1\leq {p'}\leq p$, is the   architecture-dependent representability gap (a.k.a., the model misspecification error or  the expressive power) of an NN with ${p'}$-many nonzero fitting  parameters.
By \eqref{general inequality NN bound} above, the generalization error of an NN consists of four terms: (i) a  generalization error term of the order $\mathcal O\left(n^{-1/3}+n^{-1/2}\mathcal D\ln p\right)$; (ii) the suboptimality gap; (iii) a term that  measures the NN's representability; and (iv) a term that is dependent on suboptimality gap, sample size, and representability, simultaneously. It is worth noting that \eqref{general inequality NN bound} is obtained with little restriction on the NN architecture and the data generation process.   Combining \eqref{general inequality NN bound}  with the existing results on the representability analysis of NNs, we further derive more explicit  generalization error bounds. For example, we show that the error yielded by an NN with smooth activation functions can be bounded by  ${\mathcal O}\left(\frac{\mathcal D\cdot \ln p}{ n^{1/3}}+\sqrt{{\frac{\Gamma}{n^{1/3}}}}+\Gamma\right)$, when we assume that data from different categories are separable by a polynomial function (as well as a couple of other conditions on the NN architecture).
  
{ {\label{changed text reference}\Copy{Error bound in depends}{The error bound in \eqref{general inequality NN bound} depends on $\Gamma$, the suboptimality gap.  To explicitly bound its value is challenging in general because of the nonconvexity of an NN's  training formulation. Nonetheless, we show that  some pseudo-polynomial-time computable solutions generated with the aid of an efficient initialization provably ensure the explicit control of $\Gamma$  in  the same settings considered by \cite{cao2019a}.  In such a case, the generalization error  is further explicated into
 \begin{align}
 \mathcal O\left( \frac{\mathcal D}{n^{1/3}}\cdot \ln p\right),\label{desired local}
  \end{align}
 which becomes independent of $\Gamma$. In achieving this result, our settings seem more general than  \cite{wang2019a}, and our rates on both $\mathcal D$ and $p$ are perhaps more appealing than most of the existing results.   In particular, \cite{wang2019a}  focus on  ReLU-NNs  \cite[that is, the NNs where the activation functions are ReLU, as discussed by][]{glorot2011deep} with one hidden layer, but our approach can handle deep NNs under more general hyper-parameters.  For  deep and wide NNs, \cite{cao2019a} have established generalization error bounds, which,  however, increase exponentially  in   the number of layers in the same settings of our discussion. In contrast, our bound is both poly-logarithmic in dimensionality and polynomial in the number of layers.   The computational complexity  of training an NN with the claimed error bound is in pseudo-polynomial time.}}}
 
  In obtaining our results, we do not artificially impose any condition on sparsity or alike. As we articulate in Section \ref{NN training}, our findings are based on the observation that the   A-sparsity (as in Assumption \ref{Assumption A-sparsity}) is an intrinsic property   implied by the NN's  expressive power.

\subsection{Summary of results}\label{summary of results section sub}
Table \ref{summary of complexity} summarizes the sample complexity results proven in this paper. In contrast to the literature, we claim that our results  could lead to the following contributions:
\begin{itemize}
\item[1.] We provide the first  HDSL theory for problems where the three conditions---the twice-differentiability,  the RSC or alike, and the  sparsity---are simultaneously relaxed.   In the more general settings, we show that HDSL is still possible   even if  the sample size is only poly-logarithmic in the dimensionality. In Table \ref{summary of complexity}, the results are presented in  the rows for ``HDSL under A-sparsity''.   
\item[2.] We have derived  a  pseudo-polynomial-time gradient-based method to compute an S$^3$ONC solution.  Even though the S$^3$ONC is a set of second-order necessary conditions, the proposed algorithm does not need to access the Hessian matrix. Furthermore,   the iteration complexity of the proposed method is provably $\mathcal O(\frac{1}{\gamma_{opt}^2})$  in achieving a $\gamma_{opt}$-approximation to the S$^3$ONC, which is sharper than the more generic algorithms such as the variations of Newton's method.
\item[3.]{\label{point 3 of contribution summary}\Copy{As theoretical applications Copy}{As theoretical applications of our error bounds for HDSL under A-sparsity, we derive generalizability results for nonsmooth HDSL problems and deep NNs. More specifically, for a flexible class of high-dimensional nonsmooth M-estimation problems, we prove perhaps the first poly-logarithmic sample complexity bound without the RSC assumption.  The corresponding result  is summarized in Table \ref{summary of complexity} in the rows for ``Nonsmooth HDSL under A-sparsity''. As for the NNs, our sample requirement is only poly-logarithmic in the  network size  and polynomial in the number of layers, providing theoretical underpinnings for the generalizability of an NN under over-parameterization. These results are summarized  in the rows for ``Neural Network'' of Table \ref{summary of complexity}. }}  

\end{itemize}
 \begin{table}[h!]
{\caption{Summary of sample complexities.  $\varepsilon_{A}$ is the parameter for A-sparsity as in Assumption \ref{Assumption A-sparsity original}.  $p$ and $n$ are the sample size and the dimensionality, respectively. ``ReLU-NN'' stands for an NN with ReLU activation. }\label{summary of complexity}
\bigskip
\begin{center}
\begin{tabular}{cccccccccccc}\hline\hline
\multicolumn{2}{c}{\it HDSL under A-sparsity}
\\
\hline 
\thead{S$^3$ONC 
  initialized with Lasso}&  \thead{$\frac{\ln p}{n^{2/3}}+{\frac{\sqrt{\ln p}}{n^{1/3}}}+\sqrt{\frac{\varepsilon_{A}}{n^{1/3}}}+\varepsilon_{A}$}       
\\\hline
\thead{S$^3$ONC 
 with suboptimality gap $\Gamma$} &  \thead{$\frac{{\ln p}}{n^{2/3}}+\sqrt{\frac{{\ln p}}{n}}+\frac{1}{n^{1/3}}+\sqrt{\frac{\Gamma+\varepsilon_{A}}{n^{1/3}}}+\Gamma+\varepsilon_{A}$}   
\\\hline\hline
\multicolumn{2}{c}{\it  Nonsmooth HDSL under A-sparsity}
\\\hline 
\thead{S$^3$ONC 
initialized with Lasso}              & $\frac{{\ln p}}{n^{3/4}}+\frac{\sqrt{\ln p}}{n^{1/4}}+\sqrt{\frac{\varepsilon_{A}}{n^{1/4}}}+\varepsilon_{A}$      
\\\hline\hline
\multicolumn{2}{c}{\it Neural network (with $\mathcal D$-many layers and $p$-many fitting parameters)}
\\\hline 
\thead{S$^3$ONC to a general NN
 with suboptimality gap $\Gamma$\\ and any $s_A:\,1\leq s_A\leq p$}    & \thead{$  {\frac{{s_A}\cdot \mathcal D\cdot \ln p }{n^{2/3}}+\sqrt{\frac{{s_A}\cdot \mathcal D \cdot \ln p}{n}}+\frac{1}{n^{1/3}}} +   {\Omega({s_A})} + {\Gamma}+   {\sqrt{\frac{\Gamma+ \Omega({s_A}) }{n^{1/3}}}}$} 
\\\hline 
\thead{S$^3$ONC 
to  an NN for a flexible choice of activation functions \\ with suboptimality gap $\Gamma$, when the target function is polynomial}    & \thead{$ \frac{\mathcal D}{ n^{1/3}}\cdot \ln p+\sqrt{{\frac{\Gamma}{n^{1/3}}}} +\Gamma$} 
\\\hline 
\thead{A pseudo-polynomial-time computable solution 
in training   \\ a ReLU-NN in the same settings by \cite{cao2019a}}    & \thead{$\frac{\mathcal D}{n^{1/3}}\cdot \ln p$} 
\\\hline\hline
\end{tabular}
\end{center}
}
\end{table}
\subsection{Organization of the paper}

The rest of the paper is organized as below: 
Section \ref{assumptions here} summarizes the  settings and assumptions.
Section \ref{3} introduces the S$^3$ONC. Section \ref{sec: main results} states our main results concerning HDSL under A-sparsity. A pseudo-polynomial-time  solution scheme that guarantees the S$^3$ONC is discussed in Section \ref{solution section}. Section \ref{sec theoretical app} discusses the theoretical applications  to nonsmooth HDSL and the regularized (deep) NNs.  Some numerical experiments are presented in Section \ref{Numerical section}.  Sections \ref{additional theoretical results} and \ref{sec Numerical Experiments EC} of the electronic companion, respectively, present some additional theoretical results on the NN and supplementary numerical results on both the SVM and the NN.  Section \ref{conclusion: sec} concludes the paper.

Our notations are summarized below.   We use $p$ and $n$ to represent the numbers of dimensions (fitting parameters) and the sample size.  We  let $\Vert\,\cdot\,\Vert_{\mathbf p}$  ($1\leq \mathbf p\leq \infty$) be the  $\mathbf p$-norm, except that $1$- and $2$-norms are denoted by $\vert\,\cdot\,\vert$ and $\Vert\,\cdot\,\Vert$, respectively. When there is no ambiguity, we also denote by $\vert\,\cdot\,\vert$ the cardinality of a set, if the argument is  a finite set. Let $\Vert \cdot\Vert_F$ of a matrix be  its Frobenius norm and let $\Vert\cdot\Vert_0$ of a vector be the number of its nonzero entries. For a random vector $\mathbf v=(v_j)\in\Re^p$, we  denote that $\Vert \mathbf v\Vert_\infty\leq R$ if $\mathbb P[\vert v_j\vert\leq R,\,\forall j=1,...,p]=1$. For a random variable $X$, its subexponential and subgaussian norms  are denoted by  $\Vert X\Vert_{\psi_1}$ and $\Vert X\Vert_{\psi_2}$, respectively.    {\label{definition of 2-1 norm}\Copy{defi of norm Copy}{$\Vert \mathbf A\Vert_{1,2}:=\max_{\mathbf x\in\Re^{m_1},\,\mathbf u\in\Re^{m_2}}\{\mathbf u^\top\mathbf A\mathbf x:\,\Vert \mathbf x\Vert_1=1,\,\Vert\mathbf u\Vert_2=1\}$ for integers $m_1,\,m_2$ and a matrix $\mathbf A\in\Re^{m_2\times m_1}$.}}  
 For a function $f$, denote by $\nabla f$ its gradient, whenever it exists. For a vector $\boldsymbol\beta=(\beta_j)\in\Re^p$  and a set $S\subset\{1,...,p\}$, let $\boldsymbol\beta_S=(\beta_j:\, j\in S)$ be a sub-vector of $\boldsymbol\beta$.  For any vector $\mathbf v=(v_j)$,  the notation $diag(\mathbf v)$ represents the diagonal matrix whose $j$th diagonal entry is $v_j$. We denote by $vec(M_1,M_2,..., M_{m})$  the vector that collects all the entries of the matrices $M_1$, $M_2,$ ..., $M_m$. The vector   $e_j$ is the $j$th standard basis. $\lceil x\rceil$ (or $\lfloor x\rfloor$) for any $x\geq 0$ is the smallest  (or largest) integer that is greater (or smaller, respectively) than or equal to $x$.   Finally, we   denote by $O(\cdot)$'s and  $\mathcal O(\cdot)$'s, respectively, the complexity rates that hide (potentially different) universal constants and quantities at most logarithmically dependent on ``$\cdot$''.
 
 \section{Settings and assumptions}\label{assumptions here}
In this section, we summarize our  assumptions in addition to the aforementioned settings. We  assume    that  
 the gradient $\nabla L(\boldsymbol\beta,z):=(\frac{\partial  L(\boldsymbol\beta,z)}{\partial \beta_j}:\,j=1,...,p)$ of 
$L(\boldsymbol\beta,z)$ w.r.t. $\boldsymbol\beta$  is well-defined for all $\boldsymbol\beta\in\Re^p$ and almost every $z\in\mathcal W$. 
Furthermore, we  also suppose that  $\frac{\partial L(\boldsymbol\beta,z)}{\partial\beta_j}$ is  Lipschitz   continuous for all $\boldsymbol\beta\in\Re^p$; that is, there exists a scalar $U_L>0$ such that
  \begin{align}\left\vert\left[\frac{\partial L(\boldsymbol\beta,z)}{\partial\beta_j} \right]_{\boldsymbol\beta=\widetilde{\boldsymbol\beta}+\delta \cdot e_j}-\left[\frac{\partial L(\boldsymbol\beta,z)}{\partial\beta_j}\right]_{\boldsymbol\beta=\widetilde{\boldsymbol\beta}}\right\vert\leq  U_{L}\cdot\vert\delta \vert,\label{Lipschitz gradient condition value new value}
 \end{align}
  for almost every $z\in \mathcal W$ and for all $\widetilde{\boldsymbol\beta}\in\Re^p$, $\delta\in\Re$, $j=1,...,p$. These regularities are to be relaxed when we  later discuss the nonsmooth HDSL problems and the  ReLU-NNs.  Apart from the above, two additional assumptions are imposed.

\begin{assumption}\label{sub exponential condition}
For all  $\boldsymbol\beta\in\Re^p:\,\Vert\boldsymbol\beta\Vert_\infty\leq R$ and $i=1,...,n$, it holds that $\mathbb E[ L(\boldsymbol\beta,\, Z_i)]$ is finite-valued and  $L(\boldsymbol\beta,\, Z_i)-\mathbb E[ L(\boldsymbol\beta,\, Z_i)]$  follows a subexponential distribution; that is,
$
\Vert L(\boldsymbol\beta,\, Z_i)-\mathbb E[ L(\boldsymbol\beta,\, Z_i)]\Vert_{\psi_1}\leq \sigma,
$
for some $\sigma \geq 1$. 
\end{assumption}

\begin{remark}\label{remark beinstein}
As an implication of Assumption \ref{sub exponential condition},  for all $\boldsymbol\beta\in\Re^p:\,\Vert\boldsymbol\beta\Vert_\infty\leq R$, (combined with the assumption that   $Z_i$, $i=1,...,n$, are i.i.d.) a well-known Bernstein-like inequality holds as below:
\begin{align}
\mathbb P\left(\left\vert \sum_{i=1}^n a_i\left\{L(\boldsymbol\beta,\, Z_i)-\mathbb E[L(\boldsymbol\beta,\, Z_i)] \right\}\right\vert>\sigma \cdot \left(\Vert \mathbf a\Vert\sqrt{t}+\Vert \mathbf a\Vert_\infty t\right)\right)\leq 2\exp\left(-ct\right),\quad
\forall t\geq 0, \,\mathbf a=(a_i)\in\Re^n,\label{Bernstein inequality result}
\end{align}
for some absolute constant $c\in(0,\,0.5]$.  Interested readers are referred to \cite{vershynin2012a} for more detailed discussions on the subexponential distributions.
 
\end{remark}

\begin{assumption}\label{Lipschitz condition}
For some measurable and deterministic function $\mathcal C:\, \mathcal W\rightarrow \Re_+$, the random variable $\mathcal C(Z_i)$ satisfies that
$
\left\Vert \mathcal C(Z_i)-\mathbb E\left[\mathcal C(Z_i)\right]\right\Vert_{\psi_1}\leq \sigma_L, $
 for all $i=1,...,n$, for some $\sigma_L\geq 1$. Furthermore,
$\vert L(\boldsymbol\beta_1,\, z)-L(\boldsymbol\beta_2,\, z)\vert\leq \mathcal C(z)\Vert \boldsymbol\beta_1-\boldsymbol\beta_2\Vert,$
 for all $\boldsymbol\beta_1,\,\boldsymbol\beta_2\in\Re^p\cap\{\boldsymbol\beta:\,\Vert\boldsymbol\beta\Vert_\infty\leq R\}  $  and almost every  $z\in\mathcal W$.
 \end{assumption}
 
 Hereafter, we let $\mathbb E[ \mathcal C(Z_i) ]\leq \mathcal  C_\mu$ for all $i=1,...,n$ for some $\mathcal  C_\mu\geq 1$.

\begin{remark}
Assumptions \ref{sub exponential condition} and \ref{Lipschitz condition} are general enough to cover a wide spectrum of  M-estimation problems. More specifically, Assumption \ref{sub exponential condition}   requires   that the underlying distribution is sub-exponential, and Assumption \ref{Lipschitz condition} essentially imposes the Lipschitz(-like) continuity on $\mathcal L_n(\,\cdot\,,\mathbf Z_1^n)$. Examples of sub-exponential distributions include uniform, Gaussian, exponential,  and $\chi^2$ distributions, as well as any distribution that has a bounded support set.   {
\label{remark 2}\Copy{to copy remark 2}{As for the Lipschitz continuity, it is a condition satisfied by many statistical learning problems, such as linear regression, Huber regression, SVM, and NNs. We are to show that the generalization error bounds only grow logarithmically in the Lipschitz constant.}} The combination of our Assumptions is  non-trivially weaker than the settings in \cite{liu2017a,liu2018a}. 
 It is also worth mentioning that the stipulations of $\sigma\geq 1$, $\mathcal  C_\mu\geq 1$, and $ \sigma_L\geq 1$ can be easily relaxed and are needed only for notational simplicity in   presenting our results.
\end{remark}

\section{Significant subspace second-order necessary conditions}\label{3}
 
Because the FCP is nonconvex, so is Eq. \eqref{general formulation}. Thus, computing the global solution to \eqref{general formulation} is intractable. Nonetheless, our theories concern only local stationary points. We show that these local solutions  are good enough to ensure the promised statistical performance. 

In particular, we consider the stationary points that are characterized by the satisfaction of the significant subspace second-order necessary conditions (S$^3$ONC), which are closely similar to the necessary conditions discussed by \cite{chen2010a} for   linear regression with bridge regularization and by \cite{liu2017a,liu2018a}   under the assumption that the empirical risk function is everywhere twice differentiable. This paper generalizes the  characterizations of the S$^3$ONC    to scenarios where the twice-differentiability may not hold everywhere. 
 
 \begin{definition}\label{SONC definition}
Given $\mathbf Z_1^n\in\mathcal W^n$, a vector $\widehat{\boldsymbol\beta}\in\Re^p$ is said to satisfy the S$^3$ONC  (denoted by {\it S$^3$ONC}$(\mathbf Z_1^n)$) of Problem \eqref{general formulation} if both of the following  sets of conditions are satisfied:

\begin{enumerate} 
\item[a.] {\label{first order kkt def}\Copy{the first-order KKT copy}{The first-order KKT conditions are met at $\widehat{\boldsymbol\beta}:=(\widehat\beta_j)$; that is, there exists $\varkappa_j\in \partial(\vert \widehat\beta_j \vert)$, for all $j=1,...,p$, such that
\begin{align}
\nabla\mathcal L_{n}(\widehat{\boldsymbol\beta},\, \mathbf Z_1^n)+(P'_\lambda(\vert\widehat\beta_j\vert)\cdot\varkappa_j:\,j=1,...p)=\mathbf 0,\label{first-order kkt}
\end{align}
where $\nabla\mathcal L_{n}(\widehat{\boldsymbol\beta},\, \mathbf Z_1^n)$ is the gradient of $\mathcal L_{n}(\,\cdot,\, \mathbf Z_1^n)$ as defined in \eqref{test new results},  $\partial(\vert \widehat\beta_j \vert)$ is the subdifferential  of $\vert \,\cdot\, \vert$ at $\widehat\beta_j$, and $P'_\lambda(\,\cdot\,)$ is the first derivative of $P_\lambda(\,\cdot\,)$. 
}}
\item[b.] The following inequality holds at $\widehat{\boldsymbol\beta}$: for all $j=1,...,p$, if $\vert\widehat\beta_j\vert\in(0,\,a\lambda)$, then
\begin{align}
U_L+P''_\lambda(\vert\widehat\beta_j\vert)\geq 0.\label{second condition 1}
\end{align}
where $P''_\lambda$ is the second derivative of $P_\lambda(\,\cdot\,)$, the quantity   $U_L$ is defined as in \eqref{Lipschitz gradient condition value new value}, and $a$ and $\lambda$ are (hyper-)parameters of the FCP as in \eqref{FCP penalty formulation}. 
\end{enumerate}
\end{definition}
 %
It is   worth noting that the S$^3$ONC is  verifiably implied by the conventional second-order KKT conditions when they are well-defined. We  show in Section \ref{solution section}  that an S$^3$ONC solution (i.e., a solution that satisfies the S$^3$ONC) can be computed by the proposed gradient-based method  at  pseudo-polynomial-time complexity.

\section{Statistical performance bounds}\label{sec: main results}

This section presents the promised sample complexity results for  a generic HDSL problem under A-sparsity. More specifically, Proposition \ref{second theorem} shows the most general result of this paper.  In that proposition, a hyper-parameter $\varrho$ is left to be  determined in different special cases. One of those cases is then presented in Theorem \ref{second theorem theorem}. For convenience, we adopt a short-hand notation as follows:  $\widetilde\zeta:=\ln\left( 3eR\cdot(\sigma_L+\mathcal  C_\mu)\right)$.

\begin{newproposition}\label{second theorem} Suppose that Assumptions \ref{Assumption A-sparsity}, \ref{sub exponential condition}, and  \ref{Lipschitz condition} hold. For any $\varrho:\,0<\varrho< \frac{1}{2}$ and the same $c$ in \eqref{Bernstein inequality result}, let    $a<\frac{1}{U_L}$ and  $\lambda:=\sqrt{\frac{8\sigma }{c\cdot a\cdot n^{2\varrho}}[\ln(n^{\varrho}p)+\widetilde\zeta]}$.      Consider any random vector $\widehat{\boldsymbol\beta}\in\Re^p$ such that $\Vert \widehat{\boldsymbol\beta}\Vert_\infty\leq R$ and  the S$^3$ONC$(\mathbf Z_1^n)$   to \eqref{general formulation}  is satisfied at   $\widehat{\boldsymbol\beta}$ almost surely. The following statements hold:
\begin{itemize}
\item[(i)]
For any fixed $\Gamma\geq 0$ and some    universal constant $C_1>0$, if
\begin{align}
n > C_1\cdot \left[\left(\frac{\Gamma+\varepsilon_{A}}{\sigma }\right)^{\frac{1}{1-2\varrho}}+s\cdot\left(\ln(n^{\varrho}p)+\widetilde \zeta\right)\right],\label{sample initial requirement 2 ori}
\end{align}
 and   $\mathcal L_{n,\lambda}(\widehat{\boldsymbol\beta},\,\mathbf Z_1^n)\leq \mathcal L_{n,\lambda}({\boldsymbol\beta}^{*}_{\varepsilon_{A}},\,\mathbf Z_1^n)+\Gamma$ almost surely, then  
\begin{multline}
\mathbb L(\widehat{\boldsymbol\beta})-L_g^*
\leq C_1\cdot\left(\frac{s\cdot\left(\ln(n^{\varrho}p)+\widetilde \zeta\right)}{n^{2\varrho}}+\sqrt{\frac{s\cdot\left(\ln(n^{\varrho}p)+\widetilde \zeta\right)}{n}}+\frac{1}{n^{\varrho}}+\frac{1}{n^{1-2\varrho}}+\frac{1}{n^{(1-\varrho)/2}}\right)\cdot \sigma \\+C_1\cdot\sqrt{\frac{\sigma (\Gamma+\varepsilon_{A})}{n^{1-2\varrho}}}+\Gamma+\varepsilon_{A},\label{bound global}
\end{multline}
with probability at least 
$
1-2 (p+1) \exp(- n/C_1)-6\exp\left(-2cn^{4\varrho-1}\right) 
$, where $\mathbb L$ is defined in Eq.\,\eqref{population-level model M-estimation} and $L_g^*$ is defined in Assumption \ref{Assumption A-sparsity}.
\item[(ii)]
For almost every $\mathbf Z_1^n\in\mathcal W^n$, assume that the minimization problem  in \eqref{Lasso problem} admits a finite optimal solution denoted by $\widehat{\boldsymbol\beta}^{\ell_1}:=\widehat{\boldsymbol\beta}^{\ell_1}(\mathbf Z_1^n)$. For  some    universal constant $C_2>0$, if
\begin{align}
n > C_2\cdot\left(\frac{\varepsilon_{A}}{\sigma }\right)^{\frac{1}{1-2\varrho}}+ C_2\cdot a^{-1}\cdot [\ln(n^{\varrho}p)+\widetilde\zeta]\cdot s^{\max\{1,\frac{1}{2-4\varrho},\,\frac{1}{2\varrho}\}}\left(\max\left\{1,\,\Vert \boldsymbol\beta^*_{\varepsilon_{A}}\Vert_\infty\right\}\right)^{\max\{\frac{1}{2-4\varrho},\,\frac{1}{2\varrho}\}},\label{sample initial requirement}
\end{align}
 and   $\mathcal L_{n,\lambda}(\widehat{\boldsymbol\beta},\,\mathbf Z_1^n)\leq \mathcal L_{n,\lambda}(\widehat{\boldsymbol\beta}^{\ell_1},\,\mathbf Z_1^n)$ almost surely, then  
\begin{multline}
\mathbb L(\widehat{\boldsymbol\beta})-L_g^* 
\leq C_2\cdot\left[\frac{s\left(\ln(n^{\varrho}p)+\widetilde \zeta\right)}{n^{2\varrho}}+\frac{1}{n^{\varrho}}+\frac{1}{n^{1-2\varrho}}\right]
\cdot \sigma 
\\+C_2\cdot \frac{s\cdot \max\left\{1,\,\Vert \boldsymbol\beta^*_{\varepsilon_{A}}\Vert_\infty\right\}\cdot \sigma ^{3/4}}{\min\left\{a^{1/2}n^\varrho,\,a^{1/4}n^{\frac{1-\varrho}{2}}\right\}} \left[\ln(n^{\varrho}p)+\widetilde \zeta\right]^{1/2}
+C_2\cdot\sqrt{\frac{\sigma \varepsilon_{A}}{n^{1-2\varrho}}}+\varepsilon_{A},\label{bound 12}
\end{multline}
with probability at least 
$
1-2 (p+1) \exp(- n/C_2)-6\exp\left(-2cn^{4\varrho-1}\right)   
$.
\end{itemize}
\end{newproposition}
\begin{proof}{Proof.}
See Section \ref{subsection proof proposition 6}.
\hfill \ensuremath{\Box}\end{proof} 

\begin{remark}Proposition \ref{second theorem} is the most general result in this paper. It does not rely on convexity, RSC, or alike, although to ensure $\mathcal L_{n,\lambda}(\widehat{\boldsymbol\beta},\,\mathbf Z_1^n)\leq \mathcal L_{n,\lambda}(\widehat{\boldsymbol\beta}^{\ell_1},\,\mathbf Z_1^n)$ almost surely in Part (ii) usually requires $\mathcal L_{n,\lambda}(\,\cdot\,,\,\mathbf Z_1^n)$ to be convex. 
\end{remark}
{
\begin{remark}\label{additional remark on R}
\Copy{to copy 3}{The assumption that $\Vert \widehat{\boldsymbol\beta}\Vert_\infty\leq R$  is comparable to, or less restrictive than, some similar conditions in the literature. For example,   \cite{p2017a}  and \cite{loh2015a} require that the estimator is within the set of $\{\boldsymbol\beta:\,\vert{\boldsymbol\beta}\vert\leq R_{\ell_1}\}$. Under the same requirement, we may have $R_{\ell_1}\geq R$. 
 Because the error bounds in \eqref{sample initial requirement 2 ori} and \eqref{bound 12} are logarithmic in $R$ (with $\widetilde\zeta:=\mathcal O\left(\ln R\right)$), one may let the value of $R$ to be a coarse overestimation of $\Vert \widehat{\boldsymbol\beta}\Vert_\infty$.} 
\end{remark}}
\begin{remark}
  Because $\mathbb L(\widehat{\boldsymbol\beta})-\inf_{\boldsymbol\beta}~\mathbb L({\boldsymbol\beta})\leq \mathbb L(\widehat{\boldsymbol\beta})-L_g^*$, the first part of this proposition indicates that, for all the S$^3$ONC solutions, the excess risk can be bounded by a function in the parameterization of the suboptimality gap $\Gamma$. (Technically speaking, $\Gamma$  is an underestimation of the suboptimality gap in this proposition.) This  bound on  the excess risk explicates the consistency  between the statistical performance of a stationary point to an HDSL problem and the optimization quality of that stationary point in minimizing the   objective function of Problem  \eqref{general formulation}.  
The  second part of Proposition \ref{second theorem} concerns an arbitrary S$^3$ONC solution $\widehat{\boldsymbol\beta}$ that has an objective function value smaller than that of $\widehat{\boldsymbol\beta}^{\ell_1}$. The corresponding error bound becomes independent of $\Gamma$. 
\end{remark}
\begin{remark}
To compute  $\widehat{\boldsymbol\beta}$ in Part (ii) of this proposition, we can adopt  a two-step approach: In the first step, we  solve for  $\widehat{\boldsymbol\beta}^{\ell_1}$, which is often polynomial-time computable if $\mathcal L_{n,\lambda}(\,\cdot\,,\mathbf Z_1^n)$ is convex given $\mathbf Z_1^n$. Then, in the second step, we invoke an   S$^3$ONC-guaranteeing algorithm (such as the gradient-based method to be discussed in Section \ref{solution section}). This algorithm should be initialized with $\widehat{\boldsymbol\beta}^{\ell_1}$. 
\end{remark}

\begin{remark}
We may as well let $a^{-1}=2U_L$  to satisfy the stipulation on $a$ in Proposition \ref{second theorem theorem}. Here, $U_L$ can be considered as the largest diagonal   of the Hessian matrix of $\mathcal L(\,\cdot\,,\,z)$, if it exists. In many applications of HDSL, this quantity can satisfy $U_L\leq O(1)\ln p$ with high probability under  data normalization. For example, in the special case of high-dimensional linear models, $U_L\leq 1$ is implied by the common assumption of column normalization \citep{raskutti2011a,n2012a}.
\end{remark}

{
\begin{remark}\label{remark added in response to reviewer}
\Copy{to copy 1}{The proof of Proposition \ref{second theorem} makes use of the coincidence that, at the S$^3$ONC solutions, the FCP behaves similarly as the  $\ell_0$ penalty (as discussed by, e.g., \cite{shen2013constrained}). Thus, it is possible that adopting  the $\ell_0$ penalty instead of the FCP  in our formulation \eqref{general formulation} may lead to similar results on the generalization errors with less technical difficulty.  Nonetheless, the $\ell_0$ penalty introduces discontinuity to the formulation and thus may usually lead to higher computational ramification. We leave for the future research the study of the trade-offs between computational and sample complexities for the formulations with  alternative regularization terms.}
\end{remark}}

\begin{remark}
For any fixed $\varrho:\ 0<\varrho< \frac{1}{2}$, each of the two parts  of  Proposition \ref{second theorem} has already established the poly-logarithmic sample complexity. Based on this proposition, polynomially increasing the sample size can compensate for the exponential growth in the dimensionality. We may further pick a reasonable value for $\varrho$ and obtain more detailed bounds as in Theorem \ref{second theorem theorem} below, which confirms the promised complexity rates as previously mentioned in \eqref{lasso initialized} and \eqref{all solutions} for a general HDSL problem under A-sparsity.
\end{remark}

\begin{theorem}\label{second theorem theorem} Let   $a<\frac{1}{U_L}$  and $\lambda:=\sqrt{\frac{8\sigma }{c\cdot a\cdot n^{2/3}}[\ln(n^{2/3}p)+\widetilde\zeta]}$ for the same $c$ in \eqref{Bernstein inequality result}. Suppose that Assumptions \ref{Assumption A-sparsity original}, \ref{sub exponential condition}, and \ref{Lipschitz condition} hold.    For any random vector $\widehat{\boldsymbol\beta}\in\Re^p$   such that $\Vert \widehat{\boldsymbol\beta}\Vert_\infty\leq R$ and S$^3$ONC$(\mathbf Z_1^n)$   to \eqref{general formulation}  is satisfied at   $\widehat{\boldsymbol\beta}$  almost surely,  the following statements hold: 
\begin{itemize}
\item[(i)]
 For any fixed $\Gamma\geq 0$ and some    universal constant $C_3>0$, if
\begin{align}
n > C_3\cdot \left[\left(\frac{\Gamma+\varepsilon_{A}}{\sigma }\right)^{3}+s\cdot\left(\ln(np)+\widetilde \zeta\right)\right],\label{sample initial requirement 2 3}
\end{align}
 and   $\mathcal L_{n,\lambda}(\widehat{\boldsymbol\beta},\,\mathbf Z_1^n)\leq \mathcal L_{n,\lambda}({\boldsymbol\beta}^{*}_{\varepsilon_{A}},\,\mathbf Z_1^n)+\Gamma$ almost surely, then the  excess risk is bounded by 
\begin{align}
\mathbb L(\widehat{\boldsymbol\beta})-\inf_{\boldsymbol\beta}~\mathbb L(\boldsymbol\beta)
\leq C_3\sigma \cdot\left[\frac{s\cdot \left(\ln(n p)+\widetilde \zeta\right)}{n^{2/3}}+\sqrt{\frac{s\cdot \left(\ln(n p)+\widetilde \zeta\right)}{n}}+\frac{1}{n^{1/3}}\right]   
+C_3\cdot\sqrt{\frac{\sigma (\Gamma+\varepsilon_{A})}{n^{1/3}}}+\Gamma+\varepsilon_{A}\label{bound global 3}
\end{align}

with probability at least 
$
1-2 (p+1) \exp\left(-\frac{n}{C_3}\right)-6\exp\left(-\frac{n^{1/3}}{C_3}\right)$.
\item[(ii)]
For almost every $\mathbf Z_1^n\in\mathcal W^n$, assume that the minimization problem  in \eqref{Lasso problem} admits a finite optimal solution denoted by $\widehat{\boldsymbol\beta}^{\ell_1}:=\widehat{\boldsymbol\beta}^{\ell_1}(\mathbf Z_1^n)$.
For  some    universal constant $C_4>0$, if
\begin{align}
n > C_4\cdot\left(\frac{\varepsilon_{A}}{\sigma }\right)^{3}+ C_4\cdot a^{-1}\cdot [\ln(n p)+\widetilde\zeta]\cdot s^{ \frac{3}{2} }\max\left\{1,\,\Vert\boldsymbol\beta^*_{\varepsilon_{A}}\Vert_\infty^{\frac{3}{2}}\right\},\label{sample initial requirement 2} 
\end{align}
 and   $\mathcal L_{n,\lambda}(\widehat{\boldsymbol\beta},\,\mathbf Z_1^n)\leq \mathcal L_{n,\lambda}(\widehat{\boldsymbol\beta}^{\ell_1},\,\mathbf Z_1^n)$ almost surely, then the excess risk is bounded by 
\begin{align}
\mathbb L(\widehat{\boldsymbol\beta})-\inf_{\boldsymbol\beta}~\mathbb L(\boldsymbol\beta) 
\leq C_4\cdot a^{-1/2}\cdot s\cdot \sigma \cdot\left[\frac{ \left(\ln(n p)+\widetilde \zeta\right)}{n^{\frac{2}{3}}}+\frac{\max\{1,\,\Vert\boldsymbol\beta^*_{\varepsilon_{A}}\Vert_\infty\} \cdot \sqrt{\ln(n p)+\widetilde \zeta}}{n^{\frac{1}{3}}}\right]
+C_4\cdot\sqrt{\frac{\sigma \varepsilon_{A}}{n^{1/3}}}+\varepsilon_{A}\label{bound 123}
\end{align}
with probability at least 
$
1-2 (p+1) \exp\left(-\frac{n}{C_4}\right)-6\exp\left(-\frac{n^{1/3}}{C_4}\right)  $.
 \end{itemize}
\end{theorem}
\begin{proof}{Proof.}
Invoking   Proposition \ref{second theorem} with    $\varrho=\frac{1}{3}$ and noticing that Assumption \ref{Assumption A-sparsity original} implies  Assumption \ref{Assumption A-sparsity} with $L_g^*:=\inf_{\boldsymbol\beta}~\mathbb L(\boldsymbol\beta)$, we obtain both parts of the desired results.
\hfill \ensuremath{\Box}\end{proof} 

Theorem \ref{second theorem theorem} ensures the desired poly-logarithmic sample complexity for HDSL under A-sparsity. Our remarks  concerning Proposition \ref{second theorem} above also apply to Theorem \ref{second theorem theorem}, since the latter is a special case   when   $\varrho=\frac{1}{3}$ and $L_g:=\inf_{\boldsymbol\beta}~\mathbb L({\boldsymbol\beta})$. We would like to point out that, if $\varepsilon_{A}=0$, then  A-sparsity is reduced to the conventional sparsity. In such a case, the excess risk in \eqref{bound 123} is simplified into $\mathbb L(\widehat{\boldsymbol\beta})-\inf_{\boldsymbol\beta}~\mathbb L({\boldsymbol\beta}) \leq {\mathcal O}(\frac{{\ln p}}{n^{2/3}}+\frac{\sqrt{\ln p}}{n^{1/3}})$.

\section{An S$^3$ONC-Guaranteeing Algorithm}\label{solution section} 

This section  presents a pseudo-polynomial-time  S$^3$ONC-guaranteeing algorithm. For convenience, we consider a slightly more abstract  optimization problem than \eqref{general formulation} as below:
\begin{align}
\begin{split}
\min_{\boldsymbol\beta:=(\beta_j)\,\in\,\Re^p}\,\widetilde f_\lambda(\boldsymbol\beta):=\widetilde f(\boldsymbol\beta)+\sum_{j=1}^p P_\lambda(\vert\beta_j\vert).
\end{split}\label{special problem here 1}
\end{align}
where $\widetilde f:\,\Re^p\rightarrow\Re$ is a continuously differentiable function with $\Vert \nabla \widetilde f(\boldsymbol\beta_1)-\nabla \widetilde f(\boldsymbol\beta_2)\Vert\leq \widetilde U_{L,2}\cdot \Vert \boldsymbol\beta_1-\boldsymbol\beta_2\Vert$ for some  $\widetilde U_{L,2}\geq 1$ and all $\boldsymbol\beta_1,\,\boldsymbol\beta_2\in\Re^p$. Consequently, the partial derivative $\frac{\partial \widetilde f (\boldsymbol\beta)}{\partial\beta_j}$, for all $j=1,...,p$, is also globally Lipschitz continuous in the sense that $\left\vert\left[\frac{\partial \widetilde f (\boldsymbol\beta)}{\partial\beta_j}\right]_{\boldsymbol\beta=\widetilde{\boldsymbol\beta}+\delta \cdot e_j}-\left[\frac{\partial \widetilde f (\boldsymbol\beta)}{\partial\beta_j}\right]_{\boldsymbol\beta=\widetilde{\boldsymbol\beta}}\right\vert\leq  \widetilde U_{L,\infty}\cdot\vert \delta \vert$ for  every $\widetilde{\boldsymbol\beta}\in\Re^p$, any $\delta\in\Re$,  and some $1\leq U_{L,\infty}\leq U_{L,2}$.  (Note that $U_L$ in \eqref{Lipschitz gradient condition value new value} becomes $\widetilde U_{L,\infty}$ here.) The pseudo-code of the proposed algorithm is summarized below.
\vspace{4mm}
\hrule
\vspace{1mm}
{\bf\noindent Algorithm 1. An S$^3$ONC-guaranteeing gradient-based algorithm}
\vspace{1mm}
\hrule
\vspace{2mm}
\begin{description}
\item[Step 1.] Fix parameters $\gamma_{opt},\,\mathcal M,\,\lambda,$ and $a$ such that  $a<\mathcal M^{-1}$. Initialize $k=0$ and  $\boldsymbol\beta^0\in\Re^p$.
\item[Step 2.] Compute $\boldsymbol\beta^{k+\frac{1}{2}}$ by solving the following problem
\begin{align}
\boldsymbol\beta^{k+\frac{1}{2}}\in \underset{\boldsymbol\beta}{\arg\,\min}\,\left\langle \nabla \widetilde f(\boldsymbol\beta^k),\,\boldsymbol\beta-\boldsymbol\beta^k \right\rangle+\frac{{\mathcal M}}{2}\Vert \boldsymbol\beta-\boldsymbol\beta^k \Vert^2+\sum_{j=1}^pP_\lambda'(\vert \beta_j^k\vert)\cdot\vert\beta_j\vert.\label{first subproblem}
\end{align}
\item[Step 3.] Compute $\boldsymbol\beta^{k+1}$ by solving  the following problem
\begin{align}
\boldsymbol\beta^{k+1}\in \underset{\boldsymbol\beta}{\arg\,\min}\,\left\langle \nabla \widetilde{f}(\boldsymbol\beta^{k+\frac{1}{2}}),\,\boldsymbol\beta-\boldsymbol\beta^{k+\frac{1}{2}} \right\rangle+\frac{{\mathcal M}}{2}\Vert \boldsymbol\beta-\boldsymbol\beta^{k+\frac{1}{2}} \Vert^2+\sum_{j=1}^pP_\lambda(\vert \beta_j\vert).\label{second subproblem}
\end{align}

\item[Step 4.] Algorithm terminates and outputs $\boldsymbol\beta^k$ if the stopping criteria are met. Otherwise, let $k:=k+1$ and go to Step 2.
\end{description}
\vspace{2mm}
\hrule
\vspace{10mm}

We design the termination criterion to be that the algorithm stops when the below is satisfied for the first time\begin{align}\widetilde{f}_\lambda(\boldsymbol\beta^{k+1})>\widetilde{f}_\lambda(\boldsymbol\beta^{k})-\frac{\gamma_{opt}^2}{{2\mathcal M}},\label{termination criterion Algorithm 1}
\end{align}
where $\mathcal M>0$ and $\gamma_{opt}>0$ are    specified in Step 1 of Algorithm 1. Intuitively, $\mathcal M^{-1}$ can be interpreted as the step size of the algorithm, and $\gamma_{opt}$, as the error tolerance in approximating the S$^3$ONC. At termination, the iteration count is denoted by $k^*$.

{\color{black}\Copy{to our analysis}{To our analysis, Algorithm 1 relies on  solving  two per-iteration subproblems \eqref{first subproblem} and \eqref{second subproblem}, repetitively. Subproblem \eqref{first subproblem} in Step 2 ensures that a non-trivial reduction in the objective function value can be achieved whenever the first-order KKT conditions are not met. This step is essential to the promised $\mathcal O(1/\gamma_{opt}^2)$-rate  of the algorithm. Meanwhile, the presence of Subproblem \eqref{second subproblem}  in Step 3  leads to a solution sequence that approaches a desired S$^3$ONC solution without affecting the convergence rate.}\label{to our analysis ref}}  We may formalize the above analysis to prove the theorem below  on  the iteration complexity of Algorithm 1 in computing an S$^3$ONC solution. 
\begin{theorem}\label{Algorithm complexity theorem}
Suppose that $\widetilde f_{\lambda}^*:=\inf_{\boldsymbol\beta}\widetilde f_{\lambda}(\boldsymbol\beta)>-\infty$, $\mathcal M\geq \widetilde U_{L,2}$, and $a<\frac{1}{\mathcal M}$. For any $\gamma_{opt}:\,0<\gamma_{opt}<a\lambda\cdot\mathcal M$, the following statements  hold true:
\begin{itemize}
\item[(a)] Algorithm 2 terminates at iteration $ k^*\leq \left\lfloor2\mathcal M\cdot \frac{\widetilde f_{\lambda}(\boldsymbol\beta^{0})-\widetilde f_{\lambda}^*}{\gamma_{opt}^2}\right\rfloor+1.$   
\item[(b)] At termination, $\boldsymbol\beta^{k^*}=(\beta^{k^*}_j)$ is a $\gamma_{opt}$-S$^3$ONC solution to \eqref{special problem here 1}; that is, 
there exists $\varkappa_j\in \partial(\vert \beta^{k^*}_j\vert)$, for all $j=1,...,p$, such that
\begin{align}
\left\Vert\nabla{\widetilde f}(\boldsymbol\beta^{k^*})+(P'_\lambda(\vert \beta^{k^*}_j\vert)\cdot\varkappa_j:\,j=1,...p)\right\Vert\leq \gamma_{opt},\label{desired first order conditions}
\end{align}
and, for all $j=1,...,p$, if $\vert \beta^{k^*}_j\vert\in(0,\,a\lambda)$, then
$
\widetilde U_{L,\infty}+P''_\lambda(\vert\beta^{k^*}_j\vert)\geq 0,$
where  $a$ and $\lambda$ are defined in \eqref{FCP penalty formulation}.  
\item[(c)] $\widetilde f_\lambda(\boldsymbol\beta^{k^*})\leq \widetilde f_\lambda(\boldsymbol\beta^{0})$.
\item[(d)] $\beta_j^k\notin(0,a\lambda)$ for all $k=1,...,k^*$, where $\beta_j^k$ is the $j$th entry of $\boldsymbol\beta^k$.
\end{itemize}
\end{theorem}
\begin{proof}{Proof.} See proof in Section \ref{Algorithm complexity proof}
\hfill \ensuremath{\Box}\end{proof} 

\begin{remark}
We would like to make a few remarks on Theorem \ref{Algorithm complexity theorem} in the following.
\begin{itemize}
\item The assumptions of this theorem include  the stipulation of $a<\frac{1}{\mathcal M}$, which is consistent with the requirement on $a$ in the generalizability results in the previous section. More specifically,    we may let $a<\min\{\widetilde U_{L,\infty}^{-1},\,\mathcal M^{-1}\}$ to satisfy the conditions for both  Theorem \ref{Algorithm complexity theorem} and Proposition \ref{second theorem}, simultaneously. This observation can be generalized to almost all of our main sample complexity results. Another important assumption we have made is that $\widetilde f$ is smooth;  that is,  $\nabla \widetilde f$  is (globally) Lipschitz continuous. While many machine learning problems satisfy such  a condition, it  is violated by a nonsmooth HDSL problem and a ReLU-NN.  Nonetheless, as we  show in Section \ref{sec theoretical app}, the nonsmooth learning problems, including the SVM, can be analyzed through a smooth approximation.  As for a ReLU-NN, we demonstrate that Algorithm 1 can still be effective with the aid of a tractable initialization scheme.
 \item From Part (b) of the result, the   $\gamma_{opt}$-S$^3$ONC solution is an $\gamma_{opt}$-approximation to the  S$^3$ONC as in Definition \ref{SONC definition}, if we let $\mathcal L_{n}(\,\cdot\,,\, \mathbf Z_1^n):=\widetilde{f}(\,\cdot\,)$. One may see that \eqref{desired first order conditions} is a $\gamma_{opt}$-approximation to   the first-order KKT conditions   in \eqref{first-order kkt}. Meanwhile, the second set of conditions in \eqref{second condition 1} are met exactly.

\item It is easy to re-organize the results from Parts (a) and (b) of Theorem \ref{Algorithm complexity theorem} to see that the algorithm runs for $\mathcal O({\gamma_{opt}^{-2}})$-many iterations  to generate an $\gamma_{opt}$-S$^3$ONC solution.  This iteration complexity is   polynomial in the problem dimensionality and the numeric value of the problem data input. 
Since the per-iteration problems   admit closed forms, we can then see that Algorithm 2 is among the class of pseudo-polynomial-time algorithms. It is worth noting that many existing alternatives are more generic and can compute   stronger necessary conditions than the S$^3$ONC. Nonetheless, the new algorithm can still be of independent interest. Compared to $\mathcal O({\gamma_{opt}^{-3}})$, the best-known  rate to ensure an   $\gamma_{opt}$-approximation to the second-order necessary conditions in the literature, our proposed gradient-based method  yields a significantly better  computational complexity.  

\item Part (c) indicates that the output of the algorithm is no worse than the initial solution in terms of minimizing the objective function $\widetilde f_\lambda$. This property ensures conditions like $\mathcal L_{n,\lambda}(\widehat{\boldsymbol\beta},\,\mathbf Z_1^n)\leq \mathcal L_{n,\lambda}(\widehat{\boldsymbol\beta}^{\ell_1},\,\mathbf Z_1^n)$ in the sample complexity results in, e.g., Part (ii) of Theorem \ref{second theorem theorem}, if   Algorithm 1   is initialized with $\widehat{\boldsymbol\beta}^{\ell_1}$. 

\item Part (d) is useful for our subsequent analysis. One may verify that the proof of this part  holds even if $\widetilde f(\,\cdot\,)$ is not continuously differentiable.
\end{itemize}

\end{remark}

 We observe that both the per-iteration problems \eqref{first subproblem} and \eqref{second subproblem} admit closed-form solutions. To see this, we note that \eqref{first subproblem} is essentially a soft thresholding problem, whose closed form is well-known. As for \eqref{second subproblem}, we observe that it can be decomposed into $p$-many one-dimensional problems. Enumerating all the   KKT solutions to each of these decomposed problems and noticing that $a<\mathcal M^{-1}$, one may verify that, for all $j=1,...,p$,
\begin{align}\beta_j^{k+1}=
\begin{cases}
\beta_j^{k+\frac{1}{2}}-\frac{1}{\mathcal M}\cdot \left[\frac{\partial \widetilde f(\boldsymbol\beta)}{\partial\beta_j}\right]_{\boldsymbol\beta=\boldsymbol\beta^{k+\frac{1}{2}}}&\text{if $\left\vert\beta_j^{k+\frac{1}{2}}-\frac{1}{\mathcal M}\cdot \left[\frac{\partial \widetilde f(\boldsymbol\beta)}{\partial\beta_j}\right]_{\boldsymbol\beta=\boldsymbol\beta^{k+\frac{1}{2}}}\right\vert\geq a\lambda$};\\
0&\text{otherwise}.
\end{cases}\nonumber
\end{align}

\section{Theoretical Applications}\label{sec theoretical app}
In this section, we discuss two important theoretical applications of Proposition \ref{second theorem} and Theorem \ref{second theorem theorem}. Section \ref{nonsmooth learning} presents our results for a flexible class of  nonsmooth HDSL problems. Section \ref{NN training} then considers the generalizability of an FCP-regularized (deep) NN.

 \subsection{Nonsmooth HDSL under A-sparsity}\label{nonsmooth learning}
The   nonsmooth HDSL  problem of our consideration is formulated as below:
\begin{align}\min_{\boldsymbol\beta}\frac{1}{n}\sum_{i=1}^n\left[L_{ns}(\boldsymbol\beta,Z_i):=f_1(\boldsymbol\beta,Z_i)+\max_{\mathbf u\in\mathbb U} \left\{\mathbf u^\top\mathbf A(Z_i) \boldsymbol\beta-\phi(\mathbf u,\,Z_i)\right\}\right],\label{original nonsmooth}
\end{align}
 where  $\mathbf A(\cdot):\,\mathcal W\rightarrow\Re^{m\times p}$ is deterministic and measurable (and may be nonlinear in ``$\cdot$''),  $\mathbb U\subseteq \Re^{m}$ is a convex and compact set with a diameter $D:=\max\{\Vert \mathbf u_1-\mathbf u_2\Vert:\,\mathbf u_1,\mathbf u_2\in\mathbb U\}$, and $f_1:\,\Re^p\times \mathcal W\rightarrow \Re$  and $\phi:\,\mathbb U\times \mathcal W\rightarrow \Re$ are   deterministic, measurable functions. Let $f_1(\,\cdot\,,\,z)$ be  continuously differentiable with
$\left\vert\left[\frac{\partial  f_1(\boldsymbol\beta,z)}{\partial\beta_j}\right]_{\boldsymbol\beta=\widetilde{\boldsymbol\beta}+\delta \cdot e_j}-\left[\frac{\partial f_1(\boldsymbol\beta,z)}{\partial\beta_j}\right]_{\boldsymbol\beta=\widetilde{\boldsymbol\beta}}\right\vert\leq  U_{f_1}\cdot\vert \delta \vert$ for almost every $z\in\mathcal W$ and for all $\widetilde{\boldsymbol\beta}\in\Re^p$, $\delta\in\Re$, and $j=1,...,p$.  Let  $\phi(\,\cdot\,, z)$ be convex and continuous for almost every $z\in\mathcal W$. As some standard and non-critical regularity conditions, it is assumed that  $\mathbb E\left[n^{-1}\sum_{i=1}^nL_{ns}(\boldsymbol\beta,Z_i)\right]$ is well-defined for all $\boldsymbol\beta\in\Re^p$ with $\inf_{\boldsymbol\beta}\mathbb E\left[n^{-1}\sum_{i=1}^nL_{ns}(\boldsymbol\beta,Z_i)\right]>-\infty$ and there exists some vector  $\boldsymbol\beta^*_{\varepsilon_A'}\in\Re^p:\, \Vert\boldsymbol\beta^*_{\varepsilon_A'}\Vert_\infty\leq R$, such that $\mathbb E\left[n^{-1}\sum_{i=1}^nL_{ns}(\boldsymbol\beta_{\varepsilon_A'},Z_i)\right]-\underset{\boldsymbol\beta}{\inf}\,\,\mathbb E\left[n^{-1}\sum_{i=1}^nL_{ns}(\boldsymbol\beta,Z_i)\right]\leq \varepsilon_A'$ for some $\varepsilon_A'\geq 0$.  In the foregoing settings, A-sparsity    (in the sense of Assumption \ref{Assumption A-sparsity original}) holds with $\varepsilon_A:=\varepsilon_A'$ and we are again interested in estimating the vector of true parameters $\boldsymbol\beta^*\in\arg\inf_{\boldsymbol\beta}\mathbb E\left[n^{-1}\sum_{i=1}^nL_{ns}(\boldsymbol\beta,Z_i)\right]$.
Such a problem  is general enough to cover some important nonsmooth learning problems, such as  the  least quantile linear regression, the   least absolute deviation regression, and the SVM.

Compared to our results in Section \ref{sec: main results}, a nuance here is that Problem \eqref{original nonsmooth} has an  empirical risk function that is not everywhere differentiable due to the presence of a maximum operator. The non-differentiable point may reside anywhere, such as at, or in some near neighborhood of, the vector of true parameters.
 In view of this subtlety, we propose the following FCP-based formulation.
\begin{align}
\min_{\boldsymbol\beta}\left[\widetilde{\mathcal L}_{n,\delta,\lambda}(\boldsymbol\beta,\mathbf Z_1^n):=\frac{1}{n}\sum_{i=1}^nf_1(\boldsymbol\beta,Z_i)+\sum_{i=1}^n\frac{1}{n}\max_{\mathbf u\in \mathbb U}\left\{\mathbf u^\top\mathbf A(Z_i) \boldsymbol\beta-\phi(\mathbf u,\,Z_i)-\frac{\Vert\mathbf u-\mathbf u_0\Vert^2}{2n^{\delta}}\right\}+\sum_{j=1}^pP_\lambda(\vert\beta_j\vert)\right],\label{delta function}
\end{align}
for a user-specific $\mathbf u^0\in \mathbb U$ and  $\delta>0$ (which is chosen to be $\delta=\frac{1}{4}$ later    in our theory).

Note that  the proposed formulation in \eqref{delta function} is not an immediate instantiation of \eqref{general formulation} for the population-level problem $\underset{\boldsymbol\beta}{\inf}\,\,\mathbb E\left[n^{-1}\sum_{i=1}^nL_{ns}(\boldsymbol\beta,Z_i)\right]$. Indeed, apart from the FCP-based   regularization term, an additional quadratic function $-\frac{\Vert\mathbf u-\mathbf u_0\Vert^2}{2n^{\delta}}$ is also included in   \eqref{delta function}. The purpose of this extra term is to add  regularities in order to facilitate our analysis; although $\widetilde{\mathcal L}_{n}(\boldsymbol\beta,\mathbf Z_1^n):=\frac{1}{n}\sum_{i=1}^n L_{ns}(\boldsymbol\beta,Z_i)$ is not everywhere differentiable,
\begin{align}\widetilde{\mathcal L}_{n,\delta}(\boldsymbol\beta,\mathbf Z_1^n):=\frac{1}{n}\sum_{i=1}^nf_1(\boldsymbol\beta,Z_i)+\sum_{i=1}^n\frac{1}{n}\max_{\mathbf u\in \mathbb U}\left\{\mathbf u^\top\mathbf A(Z_i) \boldsymbol\beta-\phi(\mathbf u,\,Z_i)-\frac{\Vert\mathbf u-\mathbf u_0\Vert^2}{2n^{\delta}}\right\},\label{smoothed nonsmooth problem}
\end{align}  is verifiably a continuously differentiable  approximation to $\widetilde{\mathcal L}_{n}(\boldsymbol\beta,\mathbf Z_1^n)$. The error incurred by this approximation can be controlled by properly determining the hyper-parameter $\delta$. Furthermore, invoking  Theorem 1  by \cite{nesterov2005a} (restated as Theorem \ref{lipschitz} for completeness), one may derive the  Lipschitz constant of the gradient of $\widetilde{\mathcal L}_{n,\delta}(\,\cdot\,,\mathbf Z_1^n)$. This observation is formalized in Part (a) of Theorem \ref{nonsmooth case} below.

With this approximation, the nonsmooth HDSL problem can now be analyzed via the framework of HDSL under A-sparsity; we can consider the approximation error as a composite of $\varepsilon_A$ in the definition of A-sparsity.  Via this perspective, we may easily  apply  results from Proposition \ref{second theorem} or  Theorem \ref{second theorem theorem}  to \eqref{smoothed nonsmooth problem} after   some  conversions of the settings.
In doing so, we    impose  the following two assumptions, which are   instantiations of Assumptions \ref{sub exponential condition} and \ref{Lipschitz condition}, respectively:
\begin{assumption}\label{SVM assumption 1}
For all  $\boldsymbol\beta\in\Re^p:\,\Vert\boldsymbol\beta\Vert_\infty\leq R$ and $i=1,...,n$,  it holds that $
\Vert L_{ns}(\boldsymbol\beta,\, Z_i)-\mathbb E[ L_{ns}(\boldsymbol\beta,\, Z_i)]\Vert_{\psi_1}\leq \sigma ,
$
for some $\sigma \geq 1$.
\end{assumption}

\begin{assumption}\label{SVM Lipschitz problem}
For some measurable and deterministic function $\mathcal C:\, \mathcal W\rightarrow \Re_+$, the random variable $\mathcal C(Z_i)$ satisfies that 
\begin{itemize}
\item[(i)]
$
\left\Vert \mathcal C(Z_i)-\mathbb E\left[\mathcal C(Z_i)\right]\right\Vert_{\psi_1}\leq \sigma_L, $
 for all $i=1,...,n$ for some $\sigma_L\geq 1$, and
 \item[(ii)] $\mathbb E[ \mathcal C(Z_i)]\leq \mathcal  C_\mu$ for all $i=1,...,n$ for some $\mathcal  C_\mu\geq 1$. 
 \end{itemize}
 Furthermore,
$\vert L_{ns}(\boldsymbol\beta_1,\, z)-L_{ns}(\boldsymbol\beta_2,\, z)\vert\leq \mathcal C(z)\Vert \boldsymbol\beta_1-\boldsymbol\beta_2\Vert,$
 for all $\boldsymbol\beta_1,\,\boldsymbol\beta_2\in\Re^p \cap\{\boldsymbol\beta:\,\Vert\boldsymbol\beta\Vert_\infty\leq R\}$  and almost every  $z\in\mathcal W$.
\end{assumption}
\begin{remark}
Similar to Assumptions \ref{sub exponential condition} and \ref{Lipschitz condition}, the foregoing two conditions ensure that the underlying distribution is subexponential and that a Lipschitz-like inequality holds for $L_{ns}(\,\cdot\,,z)$. 
\end{remark}

 We are now ready to present our results on nonsmooth HDSL in the following theorem, which leads to what is   claimed  in Eq.\,\eqref{Nonsmooth rate}. Similar to Section  \ref{sec: main results}, we  adopt the short-hand, $\widetilde\zeta:=\ln\left( 3eR\cdot(\sigma_L+\mathcal  C_\mu)\right)$.

\begin{theorem}\label{nonsmooth case} Suppose that    $\Vert \mathbf A(z)\Vert^2_{1,2}\leq U_A$ for some $U_A\geq 0$ and for almost every $z\in\mathcal W$. Let {Assumptions} \ref{Assumption A-sparsity original}, \ref{SVM assumption 1}, and \ref{SVM Lipschitz problem}  hold (where $\varepsilon_A$ and $\mathbb L(\,\cdot\,)$ from Assumption \ref{Assumption A-sparsity original} become $\varepsilon_A'$ and $\mathbb E[L_{ns}(\,\cdot\,,\,\mathcal Z)]$, respectively). The following statements hold:
\begin{itemize}
\item[(a)] For any $\delta>0$, all $j=1,...,p$, every $\widetilde{\boldsymbol\beta}\in\Re^p$, and almost every $\mathbf Z_1^n\in\mathcal W^n$, the partial derivative $\frac{\partial \widetilde{\mathcal L}_{n,\delta}(\widetilde{\boldsymbol\beta},\, \mathbf Z_1^n)}{\partial\beta_j}$  is well-defined and  Lipschitz continuous with $\left\vert\left[\frac{\partial   \widetilde{\mathcal L}_{n,\delta}( {\boldsymbol\beta},\, \mathbf Z_1^n)}{\partial\beta_j}\right]_{\boldsymbol\beta=\widetilde{\boldsymbol\beta}+h \cdot e_j}-\left[\frac{\partial   \widetilde{\mathcal L}_{n,\delta}({\boldsymbol\beta},\, \mathbf Z_1^n)}{\partial\beta_j}\right]_{\boldsymbol\beta=\widetilde{\boldsymbol\beta}}\right\vert\leq  (U_{f_1}+n^{\delta}U_A)\cdot\vert h \vert$ for any $h\in\Re$.
\item[(b)] Let $\delta=\frac{1}{4}$, $a=\frac{1}{2(U_{f_1}+n^{1/4}U_A)}$, and $\lambda:=\sqrt{\frac{8\sigma }{c\cdot a\cdot n^{3/8}}[\ln(n^{\frac{3}{8}}p)+\widetilde\zeta]}$ for the same $c$ in \eqref{Bernstein inequality result}.  For almost every $\mathbf Z_1^n\in\mathcal W^n$, assume that the minimization problem $\min_{\boldsymbol\beta}\widetilde{\mathcal L}_{n,\delta}(\boldsymbol\beta,\mathbf Z_1^n)+\lambda\vert\boldsymbol\beta\vert$ admits a finite optimal solution denoted by $\widehat{\boldsymbol\beta}^{\ell_1,\delta}:=\widehat{\boldsymbol\beta}^{\ell_1,\delta}(\mathbf Z_1^n)$. Consider any random vector  $\widehat{\boldsymbol\beta}\in\Re^p$ such that  $\Vert \widehat{\boldsymbol\beta}\Vert_\infty\leq R$, 
$
\widetilde{\mathcal L}_{n,\delta,\lambda}(\widehat{\boldsymbol\beta},\mathbf Z_1^n) \leq \widetilde{\mathcal L}_{n,\delta,\lambda}(\widehat{\boldsymbol\beta}^{\ell_1,\delta},\mathbf Z_1^n)
$
almost surely, and  $\widehat{\boldsymbol\beta}$ satisfies the S$^3$ONC$(\mathbf Z_1^n)$     to \eqref{delta function}
 w.p.1.  For some universal constant $C_5>0$, if
\begin{align}
n >  C_5\cdot\frac{D^4}{\sigma ^2}+C_5\cdot\frac{(\varepsilon_{A}')^4}{\sigma^4}+ C_5\cdot(U_{f_1}+U_A)^{4/3}\cdot [\ln(np)+\widetilde\zeta]^{4/3}\cdot s^{8/3}\max\left\{1, \, \Vert\boldsymbol\beta^*_{\varepsilon_{A}}\Vert_\infty^{8/3}\right\},\label{sample initial requirement nonsmooth}
\end{align}
where $D:=\max\{\Vert \mathbf u_1-\mathbf u_2\Vert:\,\mathbf u_1,\mathbf u_2\in \mathbb U\}$, then
\begin{multline}
\mathbb L(\widehat{\boldsymbol\beta})-\inf_{\boldsymbol\beta}~\mathbb L({\boldsymbol\beta})
\leq  {\frac{C_5 \cdot  \sigma \cdot s \cdot (\ln(np)+\widetilde\zeta)}{n^{3/4}}}+C_5\cdot\sqrt{\frac{\sigma \varepsilon_{A}' }{n^{1/4}}}+\varepsilon_{A}'
\\+  \frac{C_5 \cdot s\cdot \sigma \cdot \max\left\{1,\,\Vert\boldsymbol\beta^*_{\varepsilon_{A}}\Vert_\infty\right\}\cdot (U_{f_1}+U_A)^{1/2}\sqrt{\ln(np)+\widetilde \zeta}+ \max\left\{\sqrt{\sigma }\cdot D,\,D^2\right\}}{n^{1/4}}\label{nonsmooth excess risk bound detail}
\end{multline}
with probability at least $
1-2 (p+1) \exp(-n/C_5)-6\exp\left(-2cn^{1/2}\right)
$.
\end{itemize}
\end{theorem}
\begin{proof}{Proof.}
See Section \ref{proof of theorem 2}.
\hfill \ensuremath{\Box}\end{proof} 

\begin{remark}\label{SVM remark 1st additional}
{\color{black}\Copy{theorem Copy remark additional}{It is possible to generalize Part (b) of the above theorem to obtain an error bound  in the parameterization of any   $\delta > 0$. Nonetheless, the optimal choice to balance all the error terms would be $\delta=1/4$.}}
\end{remark}

\begin{remark}\label{SVM remark}
{{Theorem \ref{nonsmooth case} is  general enough to cover a flexible class of nonsmooth HDSL problems under A-sparsity. Particularly, in the case of the high-dimensional SVM, Problem \eqref{original nonsmooth} becomes
\begin{align}
\min_{\boldsymbol\beta}\,\rho\Vert \boldsymbol\beta\Vert^2+\frac{1}{n}\sum_{i=1}^n\left[1-y_i\mathbf x_i^\top\boldsymbol\beta \right]_+\quad\Longleftrightarrow\quad\min_{\boldsymbol\beta}\,\rho\Vert \boldsymbol\beta\Vert^2+\frac{1}{n}\sum_{i=1}^n~\max_{u_i:\,0\leq u_i\leq 1}~\left\{u_i\cdot\left(1-y_i\mathbf x_i^\top\boldsymbol\beta \right)\right\},\label{SVM original}
\end{align}
where $(\mathbf x_i, y_i)$, for $i=1,...,n$, are i.i.d.\,random pairs of the feature values  and the categorial labels with support $\{\mathbf x\in\Re^p:\,\vert\mathbf x\vert\leq 1\}\times \{-1,\,+1\}$, and $\rho\geq 0$ is  a user-specific constant.   (The assumption that $\vert\mathbf x_i\vert\leq1$, a.s., can always be ensured by normalization.)
We may enable the SVM to handle high dimensionality via   the   formulation below:
\begin{align}
\min_{\boldsymbol\beta}\,\rho\Vert \boldsymbol\beta\Vert^2+\frac{1}{n}\sum_{i=1}^n \max_{u_i:\,  0\leq   u_i\leq  1}~\left\{u_i\cdot\left(1-y_i\mathbf x_i^\top\boldsymbol\beta \right)-\frac{( u_i- u_0)^2}{2n^{\delta}}\right\}+\sum_{j=1}^pP_\lambda(\vert\beta_j\vert),\label{SVM reformulated}
\end{align}
where the value of $u_0\in[0,\,1]$ can be specified arbitrarily.
As a special case to \eqref{delta function}, Problem \eqref{SVM reformulated} satisfies both Assumptions \ref{SVM assumption 1} and \ref{SVM Lipschitz problem}. For example, when $\rho=0.01$, both of the assumptions are met with $\sigma\leq O(1)$, $R\leq O(1)$, $\sigma_L=0$, and $\mathcal C_\mu\leq O(1)\cdot  \sqrt{p}$. (More detailed derivations are provided in Section \ref{sec: applicability to SVM} of the electronic companion.) Also observe that we may let $f_1$, $U_{f_1}$, $D$, and $\mathbf A(\mathbf Z_i^n)$ from Theorem \ref{nonsmooth case}  to be 
\[f_1(\boldsymbol\beta,\mathbf Z_1^n):=\rho\Vert \boldsymbol\beta\Vert^2,~~~~U_{f_1}:= 2\rho,~~~~D:=\max\{(u_1-u_2)^2:\,u_1,u_2\in[0,\,1]\},~~~~\text{and}~~~~\mathbf A(\mathbf Z_i^n):=y_i\cdot \mathbf x_i^\top,\] respectively, in the SVM. Thus, $U_{f_1}\leq O(1)$, $D=1$ and $U_A\leq \max_{y,\,\mathbf x}\left\{\Vert y\cdot \mathbf x^\top\Vert_{1,2}^2:\,y\in\{-1,\,1\},\,\vert\mathbf x\vert\leq 1\right\}\leq 1$ in this special case. Recall here that the error bound in, e.g., \eqref{nonsmooth excess risk bound detail} is poly-logarithmic in $\mathcal C_\mu$.  Theorem \ref{nonsmooth case} then implies that the poly-logarithmic sample complexity can also be achieved for the FCP-regularized SVM.}}


In contrast to \eqref{SVM reformulated}, an alternative formulation as below has been previously discussed in the literature:
\begin{align}
\min_{\boldsymbol\beta}\,\rho\Vert \boldsymbol\beta\Vert^2+\frac{1}{n}\sum_{i=1}^n\left[1-y_i\mathbf x_i^\top\boldsymbol\beta \right]_++\sum_{j=1}^p\widetilde P_\lambda(\vert\beta_j\vert),\label{SVM}
\end{align}
where $\widetilde P_\lambda(\vert\,\cdot\,\vert):\,\Re\rightarrow\Re$ is some sparsity-inducing regularization function, such the SCAD  and the Lasso.
Compared with \eqref{SVM reformulated}, this alternative does not incorporate the smoothing term of $-\frac{(u_i- u_0)^2}{2n^{\delta}}$.
Such a formulation  has been shown to be successful in multiple realistic classification problems \citep[e.g.,][]{zhang2006a}.  Furthermore,  recovery theories in different high-dimensional settings have been established by \cite{zhang2016b,zhang2016c} and \cite{peng2016a}, etc.  Nonetheless, the existing results commonly stipulate a strictly positive lower bound on the  eigenvalues of some principal submatrices of  $\mathbf X^\top\mathbf X$ or $\mathbb E[\mathbf X^\top\mathbf X]$, where $\mathbf X:=(\mathbf x_i^\top:\,i=1,...,n)$. Some of these conditions are the instantiations of the RE condition in the SVM problem. In contrast, our  bound on the excess risk is established without these eigenvalue conditions. 
\end{remark}

 \subsection{Regularized deep neural networks}\label{NN training}
 
This subsection   presents a generalization error bound for a flexible set of NN architectures. Additional results are  provided in Section \ref{additional theoretical results} of the electronic companion, where we derive more explicit error bounds under additional regularities. 


{\color{black}\label{NN binary classification setup}\Copy{For some Copy}{While NNs can be applied to a wide spectrum of data-driven tasks, our analysis herein is focused on a binary classification problem in the following settings.}}
{{For some   $\mathcal X:=\{\mathbf x\in\Re^d:\,\Vert\mathbf x\Vert=1\}$ and $\mathcal Y\in\{-1,\,1\}$ (where $d>0$ is some integer), let  $(\mathbf x,\,y)\in\mathcal X\times\mathcal Y$ be a random pair  that follows an unknown probability distribution $\mathbb D$ on $\mathcal X\times \mathcal Y$ with support $supp(\mathbb D)$. Here, $\mathbf x$ is the vector of random feature values and $y$ is the corresponding class label. We assume that there exists an unknown, deterministic, and measurable separating function  $g:\,\mathcal X\rightarrow\Re$ such that $\inf_{(\mathbf x,y)\in supp(\mathbb D)}\,\left\{y\cdot g(\mathbf x)\right\}\geq v$ for some $v\in(0,1)$; that is, the two categories of data are separable by    function $g$. Also assume that   $\mathbb E\left[ \vert g(\mathbf x)\vert\right]<\infty$.
 The learning problem of interest here, as a special case  of \eqref{population-level model M-estimation}, is  to train a classifier   using  the knowledge of a sequence of i.i.d. random samples, $(\mathbf x_i,\, y_i)$,\,$i=1,...,n$, of  $(\mathbf x,\,y)$. }}

In applying an NN to  solving this learning problem, we  narrow down the search of the optimal classifier to the determination of the best fitting parameters for the NN. Some relative details     are     below. Denote by  $\Psi:\,\Re\rightarrow\Re$  an activation function, such as the ReLU, $\Psi_{ReLU}(x)=\max\{0,\,x\}$,  the softplus, $\Psi_{softplus}(x)=\ln(1+e^x),$ and  the sigmoid, $\Psi_{sigmoid}(x)=\frac{e^x}{1+e^x}.$ The NN model is then  a network  that consists of  multiple   layers (groups) of     neurons (or units). Each neuron is a computing unit that performs the operations of the chosen activation function on the input signals.  
  Architectures among those layers  are formed in the sense that the signals are passed from the   layer of input neurons to the layer of output units, transversing  a predetermined collection of candidate paths. Each path may comprise multiple neurons and connections. Fitting parameters often exist in the forms of connection weights and biases  to (dis)amplify and offset the signals, respectively.  A layer that is neither the input layer nor the output layer is called a hidden layer. Throughout our discussions on the NNs,  we let $\mathcal D\geq 2$ be the number of layers (excluding the input layer but including the output layer).  A neuron in a hidden layer is called a hidden neuron. 
  We denote this NN by $F_{NN}(\mathbf x,\boldsymbol\beta)$, where $F_{NN}:\,\mathcal X\times \Re^p\rightarrow\Re$  is a deterministic, measurable function  that captures the output  of an NN given input $\mathbf x$ and  fitting parameters $\boldsymbol\beta$.  We also assume that there exists a deterministic function $\Omega:\{1,...,p\}\rightarrow\Re_+$   such that
\begin{align}
\Omega({p'})\geq \inf_{\substack{\boldsymbol\beta:\,\Vert \boldsymbol\beta\Vert_0\leq {p'}
}}~\mathbb E\left[\vert F_{NN}(\mathbf x,\,\boldsymbol\beta)-g(\mathbf x)\vert\right],~~~~\forall {p'}:\,1\leq {p'}\leq p.\label{Define Omega}
\end{align}
Intuitively, $\Omega(p')$ measures the model misspecification error incurred by  the NN in  representing   $g$, when only $p'$-many fitting parameters  are nonzero (active).

 In training the NN, we focus on the following  formulation as a special case to \eqref{general formulation}:
  \begin{align}
 \inf_{\boldsymbol\beta} ~\mathcal T_{n,\lambda}(\boldsymbol\beta):=n^{-1}\sum_{i=1}^n\mathcal F\left(y_i\cdot F_{NN}(\mathbf x_i,\boldsymbol\beta)\right)+\sum_{j=1}^pP_{\lambda}(\vert\beta_j\vert),
 \label{regularized NN model}\end{align}
where we follow \cite{cao2019a,cao2019generalization} in defining $\mathcal F:\,\Re\rightarrow\Re_+$   to be $\mathcal F\left(z\right):=\ln(1+\exp(-z))$.  Note that, if we drop the regularization term $\sum_{j=1}^pP_{\lambda}(\vert\beta_j\vert)$, then \eqref{regularized NN model} is reduced to the conventional training formulation for an NN.  Hereafter, we assume that $\mathbb E\left[\vert F_{NN}(\mathbf x,\boldsymbol\beta)\vert\right]<\infty$  for all $\boldsymbol\beta:\,\Vert \boldsymbol\beta\Vert_\infty\leq R_{\Omega}$ for some $R_{\Omega}>0$.  This quantity should be properly large to ensure the satisfaction of the assumption below.
{  \begin{assumption}\label{target function assumption} \Copy{For all Copy}{
For all $1\leq{s_A}\leq p$, it holds that $ 
\emptyset\neq  [-R_{\Omega},\,R_{\Omega}]^p\cap  \left\{\boldsymbol\beta\in\Re^p:\,\mathbb E\left[ \left\vert g(\mathbf x)-F_{NN}(\mathbf x,\,\boldsymbol\beta)\right\vert\right]\leq \Omega(s_A),\,\Vert \boldsymbol\beta\Vert_0\leq {s_A}\right\}.
$}
  \end{assumption}}
{\label{new assumption intuition first}\Copy{Intuitively,  Assumption Copy}{ Intuitively,  Assumption \ref{target function assumption} means that the NN can represent the separating function $g$ with a model misspecification error of no more than $\Omega(s_A)$ when  (a) no more than $s_A$-many fitting parameters are nonzero and (b) the absolute values of  these fitting parameters are bounded from above by $R_{\Omega}>0$.  }}
 
 We  also impose the following non-critical condition on the architecture of an NN.
{\label{new assumption intuition second}  \begin{assumption}\label{mild condition invariance}\Copy{For any constant Copy}{
 For any constants $C\in\Re$, $R_\Omega>0$,   $p'\geq 1$, and fitting parameters $\boldsymbol\beta_1\in\Re^p:\,\Vert\boldsymbol\beta_1\Vert_{\infty}\leq R_{\Omega},\,\Vert\boldsymbol\beta_1\Vert_{0}\leq p'$, it holds that $ F_{NN}(\mathbf x,\boldsymbol\beta_1)\cdot C= F_{NN}(\mathbf x,\boldsymbol\beta_2)$   for some $\boldsymbol\beta_2\in\Re^p:\,\Vert\boldsymbol\beta_2\Vert_{\infty}\leq C\cdot R_{\Omega},\,\Vert\boldsymbol\beta_2\Vert_{0}\leq p'$, for every $\mathbf x\in\mathcal X$.}
\end{assumption}}
{\color{black}\Copy{It can be}{\noindent It can be verified that Assumption \ref{mild condition invariance} holds for many NN architectures, including many convolutional neural networks and residual networks that have  linear or ReLU activation functions in the output layer.}}

\begin{remark}\label{A-sparsity of an NN}{{\Copy{By the satisfaction of Assumptions}{
By the satisfaction of Assumptions \ref{target function assumption} and \ref{mild condition invariance}, we argue that the generalizability of an NN trained by solving \eqref{regularized NN model}  can be analyzed through the  framework of HDSL under A-sparsity. Based on the existing results on the representability of NNs, e.g., by \cite{devore1989a}, \cite{yarotsky2017a}, \cite{mhaskar2016a}, and \cite{mhaskar1996a},   an NN with a reasonably small network size $s_A$ may well represent $g$ (such that $\Omega(s_A)$ is small) under some plausible conditions. These representability results imply the innate presence of    A-sparsity in an NN model. Observe that $\mathcal F$ is 1-Lipschitz continuous. Thus,   $\mathbb E [\mathcal F(y\cdot  \frac{\ln n}{2v}\cdot F_{NN}(\mathbf x,\boldsymbol\beta_1))]-\mathbb E[\mathcal F(y\cdot  \frac{\ln n}{2v}\cdot  g(\mathbf x))]\leq   \frac{\ln n}{2v}\cdot  \mathbb E\left[ \left\vert F_{NN}(\mathbf x,\boldsymbol\beta_1)-g(\mathbf x)\right\vert\right]$ for any $\boldsymbol\beta:\,\Vert\boldsymbol\beta\Vert_\infty\leq R_{\Omega}$. Invoking Assumption \ref{target function assumption} and the fact that $\inf_u \mathcal F(u)=0$, we obtain that
\begin{align}
&\min_{\substack{\boldsymbol\beta:\,\Vert\boldsymbol\beta\Vert_0\leq {s_A},\\\,\Vert\boldsymbol\beta\Vert_\infty\leq R_\Omega}}\mathbb E\left[\mathcal F\left(y\cdot  \frac{\ln n}{2v}\cdot F_{NN}(\mathbf x,\boldsymbol\beta)\right)\right]-\inf_{u}\,\mathcal F(u)\nonumber
\\
\,\leq &\min_{\substack{\boldsymbol\beta:\,\Vert\boldsymbol\beta\Vert_0\leq {s_A},\\\,\Vert\boldsymbol\beta\Vert_\infty\leq R_\Omega}}\mathbb E\left[ \frac{\ln n}{2v}\left\vert  F_{NN}(\mathbf x,\boldsymbol\beta)   -g(\mathbf x)\right\vert\right] +\mathbb E \left[\mathcal F\left(y\cdot  \frac{\ln n}{2v}\cdot g(\mathbf x)\right)\right]\nonumber
\\\leq &  \frac{\ln n}{2v}\cdot\Omega(s_A)+\mathbb E \left[\mathcal F\left(y\cdot  \frac{\ln n}{2v}\cdot g(\mathbf x)\right)\right]\leq \frac{\ln n}{2v}\cdot\Omega(s_A)+\frac{1}{\sqrt{n}},\nonumber
\end{align}
where the last inequality is due to the assumption that, for all $(\mathbf x, y)\in supp(\mathbb D)$, it holds that $y\cdot g(\mathbf x)\geq v\Longrightarrow \mathbb E \left[\mathcal F\left(y\cdot  \frac{\ln n}{2v}\cdot g(\mathbf x)\right)\right]\leq\ln\left(1+\exp(-0.5\ln n)\right)\leq \frac{1}{\sqrt{n}}$. Further note that, by Assumption \ref{mild condition invariance}, $\frac{\ln n}{2v}\cdot F_{NN}(\mathbf x,\boldsymbol\beta) $ can   be represented by the same NN architecture; that is, $\frac{\ln n}{2v}\cdot F_{NN}(\mathbf x,\boldsymbol\beta)= F_{NN}(\mathbf x,\boldsymbol\beta')$ for some new fitting parameters $\boldsymbol\beta':\,\Vert\boldsymbol\beta'\Vert_\infty\leq \frac{\ln n}{2v} R_\Omega$. Thus, we may have 
\begin{align}\min_{\substack{\boldsymbol\beta:\,\Vert\boldsymbol\beta\Vert_0\leq {s_A},\\\,\Vert\boldsymbol\beta\Vert_\infty\leq \frac{\ln n}{2v}\cdot R_\Omega}}\mathbb E\left[\mathcal F\left(y\cdot F_{NN}(\mathbf x,\boldsymbol\beta)\right)\right]-\inf_{u}\,\mathcal F(u)\leq \frac{\ln n}{2v}\cdot\Omega(s_A)+\frac{1}{\sqrt{n}},\label{test inequality final result representability}
\end{align}
  which matches the statement of Assumption \ref{Assumption A-sparsity}    with $s:={s_A}$, \, $R:=\frac{\ln n}{2v}\cdot R_\Omega$,  \, $\varepsilon_A :=   \frac{\ln n}{2v}\cdot \Omega(s_A)+\frac{1}{\sqrt{n}}$, and $L_{g}^*:=\inf_{u}\,\mathcal F(u)=0$.
As mentioned, explicit forms of $\Omega(\cdot)$ have been provided, e.g., by  \cite{devore1989a}, \cite{yarotsky2017a}, \cite{mhaskar2016a}, and \cite{mhaskar1996a}.}}} With the above discussion, the generalizability  of an NN can then be derived using the same machinery for  HDSL under  A-sparsity, under  one more  flexible assumption on the NN's architecture  as  below.
\end{remark}

{
 \begin{assumption}\label{new bound results 2.5} \Copy{Assumption to copy}{ 
 For almost every   $\mathbf x\in\mathcal X$, it holds that the gradient $\nabla_{\boldsymbol\beta} F_{NN}(\mathbf x,{\boldsymbol\beta})$ and Hessian $\nabla_{\boldsymbol\beta}^2 F_{NN}(\mathbf x,{\boldsymbol\beta})$ of $F_{NN}(\mathbf x,\,\cdot\,)$ are  everywhere well-defined and satisfy that 
\[ \max\left\{\underset{ \mathbf x\in\mathcal X}{\text{\upshape ess sup}}\,\left\Vert \nabla_{\boldsymbol\beta}  F_{NN}(\mathbf x,{\boldsymbol\beta}) \right\Vert,~~\underset{ \mathbf x\in\mathcal X}{\text{\upshape ess sup}}\left\Vert \nabla^2_{\boldsymbol\beta}  F_{NN}(\mathbf x,{\boldsymbol\beta})\right\Vert\right\} \leq \exp\left[\mathcal U_{NN}\cdot \mathcal D\cdot \ln\left(\mathcal U_{NN}\cdot \Vert \boldsymbol\beta\Vert +\mathcal U_{NN}\right)\right]\]
for all $\boldsymbol\beta\in\Re^p$ and some $\mathcal U_{NN}\geq 1$.}
\end{assumption}
}
{\label{explanation 3rd assumption}\Copy{Assumption 222 Copy}{Assumption  \ref{new bound results 2.5} essentially allows the norms of gradient and Hession to grow   exponentially in the number of  layers $\mathcal D$. Such an assumption is  satisfied by a wide spectrum of NN architectures, especially when the    activation functions are smooth. Some NNs with nonsmooth activation functions, such as the ReLU, may still be analyzed. We discuss such a case later in Subsection  \ref{computable local solution}.}} 
 
We are now ready to present our  result on the generalizability of a regularized NN. With some abuse of notations,  the S$^3$ONC($\mathbf Z_1^n$), in this special case,   is  referred to  as the S$^3$ONC$(\mathbf X,\,\mathbf y)$ to problem \eqref{regularized NN model}, where $\mathbf X:=(\mathbf x_i^\top)$ and $\mathbf y:=(y_i)$.  
\begin{theorem}\label{general result theorem regularized NN}
Consider any random vector $\widehat{\boldsymbol\beta}$ such that   $\Vert\widehat{\boldsymbol\beta}\Vert_\infty\leq \frac{\ln n}{2v}\cdot R_\Omega$ and  the S$^3$ONC$(\mathbf X,\mathbf y)$ holds at  $\widehat{\boldsymbol\beta}$ almost surely. 
Suppose that Assumptions \ref{target function assumption}, \ref{mild condition invariance}, and \ref{new bound results 2.5} hold.
For any fixed $\Gamma\geq 0$, assume that $\mathcal T_{n,\lambda}(\widehat{\boldsymbol\beta})-\inf_{{\boldsymbol\beta}}\mathcal T_{n,\lambda}({\boldsymbol\beta})\leq \Gamma$, w.p.1.,
where $\mathcal T_{n,\lambda}$ is as defined in \eqref{regularized NN model}.
There exists a universal constant $C_6>0$, such that, for any ${s_A}:\, 1\leq {s_A}\leq p$, if    $a<\frac{1}{2}\cdot \exp\left\{-2\mathcal U_{NN}\cdot \mathcal D\cdot \ln\left[2 p\cdot v^{-1}\cdot \mathcal U_{NN}\cdot R_{\Omega}\cdot\ln n \right]\right\}$,    $\lambda:=\sqrt{\frac{8\sigma }{c\cdot a\cdot n^{2/3}}\left[\ln(\frac{3e}{2v}\cdot R_{\Omega} p n^{4/3})+\mathcal U_{NN}\cdot \mathcal D\cdot \ln\left(\mathcal U_{NN}  R_{\Omega}pnv^{-1} \right)\right]}$,
 and 
 \begin{align}
n > C_6\cdot \left[\left( {\Gamma+v^{-1}\cdot\Omega({s_A})\cdot \ln n  }\right)^{3}+{s_A}\cdot \mathcal D\cdot \mathcal U_{NN}\cdot \ln\left(\mathcal U_{NN}\cdot (1+n pR_{\Omega}v^{-1})\right) \right],\label{sample initial requirement 3}
\end{align}
then it holds  that
 \begin{multline}\label{part 1 of nn result 1}
\mathbb E\left[\mathbb 1\left(y\cdot F_{NN}(\mathbf x,\widehat{\boldsymbol\beta})<0\right)\right]
\\\leq    C_6\cdot\left(\frac{{s_A}\cdot \mathcal D\cdot \mathcal U_{NN}\cdot \ln\left(\mathcal U_{NN}\cdot (1+n pR_{\Omega}v^{-1})\right)}{n^{2/3}}+\sqrt{\frac{{s_A}\cdot \mathcal D\cdot \mathcal U_{NN}\cdot \ln\left(\mathcal U_{NN}\cdot (1+n pR_{\Omega}v^{-1})\right)}{n}}+\frac{1}{n^{1/3}}\right)
\\ +v^{-1}\cdot\Omega({s_A})\cdot \ln n+\Gamma+C_6\cdot  \sqrt{\frac{\Gamma+v^{-1}\cdot\Omega({s_A})\cdot \ln n}{n^{1/3}}}
\end{multline}
with probability at least $1-C_6 p \exp\left(-\frac{n}{C_6}\right)-C_6\exp\left(-\frac{n^{1/3}}{C_6}\right).$ Here, $\Omega(\,\cdot\,)$ is defined as in \eqref{Define Omega}. 
\end{theorem}
\begin{proof}{Proof.}
See Section \ref{sec: proof of generalized NN in general}. 
\hfill \ensuremath{\Box}
\end{proof}

\begin{remark}\label{remark on the first NN result}
We would like to make a few remarks on the results presented in this theorem.
\begin{enumerate}
\item[(i)]  $\mathbb E\left[\mathbb 1\left(y\cdot F_{NN}(\mathbf x,\widehat{\boldsymbol\beta})<0\right)\right]=\mathbb P[y\cdot F_{NN}(\mathbf x,\widehat{\boldsymbol\beta})<0]$ is also referred to as the expected 0-1 loss and is a commonly adopted measure of generalization performance,  such as by \cite{cao2019a,cao2019generalization},  in a binary classification problem. 
\item[(ii)] This theorem provides the promised poly-logarithmic dependence between the sample size $n$ and the dimensionality $p$; polynomially increasing $n$ can compensate  for the exponential growth in  $p$.   With this result,  the generalizability of an over-parameterized NN is ensured, and the promised result in \eqref{general inequality NN bound} is proven. The error bound can be made more explicit under some additional conditions as discussed in Section \ref{local solutions NN training results ReLu}.
 
\item[(iii)] Although  Assumption \ref{new bound results 2.5} allows the   Lipschitz constant to   grow exponentially in the number of layers $\mathcal D$, the generalization error increases  no more than linearly in $\mathcal D$.
\item[(iv)] Many sparsity-inducing regularization schemes have been discussed in the literature, including Dropout \citep{srivastava2014a}, sparsity-inducing penalization \citep{han2015a,scardapane2017a,louizos2017a,Wen2016a}, DropConnect \citep{wan2013a}, randomDrop \citep{huang2016a}, and pruning \citep{alford2018a}, etc. Many of these studies are focused on the numerical aspects, yet the  theoretical guarantees on the effectiveness of  regularization   are still largely lacking.  Although  \cite{wan2013a} presented   generalization error analyses for DropConnect, the dependence among the dimensionality, the generalization error, and the sample size is not explicated therein. 
{\color{black}{\label{statement change minor}\Copy{It is our}{It is our conjecture that our results could be extended to and combined with the alternative regularization schemes to facilitate the analysis of the regularized NNs.}}}
 
\item[(v)] Theorem \ref{general result theorem regularized NN} informs us that the generalization performance of the NNs is consistent with the optimization quality. If all other quantities are fixed, the generalization error can be bounded by $\mathcal O\left(\sqrt{\Gamma}+\Gamma\right)$, where we recall that $\Gamma\geq 0$ is the suboptimality gap.   
\item[(vi)] {\label{comment on optimization}\Copy{Admittedly Copy}{Admittedly, how to control $\Gamma$ is still an open  question.  The traditional training formulation of an NN is usually nonconvex. Thus, it is  generally prohibitive to compute  a global solution. The  challenge is further increased by the  incorporation of the FCP, which is also nonconvex. Fortunately,  in spite of the current theoretical challenge, it has been observed empirically that some local optimization algorithms could well approximate a global optimum in NN training, e.g., in the experiments reported by \cite{wan2013a} and \cite{alford2018a}. To explain these observations, several   theoretical paradigms have already been provided  by, e.g., \cite{du2018a,liang2018a,haeffele2017a} and \cite{wang2019a}.   Based on those results, it is promising that the structures of an NN (even with  regularization) can often be exploited to facilitate global optimization. An excellent review of this topic is provided by \cite{sun2019a}.  To add to the literature, we   present an interesting special case where a suboptimality-independent generalization error bound for the FCP-regularized NN can be  achieved at a pseudo-polynomial-time computable solution in Subsection \ref{computable local solution} of the electronic companion.}} \label{last bullet point} 


 \end{enumerate}
\end{remark}

\section{Numerical Experiments}\label{Numerical section}
We report in this section   several numerical experiments. In Sections \ref{subsec: numerical A-sparsity} and \ref{MNIST results}, we  consider the high-dimensional Huber regression under A-sparsity and the NNs, respectively. Then, Section \ref{sec Numerical Experiments EC} of the electronic companion presents our test results on  the high-dimensional SVM  (as a special nonsmooth learning problem) and some additional numerical examples  on the NNs.
Unless otherwise stated explicitly, most of our experiments, including those in the electronic companion, were implemented in Matlab 2014b and run with a single thread on a PC with 40 Intel (R) Xeon (R) E5-2640-v4 CPU cores (2.40 GHz, 64 bits), and 128 GB memory. A different implementation environment was involved in the tests on some larger-scale NN models, as presented in Section \ref{MNIST results}.
 
\subsection{Experiments on HDSL under A-sparsity}\label{subsec: numerical A-sparsity}
This section reports our test results   on  high-dimensional Huber regression (HR) under A-sparsity (in the sense of Assumption  \ref{Assumption A-sparsity original}). Our settings for experiments are summarized below:  Denote by $\mathcal N(0,\sigma^2)$ a centered normal distribution  with variance $\sigma^2>0$ and by $\mathcal N_{p}(\mathbf 0,\Sigma)$  a centered $p$-variate normal distribution   with covariance matrix $\Sigma=(\varsigma_{j_1,j_2})$ and $\varsigma_{j_1,j_2}=0.3^{\vert j_1-j_2\vert}$. The training data set $\{(\mathbf x_i,y_i):\,i=1,...,n\}$ was generated as per a linear system $y_i=\mathbf x_i^\top\boldsymbol\beta^{*}+\omega_i$, for  $i=1,...,n$. Here, $(\mathbf x_i,\,y_i)$ denotes a pair of (observed) design and response, and $\boldsymbol\beta^{*}$ denotes the vector of true parameters to be recovered. Some additional details are summarized below:
\begin{itemize}
\item The training sample size was chosen as $n=100$.
\item $\omega_i$, $i=1,...,n$, were   i.i.d.\ white noises such that  $\omega_i\sim \mathcal N(0,\sigma^2)$ for all $i$.
\item $\mathbf x_i\sim \mathcal N_{p}(\mathbf 0,\Sigma)$, $i=1,...,n$, were  i.i.d. random vectors.
\item The vector of true parameters was prescribed as $\boldsymbol\beta^{*}=\boldsymbol\beta_{\varepsilon_{A}}^*+E\cdot \boldsymbol v\cdot \frac{1}{\vert \boldsymbol v\vert}$, where $\boldsymbol\beta_{\varepsilon_{A}}^*:=(3,\,5,\,0,\,0,\,1.5,\underbrace{0,\,...,0}_{\text{$(p-5)$-many 0's}})^\top$ and $E\cdot \boldsymbol v\cdot \frac{1}{\vert \boldsymbol v\vert}$ stands for some dense perturbation. Here, $E>0$ denotes a user-specific scalar and  $\mathbf v=(v_j)$ denotes a random vector with i.i.d. entries of uniform random variables on $[-1,\,1]$. Note that  the magnitude of the perturbation can be calculated as $\left\vert E\cdot \boldsymbol v\cdot \frac{1}{\vert \boldsymbol v\vert}\right\vert=E$
\end{itemize}
 
Given the above, this experiment was focused on the following HR problem:
\[
\min_{\boldsymbol\beta} ~\frac{1}{n}\sum_{i=1}^n \left[L_{HR}(\boldsymbol\beta,\mathbf x_i,y_i):=\frac{1}{2}(\mathbf x_i\boldsymbol\beta-y_i)^2\cdot \mathbb I\left(\vert \mathbf x_i\boldsymbol\beta-y_i\vert \leq \eta\right)+ \left(\eta\vert \mathbf x_i\boldsymbol\beta-y_i\vert -\frac{\eta^2}{2}\right)\cdot \mathbb I\left(\vert \mathbf x_i\boldsymbol\beta-y_i\vert  >\eta\right)\right].
\]
The corresponding FCP-regularized formulation, referred to as the HR-FCP, is then given as
\begin{align}
\min_{\boldsymbol\beta} \,n^{-1}\sum_{i=1}^n L_{HR}(\boldsymbol\beta,\mathbf x_i,y_i)+\sum_{j=1}^pP_\lambda(\vert\beta_j\vert).\label{FCP HR form}
\end{align}
This problem was solved via Algorithm 1, for which  the initial solution was prescribed as $\widehat{\boldsymbol\beta}^{\ell_1}\in
\arg\,\min_{\boldsymbol\beta} \,n^{-1}\sum_{i=1}^n L_{HR}(\boldsymbol\beta,\mathbf x_i,y_i)+\lambda\cdot \sum_{j=1}^p \vert\beta_j\vert$ for the same $\lambda$ as in \eqref{FCP HR form}.

The hyper-parameters of Algorithm 1 were set to be $\mathcal M=10$ and $\gamma_{opt}=10^{-5}$. For  the FCP,   we  fixed $a=0.09$  (such that $a<\mathcal M^{-1}$)  and prescribed that $\lambda:=\mathcal C_{fcp}\cdot \sqrt{\frac{\ln p}{n^{2/3}}}$ for some  $\mathcal C_{fcp}>0$.  In choosing  $\mathcal C_{fcp}$,   three independent validation datasets, with 100 data observations for each,  were generated following the same approach as  the training data above. The dimensions of those validation sets were   $p\in\{500,\,  750,\, 1000\}$.  The value of $\mathcal C_{fcp}$ was chosen to be the best-performing   on the validation data   among the candidate values of $\{0.5,\,0.75,\,1,\,1.25,\,1.5\}$. More specifically, a linear model was trained on the training data  when $\mathcal C_{fcp}$ and $p$ were fixed at every combination of their candidate values listed above. We let $\widehat{\boldsymbol\beta}^{1,\mathcal C_{fcp}}$, $\widehat{\boldsymbol\beta}^{2,\mathcal C_{fcp}}$, and $\widehat{\boldsymbol\beta}^{3,\mathcal C_{fcp}}$ be  the resultant estimators   for a fixed  $\mathcal C_{fcp}$ when $p=500$, $750$, and $1000$, respectively.  The chosen  value of  $\mathcal C_{fcp}$  was the one that  minimized the average performance on all the validation sets, calculated as per the below:
\begin{align}
\frac{1}{300}\left[ \sum_{i=1}^{100}L_{HR}(\widehat{\boldsymbol\beta}^{1,\mathcal C_{fcp}},\mathbf x_i^{val,1},y_i^{val,1})+\sum_{i=1}^{100}L_{HR}(\widehat{\boldsymbol\beta}^{2,\mathcal C_{fcp}},\mathbf x_i^{val,2},y_i^{val,2})+\sum_{i=1}^{100}L_{HR}(\widehat{\boldsymbol\beta}^{3,\mathcal C_{fcp}},\mathbf x_i^{val,3},y_i^{val,3})\right].\label{validation set evaluate HR}
\end{align}
Here, $(x_i^{val,k'},y_i^{val,k'})$, for $k'\in\{1,2,3\}$, is the $i$th data from the $k'$th validation set. As it turned out, $\mathcal C_{fcp}:=1$.

 The HR-FCP  was  compared with two alternative schemes: (i) the HR without any regularization, denoted by HR, and (ii) the HR with the $\ell_1$-norm regularization, denoted by HR-L1. (The HR-L1 has been discussed by \cite{owen2007a}, among others.) 
 The coefficient for the $\ell_1$-norm penalty was chosen to be $\lambda_{\ell_1}:=\mathcal C_{\ell_1}\cdot \sqrt{\frac{\ln p}{n}}$ for some $\mathcal C_{\ell_1}>0$. The dependence of $\lambda_{\ell_1}$ on $p$ and $n$ is consistent with the theoretical results for the $\ell_1$-norm regularization (e.g., by \cite{n2012a}).  We determined $\mathcal C_{\ell_1}:=0.5$ using the same  approach as in choosing $\mathcal C_{fcp}$ above.  
 
 To evaluate  the out-of-sample performance,  $5000$-many  independent test  data observations were simulated for each problem instance, following the same data generation process for the training data  above. If we let  $(\mathbf x_i^{test},y_i^{test})$, $i=1,...,5000$, be the test data of a problem instance,    the   out-of-sample error of an estimator $\widehat{\boldsymbol\beta}$ was calculated by
\begin{align}
\frac{1}{5000}\sum_{i=1}^{5000}L_{HR}(\widehat{\boldsymbol\beta},\mathbf x_i^{test},y_i^{test})-\frac{1}{5000}\sum_{i=1}^{5000}L_{HR}(\boldsymbol\beta^*,\mathbf x_i^{test},y_i^{test}).\label{test evaluate HR}
\end{align}
Each experiment was  randomly replicated 100 times. Figure \ref{Plots for different gap A-sparsity} presents the numerical results.  We discuss this figure in relative detail below.

\begin{figure}[htbp]
\begin{center}
\begin{tabular}{ c c  }\setlength\extrarowheight{-10pt}
 \includegraphics[width=0.35\textwidth]{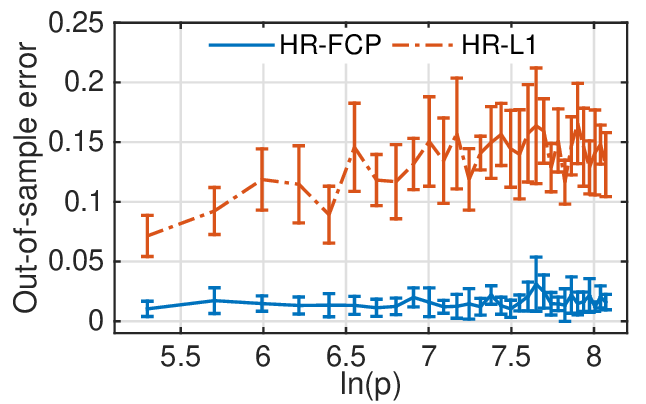} & \includegraphics[width=0.35\textwidth]{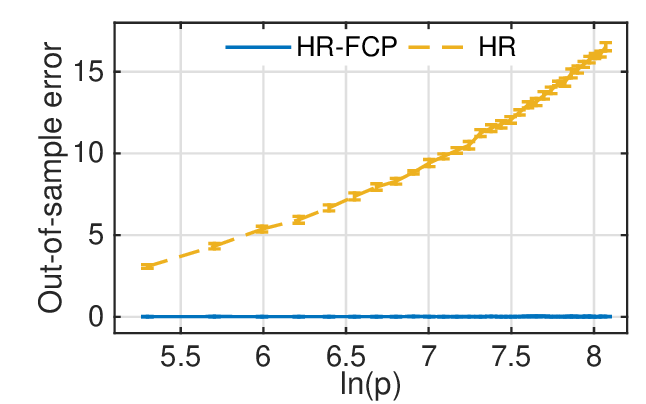}    \\ 
 
 (a)& (b)\\

  \includegraphics[width=0.35\textwidth]{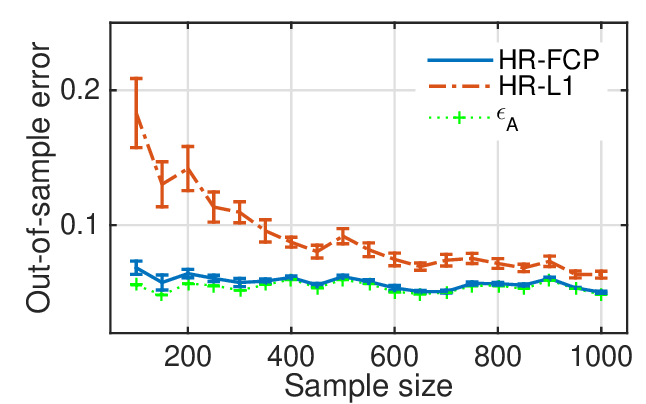} & \includegraphics[width=0.35\textwidth]{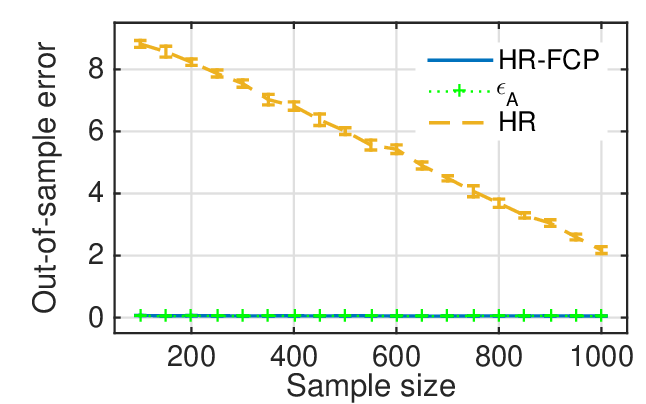}    \\ 
 (c)& (d)\\
  \includegraphics[width=0.35\textwidth]{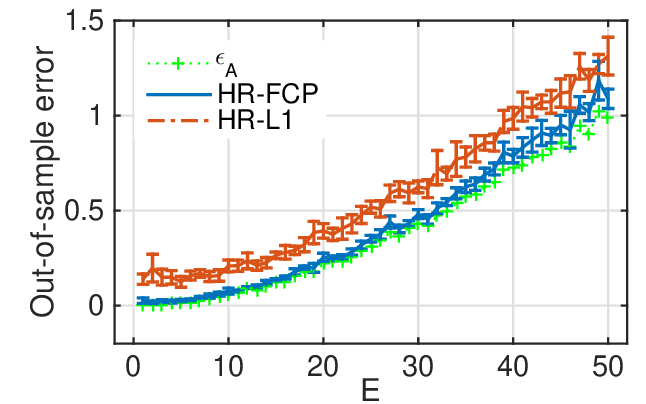} & \includegraphics[width=0.35\textwidth]{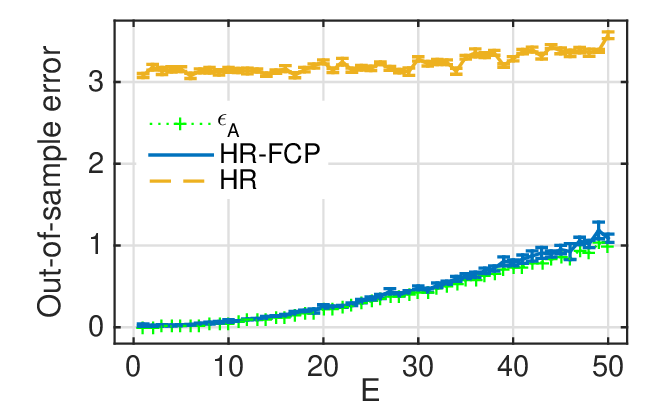}    \\ 
 (e)& (f)\\
   \includegraphics[width=0.35\textwidth]{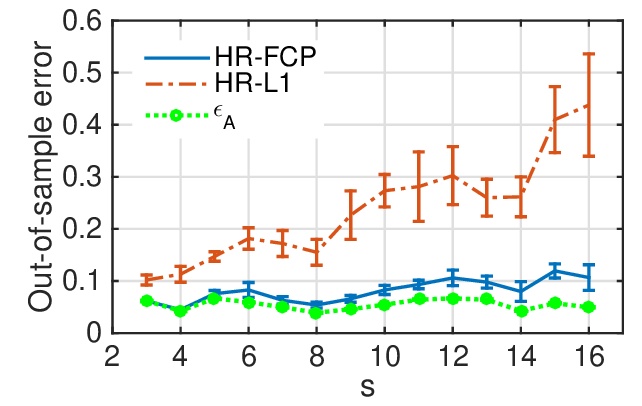} & \includegraphics[width=0.35\textwidth]{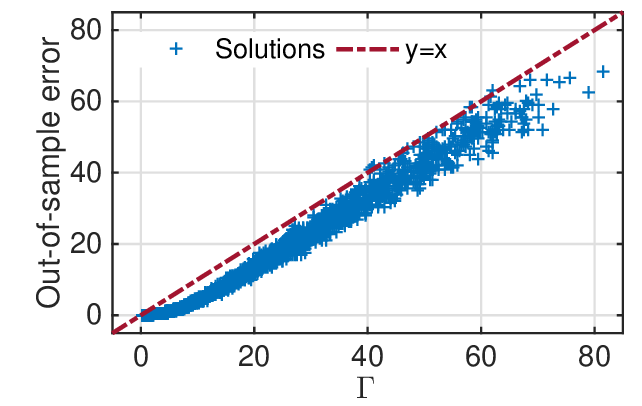}    \\ 
 (g)&(h)\\
\end{tabular}
\end{center}
    \caption{Numerical tests on the dependence of the out-of-sample errors in high-dimensional Huber regression on different quantities, including the logarithm of dimensionality $\ln p$ in Subplots (a)-(b), the sample size $n$ in subplots (c)-(d), the  quantities $E$ and $\varepsilon_A$ in subplots (e)-(f), the sparsity level $s$ in subplot (g), and the underestimation of the suboptimality gap $\Gamma$ in Subplot (h). All the error bars are centered at the average levels out of 100 random replications, and the radii of the error bars are equal to  1.96 times the standard errors. }\label{Plots for different gap A-sparsity}
\end{figure}

\begin{itemize}[leftmargin=*]
\item {\label{explain figure first}\Copy{In all the subplots Copy}{In all the subplots (a) through (g) of Figure \ref{Plots for different gap A-sparsity}, blue solid lines, red dot-dashed lines, and yellow dashed lines represent the out-of-sample errors generated by the HR-FCP, the HR-L1, and the HR. The green dotted lines stand for the estimated values of $\varepsilon_A$, a quantity involved in the definition of A-sparsity. The values of $\varepsilon_A$ were estimated by  \eqref{test evaluate HR} with $\widehat{\boldsymbol\beta}:={\boldsymbol\beta}_{\varepsilon_A}^*$.  The error bars in the plot are all centered at the average levels out of 100 random replications, and the radii of the error bars are 1.96 times the corresponding standard errors.}}
\item {\Copy{Subplots (a) and (b)}{Subplots (a) and (b) show the comparison of the HR-FCP with the HR-L1 and with the HR, respectively, when the logarithm of the dimensionality ($\ln p$) was increased  gradually  with $p\in\{200, 300, ..., 5000\}$ and $E=0$. From both subplots (a) and (b), one can see that the out-of-sample errors generated by HR-FCP were small for all the values of $\ln p$,  especially when the HR-FCP was compared with both the HR and the HR-L1. In particular, (as in Subplot (b)), the performance of the HR   deteriorated  rapidly as $\ln p$ grew, while the performance of the HR-FCP   remained approximately constant. Because our error bounds for HR-FCP are polynomial in $\ln p$, it appears that  an even sharper dependence on $\ln p$ may be pursued in our analysis, at least for certain HDSL special cases.}}

\item {\Copy{Subplots (c) and (d)}{Subplots (c) and (d) present the performance of all the three schemes above when  the sample size $n$ was increased from 100 to 1000 (with $E=10$ and $p=1000$). From both subplots, one can observe that the HR-FCP outperformed both the HR and  the HR-L1. Also shown in these two subplots are the values of $\varepsilon_A$ (denoted by ``$\epsilon_A$'' in the figure).
It can be observed that the out-of-sample errors of the HR-FCP matched with the values of $\varepsilon_A$, especially when the sample size was relatively large.  This  pattern was consistent with our error bounds.}}
\item {\Copy{Subplots (e) and (f) Copy}{As shown in Subplots (e) and (f), all the three schemes above were compared again when   $E$ was    increased gradually
 (and, as a result,  $\varepsilon_A$ would tend to grow). Consistent with our theoretical results, the out-of-sample errors yielded by the HR-FCP approximately matched the values of $\varepsilon_A$ (denoted by ``$\epsilon_A$'' in the plots). Furthermore, regardless of  the values of $\varepsilon_A$, the HR-FCP achieved better generalization errors than the HR and the HR-L1 in almost all of the instances.   We can also observe from both subplots that,  even if the magnitudes of the perturbation $E$ were comparable to $\vert\boldsymbol\beta_{\varepsilon_A}^*\vert$, the corresponding values of $\varepsilon_A$ remained to be small. So did the out-of-sample errors generated by the HR-FCP, especially when compared with the HR's performance.   For example, when $E=10$, the magnitude of  perturbation was larger than $\vert\boldsymbol\beta_{\varepsilon_A}^*\vert=9.5$.  Yet, the corresponding $\varepsilon_A$ was below 0.1, and the out-of-sample error of the HR-FCP was almost equal to $\varepsilon_A$. Both values were significantly lower than the corresponding out-of-sample error of the HR.}}
 \item  {\Copy{The dependence of the HR-FCP Copy}{In Subplot (g), the dependence of the HR-FCP and the HR-L1 on the sparsity level $s$ was  evaluated when $E=10$, $p=1000$, $n=100$, and  $\boldsymbol\beta_{\varepsilon_A}^*:=(3,\,5,\,0,\,0,\,1.5,\underbrace{2,\,...,2}_{\text{$(\tau)$-many 2's}},\,~\underbrace{0,\,...,0}_{\substack{\text{$(p-\tau-5)$}\\\text{-many 0's}}})^\top$ for all $\tau=0,1,...,13$. Thus, the corresponding values of $s$ were $s=3,4,...,16$.   As one may see from Subplot (g), the performance of both the HR-FCP and the HR-L1 deteriorated  when  $s$ increased. Yet, the HR-L1 seemed to be more sensitive to the change in $s$ than the HR-FCP.  
 }}
 \item {\Copy{Finally, Subplot (h) Copy}{Finally, Subplot (h) presents the numerical evaluation of the dependence of the HR-FCP's out-of-sample performance on $\Gamma$. Note that, in the case of Huber regression,   $\Gamma:=\left[n^{-1}\sum_{i=1}^n L_{HR}(\widehat{\boldsymbol\beta},\mathbf x_i,y_i)+\sum_{j=1}^pP_\lambda(\vert\widehat\beta_j\vert)\right]-\left[n^{-1}\sum_{i=1}^n L_{HR}({\boldsymbol\beta}^{*}_{\varepsilon_{A}},\mathbf x_i,y_i)+\sum_{j=1}^pP_\lambda(\vert\beta_{\varepsilon_A,j}^*\vert)\right]$   is an underestimation of the suboptimality gap in  minimizing   \eqref{FCP HR form}. To generate this plot, we solved for the S$^3$ONC solutions with random initialization for 2000-many repetitions. A ``$+$'' in the plot corresponds to one of those S$^3$ONC solutions, and the dot-dashed line stands for the linear function of $Y=X$. If a ``$+$'' is below the line of $Y=X$, then it indicates that the out-of-sample error of that point was smaller than the    corresponding  value of $\Gamma$. As can be seen from this subplot, almost all the ``+''s are  below (but in the proximity of) the aforementioned linear function.   This pattern was consistent with our error bound in \eqref{bound global 3}, which is indeed of $\mathcal O(\Gamma)$ when $\Gamma\geq 1$. }}
\end{itemize}

\subsection{Experiments on   neural networks}\label{MNIST results} We report two sets of experiments on the FCP-regularized NNs.   The first set, as  presented in this subsection, was focused on image classification  using two mainstream testbeds, the MNIST \citep{lecun2013a} and the CIFAR-10 datasets \citep{krizhevsky2009a}.  Leaderboards that report the state-of-the-art results can be found at, e.g., \url{https://paperswithcode.com/}.  The second set of tests, as presented in Section \ref{sec NN test binary classification} of the electronic companion, involved the comparison between the non-regularized NNs and their  FCP-regularized counterparts in a task of binary classification with simulated data.  

In this experiment of image classification, we considered a few  popular or highly-ranked NN architectures (as well as their regularization and data augmentation schemes, if applicable), as below:


\noindent(A) For the MNIST dataset:
\begin{itemize} 
\item {\it CNN}:  A simple convolutional neural network with two convolutional layers. The codes for this model are available at \url{https://github.com/pytorch/examples/tree/master/mnist}.
\item {\it LN-S}:  A convolutional neural network called  LeNet5 \citep{lecun1995a} trained with a sparse learning strategy by \cite{dettmers2019a}.
\item  {\it VGG-g}: A deep convolutional neural network (a.k.a., VGG8B) that is trained with  global loss and cutout \citep{devries2017a} regularization. This model is presented by \cite{nokland2019training}.
\end{itemize}
(B)  For the CIFAR-10 dataset:
\begin{itemize}
\item {\it VGG19}: A deep convolutional neural network with 19 layers. The architecture was first discussed by \citep{simonyan2014a}, and the codes for this network were made available by  \cite{li-a}. 
\item {\it shk-RN}: A residual network \citep{he2016a} with a regularization scheme that combines  shake-shake \citep{gastaldi2017a}, cutout \citep{devries2017a}, and mixup \citep{zhang2017a}.  The code for this network were made available by  \cite{li-a}.
\item {\it FMix} \citep{harris-a}: An NN architecture that adopts  a modified mixed sample data augmentation (MSDA).  
\end{itemize}
 
We replaced the training algorithms of the above  NN implementations into Algorithm 1 with $\gamma_{opt}=10^{-6}$, using the outputs of the original implementations as the initial solutions.  
Some heuristic modifications were incorporated into Algorithm 1 in the above replacement: First, the gradient in Algorithm 1 was changed into an unbiased estimator of the gradient constructed on a mini-batch of the whole dataset. The mini-batch sizes remained the same as the original implementations. Second, the values of $\mathcal M$ could be varying over the iterations and were specified to be the multiplicative inverse for the learning rates (a.k.a., step sizes) of the original implementations. Third, $a$,  the parameter in FCP, was always set to be 0.99 times the current value of $\mathcal M^{-1}$ at each iteration (a.k.a., epoch) during the NN training.  {\color{black}\label{ref: lambda determine}\Copy{lambda determine}{Last, the value of   $\lambda$, the other parameter of FCP, was assigned to be $\lambda:=\mathcal C_\lambda\cdot \mathcal U^{-1}$ heuristically, where $\mathcal C_\lambda\geq 0$ was determined as below for each NN: We first randomly selected 10\% of the training data points to construct a balanced validation set. Then, we found the 1st, 1.25th, 2.5th, 5th, 10th, and 15th percentile absolute values of the nonzero fitting parameters in the initial solution. After rounding these percentile values to their first significant digits, the resulting numbers were considered as the candidates for $\mathcal C_\lambda$. From these candidates, we then selected the one  that led to the  best classification result for the validation set, when the NN model was trained on the rest of the training set. 
As it turned out, $\mathcal C_\lambda$ was $1 \times 10^{-2}$, $5 \times 10^{-6}$, and   $2 \times 10^{-4}$, respectively, for CNN-FCP, LN-S-FCP,  and  VGG-g-FCP in the experiments on the MNIST dataset, and $1 \times 10^{-3}$, $3 \times 10^{-2}$, and $1 \times 10^{-3}$, respectively, for  VGG-19-FCP, shk-RN-FCP, and FMix-FCP in the experiments on the CIFAR-10 dataset.}}

The tests in this subsection were implemented using Pytorch \citep{paszke2017a}, and most of the tests were conducted on a single thread on a PC with 40 Intel (R) Xeon (R) E5-2640-v4 CPU cores (2.40 GHz, 64 bits), 128 GB memory, and one Quadro M4000 GPU (8GB memory), except that shk-RN and shk-RN-FCP were implemented using one GPU-enabled  thread on Floydhub, a cloud computing platform with an Intel Xeon CPU (4 Cores), 61GB RAM, and an NVIDIA Tesla K80 GPU (12 GB Memory)  and FMix and FMix-FCP were tested on the same cloud computing platform with different configurations (Intel Xeon CPU with 8 Cores, 61GB RAM, and an NVIDIA Tesla  V100 GPU  with 16 GB Memory).

The out-of-sample classification errors are reported in Tables \ref{table summary 12 NN} and  \ref{table summary 2 NN} for results on MNIST and CIFAR-10, respectively. One may tell from the tables that  the performance of all the NN architectures involved in the test were sharpened by incorporating the proposed FCP regularization. In particular, the best  out-of-sample classification errors achieved by the FCP-regularized schemes for MNIST and CIFAR-10 were 0.23\% and 1.31\%, respectively, both of which were competitive against some  high-performance NNs on the leaderboards (available at \url{https://paperswithcode.com/}), especially if we notice that no external data were used. 

The number of nonzero fitting parameters of the NNs after training with and without the FCP are also reported in Tables \ref{table summary 12 NN} and \ref{table summary 2 NN}. One may observe that the FCP  significantly reduced the number of active fitting parameters. For the case of LN-S, the FCP was able to further reduce the dimensionality on top of the  sparsity-inducing mechanisms in the original model.
\begin{table}[h!]
{\caption{Classification errors of NN variants with and without the FCP on MNIST   dataset.   ``$\langle$Model Name$\rangle$-FCP'' refers to the an FCP-regularized NN.   ``Param \#'' stands for  the number of nonzero fitting parameters after training. ```R.Gap'' standards for the relative gap; that is, the ratio between the difference  and the value obtained before introducing the FCP. \\}\label{table summary 12 NN}
\begin{center}
\begin{threeparttable}
\begin{tabular}{cccccccccccccc}\hline\hline
 Model & CNN & CNN-FCP   &R. Gap  
\\
Test Error&0.80\%&0.70\%& 12.50\%  
\\
Param \#&1,199,882&265,517& 77.87\% 
\\\hline 
 Model &    {LN-S}& LN-S-FCP  & R. Gap
\\
Test Error&   0.66\%&0.64\%& 3.03\%
\\
Param \#& 22,000\tnote{*} &14,417& 34.47\%
\\\hline 
 Model &    {VGG-g} & {VGG-g}-FCP  &R. Gap
\\
Test Error& 0.25\%&{0.23\%} &8.00\%
\\
Param \# &16,853,584&15,115,902  &10.31\%
\\\hline\hline
\end{tabular}
\begin{tablenotes}\footnotesize
\item[*] {The original LN-S model has 431,080 fitting parameters. The  built-in sparsity-inducing mechanisms of the LN-S led to a model with 22,000 nonzero fitting parameters.} 
\end{tablenotes}
\end{threeparttable}
\end{center}
}
\end{table}

\begin{table}[h!]
{\caption{Classification errors of NN variants with and without the FCP on  CIFAR-10 dataset.   ``$\langle$Model Name$\rangle$-FCP'' refers to the an FCP-regularized NN. ``Param \#'' stands for  the number of nonzero fitting parameters after training. ``R.Gap'' standards for the relative gap; that is, the ratio between the difference  and the value obtained before introducing the FCP. \\}\label{table summary 2 NN}
\begin{center}
\begin{tabular}{cccccccccccccc}\hline\hline
 Model & VGG19 & VGG19-FCP &R.Gap &   
\\
Test Error& 6.86\% &6.84\%& 12.50\%  
\\
Param \#&20,051,546& 10,789,567& 46.19\% 
\\\hline 
 Model &    shk-RN&shk-RN-FCP& R.Gap 
\\
Test Error&    2.29\%& 2.16\% &5.67\%
\\
Param \#& 11,932,743&7,303,200& 38.79\%
\\\hline 
 Model &   FMix & FMix-FCP&R.Gap
\\
Test Error& 1.36\%&{1.31\%}&3.68\%
\\
Param \# &  26,422,068 &21,485,594&18.68\%
\\\hline\hline
\end{tabular}
\end{center}
}
\end{table}

%

\section{Conclusion}\label{conclusion: sec}
In this paper, we provide a   theoretical  framework for HDSL under A-sparsity; that is, the high-dimensional learning problems where the vector of the true parameters may be dense but can be  approximated by a sparse vector. We show that, for a   problem of this type, an S$^3$ONC solution for an FCP-based learning formulation yields a poly-logarithmic sample complexity:  the required sample size is only poly-logarithmic in the number of dimensions, even if the common assumption of the RSC is absent.  To compute a solution with the proven sample complexity, we propose a novel, pseudo-polynomial-time  gradient-based algorithm.  

Our results on HDSL under A-sparsity can be applied to the  analysis of two important learning problems that are currently less understood: (i) the nonsmooth HDSL problems, where the  empirical risk functions are not necessarily differentiable; and (ii)  an NN with a flexible choice of the network architectures. We show that for both problems, the incorporation of the FCP regularization can ensure the generalization performance, as measured by the excess risk, to be   insensitive to the increase of the dimensionality.  Particularly, our results indicate that, with regularization, an over-parameterized deep NN can be provably generalizable.

Our numerical results are consistent with our theoretical predictions and point to the interesting potential of combining the proposed FCP with some other recent techniques in further enhancing an NN's   performance. 
For future research, we will extend the results to other regularization schemes. {\color{black}\label{additional assumption on weak sparsity 2}\Copy{weak sparsity discussion}{We will also study how our results can be adapted to the analysis of HDSL under the assumption of weak sparsity \citep{n2012a}.} }



\clearpage
 
\ECSwitch
\ECHead{Appendices}
\section{Additional Results on the Neural Networks}\label{additional theoretical results}
This section of the electronic companion is focused on the generalizability of the neural networks (NN) in binary classification. The problem settings of this classification problem follow   Section \ref{NN training}. Section \ref{local solutions NN training results ReLu} presents a corollary of Theorem \ref{general result theorem regularized NN}, where quantities like $\Omega(s_A)$ are made more explicit. Then Section \ref{computable local solution} presents a suboptimality-independent generalization error bound for a ReLU-NN.
\subsection{Generalizability of   NNs under additional regularities.}\label{local solutions NN training results ReLu}
This subsection presents a corollary of Theorem \ref{general result theorem regularized NN}   under some additional assumptions on the separating function $g$, activation functions, and  the network architecture.   Below we start by introducing those assumptions.

First,  we impose additional  regularities on the separating function $g$ following \cite{mhaskar1996a}. {\label{additional assumption on target function}\Copy{Following Copy mhaskar}{We let  $\mathbf D^{\mathbf k}$ represent the partial derivative with order $\mathbf k=(k_1,\,...,k_d)^\top\geq \mathbf 0$ and $\vert \mathbf k\vert= k_1+...+k_d$; that is, $\mathbf D^{\mathbf k} \widetilde g:=\frac{\partial^{\vert\mathbf k\vert} \widetilde g}{\partial x_1^{k_1},\cdots,\,\partial x_d^{k_d}}$, for a function $\widetilde g$.  Define that
$
\mathbb F_{d,r}:=\left\{ \widetilde g\in\mathbb W^{r,\infty}([-1,1]^d):\,\Vert  \widetilde g\Vert_{\mathbb W^{r,\infty}([-1,1]^d)}\leq 1\right\}.
$
Here  $\mathbb W^{r,\infty}([-1,1]^d)$ is the Sobolev space of functions on $[-1,1]^d$  with  continuous  derivatives with order $\mathbf r$ for all $\mathbf r\in\mathbb Z^d\cap[0,r]^d$, where $\mathbb Z$ is the set of integers. Meanwhile,
$\Vert \widetilde g\Vert_{\mathbb W^{r,\infty}([-1,1]^d)}:=\sum_{\mathbf k\in\mathbb Z^d:\,\mathbf k\in[0,\,r]^d}\,\underset{\mathbf x\in[-1,1]^d}{\text{ess\,sup}}\vert \mathbf D^{\mathbf k} {\widetilde g}(\mathbf x)\vert.$ By this definition, $\mathbb F_{d,r}$ is a fairly flexbile class of functions. The corollary to be presented subsequently is focused on the cases that the separating function $g$ is an element from $\mathbb F_{d,r}$.   An important special case   is  where $g$ is a polynomial.}}

Second, we make the following assumption on the activation functions also following \cite{mhaskar1996a}: 
 {\begin{assumption}\label{activation function condition here}
 \Copy{Let the activation Copy}{Let the activation function $\Psi$  be infinitely many times continuously differentiable in some open interval in $\Re$. Furthermore, $\frac{\partial^k\Psi(z)}{\partial z^{k}}\neq 0$ for some $z$ in that interval, for any integer $k\geq 0$.}
  \end{assumption}
 \Copy{The same assumption Copy}{According to \cite{mhaskar1996a}, commonly adopted activation functions, such as sigmoid, hyperbolic tangent, Gaussian, and multiquadratics, all obey Assumption \ref{activation function condition here}.}
}
 
 \begin{figure}
\begin{center}
\includegraphics[width=0.5\textwidth]{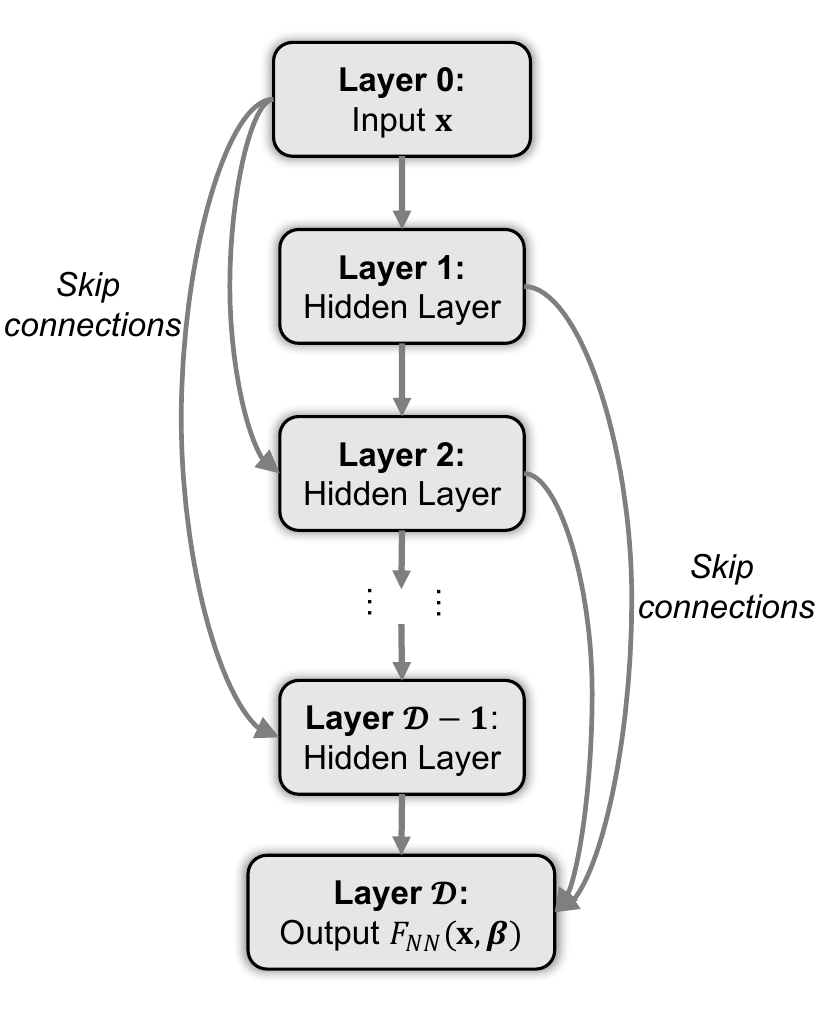}
    \caption{Illustration of the deep NN network. ``Skip connections'' are in presence from the input layer to hidden layers as well as from  hidden layers to the output layer. No nonlinear transform is in presence in the output layer.}\label{skip connection network}
        \end{center}
    \end{figure}
    
Third, for convenience of discussion, {\label{third assumption NN second result}\Copy{we focus on Copy}{we focus on an NN architecture as in Figure \ref{skip connection network}. In this NN, there are  ``skip connections''   from the input layer to the $l$th hidden layer, for all $l=2,...,\mathcal D-1$. Meanwhile, there are also  ``skip connections'' from the $l$ hidden layer, for all $l=1,...,\mathcal D-2$, to the output layer.  We let $\mathcal D$ and $K$ be the network  depth and the number of   neurons in each hidden layer, respectively.}}  Without loss of generality, we assume that all hidden layers have the same number of neurons, and all hidden neurons adopt the same activation function $\Psi$. We also assume that the output layer involves no nonlinear transformation. The output of this NN, given input $\mathbf x$ and fitting parameters $\boldsymbol\beta=vec\left((\mathbf W_{l-1,l}),(\mathbf b_{l-1,l}),\,(\boldsymbol w_{l,\mathcal D}),\,(b_{l,\mathcal D}),\,(\mathbf W_{0,l}),\,(\mathbf b_{0,l})\right)\in\Re^p$, can be captured by the nonlinear system  below, where $f_{NN,l}:\Re^d\times\Re^p\rightarrow\Re^K$ is the output from the $l$th layer.
\begin{align}
F_{NN}(\mathbf x,\boldsymbol\beta)= &\sum_{l=1}^{\mathcal D-1}\left(\boldsymbol w_{l,\mathcal D}^\top f_{NN,l}(\mathbf x,\boldsymbol\beta)+  b_{l,\mathcal D}\right);\label{zero NN equality}
\\
f_{NN,l}(\mathbf x,\boldsymbol\beta)= &\Psi\left( \mathbf W_{l-1,l} f_{NN,l-1}(\mathbf x,\boldsymbol\beta) +\mathbf b_{l-1,l}\right)+  \Psi\left( \mathbf W_{0,l} \mathbf x +\mathbf b_{0,l}\right)
,~~~\forall l=2,...,\mathcal D-1;\label{first NN equality}
\\
f_{NN,1}(\mathbf x,\boldsymbol\beta)=  & \Psi\left( \mathbf W_{0,1} \mathbf x +\mathbf b_{0,1}\right).\label{second NN equality}
\end{align}
  
With the foregoing settings, below is our result on the NN's generalization error.

\begin{corollary}\label{ReLU arch result}     
Let $g\in \mathbb F_{d,r}$.
Consider a   deep neural network $F_{NN}$ defined as in \eqref{zero NN equality}-\eqref{second NN equality}. Suppose that Assumptions \ref{target function assumption}, \ref{new bound results 2.5}, and \ref{activation function condition here} hold.  
Let $\widehat{\boldsymbol\beta}\in\Re^p$ be any random vector such that $\Vert \widehat{\boldsymbol\beta}\Vert_\infty\leq \frac{1}{2}v^{-1}\cdot R_{\Omega}\cdot \ln n$ and  the S$^3$ONC$(\mathbf X,\mathbf y)$ holds at $\widehat{\boldsymbol\beta}$ almost surely. For a fixed $\Gamma\geq 0$, assume that $\mathcal T_{n,\lambda}(\widehat{\boldsymbol\beta})-\inf_{{\boldsymbol\beta}}\mathcal T_{n,\lambda}({\boldsymbol\beta})\leq \Gamma$, w.p.1. Let  $C_7>0$ be a universal constant and  $\mathcal C_{NN}>0$ be some constant that depends only on $d$ and $r$. If $a<\frac{1}{2}\cdot \exp\left\{- \mathcal U_{NN}\cdot {\mathcal D}\cdot \ln\left[p\cdot v^{-1}\cdot  \mathcal U_{NN} \cdot R_{\Omega}\cdot\ln n \right]\right\}$,    $\lambda:=\sqrt{\frac{8\sigma }{c\cdot a\cdot n^{2/3}}\left[\ln(\frac{3e}{2v}\cdot R_{\Omega} p n^{4/3})+\mathcal U_{NN}\cdot {\mathcal D}\cdot \ln\left(\mathcal U_{NN}\cdot (1+n pR_{\Omega}v^{-1})\right)\right]}$, and 
\begin{multline}
n>C_7\cdot \left[\left(\mathcal C_{NN}\cdot v^{-1}\ln n\right)^{3d/r}+ (\Gamma+1)^3\right]
\\+ C_7\cdot\min\left\{\left[ (d+1)\cdot {\mathcal D}\cdot \mathcal U_{NN}\cdot \ln \left(\mathcal U_{NN}\cdot (1+n pR_{\Omega}v^{-1})\right)\right]^{3/2},\right. 
\\\left. \, (d+1)\cdot  K\cdot {\mathcal D}^2\cdot \mathcal U_{NN}\cdot \ln \left(\mathcal U_{NN}\cdot\left(npR_\Omega v^{-1}+1\right)\right) \vphantom{a^{3^3}}\right\},\label{sample initial requirement 3 bound here}
 \end{multline}
 then it holds  that
\begin{multline}
\mathbb E\left[\mathbb 1\left(y\cdot F_{NN}(\mathbf x,\widehat{\boldsymbol\beta})<0\right)\right]
\leq   C_7\cdot  \sqrt{\frac{\Gamma}{n^{1/3}}}+\frac{\mathcal C_{NN}^{1/2}\cdot v^{-1/2}\cdot \sqrt{\ln n}}{\min\left\{n^{\frac{1}{6}+\frac{r}{6d}},\,(K\cdot {\mathcal D})^{\frac{r}{2d}}\cdot n^{\frac{1}{6}}\right\}} +\Gamma
 \\+\mathcal C_{NN}\cdot v^{-1}\cdot \max\left\{n^{-\frac{r}{3d}},\,(K\cdot {\mathcal D})^{-\frac{r}{d}}\right\}\cdot \ln n
+C_7\cdot\frac{(d+1)\cdot {\mathcal D}\cdot \mathcal U_{NN}\cdot \ln\left(\mathcal U_{NN}\cdot (1+n pR_{\Omega}v^{-1})\right)}{n^{1/3}}, \label{ReLU second bound}
\end{multline}
with probability at least $1-C_7 p \exp\left(-\frac{n}{C_7}\right)-C_7\exp\left(-\frac{n^{1/3}}{C_7}\right).$
\end{corollary}

\begin{proof}{Proof.}
See proof in Section \ref{Proof ReLU arch result}.
\hfill \ensuremath{\Box}\end{proof} 
 
\begin{remark} Below are a few remarks on Corollary \ref{ReLU arch result}.
\begin{itemize}
\item
We attain the the poly-logarithmic sample complexity again in this corollary. Similar to Theorem \ref{general result theorem regularized NN}, the generalization error  bound in  \eqref{ReLU second bound} is strictly monotone in the suboptimality gap $\Gamma$.
\item If $g$ is a polynomial function, which is infinitely many times differentiable, and if the network is  over-parameterized with $n\leq (K\mathcal D)^3$, then we may as well let $d=r$ and obtain from \eqref{ReLU second bound} that
\begin{multline}
\mathbb E\left[\mathbb 1\left(y\cdot F_{NN}(\mathbf x,\widehat{\boldsymbol\beta})<0\right)\right]
\leq   O(1)\cdot\frac{(d+1)\cdot {\mathcal D}\cdot \mathcal U_{NN}\cdot \ln\left(\mathcal U_{NN}\cdot (1+n pR_{\Omega}v^{-1})\right)}{n^{1/3}}
\\+O(1)\cdot  \sqrt{\frac{\Gamma}{n^{1/3}}}+\frac{\left(\mathcal C_{NN}^{1/2}+\mathcal C_{NN}\right)\cdot v^{-1}\cdot \ln n}{n^{1/3}} +\Gamma, \nonumber
\end{multline}
with overwhelming probability.
\item {\color{black}{\label{comments on 2nd result 3rd assumption}\Copy{By a closer Copy}{By a closer examination, Corollary \ref{ReLU arch result} is obtained by explicating the misspecification error $\Omega(\cdot)$ in Theorem \ref{general result theorem regularized NN}.  In doing so, we reduce the NN defined as in \eqref{zero NN equality}-\eqref{second NN equality} to a one-hidden-layer subnetwork with $(K\cdot \mathcal D)$-many hidden neurons by assigning 0 to all the connection weights between any pair of hidden layers. We can then use the  existing upper bounds on the misspecification error of a one-hidden-layer NN, such as the results by \cite{mhaskar1996a}, to provide a (conservative) estimate of $\Omega(\cdot)$.  We conjecture that the same argument can be  extendable to  many other NN  architectures, given that  they can represent a one-hidden-layer subnetwork   with $(K\cdot \mathcal D)$-many hidden neurons. Here, we say that one NN (denoted by $F_{NN,1}$) can be represented by another NN (denoted by $F_{NN,2}$), if it holds that,  for any $\boldsymbol\beta_1$ and almost every $\mathbf x\in\mathcal X$,  $F_{NN,1}(\mathbf x,\boldsymbol\beta_1)=F_{NN,2}(\mathbf x,\boldsymbol\beta_2)$ for some $\boldsymbol\beta_2$. Because many NN architectures entail strong representability, we think that Corollary \ref{ReLU arch result} can be used to understand a broader spectrum of NN-based models.}}}
\end{itemize}
\end{remark}

\subsection{A suboptimality-independent generalization bound at tractable local solutions.}\label{computable local solution}

This subsection presents a result on the generalizability of a ReLU-NN at a pseudo-polynomial-time computable solution. Different from the above, the error bound herein is independent of the suboptimality gap $\Gamma$. This is possible under the  following assumption  on the data generation process. 
{ \begin{assumption}\label{Gaussian assumption NN}
\Copy{There exists a constant Copy Assumption 11}{There exists a constant $v\in(0,1)$ and
$$g(\cdot)\in\left\{G(\,\cdot\,):\,G(\mathbf x)=\int_{\Re^d} C_g(\mathbf u)\cdot\max\left\{0,\,\mathbf u^\top\mathbf x\right\}\cdot P(\mathbf u) d\mathbf u:\,\sup_{\mathbf u}\vert C_{g}(\mathbf u)\vert\leq 1\right\},$$
 where $P(\mathbf u)$ is the density of a standard Gaussian vector, such that $y\cdot g(\mathbf x)\geq v$ for all $(\mathbf x,\,y)\in supp(\mathbb D)$.}
\end{assumption}}
{\label{discussion on assumption data generation 3rd result}\Copy{Assumption follows in their analysis}{Assumption \ref{Gaussian assumption NN}  follows Assumption 4.10 by \cite{cao2019a} and Assumption A.1 by \cite{cao2019generalization} in their analysis on the generalization performance of the ReLU-NNs trained with a  stochastic gradient descent (SGD) algorithm. The same assumption is also equivalent to the condition discussed by \cite{rahimi2009weighted}, for some choices of parameters, in analyzing a one-hidden-layer NN. According to  \cite{cao2019a}, Assumption \ref{Gaussian assumption NN} holds for all the functions representable by an infinite-width one-hidden-layer ReLU-NN with a rapidly decaying second-layer weights (faster than $P(\mathbf u)$). Because of the strong representability of an infinite-width ReLU-NN, we think that the set of functions defined in  Assumption \ref{Gaussian assumption NN} is reasonably flexible.}} 

Though our results can be adapted to   facilitate the analysis of a more flexible class of NN architectures,   we  focus on a ReLU-NN architecture $F_{NN}:\,\mathcal X\times\Re^p\rightarrow\Re$ that is in accordance with the following system, given fitting parameters $\boldsymbol\beta=vec\left((\mathbf W_{l-1,l}:\,2\leq l\leq \mathcal D-1),(\mathbf b_{l-1,l}:\,2\leq l\leq \mathcal D-1),\,\boldsymbol w_{\mathcal D-1,\mathcal D},\,\boldsymbol w_{1,\mathcal D},\,,b_{\mathcal D-1,\mathcal D},\mathbf W_{0,1},\mathbf b_{0,1}\right)\in\Re^p$:
\begin{align}
F_{NN}(\mathbf x,\boldsymbol\beta)= &\boldsymbol w_{{\mathcal D}-1,{\mathcal D}}^\top f_{NN,{\mathcal D}-1}(\mathbf x,\boldsymbol\beta)+\boldsymbol w_{1,{\mathcal D}}^\top\Psi\left( \mathbf W_{0,1} \mathbf x +\mathbf b_{0,1}\right)+b_{{\mathcal D}-1,{\mathcal D}};\label{zero NN equality 2}
\\
f_{NN,l}(\mathbf x,\boldsymbol\beta)= &\Psi\left( \mathbf W_{l-1,l} f_{NN,l-1}(\mathbf x,\boldsymbol\beta) +\mathbf b_{l-1,l}\right),~~~\forall l=2,...,{\mathcal D}-1;\label{first NN equality 2}
\\
f_{NN,1}(\mathbf x,\boldsymbol\beta)= &\Psi\left( \mathbf W_{0,1}  \mathbf x +\mathbf b_{0,1}\right).\label{first NN equality 3}
\end{align}
where we let $\Psi(z):=\max\{0,\,z\}$ be the ReLU activation function. The system in \eqref{zero NN equality 2}-\eqref{first NN equality 3} captures a fully-connected ${\mathcal D}$-layer NN (with $\mathcal D-1$ hidden layers),   {where the first hidden layer is connected with the output layer directly through ``skip connections''.\label{additional note on skip connection ReLU-NN}} We  assume that there  are $K$-many neurons in the every hidden layer.

In order to effectively train the above ReLU-NN, we propose the following initialization scheme (Algorithm 2) modified from the Weighted Sums of Random Kitchen Sinks (WSRKS) fitting procedure by \cite{rahimi2009weighted} for training shallow networks.

\vspace{2mm}
\hrule
\vspace{1mm}
{\bf\noindent Algorithm 2. A tractable initialization scheme}
\vspace{1mm}
\hrule
\vspace{2mm}
\begin{description}
\item[Step 0.] 
Specify an integer $K^*:\,1\leq K^*\leq K$. Consider a  subnetwork   in  Figure \ref{initialization scheme}  (where the subnetwork is highlighted in red) of   the complete ReLU-NN \eqref{zero NN equality 2}-\eqref{first NN equality 3}. Denote  this subnetwork by $F^{sub}_{NN}:\,\mathcal X\times \Re^p\rightarrow\Re$, which writes as $F^{sub}_{NN}\left(\mathbf x,(\widetilde{\mathbf W}_{0,1},\,\widetilde{\boldsymbol w}_{1,{\mathcal D}})\right):=\widetilde{\boldsymbol w}_{1,{\mathcal D}}^\top\Psi\left( \widetilde{\mathbf W}_{0,1} \mathbf x \right)$. Here, we let $\widetilde{\mathbf W}_{0,1}=(\omega_{0,1,k,\iota}:\,k=1,...,K^*,\,\iota=1,...,d)\in\Re^{K^{*}\times d}$ and $\widetilde{\boldsymbol w}_{1,{\mathcal D}}=(\omega_{1,{\mathcal D},k}:\,k=1,...,K^*)\in\Re^{K^{*}}$. 
\item[Step 1.] Generate each entry of $\mathbf W^{initial}_{0,1}=\left((\mathbf w_{0,l,k}^{initial})^\top:\,k=1,...,K^*\right)$, independently, from a standard normal distribution $\mathcal N(0,1)$.

\item[Step 2.]  Compute $\boldsymbol w_{1,{\mathcal D}}^{initial}=\left(w^{initial}_{1,{\mathcal D},k}:\,k=1,...,K^*\right)$ by solving the following (convex) optimization problem, where all the entries of $\mathbf W^{initial}_{0,1}$ are fixed to be the values from Step 1:
\begin{align}
{\boldsymbol w}_{1,{\mathcal D}}^{initial}\in&\,\underset{\Vert\widetilde{\boldsymbol w}_{1,{\mathcal D}}\Vert_\infty\leq n}{\arg\,\min}\,\frac{1}{n}\sum_{i=1}^n\mathcal F\left(y_i\cdot F^{sub}_{NN}\left(\mathbf x_i,({\mathbf W}^{initial}_{0,1},\,\widetilde{\boldsymbol w}_{1,{\mathcal D}})\right)\right),\label{initial NN subproblem}
\end{align} 

\item[Step 3.]  Let $\widehat{\boldsymbol\beta}^{initial}\in\Re^p$ be a vector of  fitting parameters. Set the components of $\widehat{\boldsymbol\beta}^{initial}$ that correspond to the subnetwork   to be  $vec(\mathbf W^{initial}_{0,1},\,\boldsymbol w_{1,{\mathcal D}}^{initial})$. Let all other components of $\widehat{\boldsymbol\beta}^{initial}$   be  zero. 
\item[Step 4.] Output $\widehat{\boldsymbol\beta}^{initial}$. 
\end{description}
\vspace{3mm}
\hrule
\vspace{3mm}
 
Algorithm 2  essentially trains the subnetwork constructed in Step 0 of Algorithm 2 with the WSRKS fitting procedure. Meanwhile, all the fitting parameters outside the subnetwork are set to be zero. 
Subsequent to this initialization scheme, we may then invoke Algorithm 1 to generate the desired solution to the FCP-regularized training formulation in \eqref{regularized NN model}.

A subtlety arises when applying Algorithm 1 to  the ReLU-NN. The ReLU activation function $\Psi(z):=\max\{0,\,z\}$ is nonsmooth. Resultantly, the empirical risk function is not everywhere differentiable in general. A common approach in the literature (e.g., \cite{berner2019a}) to avoid this irregularity is to consider a modified  first  derivative of $\Psi$ defined as  $\frac{\partial \Psi(z)}{\partial z}:=\mathbb 1(z>0)$. By this definition, a chain rule is preserved as per \cite{berner2019a}.  Correspondingly, the (modified) gradient can be calculated with the detailed formula  provided in Section \ref{detailed gradient NN}.    We  adopt this modification in Algorithm 1. Despite the use of these modifications, we show that the combination of Algorithms 1 and 2 can lead to a generalizable ReLU-NN within pseudo-polynomial time, and the resulting sample complexity is poly-logarithmic in $p$. Furthermore, the generalization error is independent of $\Gamma$, the suboptimality gap.

Theorem \ref{local solution results} below shows  the promised suboptimality-independent generalization error bound. Note that this theorem  adopts the following settings  and   hyper-parameters:
\begin{align}
\mathcal M>0;\quad{K^*}= \left\lceil 10 n^{1/3}\cdot (\ln n)^{5/3}\right\rceil;\quad a<\frac{1}{\mathcal M};\quad\text{and}\quad\lambda:=\sqrt{\frac{8}{ c\cdot a\cdot n^{2/3}}[\ln(9 en^{4/3}p^{3/2}{\mathcal D})+{\mathcal D}\ln(KR)]},\label{choice of hyper-parameters NN}
\end{align}
where $K^*$ is defined  in Algorithm 2 and $(a,\,\lambda)$ are tuning parameters of the FCP.   For invoking Algorithm 1 in training the ReLU-NN, we let $\widetilde f(\,\cdot\,):=n^{-1}\sum_{i=1}^n\mathcal F\left(y_i\cdot F_{NN}(\mathbf x_i,\,\cdot\,)\right)$  and $\nabla\widetilde f(\,\cdot\,):=n^{-1}\sum_{i=1}^n\widetilde\nabla_{\boldsymbol\beta}\mathcal F\left(y_i\cdot F_{NN}(\mathbf x_i,\,\cdot\,)\right)$ with $\widetilde\nabla_{\boldsymbol\beta}\mathcal F\left(y_i\cdot F_{NN}(\mathbf x_i,\,\cdot\,)\right)$  defined in Section \ref{detailed gradient NN}.     Finally, it is worth noting that the output of Algorithm 1 can be understood as a deterministic (and implicit) function of its initial solution $\boldsymbol\beta^0$ and training data $(\mathbf X,\,\mathbf y)$. When $\boldsymbol\beta^0$, $\mathbf X$, and $\mathbf y$ are random, the algorithm's output is also a random vector.

\begin{theorem}\label{local solution results}Consider the ReLU-NN in \eqref{zero NN equality 2}-\eqref{first NN equality 3} with $K\geq \max\{2,\,d,\,10n^{1/3}\cdot(\ln n)^{5/3}+1\}$.   Suppose that Assumption \ref{Gaussian assumption NN} holds and that $\widehat{\boldsymbol\beta}\in\Re^p$ with $\Vert \widehat{\boldsymbol\beta}\Vert_\infty\leq R$ for some $R\geq n$ is the output of Algorithm 1 when it terminates as per the stopping criterion in \eqref{termination criterion Algorithm 1}. Given hyper-parameters as in \eqref{choice of hyper-parameters NN},  the following statements hold.
 \begin{itemize}
 \item[(a)] For any initial solution $\boldsymbol\beta^0\in\Re^n$ and training data $(\mathbf X,\mathbf y)$, Algorithm 1 terminates at the $k^*(\boldsymbol\beta^{0},\mathbf X,\mathbf y)$-th iteration, for some  integer  $k^*(\boldsymbol\beta^{0},\mathbf X,\mathbf y)< \left(\left\lceil2\mathcal M\cdot \frac{\mathcal T_{n,\lambda}(\widehat{\boldsymbol\beta}^{initial})}{\gamma_{opt}^2}\right\rceil+1\right)$. 
 \item[(b)] 
 Further assume that the initial solution of Algorithm 1   is the output of   Algorithm 2; that is, ${\boldsymbol\beta}^0:=\widehat{\boldsymbol\beta}^{initial}$. At the termination of Algorithm 1, there exists a universal constant $C_8>0$ such that, if
\begin{align}
  n>C_8\cdot  d^{3}\cdot v^{-3}\cdot {\mathcal D}^{3/2}\cdot\left(\ln n\right)^{4}\cdot [\ln(pR)]^{3},\label{sample size initial requirement 1}
  \end{align}
 then, with probability at least $1-C_8 \cdot p  \exp\left(-\frac{n^{1/3}}{C_8}\right)-C_8\cdot n^{1/3} d\exp(-n^2/2)-C_8\cdot(d\cdot n)^{-d/3}$,  the generalization error of the trained ReLU-NN is bounded by
\begin{align}
\mathbb E\left[\mathbb 1\left(y\cdot F_{NN}(\mathbf x,\widehat{\boldsymbol\beta})< 0\right)\right]\leq C_8\cdot   \frac{d \cdot {\mathcal D}}{n^{1/3}v^2}\cdot \left[\left(\ln n\right)^{4/3}\cdot\ln(pR)\right]-\frac{\gamma^2_{opt}}{2\mathcal M}\cdot k^*(\widehat{\boldsymbol\beta}^{initial},\mathbf X,\mathbf y).\label{part b result NN tractable}
\end{align}
 \end{itemize}
 \end{theorem}
\begin{proof}{Proof.} See Section \ref{proof of local solution results}.
\end{proof}

\begin{remark}
In this theorem, the generalization error bound, as measured in terms of the expected 0-1 loss, is no longer dependent on the suboptimality gap $\Gamma$, yet the promised poly-logarithmic sample complexity is maintained; the sample size should grow only poly-logarithmically to compensate for the growth in $p$. In addition, the dependence on the number of layers $\mathcal D$ is polynomial. In contrast to the literature, we argue that  our result here may provide a significantly better rate in terms of both $p$ and ${\mathcal D}$, especially when considering that the training algorithm to ensure the desired sample complexity is provably   in pseudo-polynomial time as per the remark below.
\end{remark}

\begin{remark}
The combination of Algorithms 1 and 2 in Theorem \ref{local solution results} yields a pseudo-polynomial-time complexity. 
\begin{itemize}
\item In the initialization step, Algorithm 2 is a polynomial-time algorithm.  The main computational effort is on solving \eqref{initial NN subproblem}, which is convex and thus in polynomial time. (Note that  an approximate solution to \eqref{initial NN subproblem} with a suboptimality gap of $\mathcal O(\frac{d\cdot \mathcal D\cdot \ln p}{n^{1/3}})$ would actually suffice for deriving the same sample complexity as in Theorem \ref{local solution results}.) 

\item Subsequent to  Algorithm 2,  Algorithm 1   computes a solution that entails the desired sample complexity. The  iteration complexity of Algorithm 1, as proven in Part (a) of Theorem \ref{local solution results}, is polynomial in both the dimensionality and the numeric values of the problem data. Thus, Algorithm 1 yields a pseudo-polynomial-time complexity.  
\end{itemize}

With the above, we know that the total   computational effort of the combined algorithm is in pseudo-polynomial time.
\end{remark}

\begin{remark}
The proof of Theorem \ref{local solution results} does not depend on how the  gradient is defined or modified. Nonetheless, there is some benefit of using the ``modified gradient''  as in Section \ref{detailed gradient NN}, as discussed in Remark \ref{remark on whether algorithm 1 works} below.
\end{remark}

\begin{remark}\label{remark on whether algorithm 1 works}
By a closer examination of the proof, one may notice that Algorithm 2 (invoked for initialization) alone is already capable of identifying a solution with provable generalizability. Nonetheless, as per \eqref{part b result NN tractable}, Algorithm 1    sharpens the generalization error; the more iterations that Algorithm 1 would run for, the shaper is the performance of the trained NN. A natural question  would be whether the initial solution identified by Algorithm 2 would render  the stoping criterion in \eqref{termination criterion Algorithm 1} to be satisfied at the first iteration of Algorithm 1. If so, $k^*(\widehat{\boldsymbol\beta}^{initial},\mathbf X,\mathbf y)=0$ and Algorithm 1 would not be effective.
 We think it to be a possible scenario for some problem instances. However, because $\frac{1}{n}\sum_{i=1}^n\mathcal F(y_i\cdot F_{NN}(\mathbf x_i,\cdot))$ is a piecewise smooth function and Algorithm 2  trains only a small subset of the fitting parameters, it is more likely that the initial solution generated by Algorithm 2  is a non-KKT point within a continuously differentiable neighborhood. In such a case, the ``modified gradient''  as in Section \ref{detailed gradient NN} becomes the exact formulation of the gradient. One may then show that $k^*(\widehat{\boldsymbol\beta}^{initial},\mathbf X,\mathbf y)>0$ must hold, if  $\mathcal M$ is properly large and greater than the Lipschitz constant of the gradient of $\frac{1}{n}\sum_{i=1}^n\mathcal F(y_i\cdot F_{NN}(\mathbf x_i,\cdot))$ for every $\boldsymbol\beta$ in that neighborhood.
\end{remark}

{\begin{remark}\label{remark ref ReLU NN assumption}
{\color{black}\Copy{The results of Theorem Copy}{The results of Theorem \ref{local solution results} is obtained via a similar argument as in proving Theorem \ref{general result theorem regularized NN}, except that the misspecification error $\Omega(\cdot)$ and the suboptimality gap $\Gamma$ in  Theorem \ref{general result theorem regularized NN}  are now explicated in Theorem \ref{local solution results} under the specific assumptions made on the neural network and the data generating process. To make explicit both $\Omega(\cdot)$ and  $\Gamma$, our proofs are largely focused on analyzing the subnetwork constructed in Step 0 of Algorithm 2 and illustrated in  Figure \ref{initialization scheme}.  The misspecification error of this subnetwork serves as a conservative estimate of $\Omega(\cdot)$, and the suboptimality gap obtained after training this subnetwork becomes an overestimate of the initial suboptimality gap to bound $\Gamma$. We conjecture that the above argument can be extended to any NN architecture that contains, or can represent, the above subnetwork. Such NN architectures include the conventional ReLU networks and the  residual networks with ReLU activation, among others.}}
\end{remark}}

 \section{Additional Numerical Experiments}\label{sec Numerical Experiments EC}
 
This part of the electronic companion presents some additional numerical experiments. Sections \ref{SVM test results} and \ref{sec NN test binary classification}  below are focused on a high-dimensional SVM and a ReLU-NN, respectively.

 \subsection{Experiments on high-dimensional SVM}\label{SVM test results}
This section presents our experiments on  high-dimensional  SVM, whose training formulation entails a nonsmooth statistical loss function. For each experimental instance,  a training set and a test set   were randomly generated in two different cases below: (a) The first case involved data with less correlated design. With the same notations as in \eqref{SVM original}, let $\mathbf x_1,\,\mathbf x_2,\,..., \,\mathbf x_n$ be i.i.d.\ samples of $\mathcal N_{p}(\mathbf 0,\Sigma)$   with  $\Sigma=(\varsigma_{j_1,j_2})$ and $\varsigma_{j_1,j_2}=0.3^{\vert j_1-j_2\vert}$. Let the class labels  of the samples $y_i,\,i=1,...,n$, be determined by  
 $y_i=+1$ if  $\mathbf x^\top_i\boldsymbol\beta^{*}+\omega_i\geq 0$, and $y_i=-1$, otherwise.
Here, $\omega_1,\,\omega_2,...,\,\omega_n$ are  i.i.d.\  standard normal random variables {and $\boldsymbol\beta^{*}=(3,\,5,\,0,\,0,\,1.5,\underbrace{0,\,...,0}_{\text{$(p-5)$-many 0's}})^\top$}. We let $n=100$ for both the training and test sets.  (b) In the second case,    data with more correlated design were generated. In doing so, the same approach as in the first case above was followed, except that $\Sigma=(\varsigma_{j_1,j_2})$ was simulated differently. We first calculated $\varsigma_{j_1,j_2}=0.3^{\vert j_1-j_2\vert}$ and then shrank all the singular values of $\Sigma$ below the 80th percentile    to be 0.01 times their original values. 

  Linear classifiers were trained on the training data via three different schemes to be explained subsequently. Their performance was measured by the out-of-sample classification error  on the test data, calculated as $\frac{\text{Number of wrongly classified observations}}{\text{Total number of observations}}\times 100\%$.

Our numerical comparisons involved the following schemes:
(i). {\it SVM:} The canonical SVM  in \eqref{SVM original} with $\rho =0$.  (ii) {\it SVM-$\ell_2$:} The SVM   with $\ell_2$ regularization, that is, the estimator generated by solving \eqref{SVM original} with $\rho >0$. (iii) {\it SVM-$\ell_1$:} The SVM variant with $\ell_1$ regularization, that is, the estimator generated by solving \eqref{SVM} with $\rho =0$, and $\widetilde P_\lambda(\vert \cdot \vert)=\lambda \vert\,\cdot\,\vert$. 
(iv) {\it SVM-FCP:} The SVM variant with the proposed FCP-based regularization, that is, the estimator generated by solving for an S$^3$ONC solution via Algorithm 1 to Problem \eqref{SVM reformulated} with $\rho =0$. {Note that Algorithm 1 in (iv) was initialized with solutions generated by the SVM-$\ell_1$}. Hyper-parameters of Algorithm 1 was specified as $\gamma_{opt}=10^{-5}$ and $\mathcal M=3.5\geq n^{1/4}$.  The SVM, the SVM-$\ell_2$, and the SVM-$\ell_1$  were all solved by calling Mosek  \citep{aps2015a}  through CVX \citep{grant2013a,grant2008a}.

{\Copy{In determining copy}{In determining the hyper-parameters, namely, $\rho$ in the SVM-$\ell_2$, $\lambda$ in the SVM-$\ell_1$ as well as  $\lambda$ in the SVM-FCP (where we fixed the value of $a$, the other tuning parameter of the FCP, to be 0.3),   three training sets with $p\in\{100,\, 500,\,1000\}$ and $n=100$  were  generated  as per the above data generation process in the first case (with less correlated design). On these data sets, the SVM-$\ell_2$, the SVM-$\ell_1$, and the SVM-FCP models were then trained for fixed    hyper-parameters,  $\lambda$ or $\rho$, chosen from     $\{0.05,\,0.1,\,0.15,\,0.20,...,0.4\}$.  The trained SVM variants were then evaluated in terms of their  classification errors  on  three validation sets, one for each value of $p\in\{100,\, 500,\, 1000\}$. These validation sets were generated with  the same sample sizes and probability distributions as the three training datasets above. From the pool of candidate values for $\lambda$ and $\rho$,  the best  ones were chosen in terms of minimizing the average classification errors on the validation sets over all the three cases of $p=100,\,500,\,1000$.   It turned out that $\lambda=0.25$ for both  the SVM-FCP and the SVM-$\ell_1$, and $\rho=0.1$ for the SVM-$\ell_2$.}\label{added description on hyper parameters}}

In testing the impact of dimensionality on the out-of-sample performance of all the four SVM variants,  $p$ was increased gradually with values chosen from $\{100,\,200,\,...,1000\}$.  For each choice of dimensionality, 100 random replications were conducted.  The performance  of each SVM variant is reported in Tables \ref{table summary 123} and  \ref{table summary 123 b}, where we compare  the averages and standard errors of the out-of-sample classification errors for the cases with lower and higher correlations in the design, respectively. From both tables, one can see that the classification errors generated by the proposed SVM-FCP were noticeably better than all other alternative approaches involved in this test. A representation of the  comparisons are provided in the two subplots of Figure \ref{C-b-C comparison}, where the center and radius of each of the  error bars are the average classification error and 1.96 times the corresponding standard error, respectively, from the 100 replications. This figure shows that the SVM-FCP   persistently outperformed the other three SVM variants involved in the test.

\begin{figure}[h!]
\begin{center}
\Copy{figures SVM}{
 \begin{tabular}{cc}
 \includegraphics[width=0.48\textwidth]{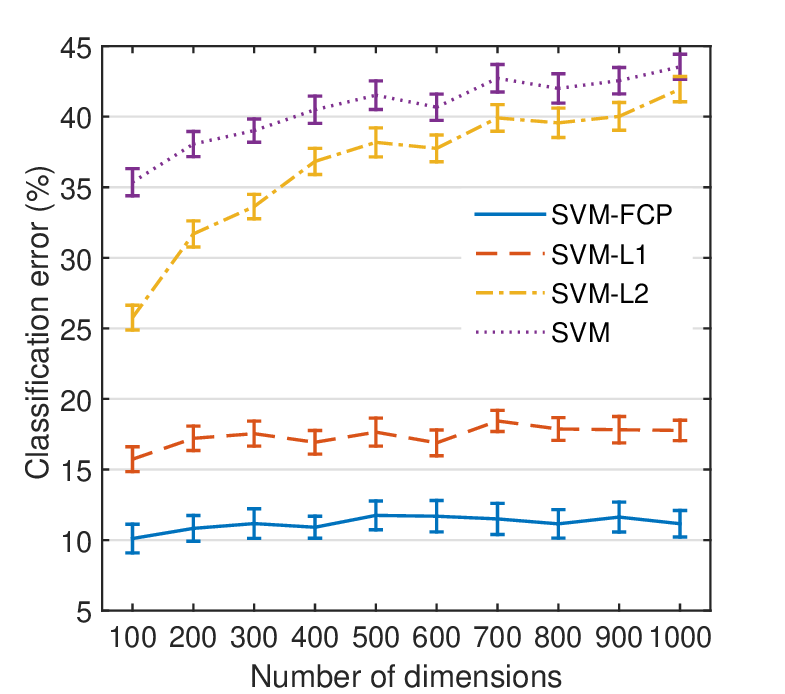}
 &
  \includegraphics[width=0.48\textwidth]{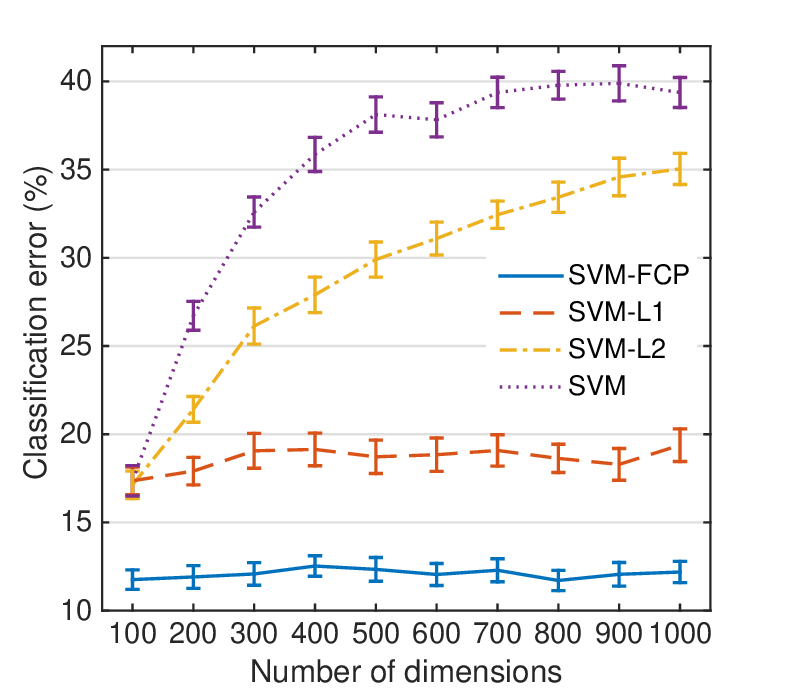}
  \\
   (a)  &(b)  
  \\
  \\
 \end{tabular}}
 \end{center}
\caption{Comparison of classification errors (\%) incurred by the SVM variants. Subplot (a) shows  the case with less correlated design and Subplot (b) presents the case  with more correlated design. ``SVM-FCP'', ``SVM-L1'', ``SVM-L2'', ``SVM'' refers to the SVM variants with the FCP regularization, the $\ell_1$   regularization, the $\ell_2$  regularization, and no regularization, respectively. Centers of error bars are the mean classification errors  out of 100 random replications, and the radius of the error bars are 1.96 times the standard errors. }\label{C-b-C comparison}
\end{figure}
 
\begin{table}[h!]
{\caption{Classification errors of SVM with different regularization schemes when the design has lower correlation. ``Mean'' stands for the average out-of-sample classification error (\%) out of 100 random replications, and ``SE'' is the corresponding standard error (\%).}\label{table summary 123}
\vspace{1mm}
\begin{center}
\begin{tabular}{lcccccccccccc}\hline\hline
 & \multicolumn{2}{c}{SVM-FCP}&& \multicolumn{2}{c}{SVM-$\ell_1$}&& \multicolumn{2}{c}{SVM-$\ell_2$}&& \multicolumn{2}{c}{SVM}
\\\hline
$p$& Mean  & SE && Mean  & SE && Mean  & SE && Mean  & SE
\\\hline 
$100$  & 10.11 & 0.26 && 15.73 & 0.28 && 25.76 & 0.28 && 35.36 & 0.25
\\
$200$  & 10.83 & 0.24 & &17.21 & 0.22 && 31.69 & 0.24 && 38.06 & 0.28
\\
$300$  & 11.17 & 0.28 & &17.54 & 0.28 && 33.63 & 0.22 && 39.01 & 0.21
\\
$400$ & 10.91 & 0.20 && 16.93 & 0.22 && 36.83 & 0.24 && 40.49 & 0.25
\\
$500$ & 11.75 & 0.26 && 17.65 & 0.26 && 38.18 & 0.26 && 41.52 & 0.26 
\\
$600$ & 11.69 & 0.29 && 16.89 & 0.23 && 37.75 & 0.24 && 40.67 & 0.24
\\
$700$ & 11.50 & 0.28 && 18.44 & 0.19 && 39.91 & 0.24 && 42.72 & 0.25 
\\
$800$ & 11.15 & 0.26 && 17.87 & 0.21 && 39.56 & 0.27 && 42.00 & 0.27
\\
$900$ & 11.63 & 0.27 & &17.82 & 0.24 && 40.02 & 0.25 && 42.55 & 0.24 
\\
$1000$ & 11.16 & 0.24 && 17.77 & 0.19 && 41.95 & 0.23 && 43.53 & 0.23 
\\\hline\hline
\end{tabular}
\end{center}
}
\end{table}

\begin{table}[h!]
{\caption{Classification errors of SVM with different regularization schemes when the design has higher correlation. ``Mean'' stands for the average out-of-sample classification error (\%) out of 100 random replications, and ``SE'' is the corresponding standard error (\%).}\label{table summary 123 b}
\vspace{1mm}
\begin{center}
{\color{black}\Copy{additional table}{
\begin{tabular}{lcccccccccccc}\hline\hline
 & \multicolumn{2}{c}{SVM-FCP}&& \multicolumn{2}{c}{SVM-$\ell_1$}&& \multicolumn{2}{c}{SVM-$\ell_2$}&& \multicolumn{2}{c}{SVM}
\\\hline
$p$& Mean  & SE && Mean  & SE && Mean  & SE && Mean  & SE
\\\hline 
100  & 11.76 & 0.55 &  & 17.36 & 0.78 &  & 17.13 & 0.78 &  & 17.36 & 0.86 \\
200  & 11.91 & 0.65 &  & 17.91 & 0.78 &  & 21.41 & 0.73 &  & 26.71 & 0.82 \\
300  & 12.08 & 0.64 &  & 19.06 & 0.99 &  & 26.13 & 1.03 &  & 32.59 & 0.85 \\
400  & 12.53 & 0.58 &  & 19.14 & 0.93 &  & 27.90 & 1.01 &  & 35.86 & 0.97 \\
500  & 12.34 & 0.67 &  & 18.72 & 0.94 &  & 29.90 & 1.00 &  & 38.12 & 1.00 \\
600  & 12.05 & 0.62 &  & 18.84 & 0.94 &  & 31.09 & 0.93 &  & 37.82 & 0.97 \\
700  & 12.29 & 0.65 &  & 19.08 & 0.89 &  & 32.44 & 0.77 &  & 39.37 & 0.86 \\
800  & 11.71 & 0.58 &  & 18.63 & 0.80 &  & 33.43 & 0.86 &  & 39.78 & 0.79 \\
900  & 12.06 & 0.67 &  & 18.29 & 0.90 &  & 34.58 & 1.06 &  & 39.89 & 1.00 \\
1000 & 12.19 & 0.60 &  & 19.38 & 0.92 &  & 35.04 & 0.88 &  & 39.37 & 0.85
\\\hline\hline
\end{tabular}}}
\end{center}
}
\end{table}

\subsection{Numerical Experiments on ReLU-NN in Binary classification}\label{sec NN test binary classification}
This subsection presents  our numerical tests on the efficacy of the FCP-based regularization on a ReLU-NN. 
A training set, a validation set, and a test set  were generated as below:  (A) {\it Training set:}  2000 data were first generated in line with Assumption \ref{Gaussian assumption NN}, where $d=10$ and  $C_g({\bf u}):=\sin({{\sum_{\iota=1}^d {u_\iota}}})/d$ with $\mathbf u=(u_\iota)$. For the given $C_g$,  (the integration involved in defining) the  separating function $g(\mathbf x)$ was evaluated via numerical integration. For each sample data with feature values $\mathbf x_i$, the corresponding (actual) label $y_i$ was set to be +1 if $g(\mathbf x)\geq 0$, and $-1$, otherwise.  Some mislabels were  introduced. Specifically, out of these 2000-many data, a subset of data points was selected as per a Bernoulli distribution; each data point was selected with  probability 0.05.   All the data points in this subset were assigned the wrong labels (opposite to their actual labels calculated previously).
 (B) {\it Validation set:} Following the same approach as the above, we generated another set of 2000 validation data. (C) {\it Test set:}  A set of 5000  independent test data were   generated following Assumption \ref{Gaussian assumption NN}, with the same $d$, $C_g$, and $g$ as the above. However, no  test data was mislabeled. 

We followed  \eqref{zero NN equality 2}-\eqref{first NN equality 3} in constructing the architecture of a $\mathcal D$-layer ReLU-NN model, 
where  the width $K$ (i.e., the number of hidden neurons per hidden layer) was identical across all the hidden layers. 
We employed Algorithm 1, initialized by Algorithm 2, in training the FCP-regularized ReLU-NN formulated in Eq.~\eqref{regularized NN model}.   In choosing the hyper-parameters, we set $a=0.5$ and $\lambda=\mathcal C_{fcp}\cdot \mathcal D\cdot \sqrt{\ln K}$.  Here, $\mathcal C_{fcp}=0.001$ was determined through a process to be detailed subsequently. For Algorithm 1, we let $\gamma_{opt}=10^{-6}$ and $\mathcal M=1$ (such that $a<\frac{1}{\mathcal M}$). For Algorithm 2, ${K^*}= \left\lceil 10 n^{1/3}\cdot (\ln n)^{5/3}\right\rceil$ as per Theorem \ref{local solution results}.  

To determine $\mathcal C_{fcp}$,  three ReLU-NN architectures with 10, 50, and 100 hidden layers and $K=150$ were trained with the combination of Algorithms 1 and 2, when $\mathcal C_{fcp}$ was fixed at each of the candidate values from the set $\{0.0001,\,0.0005, \,0.001,\,0.005, \,0.01,\,0.05,\,0.1\}$.  The performance of these trained ReLU-NNs was evaluated on the validation set in terms of the classification errors. Then, for each candidate value of $\mathcal C_{fcp}$, an classification error  over all  the three NN architectures above was calculated. The value of $\mathcal C_{fcp}$ was chosen to be the one that led to the best average performance. It turned out that  $\mathcal C_{fcp}=0.001$.

Involved as a benchmark in the experiment was  the ReLU-NN model generated by solving the conventional training formulation given as \begin{align}\inf_{\boldsymbol\beta} ~\mathcal T_{n,\lambda}(\boldsymbol\beta):=n^{-1}\sum_{i=1}^n\mathcal F\left(y_i\cdot F_{NN}(\mathbf x_i,\boldsymbol\beta)\right).\label{unpenalized NN train}\end{align}
 In computing a solution to this problem, we employed an  SGD algorithm based on \cite{cao2019generalization}, who have shown the generalizability of the ReLU-NNs trained  by an SGD in spite of the nonconvexity of the formulation. The SGD in our experiment was integrated with a  three-step multi-start strategy: In Step 1, we repeated, for five  times, the training of the same ReLU-NN using the conventional SGD with the   He initialization \citep{he2015a}. Because both the He initialization and the SGD are stochastic, five potentially different local solutions could be generated by Step 1. In Step 2, we  trained the ReLU-NN using the conventional SGD again, but  the initial point was specified as the output of Algorithm 2. Finally, in Step 3, we compared all the  solutions from Steps 1 and 2 and chose the solution with the smallest  objective value (in terms of  \eqref{unpenalized NN train}) as the output of this multi-start strategy. While there could be different strategies in the literature to boost the performance of  the SGD, such as a wise determination of the batch size, the momentum,  and the  learning rate (i.e., the step size), we did not employ those strategies; our purpose was  to compare the non-regularized ReLU-NN formulation in \eqref{unpenalized NN train} with the proposed FCP-regularized ReLU-NN. Thus, given that the SGD   well optimized the problem in \eqref{unpenalized NN train} globally, the performance of the resulting solutions were considered to well represent the efficacy of the non-regularized ReLU-NN.  Indeed, {\label{suboptimality NN non-regu}\Copy{in evaluating  the Copy}{in evaluating  the optimization quality of the SGD, we found that the average, maximal, and minimal objective function values out of all the numerical instances were 0.0013, 0.0052, and 0.0000, respectively. (In contrast, the average  initial objective value of all the SGD runs in this experiment was 19.3101.) In view of the fact that $\inf_{u}\mathcal F(u)\geq 0$, we claim that the global optimal solutions to \eqref{unpenalized NN train} were well approximated, if not always achieved, by the above SGD scheme.}}

\begin{figure}
\begin{center}
\includegraphics[width=0.5\textwidth]{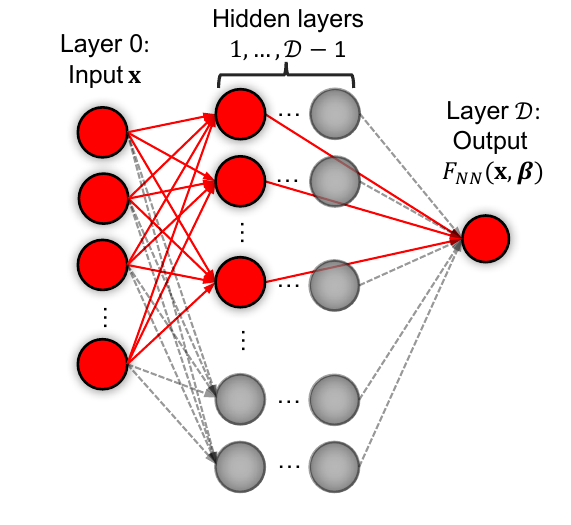}
    \caption{Illustration of the initialization scheme for training the ReLU-NN. The red nodes and links constitute the subnetwork to be initialized via Algorithm 2.}\label{initialization scheme}
        \end{center}
    \end{figure}

\begin{figure}[htbp]
\begin{center}
\begin{tabular}{ c c  }\setlength\extrarowheight{-10pt}
  \includegraphics[width=0.35\textwidth]{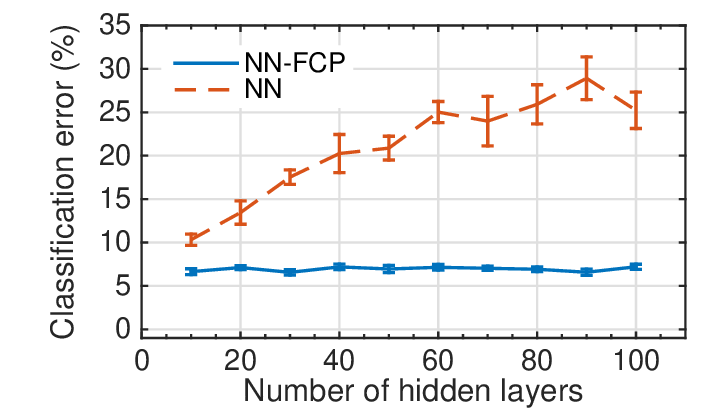} & \includegraphics[width=0.35\textwidth]{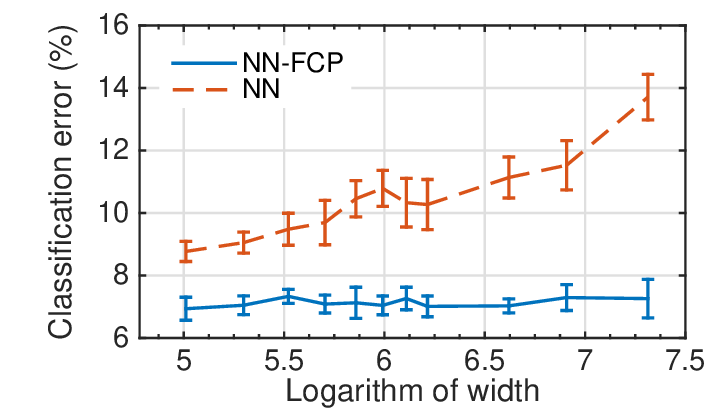}    \\ 
 
 (a)& (b)
 \\
 \includegraphics[width=0.35\textwidth]{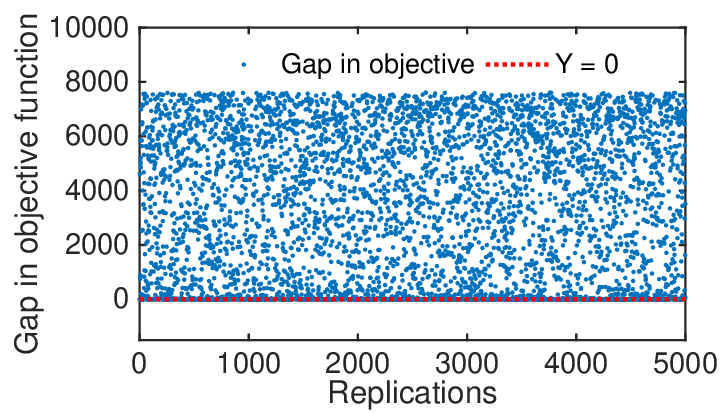} & \includegraphics[width=0.35\textwidth]{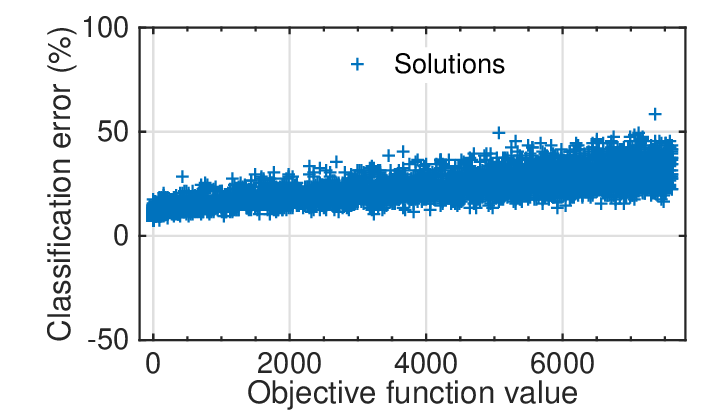}    \\ 
 
 (c)& (d)
 \end{tabular}
\end{center}
    \caption{Numerical evaluation of the FCP-regularized NN (referred to as the NN-FCP): (a) Comparison between the NN-FCP and the non-regularized NN (referred to as the NN) when the number of hidden layers grew; (b) Comparison between  the NN-FCP and the NN when the logarithm of the width of each hidden layer (that is, $\ln K$) increased; (c) Comparison between the objective function values of 5000 randomly generated solutions and that of the solution generated by combining Algorithms 1 and 2; (d) Evaluation of the dependence between in-sample training error and the out-of-sample performance.  All the error bars are centered at the average levels out of 10 random replications and radii are equal to  1.96 times the standard errors. }\label{special NN plot}
\end{figure}

 Our numerical results are presented in Figure \ref{special NN plot}. Some discussions on this figure are as below.
 \begin{itemize}
 \item[(i)] Subplot (a) of Figure \ref{special NN plot} reports the out-of-sample classification errors of the FCP-regularized ReLU-NN and the non-regularized ReLU-NN (referred to as the NN-FCP and the NN, respectively, in the figure) when  the width was fixed at $K=150$, and the number of hidden layers  was chosen from a pool of candidate values $\{10, 20, ..., 150\}$.  For each combination of width and depth, we replicated the experiment for ten  times. The center and the radius of each error bar in the plot are the average classification error and 1.96 times the corresponding standard error out of the ten replications. One can see that the performance of the FCP-regularized ReLU-NN was significantly better than the non-regularized ReLU-NN. Meanwhile, the performance of the former was insensitive to the growth in the depth of the network. This pattern was consistent with  Theorem \ref{local solution results}, but it also may have identified room for   further improvement in terms of the dependence on $\mathcal D$, at least for some regions of the hyper-parameters.
 \item[(ii)] Subplot (b) of Figure \ref{special NN plot} shows the out-of-sample classification errors of the FCP-regularized ReLU-NN  and the non-regularized ReLU-NN  when  the number of hidden layers was fixed to be two  and the width of the hidden layers  was set to be $K\in \{150,\,200,\,250,\,300,\,350,\,400,\,450,\,500,\,750,\,1000,\,1500\}$. Note that the number of fitting parameters $p$ is polynomial in $K$. In order to show the dependence of the generalization performance on $\ln p$, the X-axis of Subplot (b) is on $\ln K$. We can see from this subplot that the performance of the FCP-regularized ReLU-NN remained almost constant as $\ln K$ increased. In contrast, the non-regularized ReLU-NN deteriorated significantly when $\ln K$ became larger.
 \item[(iii)] {\label{discussion on figure suboptimality}\Copy{To show how well the FCP Copy}{To show how well the FCP-regularized ReLU-NN training formulation was optimized in our experiments through the combination of Algorithms 1 and 2, we present in Subplot (c) of Figure \ref{special NN plot} a test on the ReLU-NN with 100 hidden layers and 150 neurons per hidden layer --- the largest network among all the ReLU-NNs involved in (i) and (ii) above.
  For this model, we  generated 5000 random solutions to \eqref{regularized NN model} and compared their objective function values (in terms of \eqref{regularized NN model}) with that of the  solution $\widehat{\boldsymbol\beta}\in\Re^p$ computed by combining Algorithms 1 and 2 as above. The $m$th  (for all $m\in\{1,...,5000\}$) random solution was generated as per the following two-step process: {\bf Step 1.} We generated a random vector $\mathbf v^1_m:=\widehat{\boldsymbol\beta}+\nu_m$, where   $\nu_m\in\Re^p$ was a random sample of a centered Gaussian random vector with i.i.d. entries. The covariance matrix of each $\nu_m$ was prescribed to be $mod(m,\,25)\cdot R_m\cdot I$, where $mod(m,\,25)$ is the remainder of the Euclidean division of $m$ by $25$, $R_m$ denotes a  uniformly distributed random number on (0,\,1), and $I$ stands for the identity matrix. {\bf Step 2}. For all $m=1,...,5000$, we invoked Algorithm 1 to generate a new solution $\mathbf v^2_m\in\Re^p$  using $\mathbf v^1_m$ as the initial point. Here, Algorithm 1 was terminated  whenever either the stopping criterion in \eqref{termination criterion Algorithm 1} was met  ($\gamma_{opt}=10^{-6}$ and $\mathcal M=1$) or a maximal iteration number of 15 was reached. Of all these random solutions, if any could entail a smaller objective   value (w.r.t. the objective function in \eqref{regularized NN model}) than  $\widehat{\boldsymbol\beta}$, then it would mean that $\widehat{\boldsymbol\beta}$ was not the global minimizer.   A blue point in Subplot (c) of Figure \ref{special NN plot} represents one of those random solutions. The corresponding Y-axis of that point indicates  the difference between the objective values of $\mathbf v^2_m$ and $\widehat{\boldsymbol\beta}$. One may observe from the plot that, for all $m=1,...,5000$, the gaps in the objective  were always above zero. This indicates that   $\widehat{\boldsymbol\beta}$   well approximated, if not  coincided with,  a globally minimal solution to \eqref{regularized NN model}.}}
 
 \item[(iv)] In Subplot (d) of Figure  \ref{special NN plot}, we reorganized data from (iii) above to show the correspondence between the in-sample training errors  and the out-of-sample  errors. More specifically, we sorted the random solutions $\mathbf v^2_m$ in the ascending order of their objective values (w.r.t. the objective function in \eqref{regularized NN model}) and showed in this subplot the corresponding out-of-sample  classification errors of those solutions. In the subplot, each blue ``$+$'' represents one of the random solutions $\mathbf v_m^2$. The X- and Y-axis values at the center of each ``$+$'' are the corresponding objective function value and the out-of-sample error, respectively. One may observe that  these ``$+$''s  tend to cluster around an affine function. 
 \end{itemize}
 
Finally, it is worth noting that Algorithm 1 (which was initialized by Algorithm 2) always ran for more than one iteration in all the test instances. If we combine this observation with   Remark \ref{remark on whether algorithm 1 works} about Theorem \ref{local solution results}, we then know that $k^*(\widehat{\boldsymbol\beta}^{initial},\mathbf X,\mathbf y)>0$ (where $k^*(\widehat{\boldsymbol\beta}^{initial},\mathbf X,\mathbf y)$ is defined as in Theorem \ref{local solution results}) and, hence,  Algorithm 1 was indeed effective in our test.

\section{The ``modified gradient'' of the ReLU-NN}\label{detailed gradient NN}
In using Algorithm 1 to train the ReLU-NN of consideration, we follow the commonly adopted definition (e.g., by \cite{berner2019a}) of the (modified) gradient (denoted by $\widetilde\nabla_{\boldsymbol\beta} \mathcal F(y F_{NN}(\mathbf x,\boldsymbol\beta))$) of the training formulation.  In this definition, we denote that $\mathcal H(\mathbf v):=diag\left(\mathbb 1(v_1>0),\,\mathbb 1(v_2>0),...\right)$, for any vector $\mathbf v=(v_1,v_2,...)^\top$. More specifically, we let $\widetilde\nabla_{\boldsymbol\beta} \mathcal F(y F_{NN}(\mathbf x,\boldsymbol\beta)):=\left.\frac{d\mathcal F(t)}{dt}\right\vert_{t=y\cdot F_{NN}(\mathbf x,\boldsymbol\beta))}\cdot y\cdot  \frac{\widetilde dF_{NN}(\mathbf x,\boldsymbol\beta)}{\widetilde d\boldsymbol\beta}$, where  the formula for the components of $\frac{\widetilde dF_{NN}(\mathbf x,\boldsymbol\beta)}{\widetilde d\boldsymbol\beta}=\left(\frac{\widetilde\partial F_{NN}(\mathbf x,\boldsymbol\beta)}{\widetilde\partial \beta_j}:\,j=1,...,p\right)$ are given  below:
$$
\frac{\widetilde\partial F_{NN}(\mathbf x,\boldsymbol\beta)}{\widetilde\partial \boldsymbol w_{\mathcal D-1,\mathcal D}}:= f_{NN,\mathcal D-1} (\mathbf x,\boldsymbol\beta),~~~\frac{\widetilde\partial F_{NN}(\mathbf x,\boldsymbol\beta)}{\widetilde\partial  b_{\mathcal D-1,\mathcal D}}:= \mathbf 1,~~~\text{and}~~~\frac{\widetilde\partial F_{NN}(\mathbf x,\boldsymbol\beta)}{\widetilde\partial \boldsymbol w_{1,\mathcal D}}:=\Psi(\mathbf W_{0,1}\mathbf x+\mathbf b_{0,1}).
$$
Meanwhile, 
\begin{multline}
\frac{\widetilde\partial F_{NN}(\mathbf x,\boldsymbol\beta)}{\widetilde\partial \mathbf W_{l-1,l}}:=   \mathcal H\left(\mathbf W_{l-1,l} f_{NN,l-1}(\mathbf x,\boldsymbol\beta)+\mathbf b_{l-1,l}\right)
\cdot \mathbf W_{l,l+1}^\top\cdot  \mathcal H\left(\mathbf W_{l,l+1} f_{NN,l}(\mathbf x,\boldsymbol\beta)+\mathbf b_{l,l+1}\right)\cdot  ...
\\\cdot  \mathbf W_{\mathcal D-2,\mathcal D-1}^\top\cdot\mathcal H \left(\mathbf W_{\mathcal D-2,\mathcal D-1} f_{NN,\mathcal D-2}(\mathbf x,\boldsymbol\beta)+\mathbf b_{\mathcal D-2,\mathcal D-1}\right)\cdot  \boldsymbol w_{\mathcal D-1,\mathcal D}\cdot\left[ f_{NN,l-1}(\mathbf x,\boldsymbol\beta)\right]^\top,~~~\text{for all $l:\,2\leq l\leq \mathcal D-1$};\end{multline}
\begin{multline}
\frac{\widetilde\partial F_{NN}(\mathbf x,\boldsymbol\beta)}{\widetilde\partial \mathbf b_{l-1,l}}:=   \mathcal H\left(\mathbf W_{l-1,l} f_{NN,l-1}(\mathbf x,\boldsymbol\beta)+\mathbf b_{l-1,l}\right)\cdot \mathbf W^\top_{l,l+1}\cdot  \mathcal H\left(\mathbf W_{l,l+1} f_{NN,l}(\mathbf x,\boldsymbol\beta)+\mathbf b_{l,l+1}\right)\cdot
...\\ \cdot \mathbf W^\top_{\mathcal D-2,\mathcal D-1}\cdot  \mathcal H \left(\mathbf W_{\mathcal D-2,\mathcal D-1} f_{NN,\mathcal D-2}(\mathbf x,\boldsymbol\beta)+\mathbf b_{\mathcal D-2,\mathcal D-1}\right)\cdot  \boldsymbol w_{\mathcal D-1,\mathcal D},~~~\text{for all $l:\,2\leq l\leq \mathcal D-1$};\end{multline}
\begin{multline}
\frac{\widetilde\partial F_{NN}(\mathbf x,\boldsymbol\beta)}{\widetilde\partial \mathbf W_{0,1}}:=  \mathcal H\left(\mathbf W^\top_{0,1}\mathbf x+\mathbf b_{0,1}\right)
\cdot \mathbf W^\top_{1,2}\cdot  \mathcal H\left(\mathbf W^\top_{1,2} f_{NN,1}(\mathbf x,\boldsymbol\beta)+\mathbf b_{1,2}\right)\cdot ...
\\\cdot \mathbf W^\top_{\mathcal D-2,\mathcal D-1}\cdot  \mathcal H \left(\mathbf W^\top_{\mathcal D-2,\mathcal D-1} f_{NN,\mathcal D-2}(\mathbf x,\boldsymbol\beta)+\mathbf b_{\mathcal D-2,\mathcal D-1}\right)\cdot  \boldsymbol w_{\mathcal D-1,\mathcal D}\mathbf x^\top
+   \mathcal H \left(\mathbf W_{0,1} \mathbf x+\mathbf b_{0,1}\right)\cdot \boldsymbol w_{1,\mathcal D}\cdot \mathbf x^\top;
\end{multline}
\begin{multline}
\frac{\widetilde\partial F_{NN}(\mathbf x,\boldsymbol\beta)}{\widetilde\partial \mathbf b_{0,1}}:= \mathcal H\left(\mathbf W_{0,1}\mathbf x+\mathbf b_{0,1}\right)\cdot \mathbf W^\top_{1,2}\cdot  \mathcal H\left(\mathbf W_{1,2} f_{NN,1}(\mathbf x,\boldsymbol\beta)+\mathbf b_{1,2}\right)\cdot ...
\\\cdot \mathbf W^\top_{\mathcal D-2,\mathcal D-1}\cdot  \mathcal H \left(\mathbf W_{\mathcal D-2,\mathcal D-1} f_{NN,\mathcal D-2}(\mathbf x,\boldsymbol\beta)+\mathbf b_{\mathcal D-2,\mathcal D-1}\right)\cdot  \boldsymbol w_{\mathcal D-1,\mathcal D}
+   \mathcal H \left(\mathbf W_{0,1} \mathbf x+\mathbf b_{0,1}\right)\cdot \boldsymbol w_{1,\mathcal D}.
\end{multline}
The above calculation can be conducted via back-propagation. The function $\mathcal F(y F_{NN}(\mathbf x,\cdot))$ is  piecewise continuously differentiable. At points where the gradient is well-defined, the above calculation equals to the gradient exactly.  

  \section{The Applicability of Theorem \ref{nonsmooth case} to the high-dimensional SVM}\label{sec: applicability to SVM}
  This section discusses how Theorem \ref{nonsmooth case} can be used to analyze the generalization performance of SVM.  In particular, we determine here the proper values of $R$, $\sigma$, $\sigma_L$, and $\mathcal C_\mu$ in the instantiation of Assumptions  \ref{SVM assumption 1} and \ref{SVM Lipschitz problem}. We start by introducing a few short-hand notations. Let $\mathbf X=(\mathbf x^\top_i:\,i=1,...,n)$, $\mathbf y=(y_i)$,  
  \begin{align}
  \widetilde{\mathcal L}^{SVM}_{n}(\boldsymbol\beta,(\mathbf X,\mathbf y)):=&\,\rho\Vert \boldsymbol\beta\Vert^2+\frac{1}{n}\sum_{i=1}^n\max_{u_i:\,0\leq u_i\leq 1}\,\left\{u_i\cdot\left(1-y_i\mathbf x_i^\top\boldsymbol\beta \right)\right\},\nonumber
  \\\widetilde{\mathcal L}^{SVM}_{n,\delta}(\boldsymbol\beta,(\mathbf X,\mathbf y)):=&\,\rho\Vert \boldsymbol\beta\Vert^2+\frac{1}{n}\sum_{i=1}^n\max_{u_i:\,0\leq u_i\leq 1}\,\left\{u_i\cdot\left(1-y_i\mathbf x_i^\top\boldsymbol\beta \right)-\frac{(u_i-u_0)^2}{2n^{\delta}}\right\},~~~~~~~~~\text{and}\nonumber
  \\\widetilde{\mathcal L}^{SVM}_{n,\delta,\lambda}(\boldsymbol\beta,(\mathbf X,\mathbf y)):=&\,\rho\Vert \boldsymbol\beta\Vert^2+\frac{1}{n}\sum_{i=1}^n\max_{u_i:\,0\leq u_i\leq 1}\left\{u_i\cdot\left(1-y_i\mathbf x_i^\top\boldsymbol\beta \right)-\frac{(u_i-u_0)^2}{2n^{\delta}}\right\}+\sum_{j=1}^p P_\lambda(\vert \beta_j\vert).\nonumber
  \end{align}
  
  We  first  determine $R$ for the case of SVM. Observe that   $\inf_{\boldsymbol\beta}\mathbb E[L_{ns}(\boldsymbol\beta,Z_i)]\leq \mathbb E[L_{ns}(\mathbf 0,Z_i)]= 1$ and $\boldsymbol\beta^*\in\arg\,\inf_{\boldsymbol\beta}\mathbb E[L_{ns}(\boldsymbol\beta,Z_i)]$.  Recall that we have let $\rho=0.01$. Therefore, $\rho\Vert\boldsymbol\beta^*\Vert^2\leq 1\Longrightarrow \Vert\boldsymbol\beta^*\Vert\leq 10$. If $\boldsymbol\beta^*$ is dense and entails A-sparsity (in Assumption \ref{Assumption A-sparsity original}), there must exist a sparse $\boldsymbol\beta^*_{\varepsilon_A}\in [-10,\,10]^p$ that approximates  $\boldsymbol\beta^*$ in the sense of Assumption \ref{Assumption A-sparsity original}  by the continuity of $\mathbb E[L_{ns}(\,\cdot\,,Z_i)]$. 
Meanwhile, one may also observe that any solution $\widehat{\boldsymbol\beta}$ as defined in Part (b) of Theorem \ref{nonsmooth case} with (where $\widetilde{\mathcal L}_{n,\delta}(\boldsymbol\beta,\mathbf Z_1^n)$ and $\widetilde{\mathcal L}_{n,\delta,\lambda}(\boldsymbol\beta,\mathbf Z_1^n)$ in that theorem become $\widetilde{\mathcal L}^{SVM}_{n,\delta}(\boldsymbol\beta,(\mathbf X,\mathbf y))$ and $\widetilde{\mathcal L}^{SVM}_{n,\delta,\lambda}(\boldsymbol\beta,(\mathbf X,\mathbf y))$, respectively, in the case of SVM) must satisfy that  $\rho\Vert \widehat{\boldsymbol\beta} \Vert^2-1\leq \widetilde{\mathcal L}^{SVM}_{n,\delta,\lambda}(\widehat{\boldsymbol\beta},(\mathbf X,\mathbf y))\leq \widetilde{\mathcal L}^{SVM}_{n,\delta,\lambda}(\widehat{\boldsymbol\beta}^{\ell_1,\delta},(\mathbf X,\mathbf y))\leq \widetilde{\mathcal L}^{SVM}_{n,\delta}(\widehat{\boldsymbol\beta}^{\ell_1,\delta},(\mathbf X,\mathbf y))+\lambda\vert \widehat{\boldsymbol\beta}^{\ell_1,\delta}\vert$, w.p.1., where the last inequality is due to the observation that $P_\lambda(\vert t\vert)\leq \lambda\vert t\vert$ (which is an immediate result of the FCP's definition). Because $\widehat{\boldsymbol\beta}^{\ell_1,\delta}$ is the minimizer to the $\ell_1$-regularized problem, we thus may continue the above as $\rho\Vert \widehat{\boldsymbol\beta} \Vert^2-1 \leq\widetilde{\mathcal L}^{SVM}_{n,\delta}(\widehat{\boldsymbol\beta}^{\ell_1,\delta},\,(\mathbf X,\mathbf y))+\lambda\vert \widehat{\boldsymbol\beta}^{\ell_1,\delta}\vert\leq \widetilde{\mathcal L}^{SVM}_{n}(\widehat{\boldsymbol\beta}^{\ell_1,\delta},\,(\mathbf X,\mathbf y))+\lambda\vert \widehat{\boldsymbol\beta}^{\ell_1,\delta}\vert\leq \left[\widetilde{\mathcal L}_{n}(\boldsymbol\beta,\,(\mathbf X,\mathbf y))+\lambda\vert \boldsymbol\beta \vert\right]_{\boldsymbol\beta=\mathbf 0}\leq  1$, with probability one. Therefore, $\Vert \widehat{\boldsymbol\beta}\Vert\leq \sqrt{200}= 10\sqrt{2}$, a.s.  Thus, $R=10\sqrt{2}$. 

Second, we verify Assumption  \ref{SVM assumption 1} and determine $\sigma$.   Because, with probability one, it holds simultaneously that
\begin{align}\widetilde{\mathcal L}^{SVM}_{n,\delta}(\boldsymbol\beta,(\mathbf X,\mathbf y))\leq& \rho\Vert \boldsymbol\beta\Vert^2+\frac{1}{n}\sum_{i=1}^n\max_{u_i:\,0\leq u_i\leq 1}~\left\{u_i\cdot\left(1-y_i\mathbf x_i^\top\boldsymbol\beta \right)\right\}
\\\leq & \rho\Vert \boldsymbol\beta\Vert^2+ \frac{1}{n}\sum_{i=1}^n\left\{\left(1+\vert\mathbf x_i\vert\Vert\boldsymbol\beta\Vert_\infty \right)\right\}\leq \rho\Vert \boldsymbol\beta\Vert^2+ \frac{1}{n}\sum_{i=1}^n\left\{\left(1+\vert\mathbf x_i\vert\cdot R \right)\right\}\leq 10\sqrt{2}+3,~~a.s.,
\end{align} and 
\begin{align}
\widetilde{\mathcal L}^{SVM}_{n,\delta}(\boldsymbol\beta,(\mathbf X,\mathbf y))\geq&  \rho\Vert \boldsymbol\beta\Vert^2-n^{-1}\sum_{i=1}^n\max_{0\leq u_i\leq 1}\left\{(1+\vert \mathbf x_i\vert\Vert \boldsymbol\beta\Vert_\infty) +\frac{(u_i- u_0)^2}{2n^{\delta}}\right\}
\\\geq& \rho\Vert \boldsymbol\beta\Vert^2-n^{-1}\sum_{i=1}^n\max_{0\leq u_i\leq 1}\left\{(1+\vert \mathbf x_i\vert\cdot R) +\frac{(u_i- u_0)^2}{2n^{\delta}}\right\}\geq -10\sqrt{2}-2,~~a.s.,
\end{align}
 for all $\boldsymbol\beta\in[-R,\,R]^p$.  Thus, the random variable $\widetilde{\mathcal L}^{SVM}_{n,\delta}(\boldsymbol\beta,(\mathbf X,\mathbf y))$ has a bounded support. As an immediate result, $\widetilde{\mathcal L}^{SVM}_{n,\delta}(\boldsymbol\beta,(\mathbf X,\mathbf y))$ is subexponential with $\sigma\leq O(1)$. 
 
Third, we verify Assumption \ref{SVM Lipschitz problem} and  determine $\sigma_L$ and $\mathcal C_\mu$. To that end, we observe that $\widetilde{\mathcal L}^{SVM}_{n,\delta}(\boldsymbol\beta,(\mathbf X,\mathbf y))$ is verifiably Lipschitz continous in $\boldsymbol\beta$. To see this, note that the gradient of the above function w.r.t. $\boldsymbol\beta$ is given as $\nabla_{\boldsymbol\beta}\widetilde{\mathcal L}^{SVM}_{n,\delta}(\boldsymbol\beta,(\mathbf X,\mathbf y))=2\rho\boldsymbol\beta-\frac{1}{n}\sum_{i=1}^n u_i^*y_i\mathbf x_i$, where $u_i^*$, for $i=1,...,n$, is the maximizer to the (inner) maximization problem: $\max_{u_i:\,0\leq u_i\leq 1}~ \left\{u_i\cdot\left(1-y_i\mathbf x_i^\top\boldsymbol\beta \right)-\frac{(u_i- u_0)^2}{2n^{\delta}}\right\}$. The norm of the gradient is bounded from above by $2\rho  R\sqrt{p}+1$, almost surely, for all $\boldsymbol\beta\in[-R,\,R]^p$.  Thus, Assumption \ref{SVM Lipschitz problem} holds with   $\sigma_L=0$ and $\mathcal C_\mu=2\rho  R\sqrt{p}+1=0.2\sqrt{2p}+1\leq O(1)\cdot \sqrt{p}$.

In sum, the FCP-based formulation \eqref{SVM reformulated} for the high-dimensional SVM satisfies both Assumptions \ref{SVM assumption 1} and \ref{SVM Lipschitz problem} with $R\leq O(1)$, $\sigma\leq O(1)$, $\sigma_L=0$, and $\mathcal C_\mu\leq O(1)\sqrt{p}$. Because the generalization error bound in \eqref{nonsmooth excess risk bound detail} is logarithmic in $\mathcal C_\mu$, Theorem \ref{nonsmooth case} can then be applied to show the poly-logarithmic sample complexity for the FCP-regularized SVM. Finally, we would like to remark that some more careful analysis may relax the stipulation on data normalization (such that $\vert\mathbf x_i\vert\leq 1$ a.s.) and improve   the aforementioned quantities.
  
\section{Technical proofs}
 
\subsection{Proof of sample complexities of HDSL under A-sparsity}\label{subsection proof proposition 6}

The proofs for Propositions \ref{second theorem} through \ref{comprehensive proposition} are provided below.  The demonstration of Proposition \ref{second theorem} is an immediate result of Proposition \ref{comprehensive proposition}, which further  relies on Propositions \ref{Proposition 1} through \ref{bound dimensions}.
\begin{proof}{Proof of Proposition \ref{second theorem}.}
Invoking   Proposition \ref{Proposition 1} under the assumption that $a<{U_L}^{-1}$, we have that $\mathbb P\left[\left\{\text{$\vert\widehat\beta_j\vert\notin(0,\,a\lambda)$ for all $j$}\right\}\right]=1$. This,  combined with Proposition \ref{comprehensive proposition}, yields the desired result.  \hfill \ensuremath{\Box}
\end{proof}
\begin{newproposition}\label{Proposition 1}

Suppose that  $a<{U_L}^{-1}$. For any random vector $\widehat{\boldsymbol\beta}\in\Re^p$ such that  $\widehat{\boldsymbol\beta}\in\Re^p:\,\Vert \widehat{\boldsymbol\beta}\Vert_\infty\leq R$ and the {\it S$^3$ONC}$(\mathbf Z_1^n)$  is satisfied at $\widehat{\boldsymbol\beta}$ almost surely. Then,
$$\mathbb P\left[\left\{\text{$\vert\widehat\beta_j\vert\notin(0,\,a\lambda)$ for all $j$}\right\}\right]=1.$$
\end{newproposition}
\begin{proof}{Proof.}  
 Since $\widehat{\boldsymbol\beta}$  satisfies the {\it S$^3$ONC}$(\mathbf Z_1^n)$  almost surely, Eq.\,\eqref{second condition 1} implies that, for any $j\in\{1,...,p\}:\, \vert\widehat\beta_j\vert\in(0,\,a\lambda)$, it holds that
$
0\leq\,U_L+P''_\lambda(\vert\widehat\beta_j\vert)
=  U_L-\frac{1}{a},$
which, combined with the fact that   $\frac{\partial^2 P_\lambda(t)}{\partial t^2}=-a^{-1}$ for $t\in(0,\,a\lambda)$, contradicts with the assumption that $U_L<\frac{1}{a}$.  The above contradiction implies that $\mathbb P[\{\widehat{\boldsymbol\beta}\text{ satisfies the S$^3$ONC$(\mathbf Z_1^n)$}\}\cap\{\vert\widehat\beta_j\vert\in(0,\,a\lambda)\}]=0\Longrightarrow 0\geq 1-\mathbb P[\{\widehat{\boldsymbol\beta}\text{ does not satisfy the S$^3$ONC$(\mathbf Z_1^n)$}\}]-\mathbb P[\{\vert\widehat\beta_j\vert\notin(0,\,a\lambda)\}]$. Since  $\mathbb P[\{\widehat{\boldsymbol\beta}\text{ satisfies the S$^3$ONC$(\mathbf Z_1^n)$}\}]=1$, it holds that $\mathbb P[\{\vert\widehat\beta_j\vert\notin(0,\,a\lambda)\}]=1$ for all $j=1,...,p$, which immediately leads to the desired result.
  \hfill \ensuremath{\Box}\end{proof}


\begin{newproposition}\label{test proposition important} Suppose that Assumptions \ref{sub exponential condition} and \ref{Lipschitz condition} hold.
 Let $\epsilon\in(0,\, 1]$, ${p'}:\,{p'}> s$,  $\zeta_1(\epsilon):=\ln\left(\frac{3 \cdot {(\sigma_L+\mathcal  C_\mu)}\cdot p\cdot eR}{\epsilon}\right)$, and  $\mathcal B_{{p'},R}:=\left\{\boldsymbol\beta\in\Re^{p}:\, \Vert\boldsymbol\beta\Vert_\infty\leq R,\, \left\Vert\boldsymbol\beta \right\Vert_0\leq {p'} \right\}.$  Then, for the same $c\in(0,\,0.5]$ as in \eqref{Bernstein inequality result} and for some  universal constant $\widetilde c>0$,
$$
\sup_{\boldsymbol\beta\in \mathcal B_{{p'},R}}\left\vert\frac{1}{n}\sum_{i=1}^n L(\boldsymbol\beta,Z_i)-\mathbb   L(\boldsymbol\beta)\right\vert
\leq  {\frac{\sigma }{\sqrt{n}}}\sqrt{\frac{2 {p'}}{c}\zeta_1(\epsilon)} +\frac{\sigma }{n}\cdot\frac{2 {p'}}{c}\zeta_1(\epsilon)+\epsilon$$ 
with probability at least $1-2\exp\left(- {p'}\zeta_1(\epsilon)\right)-2\exp(-\widetilde cn)$.
\end{newproposition}
\begin{proof}{Proof.}  
We  follow the ``$\epsilon$-net''  argument as discussed by \citet{vershynin2012a} and   \citet{shapiro2014a} to construct a net of discretization grids $\mathcal S(\epsilon):=\{\widetilde{\boldsymbol\beta}^k\}\subseteq \mathcal B_{{p'},R} $ such that for any $\boldsymbol\beta\in \mathcal B_{{p'},R} $, there is  $\boldsymbol\beta^k\in \mathcal S(\epsilon)$ that satisfies $\Vert\boldsymbol\beta^k-\boldsymbol\beta\Vert\leq \frac{\epsilon}{2\sigma_L+2\mathcal  C_\mu}$  for any fixed $\epsilon\in(0,\,1]$. 

To that end, we first consider a fixed index set $\mathcal I\subseteq\{1,...,p\}:\,\vert \mathcal I\vert ={p'}$ and an arbitrary  $\boldsymbol\beta\in\mathcal B_{{p'},R}\cap\{\boldsymbol\beta\in\Re^p:\,\beta_j=0,\,\forall j\notin   \mathcal I\}$.  To ensure  that there always exists $\widetilde{\boldsymbol\beta}^k \in \mathcal S(\epsilon)$ such that
\begin{align}\left\Vert\boldsymbol\beta_{\mathcal I}-\widetilde{\boldsymbol\beta}^k_{\mathcal I}\right\Vert\leq \frac{\epsilon}{ (2\sigma_L+2\mathcal  C_\mu)},~~~~\text{with~~ $\boldsymbol\beta_{\mathcal I}=(\beta_j:\,j\in\mathcal I)$ and $\widetilde{\boldsymbol\beta}^k_{\mathcal I}=(\widetilde\beta^k_j:\,j\in\mathcal I)$},\label{sample grids new}
\end{align}
it is sufficient to have a covering number of no more than
$\left(\left\lceil\frac{2 {(\sigma_L+\mathcal  C_\mu)p'R}}{\epsilon}\right\rceil\right)^{{p'}}$.
Now we consider how to cover all ${p'}$-dimensional subspaces by enumerating all possible $\mathcal I\subseteq \{1,...,p\}:\,\vert \mathcal I\vert={p'}$.  For each  $\mathcal I$,  an $\epsilon$-net with $\left(\left\lceil\frac{2 {(\sigma_L+\mathcal  C_\mu)Rp'}}{\epsilon}\right\rceil\right)^{{p'}}$-many grids can be constructed   to ensure \eqref{sample grids new} and there could be $p\choose{{p'}}$-many possible choices of $\mathcal I$'s. Therefore, to guarantee the existence of  $\boldsymbol\beta^k\in \mathcal S(\epsilon)$ that satisfies $\Vert\boldsymbol\beta^k-\boldsymbol\beta\Vert\leq \frac{\epsilon}{{2\sigma_L+2\mathcal  C_\mu}}$  for any fixed $\epsilon\in(0,\,1]$ and $\boldsymbol\beta\in\mathcal B_{{p'},R}$, it is sufficient to let $\vert \mathcal S(\epsilon)\vert:
={{p}\choose{ {p'}}}\left(\left\lceil\frac{ {{p'}\cdot(2\sigma_L+2\mathcal  C_\mu)R}}{\epsilon}\right\rceil\right)^{{p'}}$.
We notice that $\frac{{ {p'}} {(\sigma_L+\mathcal  C_\mu)R}}{\epsilon}\geq1$ and thus 
$\left\lceil\frac{{ {p'}}\cdot {(2\sigma_L+2\mathcal  C_\mu)R}}{\epsilon}\right\rceil\leq \frac{{ {p'}}\cdot {(2\sigma_L+2\mathcal  C_\mu)R}}{\epsilon}+1
  \leq \frac{3{ {p'}}\cdot{(\sigma_L+\mathcal  C_\mu)} R}{\epsilon}.$
 Therefore, $\vert \mathcal S(\epsilon)\vert\leq\left(\frac{3 \cdot {(\sigma_L+\mathcal  C_\mu)}  peR}{\epsilon}\right)^{ {p'}}$ due to ${{p}\choose{{p'}}}\leq \left(\frac{ pe}{{p'}}\right)^{{p'}}$ and, further invoking union bound and De Morgan's Law, it holds that
\begin{align}
&\mathbb P\left[\max_{\boldsymbol{\beta}^k\in \mathcal S(\epsilon)}\left\vert\frac{1}{n}\sum_{i=1}^n L(\boldsymbol\beta^k,Z_i)-\mathbb E\left[\frac{1}{n}\sum_{i=1}^n L(\boldsymbol\beta^k,Z_i)\right]\right\vert
\leq \sigma \sqrt{\frac{ t}{n}}
+\frac{\sigma t}{n} \right]  \nonumber
\\=&\mathbb P\left[\bigcap_{\boldsymbol{\beta}^k\in \mathcal S(\epsilon)}\left\{\left\vert\frac{1}{n}\sum_{i=1}^n L(\boldsymbol\beta^k,Z_i)-\mathbb E\left[\frac{1}{n}\sum_{i=1}^n L(\boldsymbol\beta^k,Z_i)\right]\right\vert
\leq \sigma \sqrt{\frac{ t}{n}}
+\frac{\sigma t}{n} \right\}\right]  \nonumber
\\\geq&1-\sum_{\boldsymbol{\beta}^k\in \mathcal S(\epsilon)}\mathbb P\left[\left\vert\frac{1}{n}\sum_{i=1}^n L(\boldsymbol\beta^k,Z_i)-\mathbb E\left[\frac{1}{n}\sum_{i=1}^n L(\boldsymbol\beta^k,Z_i)\right]\right\vert
> \sigma \sqrt{\frac{ t}{n}}
+\frac{\sigma t}{n}\right].  \nonumber
\end{align}
Further invoking the Bernstein-type inequality for a subexponential distribution as mentioned in Remark \ref{remark beinstein}, for  $c$ is as in \eqref{Bernstein inequality result}, it holds that
\begin{multline}
\mathbb P\left[\max_{\boldsymbol{\beta}^k\in \mathcal S(\epsilon)}\left\vert\frac{1}{n}\sum_{i=1}^n L(\boldsymbol\beta^k,Z_i)-\mathbb E\left[\frac{1}{n}\sum_{i=1}^n L(\boldsymbol\beta^k,Z_i)\right]\right\vert
\leq \sigma \sqrt{\frac{ t}{n}}
+\frac{\sigma t}{n} \right] \geq 1-\vert \mathcal S(\epsilon)\vert\cdot2\exp(-ct) 
\\ \geq 1-2\left(\frac{3 \cdot {(\sigma_L+\mathcal  C_\mu)}\cdot  p eR}{\epsilon}\right)^{ {p'}} \cdot\exp(-ct).\label{to use here}
\end{multline}
Furthermore, in view of Lemma  \ref{new lemma result},  it holds that
\begin{multline}\left\vert \frac{1}{n}\sum_{i=1}^n L(\boldsymbol\beta,Z_i) -\frac{1}{n}\sum_{i=1}^n L(\boldsymbol\beta^k,Z_i) \right\vert  +\left\vert \mathbb E\left[\frac{1}{n}\sum_{i=1}^n L(\boldsymbol\beta,Z_i)\right] -\mathbb E\left[\frac{1}{n}\sum_{i=1}^n L(\boldsymbol\beta^k,Z_i) \right]\right\vert
\leq 2(\sigma_L+\mathcal C_{\mu})\Vert \boldsymbol\beta-\boldsymbol\beta^k\Vert,
\end{multline}
 with probability at least $1-2\exp(-\widetilde c\cdot n)$ for some universal constant $\widetilde c>0$. Therefore, for any $\boldsymbol\beta\in \mathcal B_{{p'},R}$ and $\boldsymbol\beta^k\in \mathcal S(\epsilon)$,  it holds with the same probability that
\begin{multline}
 \left\vert\frac{1}{n}\sum_{i=1}^n L(\boldsymbol\beta,Z_i)-\mathbb E\left[\frac{1}{n}\sum_{i=1}^n L(\boldsymbol\beta,Z_i)\right]\right\vert   
\leq \left\vert\frac{1}{n}\sum_{i=1}^n L(\boldsymbol\beta^k,Z_i)-\mathbb E\left[\frac{1}{n}\sum_{i=1}^n L(\boldsymbol\beta^k,Z_i)\right]\right\vert  
\\+ \left\vert \frac{1}{n}\sum_{i=1}^n L(\boldsymbol\beta,Z_i) -\frac{1}{n}\sum_{i=1}^n L(\boldsymbol\beta^k,Z_i) \right\vert  +\left\vert \mathbb E\left[\frac{1}{n}\sum_{i=1}^n L(\boldsymbol\beta,Z_i)\right] -\mathbb E\left[\frac{1}{n}\sum_{i=1}^n L(\boldsymbol\beta^k,Z_i) \right]\right\vert
\\\leq 2(\sigma_L+\mathcal C_\mu)\Vert\boldsymbol\beta-\boldsymbol\beta^k\Vert+ \left\vert\frac{1}{n}\sum_{i=1}^n L(\boldsymbol\beta^k,Z_i)-\mathbb E\left[\frac{1}{n}\sum_{i=1}^n L(\boldsymbol\beta^k,Z_i)\right]\right\vert.\label{to minimize right}
\end{multline}
Combining the above  with \eqref{to use here}, we obtain  that
 \begin{align}
 \left\vert\frac{1}{n}\sum_{i=1}^n L(\boldsymbol\beta,Z_i)-\mathbb E\left[\frac{1}{n}\sum_{i=1}^n L(\boldsymbol\beta,Z_i)\right]\right\vert   
\leq2(\sigma_L+\mathcal C_\mu)\Vert\boldsymbol\beta-\boldsymbol\beta^k\Vert+ \sigma \sqrt{\frac{ t}{n}}
+\frac{\sigma t}{n} \nonumber
\end{align}
with probability at least $1-2\left(\frac{3 \cdot {(\sigma_L+\mathcal  C_\mu)}\cdot  p eR}{\epsilon}\right)^{ {p'}} \cdot\exp(-ct)-2\exp(-\widetilde c\cdot n)$.  Always picking the closest $\boldsymbol\beta^k$ to $\boldsymbol\beta$, we have, in view of \eqref{sample grids new}, for any $\epsilon:\, 0<\epsilon\leq 1$:
$
\mathbb P\left[\max_{\boldsymbol\beta\in \mathcal B_{{p'},R}}\left\vert\frac{1}{n}\sum_{i=1}^n L(\boldsymbol\beta,Z_i)-\mathbb E\left[\frac{1}{n}\sum_{i=1}^n L(\boldsymbol\beta,Z_i)\right]\right\vert
\leq \sigma   \sqrt{\frac{t}{n}}
+\frac{\sigma t}{n}+ \epsilon  \right]
\geq 1-2\cdot\exp(-ct)\cdot \left(\frac{3 \cdot {(\sigma_L+\mathcal  C_\mu)}\cdot  p\cdot eR}{\epsilon}\right)^{ {p'}}
-2\exp(-\widetilde cn).$
Further letting $t:=\frac{2 {p'}}{c}\zeta_1(\epsilon)$, where we recall that $\zeta_1(\epsilon):=\ln\left(\frac{3 \cdot  (\sigma_L+\mathcal  C_\mu)\cdot p\cdot eR}{\epsilon}\right)$, we then obtain the desired result.
    \hfill \ensuremath{\Box}\end{proof}


\begin{newproposition}\label{bound dimensions}
  Let $\Gamma\geq 0$, $\epsilon\in(0,\, 1]$,  and $
\zeta_1(\epsilon):=\ln\left(\frac{3 \cdot {({\sigma }_C+\mathcal  C_\mu)}\cdot p\cdot eR}{\epsilon}\right)$. Suppose that Assumptions \ref{Assumption A-sparsity}, \ref{sub exponential condition} and \ref{Lipschitz condition} hold. Consider any random vector $\widehat{\boldsymbol\beta}=(\widehat\beta_j:\,j=1,...,p)\in\Re^p$ such that $\Vert\widehat{\boldsymbol\beta}\Vert_\infty\leq R$ and $\left\vert\widehat\beta_j\right\vert\notin(0,\,a\lambda)$, for all $j$, almost surely, and
 \begin{align}
& \mathcal L_{n,\lambda}(\widehat{\boldsymbol\beta},\mathbf Z_1^n)\leq \mathcal L_{n,\lambda}({\boldsymbol\beta}_{\varepsilon_{A}}^*,\mathbf Z_1^n)+\Gamma,~~w.p.1.\label{epsilon delta}
\end{align}
For any fixed positive integer ${p'_u}:\,{p'_u}>s$, if 
\begin{align}
({p'}-s)\cdot P_\lambda(a\lambda) > \frac{4\sigma }{cn}\zeta_1(\epsilon)\cdot  {p'}
+{\frac{2\sigma  }{\sqrt{n}}}\sqrt{\frac{2{p'}}{c}\zeta_1(\epsilon)}+\Gamma+2\epsilon+\varepsilon_{A},\label{condition on value lambda 1}
\end{align}
for all ${p'}:\,  {p'_u}\leq {p'}\leq p$,  then 
$
\mathbb P[\Vert\widehat{\boldsymbol\beta} \Vert_0\leq {p'_u}-1]\geq \,1-2 p \exp(-\widetilde cn)-4\exp\left(- {p'_u}\zeta_1(\epsilon)\right)\nonumber
$ for the same $c$ in \eqref{Bernstein inequality result} and some $\widetilde c>0$.
\end{newproposition}
\begin{proof}{Proof.}   
Let $\mathcal B_{R}:=\{\boldsymbol\beta\in\Re^p:\,\Vert\boldsymbol\beta\Vert_{\infty}\leq R\}$.
Consider an arbitrary ${p'}:\,  {p'_u}\leq{p'}\leq p$. Since   ${p'}>s$ by the assumption that ${p'_u}>s$, {we may consider the following sets:} 
\begin{align}\mathcal E_{\Gamma}^1:=&\,\left\{(\widetilde{\boldsymbol\beta},\,\widetilde{\mathbf Z}_1^n)\in\mathcal B_{R}\times \mathcal W^n:\,\mathcal L_{n,\lambda}(\widetilde{\boldsymbol\beta},\,\widetilde{\mathbf Z}_1^n)\leq \mathcal L_{n,\lambda}({\boldsymbol\beta}^*_{\varepsilon_{A}},\,\widetilde{\mathbf Z}_1^n)+\Gamma\right\},\nonumber
\\
\mathcal E_{{p'}}^2:=&\,\left\{ \widetilde{\boldsymbol\beta}\in\mathcal B_{R}:\,\Vert\widetilde{\boldsymbol\beta} \Vert_0={p'}\right\}, \nonumber
\\
\mathcal E_{{p'}}^3:=&\,\left\{\widetilde{\mathbf Z}_1^n\in\mathcal W^n:\,\sup_{\boldsymbol\beta\in\mathcal B_{R}:\,\Vert \boldsymbol\beta\Vert_0\leq {p'}}\left\vert\mathcal L_n(\boldsymbol\beta,\widetilde{\mathbf Z}_1^n)-\mathbb  L (\boldsymbol\beta)\right\vert\leq  {\frac{\sigma }{\sqrt{n}}}\sqrt{\frac{2 {p'}}{c}\zeta_1(\epsilon)}+\frac{\sigma \zeta_1(\epsilon)}{n} \frac{2 {p'}}{c}+\epsilon\right\},\nonumber
\\\mathcal E^4:=&\,\{\widetilde{\boldsymbol\beta}\in\mathcal B_{R}:\, \text{$\vert\widetilde\beta_j\vert\notin(0,\,a\lambda)$ for all $j$}\}.\nonumber
\end{align}
Note that     $\widetilde{\boldsymbol\beta}\in \mathcal E_{{p'}}^2\cap \mathcal E^4$, which means that  $\widetilde{\boldsymbol\beta}$ has     ${p'}$-many   nonzero dimensions and the absolute value for each nonzero dimension must not be within the interval $(0,\,a\lambda)$.  Then, for all $(\widetilde{\boldsymbol\beta},\widetilde{\mathbf Z}_1^n)\in\{(\widetilde{\boldsymbol\beta},\widetilde{\mathbf Z}_1^n)\in \mathcal E_{\Gamma}^1\}\cap\{\widetilde{\boldsymbol\beta}\in \mathcal E^4\cap \mathcal E_{{p'}}^2\}$, where $\widetilde{\mathbf Z}_1^n=(\widetilde  Z_1,...,\widetilde Z_n)$, it holds that
\begin{align}
\frac{1}{n}\sum_{i=1}^n L(\widetilde{\boldsymbol\beta},\widetilde Z_i) +{p'}P_\lambda(a\lambda)\leq \frac{1}{n}  \sum_{i=1}^n L({\boldsymbol\beta^*_{\varepsilon_{A}}},\widetilde Z_i) +s P_\lambda(a\lambda)+\Gamma.\label{first to combine here}
\end{align}

Since $\boldsymbol\beta^*_{\varepsilon_{A}}\in\mathcal B_R:\,\Vert\boldsymbol\beta^*_{\varepsilon_{A}}\Vert_0=s<{p'}$, we may obtain that, for all $\widetilde{\boldsymbol\beta} \in   \mathcal E_{{p'}}^2$, 
\begin{align}
&\,\frac{1}{n}\sum_{i=1}^n L(\boldsymbol\beta^*_{\varepsilon_{A}},\widetilde Z_i) -\frac{1}{n}\sum_{i=1}^n L(\widetilde{\boldsymbol\beta},\widetilde Z_i) \nonumber
\\=&\, \left[\frac{1}{n}\sum_{i=1}^n L(\boldsymbol\beta^*_{\varepsilon_{A}},\widetilde Z_i) -\mathbb  L({\boldsymbol\beta^*_{\varepsilon_{A}}})  \right]+ \left[\mathbb  L (\widetilde{\boldsymbol\beta})  
-\frac{1}{n}\sum_{i=1}^n L(\widetilde{\boldsymbol\beta},\widetilde Z_i) \right]
+  \mathbb   L ({\boldsymbol\beta^*_{\varepsilon_{A}}} ) -\mathbb  L(\widetilde{\boldsymbol\beta})  \nonumber
\\\leq& \, 2\sup_{\boldsymbol\beta\in   \mathcal E_{{p'}}^2} \left\vert\frac{1}{n}\sum_{i=1}^n L(\boldsymbol\beta,\widetilde Z_i)-\mathbb   L(\boldsymbol\beta )  \right\vert +\mathbb   L({\boldsymbol\beta^*_{\varepsilon_{A}}}) -\mathbb  L(\widetilde{\boldsymbol\beta}) \nonumber
\\\leq &\, 2\sup_{\boldsymbol\beta\in   \mathcal E_{{p'}}^2} \,\,\left\vert\frac{1}{n}\sum_{i=1}^n L(\boldsymbol\beta,\widetilde Z_i)-\mathbb  L({\boldsymbol\beta})  \right\vert+\varepsilon_{A}, \label{resolution inequality}
\end{align}
where the last inequality is due to $L_g^*\leq \mathbb   L({\boldsymbol\beta^*_{\varepsilon_{A}}})$ and $\mathbb   L({\boldsymbol\beta^*_{\varepsilon_{A}}}) -L_g^*\leq \varepsilon_{A} \Longrightarrow \mathbb   L({\boldsymbol\beta^*_{\varepsilon_{A}}}) -\mathbb  L({\boldsymbol\beta})\leq \varepsilon_{A}$ for all $\boldsymbol\beta\in\mathcal B_R$.


 For any   ${p'}:\, {p'_u}\leq {p'}\leq p$, if we suppose that $ \emptyset\neq\{(\widetilde{\boldsymbol\beta},\widetilde{\mathbf Z}_1^n)\in\mathcal E_{\Gamma}^1\}\cap\{\widetilde{\boldsymbol\beta} \in\mathcal E_{{p'}}^2\cap \mathcal E^4\}\cap\{\widetilde{\mathbf Z}_{1}^n\in \mathcal E_{{p'}}^3\}$, then   \eqref{first to combine here},  \eqref{resolution inequality} and the definition of $\mathcal E_{{p'}}^3$ together would imply that $
({p'}-s)\cdot P_\lambda(a\lambda)\leq {\frac{2\sigma  }{\sqrt{n}}}\sqrt{\frac{2 {p'}}{c}\zeta_1(\epsilon)}
 +\frac{4\sigma }{n}\frac{ {p'}}{c}\zeta_1(\epsilon)+2\epsilon+\Gamma+\varepsilon_{A}$, which contradicts with the assumed inequality \eqref{condition on value lambda 1}.  Therefore, under the assumption that \eqref{condition on value lambda 1} holds and   $\widehat{\boldsymbol\beta}$  satisfies the {\it S$^3$ONC}$(\mathbf Z_1^n)$ with probability one,
 \begin{align}
 0=&\,\mathbb P\left[\{(\widehat{\boldsymbol\beta},{\mathbf Z}_1^n)\in\mathcal E_{\Gamma}^1\}\cap\{\widehat{\boldsymbol\beta} \in\mathcal E_{{p'}}^2\cap \mathcal E^4\}\cap\{\mathbf Z_1^n\in\mathcal E_{{p'}}^3\}\right]\nonumber
 \\\geq &\,1-\mathbb P\left[ \widehat{\boldsymbol\beta} \notin \mathcal E_{{p'}}^2  \right]-\mathbb P\left[\mathbf Z_1^n \notin \mathcal E_{{p'}}^3\right]-\left\{1-\mathbb P\left[(\widehat{\boldsymbol\beta},{\mathbf Z}_1^n)\in\mathcal E_{\Gamma}^1,\, \widehat{\boldsymbol\beta} \in \mathcal E^4 \right]\right\},
 \label{last inequality to obtain}
 \end{align} 
 for all ${p'}:\, {p'_u}\leq {p'}\leq p$. Now, invoke Proposition \ref{Proposition 1}, $\mathbb P\left[(\widehat{\boldsymbol\beta},{\mathbf Z}_1^n)\in\mathcal E_{\Gamma}^1,\, \widehat{\boldsymbol\beta} \in \mathcal E^4\right]=1$, since $\widehat{\boldsymbol\beta}$ satisfies both the S$^3$ONC$(\mathbf Z_1^n)$ and \eqref{epsilon delta} with probability one. Therefore, \eqref{last inequality to obtain} implies that,  for all ${p'}:\, {p'_u}\leq {p'}\leq p$,
$\mathbb P\left[\mathbf Z_1^n \notin \mathcal E_{{p'}}^3 \right]\geq \mathbb P\left[ \widehat{\boldsymbol\beta} \in \mathcal E_{{p'}}^2 \right].$
Consequently,   
$\mathbb P[\Vert \widehat{\boldsymbol\beta}\Vert_0= {p'}]\leq 1-\mathbb P\left[ \mathbf Z_1^n \in \mathcal E_{{p'}}^3\right]$
for all ${p'}:\, {p'_u}\leq {p'}\leq p$. This, combined with Proposition \ref{test proposition important}, yields that
\begin{align}
&\mathbb P[\Vert\widehat{\boldsymbol\beta} \Vert_0\leq {p'_u}-1]=\mathbb P[\Vert\widehat{\boldsymbol\beta} \Vert_0\notin\{{p'_u},\, {p'_u}+1,..., p\}] \nonumber
\\= &1-\mathbb P\left[\bigcup_{{p'}={p'_u}}^{p}\{\Vert\widehat{\boldsymbol\beta} \Vert_0={p'}\}\right]\geq 1- \sum_{{p'}={p'_u}}^{p} \mathbb P[\Vert\widehat{\boldsymbol\beta} \Vert_0={p'}]
\geq  1-\sum_{{p'}={p'_u}}^{p}\left(1-\mathbb P\left[\mathbf Z_1^n \in \mathcal E_{{p'}}^3\right]\right) \nonumber
\\\geq & 1-\sum_{{p'}={p'_u}}^{p} 2\exp\left(- {p'}\cdot\zeta_1(\epsilon)\right)  -2({p}-{p'_u}+1)\exp(-\widetilde cn).\nonumber
\end{align}
where $\widetilde c>0$ is the same constant as in Proposition \ref{bound dimensions}. Observing that $\zeta_1(\epsilon)=\ln\left(\frac{3 \cdot  (\sigma_L+\mathcal  C_\mu)\cdot p\cdot eR}{\epsilon}\right)>0$ (since $p>s>1$, $R,\,\sigma_L,\, \mathcal C_{\mu}\geq 1$, and $\epsilon\leq 1$) and $\sum_{{p'}={p'_u}}^{p} 2\exp\left(- {p'}\cdot\zeta_1(\epsilon)\right)$ is the sum of a geometric sequence, we have
\begin{align}
\mathbb P[\Vert\widehat{\boldsymbol\beta} \Vert_0\leq {p'_u}-1]\geq & 1-\frac{2\exp\left(- {p'_u}\zeta_1(\epsilon)\right)}{1- \exp\left(-\zeta_1(\epsilon)\right)}-2 {p} \exp(-\widetilde cn). \label{starting point}
\end{align}
The above can be  simplified into
$
\mathbb P[\Vert\widehat{\boldsymbol\beta} \Vert_0\leq {p'_u}-1]  \geq \,1-4\exp\left(- {p'_u}\zeta(\epsilon)\right) -2 p \exp(-\widetilde cn)$.    
  \hfill \ensuremath{\Box}\end{proof}

\bigskip

Proposition \ref{comprehensive proposition} below uses the short-hand notation that $\widetilde\zeta:=\ln\left( 3eR\cdot(\sigma_L+\mathcal  C_\mu)\right)$.
\begin{newproposition}\label{comprehensive proposition} 
Let $a<1$. Suppose that Assumptions \ref{Assumption A-sparsity}, \ref{sub exponential condition}, and \ref{Lipschitz condition} hold. For any $\varrho:\,0<\varrho< \frac{1}{2}$, let     $\lambda:=\sqrt{\frac{8\sigma }{c\cdot a\cdot n^{2\varrho}}[\ln(n^{\varrho}p)+\widetilde\zeta]}$ with the same $c$ in \eqref{Bernstein inequality result}.      Consider any random vector $\widehat{\boldsymbol\beta}=(\widehat\beta_j:\,j=1,...,p)\in\Re^p$ such that $\Vert \widehat{\boldsymbol\beta}\Vert_\infty\leq R$ and $\vert\widehat\beta_j\vert\notin (0,a\lambda)$ for all $j$ almost surely:
\begin{itemize}
\item[(i)]
For any fixed $\Gamma\geq 0$ and some    universal constants $\widetilde c,\,C_1>0$, if
\begin{align}
n > C_1\cdot \left[\left(\frac{\Gamma+\varepsilon_{A}}{\sigma }\right)^{\frac{1}{1-2\varrho}}+s\cdot\left(\ln(n^{\varrho}p)+\widetilde \zeta\right)\right],\label{sample initial requirement 2 ori test new}
\end{align}
 and   $\mathcal L_{n,\lambda}(\widehat{\boldsymbol\beta},\,\mathbf Z_1^n)\leq \mathcal L_{n,\lambda}({\boldsymbol\beta}^{*}_{\varepsilon_{A}},\,\mathbf Z_1^n)+\Gamma$ almost surely, then 
$
\mathbb P[\mathcal E_a\cap \mathcal E_b]\geq 
1-2 (p+1) \exp(-\widetilde cn)-6\exp\left(-2cn^{4\varrho-1}\right)
$, where the events $\mathcal E_a$ and $\mathcal E_b$ are defined as 
\begin{align}\mathcal E_a=\left\{\Vert \widehat{\boldsymbol\beta}\Vert_0\leq \left\lceil\frac{2cn^{4\varrho-1}}{\ln(n^{\varrho}p)+\widetilde \zeta}+\frac{2cn^{2\varrho}}{\sigma\cdot \left(\ln(n^{\varrho}p)+\widetilde \zeta\right)}\cdot\left(\Gamma+\varepsilon_{A}+\frac{2}{n^{\varrho}}\right)+8s\right\rceil\right\};~~~~ \text{and}\label{sparsity result pp5}\end{align}
\begin{multline}
\mathcal E_b:=\left\{\mathbb L(\widehat{\boldsymbol\beta})-L_g^*
\leq C_1\cdot\left(\frac{s\cdot\left(\ln(n^{\varrho}p)+\widetilde \zeta\right)}{n^{2\varrho}}+\sqrt{\frac{s\cdot\left(\ln(n^{\varrho}p)+\widetilde \zeta\right)}{n}}+\frac{1}{n^{\varrho}}+\frac{1}{n^{1-2\varrho}}+\frac{1}{n^{(1-\varrho)/2}}\right)\cdot \sigma \right.\\\left.+C_1\cdot\sqrt{\frac{\sigma (\Gamma+\varepsilon_{A})}{n^{1-2\varrho}}}+\Gamma+\varepsilon_{A}\right\}.\label{bound global}
\end{multline}
\item[(ii)]
For  some    universal constants $\widetilde c,\,C_2>0$, if
\begin{align}
n > C_2\cdot\left(\frac{\varepsilon_{A}}{\sigma }\right)^{\frac{1}{1-2\varrho}}+ C_2\cdot a^{-1}\cdot [\ln(n^{\varrho}p)+\widetilde\zeta]\cdot s^{\max\{1,\frac{1}{2-4\varrho},\,\frac{1}{2\varrho}\}}\cdot \left(\max\left\{1,\, \Vert\boldsymbol\beta^*_{\varepsilon_{A}} \Vert_\infty\right\} \right)^{\max\{\frac{1}{2-4\varrho},\,\frac{1}{2\varrho}\}},\label{sample initial requirement proposition in appendix}
\end{align}
 and   $\mathcal L_{n,\lambda}(\widehat{\boldsymbol\beta},\,\mathbf Z_1^n)\leq \mathcal L_{n,\lambda}(\widehat{\boldsymbol\beta}^{\ell_1},\,\mathbf Z_1^n)$ almost surely, then  
\begin{multline}
\mathbb L(\widehat{\boldsymbol\beta})-L_g^* 
\leq C_2\cdot\left[\frac{s\left(\ln(n^{\varrho}p)+\widetilde \zeta\right)}{n^{2\varrho}}+\frac{1}{n^{\varrho}}+\frac{1}{n^{1-2\varrho}}\right]
\cdot \sigma 
\\+C_2\cdot \frac{s\cdot \max\left\{1,\,\Vert\boldsymbol\beta^*_{\varepsilon_{A}} \Vert_\infty\right\}\cdot \sigma ^{3/4}}{\min\left\{a^{1/2}n^\varrho,\,a^{1/4}n^{\frac{1-\varrho}{2}}\right\}} \left[\ln(n^{\varrho}p)+\widetilde \zeta\right]^{1/2}
+C_2\cdot\sqrt{\frac{\sigma \varepsilon_{A}}{n^{1-2\varrho}}}+\varepsilon_{A},
\end{multline}
with probability at least 
$
1-2 (p+1) \exp(-\widetilde cn)-6\exp\left(-2cn^{4\varrho-1}\right)  
$.
\end{itemize}
\end{newproposition}
\begin{proof}{Proof.}We  denote by $c_0,\,c_1,\,c_2,...$  potentially different universal constants throughout this proof.

To show Part (i), let $\epsilon:=\frac{1}{n^{\varrho}}\in(0,\, 1]$,  and $
\zeta_1(\epsilon):=\ln\left(\frac{3 \cdot {({\sigma }_C+\mathcal  C_\mu)}\cdot p\cdot eR}{\epsilon}\right)=\ln(n^{\varrho} p)+\widetilde\zeta>0$.   Then    $\lambda=\sqrt{\frac{8\sigma \zeta_1(\epsilon)}{c\cdot a\cdot n^{2 \varrho}}}=\sqrt{\frac{8\sigma }{c\cdot a\cdot n^{2\varrho}}[\ln(n^{\varrho} p)+\widetilde\zeta]}$. 
We first invoke Proposition \ref{bound dimensions} to bound the sparsity level of $\widehat{\boldsymbol\beta}$. To that end, we need to derive an explicit form for ${p'_u}$ as defined in that proposition. Let $T_1:=2P_\lambda(a\lambda)-\frac{8\sigma }{cn}\zeta_1(\epsilon)$. We may explicate $p'_u$ by solving the following inequality (where $P_X$ is the unknown), which is equivalent to  \eqref{condition on value lambda 1} of Proposition \ref{bound dimensions} with   $p':=P_X$,
 \begin{align}
\frac{T_1}{2}\cdot P_X-\frac{2\sigma  }{\sqrt{n}}\sqrt{\frac{2P_X\zeta_1(\epsilon)}{c}}>\Gamma+2\epsilon+sP_\lambda(a\lambda)+\varepsilon_{A},\label{initial inequality}
\end{align}
for   the same $c\in(0,\, 0.5]$ in \eqref{Bernstein inequality result}.
The solution to the above inequality yields that
$
\sqrt{P_X}> \frac{2\sigma  }{T_1\sqrt{n}}\sqrt{\frac{2\zeta_1(\epsilon)}{c}}+
\frac{\sqrt{\frac{2(2\sigma  )^2\cdot\zeta_1(\epsilon)}{cn}+2T_1[\Gamma+\varepsilon_{A}+2\epsilon+sP_\lambda(a\lambda)]}}{T_1}.$
Since we aim only to find a feasible $P_X$, we may as well require that
$
{P_X}> \frac{32\sigma ^2\zeta_1(\epsilon)}{cT_1^2\cdot n}+8T_1^{-1}[\Gamma+\varepsilon_{A}+2\epsilon+sP_\lambda(a\lambda)].$
For  $\lambda=\sqrt{\frac{8\sigma \zeta_1(\epsilon)}{c\cdot a\cdot n^{2\varrho}}}$,  we have $P_\lambda(a\lambda)=\frac{a\lambda^2}{2}=\frac{4\sigma \zeta_1(\epsilon)}{c\cdot n^{2\varrho}}$. Further noticing  that $2P_\lambda(a\lambda)=\frac{8\sigma \zeta_1(\epsilon)}{c\cdot n^{2\varrho}}> \frac{4\sigma  \zeta_1(\epsilon)}{c\cdot n^{2\varrho}}+\frac{8\sigma }{nc}\zeta_1(\epsilon)$ as per our assumption (i.e., \eqref{sample initial requirement 2 ori test new} implies that $n^{1-2\varrho}> 2$) we therefore know that $T_1=2P_\lambda(a\lambda)- \frac{8\sigma }{nc}\zeta_1(\epsilon)>\frac{4\sigma \zeta_1(\epsilon)}{c\cdot n^{2\varrho}}$. As a result,  to satisfy  \eqref{condition on value lambda 1} of Proposition \ref{bound dimensions}, it suffices to let ${p'_u}$ be any integer that satisfies 
$
 {p'_u}
\geq \frac{2cn^{4\varrho-1}}{\zeta_1(\epsilon)}+\frac{2cn^{2\varrho}}{\sigma \zeta_1(\epsilon)}\cdot\left[\Gamma+\varepsilon_{A}+2\epsilon+sP_\lambda(a\lambda)\right],$
which is satisfied by specifying
\begin{align}
{p'_u}
:=& \left\lceil\frac{2cn^{4\varrho-1}}{\zeta_1(\epsilon)}+\frac{2cn^{2\varrho}}{\sigma \zeta_1(\epsilon)}\cdot\left(\Gamma+\varepsilon_{A}+2\epsilon\right)+8s\right\rceil  = \left\lceil\frac{2cn^{4\varrho-1}}{\zeta_1(\epsilon)}+\frac{2cn^{2\varrho}}{\sigma \zeta_1(\epsilon)}\cdot\left(\Gamma+\varepsilon_{A}+\frac{2}{n^{\varrho}}\right)+8s\right\rceil,\label{assignment of pu new}
\end{align}
hereafter in this proof. (Here the last equality is due to  our choice of parameter, $\epsilon={\frac{1}{n^{\varrho}}}$.)
In the meantime, the right-hand-side of \eqref{assignment of pu new} is strictly larger than $s$.
 Since \eqref{assignment of pu new} is a sufficient condition to \eqref{initial inequality}, we know that, if \eqref{assignment of pu new} holds, then \eqref{condition on value lambda 1} in Proposition \ref{bound dimensions} holds for all ${p'}:\, {p'_u}\leq {p'}\leq p$.
Invoking Proposition \ref{bound dimensions}, we have   with probability at least
$
 \, 1-4\exp\left(-\left\lceil\frac{2cn^{4\varrho-1}}{\zeta_1(\epsilon)}+\frac{2cn^{2\varrho}}{\sigma \zeta_1(\epsilon)}\cdot\left(\Gamma+\varepsilon_{A}+\frac{2}{n^{\varrho}}\right)+8s\right\rceil\cdot \zeta_1(\epsilon)\right) -2 p \exp(-\widetilde cn),$ it holds that $\Vert\widehat{\boldsymbol\beta} \Vert_0\leq {p'_u}-1=\left\lceil\frac{2cn^{4\varrho-1}}{\zeta_1(\epsilon)}+\frac{2cn^{2\varrho}}{\sigma \zeta_1(\epsilon)}\cdot\left(\Gamma+\varepsilon_{A}+\frac{2}{n^{\varrho}}\right)+8s\right\rceil-1$. 

In view of the assumption that $\mathcal L_{n,\lambda}(\widehat{\boldsymbol\beta},\,{\mathbf Z}_1^n)\leq \mathcal L_{n,\lambda}({\boldsymbol\beta}_{\varepsilon_{A}}^*,\,{\mathbf Z}_1^n)+\Gamma$, w.p.1., together with Assumption \ref{Assumption A-sparsity} and the fact that $P_\lambda(\vert\cdot\vert)\geq 0$, we know that
$\frac{1}{n}\sum_{i=1}^n L(\widehat{\boldsymbol\beta},Z_i) \leq  \frac{1}{n}  \sum_{i=1}^n L({\boldsymbol\beta^*_{\varepsilon_{A}}},Z_i) +s P_\lambda(a\lambda)+\Gamma, \,a.s.\Longrightarrow\left\{\frac{1}{n}\sum_{i=1}^n L(\widehat{\boldsymbol\beta},Z_i)-\mathbb E[\frac{1}{n}\sum_{i=1}^n L(\widehat{\boldsymbol\beta},Z_i)]\right\}+\mathbb E[\frac{1}{n}\sum_{i=1}^n L(\widehat{\boldsymbol\beta},Z_i)] \leq  \left\{\frac{1}{n}  \sum_{i=1}^n L({\boldsymbol\beta^*_{\varepsilon_{A}}},Z_i) -\mathbb E[\frac{1}{n}  \sum_{i=1}^n L({\boldsymbol\beta^*_{\varepsilon_{A}}},Z_i) ]\right\}+\mathbb E[\frac{1}{n}  \sum_{i=1}^n L({\boldsymbol\beta^*_{\varepsilon_{A}}},Z_i) ]+s P_\lambda(a\lambda)+\Gamma, \,a.s.$   
Given the event  $\mathcal E^1\cap \mathcal E^2$, where 
\begin{align}
\mathcal E^1:= 
\left\{ \sup_{\boldsymbol\beta\in \mathcal B_{{p'_u},R}}\left\vert\frac{1}{n}\sum_{i=1}^n L(\boldsymbol\beta,Z_i)-\mathbb E\left[\frac{1}{n}\sum_{i=1}^n L(\boldsymbol\beta,Z_i)\right]\right\vert \right.
\left. \leq {\frac{\sigma }{\sqrt{n}}}\sqrt{\frac{2 {p'_u}}{c}\zeta_1(\epsilon)}  
 +\frac{2\sigma }{n}\frac{{p'_u}}{c}\zeta_1(\epsilon)+\epsilon \right\}, \end{align}
with $ \mathcal B_{{p'_u},R}:=\{\boldsymbol\beta\in\Re^p:\,\Vert \boldsymbol\beta\Vert_\infty\leq R,\,\Vert \boldsymbol\beta\Vert_0\leq {p'_u}\}$ and $\mathcal E^2:=\{\Vert\widehat{\boldsymbol\beta} \Vert_0\leq {p'_u}\}$ with  ${p'_u}>s$, 
 we may  obtain from the above that
$
\mathbb L(\widehat{\boldsymbol\beta})-\mathbb L({\boldsymbol\beta^*_{\varepsilon_{A}}}) 
\leq s\cdot P_\lambda(a\lambda)+ {\frac{2\sigma  }{\sqrt{n}}}\sqrt{\frac{2 {p'_u}}{c}\zeta_1(\epsilon)} \nonumber
+\frac{4\sigma }{n}\frac{{p'_u}}{c}\zeta_1(\epsilon)+2\epsilon+\Gamma$, a.s.

In the analysis above, we have derived the probability for $\{\Vert\widehat{\boldsymbol\beta} \Vert_0\leq {p'_u}-1\}$. Combining this with  Proposition \ref{test proposition important}, we have that  the event $\mathcal E^1\cap\mathcal E^2$ holds with probability at least  $1-6\exp\left(-\left\lceil\frac{2cn^{4\varrho-1}}{\zeta_1(\epsilon)}+\frac{2cn^{2\varrho}}{\sigma \zeta_1(\epsilon)}\cdot\left(\Gamma+\varepsilon_{A}+\frac{2}{n^{\varrho}}\right)+8s\right\rceil\cdot \zeta_1(\epsilon)\right) -2 (p+1) \exp(-\widetilde cn)\geq 1-6\exp(-2cn^{4\varrho-1})-2 (p+1) \exp(-\widetilde cn)$. Further noticing that $\mathbb L({\boldsymbol\beta^*_{\varepsilon_{A}}}) \leq L_g^* +\varepsilon_{A}$ as per  Assumption \ref{Assumption A-sparsity}, we have  both $\Vert\widehat{\boldsymbol\beta} \Vert_0\leq {p'_u}$ and 
\begin{align}
\mathbb L(\widehat{\boldsymbol\beta})-L_g^*
\leq s\cdot P_\lambda(a\lambda)+ {\frac{2\sigma  }{\sqrt{n}}}\sqrt{\frac{2 {p'_u}}{c}\zeta_1(\epsilon)} 
+\frac{4\sigma }{n}\frac{ {p'_u}}{c}\zeta_1(\epsilon)+2\epsilon+\varepsilon_{A}+\Gamma, \label{first results obtained}
\end{align}
 where  ${p'_u} =\left\lceil\frac{2cn^{4\varrho-1}}{\zeta_1(\epsilon)}+\frac{2cn^{2\varrho}}{\sigma \zeta_1(\epsilon)}\cdot\left(\Gamma+\varepsilon_{A}+\frac{2}{n^{\varrho}}\right)+8s\right\rceil$, hold simultaneously with probability at least $1-6\exp(-2cn^{4\varrho-1})-2 (p+1) \exp(-\widetilde cn)$. Thus, we have already proven \eqref{sparsity result pp5} in Part (i) of the Proposition. 
 
 To obtain \eqref{bound global} in Part (i) of the Proposition,  we   simplify \eqref{first results obtained} while preserving the rates in  $n$ and $p$. Firstly, we have \begin{align}\sqrt{\frac{2 {p'_u}}{cn}\zeta_1(\epsilon)}  
 \leq&\,\sqrt{\frac{2}{cn}\zeta_1(\epsilon) \cdot\frac{2cn^{4\varrho-1}}{\zeta_1(\epsilon)}+\frac{2cn^{2\varrho}}{\sigma \zeta_1(\epsilon)}\left(\varepsilon_{A}+\Gamma+\frac{2}{n^{\varrho}}\right)\cdot \frac{2\zeta_1(\epsilon)}{cn}}  +\sqrt{\frac{2}{cn}\zeta_1(\epsilon)\left[ 8s+1\right]} \nonumber
\\
\leq&\,\sqrt{\frac{4}{n^{2-4\varrho}}+\frac{4(\Gamma+\varepsilon_{A}+\frac{2}{n^{\varrho}})}{\sigma n^{1-2\varrho}}}+  \sqrt{\frac{2}{nc}\zeta_1(\epsilon)\left[ 8s+1\right]},\label{first bound here}
\end{align}
which is obtained by observing the fact that  $\sqrt{x+y}\leq \sqrt{x}+\sqrt{y}$ for any $x,\, y\geq 0$ and the relations that $0<a\leq 1$, $0<c\leq 0.5$, $\sigma \geq 1$, and $\zeta_1(\epsilon)\geq \ln 2$ (as a result of the assumed inequality \eqref{sample initial requirement 2 ori test new}).

Similar to the above, we also have
\begin{align}
  {\frac{2 {p'_u}}{cn}\zeta_1(\epsilon)}   
 \leq  {\frac{4}{n^{2-4\varrho}}}+  {\frac{2}{nc}\zeta_1(\epsilon)\left[ 8s+1\right]}
+ {\frac{4(\Gamma+\varepsilon_{A}+\frac{2}{n^{\varrho}})}{\sigma n^{1-2\varrho}}}. \label{to use}
\end{align}
Invoking \eqref{sample initial requirement 2 ori test new} and $\zeta_1(\epsilon)=\ln(n^{\varrho} p)+\widetilde\zeta$, we have
$
\frac{4}{n^{2-4\varrho}}+\frac{4(\Gamma+\varepsilon_{A}+\frac{2}{n^{\varrho}})}{\sigma n^{1 -2\varrho}} \leq c_0$ and $ {\frac{2}{nc}\zeta_1(\epsilon)\left[ 8s+1\right]}\leq c_1$.
Therefore,   it holds that
$
{\frac{2 {p'_u}}{cn}\zeta_1(\epsilon)} \leq c_2\cdot\sqrt{\frac{4}{n^{2-4\varrho}}+\frac{4(\Gamma+\varepsilon_{A}+\frac{2}{n^{\varrho}})}{\sigma n^{1 -2\varrho}}}
+c_2\cdot\sqrt{{\frac{2}{nc}\zeta_1(\epsilon)\cdot\left( 8s+1\right)}}$.
Further invoking \eqref{first bound here} and \eqref{to use},  the inequality in \eqref{first results obtained}   can be simplified into
$
\mathbb L(\widehat{\boldsymbol\beta})-L_g^* \nonumber
\leq  \frac{4s\sigma \zeta_1(\epsilon)}{c\cdot n^{2\varrho}}+ c_3\cdot \sigma \cdot\sqrt{\frac{1}{n^{2-4\varrho}}+\frac{\Gamma+\varepsilon_{A}+\frac{2}{n^{\varrho}}}{\sigma n^{1-2\varrho}}} +c_3\cdot \sigma \sqrt{\frac{s+1}{nc}\zeta_1(\epsilon)}
+\frac{2}{n^{\varrho}}+\Gamma+\varepsilon_{A}$.
Further invoking a few known inequalities such as $\zeta_1(\epsilon)\geq \ln 2$, $0<\varrho<1/2$, $\sigma \geq 1$, and $0<c\leq 0.5$,  we may obtain a further simplification that
\begin{align}\mathbb L(\widehat{\boldsymbol\beta})-L_g^*
\leq c_4\cdot\left(\frac{s\zeta_1(\epsilon)}{n^{2\varrho}}+\frac{1}{n^{\varrho}}+\sqrt{\frac{s\cdot \zeta_1(\epsilon)}{n}}+\frac{1}{n^{1-2\varrho}}+\frac{1}{n^{\frac{1-\varrho}{2}}}\right)\cdot \sigma +c_4\cdot\sqrt{\frac{\sigma (\Gamma+\varepsilon_{A})}{n^{1-2\varrho}}}+\Gamma+\varepsilon_{A},\label{part 1 result here in proof}
\end{align}
 which immediately leads to \eqref{first results obtained} as claimed in Part (i) since $\zeta_1(\epsilon):=\ln\left( 3n^{\varrho} (\sigma_L+\mathcal  C_\mu) \cdot p\cdot eR\right)=\ln(n^{\varrho} p)+\widetilde\zeta$,  $a^{-1}> 1$, $s>1$, $R\geq 1$, and the satisfaction of \eqref{sample initial requirement 2 ori test new}. This immediately leads to the claimed inequality in \eqref{bound global} of Part (i). 


For Part (ii),  due to Lemma \ref{initial gap}, we know that $\mathcal L_{n,\lambda}(\widehat{\boldsymbol\beta},\,{\mathbf Z}_1^n)\leq \mathcal L_{n,\lambda}(\widehat{\boldsymbol\beta}^{\ell_1},\,{\mathbf Z}_1^n)\leq \mathcal L_{n,\lambda}({\boldsymbol\beta}^*_{\varepsilon_A},\,{\mathbf Z}_1^n)+\lambda\vert\boldsymbol\beta^*_{\varepsilon_{A}}\vert$ with probability one. Therefore, we may apply the results from Part (i)  for $\Gamma=\lambda\vert\boldsymbol\beta^*_{\varepsilon_{A}}\vert$. Thus $\frac{\Gamma}{\sigma }\leq \frac{\lambda\vert \boldsymbol\beta^*_{\varepsilon_{A}}\vert}{\sigma }\leq \frac{ \Vert \boldsymbol\beta^*_{\varepsilon_{A}}\Vert_\infty\cdot s\cdot \sqrt{\frac{8\sigma }{c\cdot a\cdot n^{2\varrho}}[\ln(n^{\rho}p)+\widetilde\zeta]}}{\sigma }$. Combining this inequality with  the assumption of \eqref{sample initial requirement} 
 (which implies that $n > c_5\cdot a^{-1}\cdot [\ln(n^{\varrho}p)+\widetilde\zeta]\cdot s^{\max\{1,\frac{1}{2-4\varrho},\,\frac{1}{2\varrho}\}}\cdot \left(\max\left\{1,\,\Vert \boldsymbol\beta^*_{\varepsilon_{A}}\Vert_\infty\right\}\right)^{\max\{\frac{1}{2-4\varrho},\,\frac{1}{2\varrho}\}}$)
 as well as  the assumption of $\sigma \geq 1$, we then know  that $\frac{\Gamma}{\sigma } \leq \Vert \boldsymbol\beta^*_{\varepsilon_{A}}\Vert_\infty\cdot s\cdot \sqrt{\frac{8}{c\sigma \cdot a\cdot n^{2\varrho}}[\ln(n^{\varrho}p)+\widetilde\zeta]}\leq c_6\cdot \sqrt{\frac{\Vert \boldsymbol\beta^*_{\varepsilon_{A}}\Vert_\infty\cdot s}{a^{1-2\varrho} }[\ln(n^{\varrho}p)+\widetilde\zeta]^{1-2\varrho}}$. Therefore, 
\begin{multline}\left(\frac{\Gamma+\varepsilon_{A}}{\sigma }\right)^{\frac{1}{1-2\varrho}}\leq \left( c_6\cdot  \sqrt{\frac{\Vert \boldsymbol\beta^*_{\varepsilon_{A}}\Vert_\infty\cdot s}{a^{1-2\varrho} }[\ln(n^{\varrho}p)+\widetilde\zeta]^{1-2\varrho}}+\frac{\varepsilon_{A}}{\sigma }\right)^{\frac{1}{1-2\varrho}}
\\\leq c_7\cdot \max\left\{1,\,(\Vert \boldsymbol\beta^*_{\varepsilon_{A}}\Vert_\infty)^{\frac{1}{2-4\varrho}}\right\}\cdot s^{\frac{1}{2-4\varrho}}\sqrt{a^{-1}\cdot [\ln(n^{\varrho}p)+\widetilde\zeta]}+c_7\left(\frac{\varepsilon_{A} }{\sigma  }\right)^{\frac{1}{1-2\varrho}}.
\end{multline}
Recall that $a<1$ and observe that $[\ln(n^{\varrho}p)+\widetilde\zeta]\geq 1$. We then have that, if $n$ satisfies \eqref{sample initial requirement}, then
\begin{align}
n >  &c_7\cdot\left(\frac{\varepsilon_{A}}{\sigma }\right)^{\frac{1}{1-2\varrho}}+ c_7\cdot a^{-1 }\cdot [\ln(n^{\varrho}p)+\widetilde\zeta]\cdot s^{\max\{1,\frac{1}{2-4\varrho},\,\frac{1}{2\varrho}\}}\left(\max\left\{1,\,\Vert \boldsymbol\beta^*_{\varepsilon_{A}}\Vert_\infty\right\}\right)^{\max\{\frac{1}{2-4\varrho},\,\frac{1}{2\varrho}\}}\nonumber
\\\geq &c_8\cdot \max\left\{1,\,(\Vert \boldsymbol\beta^*_{\varepsilon_{A}}\Vert_\infty)^{\frac{1}{2-4\varrho}}\right\}\cdot s^{\frac{1}{2-4\varrho}}\sqrt{a^{-1}\cdot [\ln(n^{1/3}p)+\widetilde\zeta]}+c_8\left(\frac{\varepsilon_{A} }{\sigma  }\right)^{\frac{1}{1-2\varrho}}+ c_8s\cdot\left(\ln(n^{\varrho}p)+\widetilde \zeta\right)\nonumber
\\\geq&C_1\cdot \left[\left(\frac{\Gamma+\varepsilon_{A}}{\sigma }\right)^{\frac{1}{1-2\varrho}}+s\cdot\left(\ln(n^{\varrho}p)+\widetilde \zeta\right)\right],\nonumber
\end{align}
 
Therefore, \eqref{part 1 result here in proof} above implies that 
$$
\mathbb L(\widehat{\boldsymbol\beta})-L_g^* 
\leq c_9\cdot \sigma \cdot\left(\frac{s\zeta_1(\epsilon)}{n^{2\varrho}}+\sqrt{\frac{s\zeta_1(\epsilon)}{n}}+\frac{1}{n^{\varrho}}+\frac{1}{n^{1-2\varrho}}+\frac{1}{n^{(1-\varrho)/2}}\right)
+c_9\cdot\sqrt{\frac{\sigma (\lambda\vert \boldsymbol\beta^*_{\varepsilon_{A}}\vert+\varepsilon_{A})}{n^{1-2\varrho}}}+\lambda\vert \boldsymbol\beta^*_{\varepsilon_{A}}\vert+\varepsilon_{A},
$$
  with probability at least 
$
1-2  (p+1) \exp(-\widetilde cn)-6\exp\left(-2cn^{4\varrho-1}\right)
$. The above bound  can be further simplified by noticing that $a<1$,  $s>1$, $0<\varrho<\frac{1}{2}$, $\sigma \geq 1$, $p\geq 1$, $\left[\ln(n^{\varrho}p)+\widetilde \zeta\right]\geq 1$ and $\sqrt{\frac{s\zeta_1(\epsilon)}{n}}\leq \frac{s\cdot\sqrt{\zeta_1(\epsilon)}}{n^{\frac{1-\varrho}{2}}}$. As a result,
$\mathbb L(\widehat{\boldsymbol\beta})-L_g^*
\leq c_{10}\cdot \sigma \cdot\left[\frac{s\cdot \left(\ln(n^{\varrho}p)+\widetilde \zeta\right)}{n^{2\varrho}}+\frac{1}{n^{\varrho}}+\frac{1}{n^{1-2\varrho}}\right]
+c_{10}\cdot \frac{s\cdot\max\left\{1,\,\Vert\boldsymbol\beta^*_{\varepsilon_{A}} \Vert_\infty\right\}\cdot\sigma ^{3/4}}{\min\left\{a^{1/2}n^\varrho,\,a^{1/4}n^{\frac{1-\varrho}{2}}\right\}} \left[\ln(n^{\varrho}p)+\widetilde \zeta\right]^{1/2}+c_{10}\cdot\sqrt{\frac{\sigma \varepsilon_{A}}{n^{1-2\varrho}}}+\varepsilon_{A}$, which immediately leads to the desired result in Part (ii). 
\hfill \ensuremath{\Box}
\end{proof}

\subsection{Proof of results for nonsmooth HDSL}\label{proof of theorem 2}
\begin{proof}{Proof of Theorem \ref{nonsmooth case}.} 
To show Part (a), we invoke Theorem \ref{lipschitz} and obtain that $ f_\mu(\boldsymbol\beta, \mathbf A(Z_i)):=\max_{\mathbf u\in\mathbb U}\,\left\{\mathbf u^\top\mathbf A(Z_i)\boldsymbol\beta- \frac{1}{2n^{\delta}} \Vert \mathbf u-\mathbf u_0\Vert^2\right\}$ is continuously differentiable with   Lipchitz continuous  gradient, and the corresponding Lipschitz constant is $ \frac{1}{n^{-\delta}}\Vert \mathbf A(Z_i)\Vert^2_{1,2}$, with $ \frac{1}{n^{-\delta}}\Vert \mathbf A(Z_i)\Vert^2_{1,2}\leq n^{\delta}\cdot U_{A}$, a.s. Therefore, it holds that, for all $j=1,...,p$,  the partial derivative, $\frac{\partial \widetilde{\mathcal L}_{n,\delta}(\widetilde{\boldsymbol\beta},\, \mathbf Z_1^n)}{\partial\beta_j}$,  is well-defined for all $\widetilde{\boldsymbol\beta}\in\Re^p$ and  Lipschitz continuous   for almost every $Z_i\in\mathcal W$.  Further noticing that $\frac{1}{n^{-\delta}}\Vert \mathbf A(Z_i)\Vert^2_{1,2}+U_{f_1}\leq U_{f_1}+n^{\delta}U_A$ with probability one, we have the desired result in Part (a).

To show Part (b), we  denote by $c_1,c_2,...$   potentially different universal constants throughout this proof. Let ${\boldsymbol\beta}^*_{\varepsilon_{A}'}$ be the sparse vector as in Assumption \ref{Assumption A-sparsity original} (where $\varepsilon_A$ is now denoted by  $\varepsilon_A:=\varepsilon_A'$ in this theorem) and $\widetilde{\mathcal L}_{n,\delta}$  as in \eqref{smoothed nonsmooth problem} (where  $\mathbb L(\,\cdot\,)$ in the statement of the assumption is replaced by $\mathbb E\left[n^{-1}\sum_{i=1}^nL_{ns}(\,\cdot\,,Z_i)\right]$). We claim that
\begin{align}
\mathbb E[\widetilde{\mathcal L}_{n,\delta}({\boldsymbol\beta}^*_{\varepsilon_{A}'},\mathbf Z_1^n)]-\inf_{\boldsymbol\beta}~\mathbb E[\widetilde{\mathcal L}_{n,\delta}({\boldsymbol\beta},\mathbf Z_1^n)]\leq \frac{D^2}{2n^{\delta}}+\varepsilon_{A}'.
\end{align}
To see this, one may observe that, under Assumption \ref{Assumption A-sparsity original},
\begin{align}
&\mathbb E\left[\frac{1}{n}\sum_{i=1}^nf_1({\boldsymbol\beta}^*_{\varepsilon_{A}'},Z_i)+\sum_{i=1}^n\frac{1}{n}\max_{\mathbf u\in \mathbb U}\left\{\mathbf u^\top\mathbf A(Z_i) {\boldsymbol\beta}^*_{\varepsilon_{A}'}-\phi(\mathbf u,\,Z_i)- \frac{1}{2n^{\delta}} \Vert \mathbf u-\mathbf u_0\Vert^2\right\}\right]-\inf_{\boldsymbol\beta}~\mathbb E[\widetilde{\mathcal L}_{n,\delta}(\boldsymbol\beta,\mathbf Z_1^n)]\nonumber
\\
=&\,\mathbb E\left[\frac{1}{n}\sum_{i=1}^nf_1({\boldsymbol\beta}^*_{\varepsilon_{A}'},Z_i)+\sum_{i=1}^n\frac{1}{n}\max_{\mathbf u\in \mathbb U}\left\{\mathbf u^\top\mathbf A(Z_i) {\boldsymbol\beta}^*_{\varepsilon_{A}'}-\phi(\mathbf u,\,Z_i)- \frac{1}{2n^{\delta}} \Vert \mathbf u-\mathbf u_0\Vert^2\right\}\right]\nonumber
\\&\qquad\qquad\qquad-\inf_{\boldsymbol\beta}~\mathbb E\left[\frac{1}{n}\sum_{i=1}^nf_1(\boldsymbol\beta,Z_i)+\sum_{i=1}^n\frac{1}{n}\max_{\mathbf u\in \mathbb U}\left\{\mathbf u^\top\mathbf A(Z_i) \boldsymbol\beta-\phi(\mathbf u,\,Z_i) - \frac{1}{2n^{\delta}} \Vert \mathbf u-\mathbf u_0\Vert^2\right\}\right]\nonumber
\\\leq &\mathbb E\left[\frac{1}{n}\sum_{i=1}^nf_1({\boldsymbol\beta}^*_{\varepsilon_{A}'},Z_i)+\sum_{i=1}^n\frac{1}{n}\max_{\mathbf u\in \mathbb U} \,\left\{\mathbf u^\top\mathbf A(Z_i) {\boldsymbol\beta}^*_{\varepsilon_{A}'} -\phi(\mathbf u,\,Z_i)\right\} \right]\nonumber
\\&\qquad\qquad\qquad\,-\inf_{\boldsymbol\beta}~\mathbb E\left[\frac{1}{n}\sum_{i=1}^nf_1(\boldsymbol\beta,Z_i)+\sum_{i=1}^n\frac{1}{n}\max_{\mathbf u\in \mathbb U}\,\left\{\mathbf u^\top\mathbf A(Z_i) \boldsymbol\beta-\phi(\mathbf u,\,Z_i)\right\} \right]+ \frac{D^2}{2n^{\delta}}  \nonumber
\\\leq &\,\inf_{\boldsymbol\beta\,}\,\left\{\,\mathbb E\left[\frac{1}{n}\sum_{i=1}^nf_1(\boldsymbol\beta,Z_i)+\sum_{i=1}^n\frac{1}{n}\max_{\mathbf u\in \mathbb U} \,\left\{\mathbf u^\top\mathbf A(Z_i) \boldsymbol\beta-\phi(\mathbf u,\,Z_i)\right\} \right]\right\}+\varepsilon_{A}'\nonumber
\\&\qquad\qquad\qquad-\inf_{\boldsymbol\beta}~\mathbb E\left[\frac{1}{n}\sum_{i=1}^nf_1(\boldsymbol\beta,Z_i)+\sum_{i=1}^n\frac{1}{n}\max_{\mathbf u\in \mathbb U}\,\left\{\mathbf u^\top\mathbf A(Z_i) \boldsymbol\beta-\phi(\mathbf u,\,Z_i)\right\} \right]+ \frac{D^2}{2n^{\delta}} \nonumber
\leq  \frac{D^2}{2n^{\delta}}+\varepsilon_{A}', \nonumber
\end{align}
where the last inequality is due to the definition of $\boldsymbol\beta^*_{\varepsilon'_A}$. 

Now  we consider the hypothetical population-level learning problem of   $\inf_{\boldsymbol\beta}~\mathbb E[\widetilde{\mathcal L}_{n,\delta}(\boldsymbol\beta,\mathbf Z_1^n)]$. The foregoing derivation indicates that this hypothetical problem also satisfies Assumption  \ref{Assumption A-sparsity original} with   $\varepsilon_A:=\left(\frac{D^2}{2n^{\delta}}+{\varepsilon_{A}'}\right)$ and  a sparsity level $s$.  We thus may analyze this hypothetical problem by employing Proposition \ref{second theorem}, where we let ${\mathcal L}_{n}$, ${\varepsilon_{A}}$, $\delta$, $\varrho$, and $a^{-1}$ in the original definition  to be ${\mathcal L}_{n}:=\widetilde{\mathcal L}_{n,\delta}$, $\varepsilon_A:=\frac{D^2}{2n^{\delta}}+{\varepsilon_{A}'}$, $\delta:=\frac{1}{4}$, $\varrho:=\frac{3}{8}$, and $a^{-1}:=2(U_{f_1}+n^{\delta}U_A)$, respectively,
 to bound the excess risk. To that end, we  first verify the satisfaction of \eqref{sample initial requirement} by the assumption of  \eqref{sample initial requirement nonsmooth}.  In other words, we need to ensure that
\begin{align}
 n >& c_1\cdot\left(\frac{\frac{D^2}{2n^{1/4}}+{\varepsilon_{A}'}}{\sigma }\right)^{\frac{1}{1-2\times \frac{3}{8}}}+ c_1\cdot \left(U_{f_1}+n^{1/4}U_A\right)\cdot [\ln(n^{\frac{3}{8}}p)+\widetilde\zeta]\cdot s^{\max\left\{1,\frac{1}{2-4\times \frac{3}{8}},\,\frac{1}{2\times \frac{3}{8}}\right\}}\left(\max\{1,\,\Vert \boldsymbol\beta_{\varepsilon_A'}^*\Vert_\infty\}\right)^{\max\left\{\frac{1}{2-4\times\frac{3}{8}},\,\frac{1}{2\times \frac{3}{8}}\right\}}\nonumber
 \\=&c_1\cdot\left(\frac{\frac{D^2}{2n^{1/4}}+{\varepsilon_{A}'}}{\sigma }\right)^{4}+ c_1\cdot \left(U_{f_1}+n^{1/4}U_A\right)\cdot [\ln(n^{\frac{3}{8}}p)+\widetilde\zeta]\cdot s^2\cdot (\max\{1,\,\Vert \boldsymbol\beta_{\varepsilon_A'}^*\Vert_\infty\})^2\nonumber
\end{align}
 From \eqref{sample initial requirement nonsmooth} (which implies that $\frac{16n\sigma^2}{D^4}>1$),
 \begin{align}
 \frac{n}{2}>&\,c_2\cdot\left(\frac{D^4}{2\sigma ^2}+\frac{(\varepsilon_{A}')^4}{\sigma^4}\right)=\,c_2\cdot\left(\frac{D^8}{32n\sigma^4}\cdot\frac{16n\sigma^2}{D^4}+\frac{(\varepsilon_{A}')^4}{\sigma^4}\right)\Longrightarrow n> \,c_3\cdot\left(\frac{D^2}{2n^{1/4}\sigma } +\frac{\varepsilon_{A}'}{\sigma}\right)^4.
 \end{align}
 Similarly,  \eqref{sample initial requirement nonsmooth} also implies that
$n>C_5\cdot(U_{f_1}+U_A)^{4/3}\cdot [\ln(n^{\frac{3}{8}}p)+\widetilde\zeta]^{4/3}\cdot s^{8/3}(\max\{1,\,\Vert \boldsymbol\beta_{\varepsilon_A'}^*\Vert_\infty\})^{8/3}\Longrightarrow \frac{n^{\frac{4}{3}}}{2}>\frac{c_3}{2}\cdot(U_{f_1}+U_A)^{4/3}n^{1/3}\cdot [\ln(n^{\frac{3}{8}}p)+\widetilde\zeta]^{4/3}\cdot s^{8/3}(\max\{1,\,\Vert \boldsymbol\beta_{\varepsilon_A'}^*\Vert_\infty\})^{8/3}\nonumber
\Longrightarrow n>c_4 \cdot(U_{f_1}+U_A)n^{1/4}\cdot [\ln(n^{\frac{3}{8}}p)+\widetilde\zeta]\cdot s^{2}(\max\{1,\,\Vert \boldsymbol\beta_{\varepsilon_A'}^*\Vert_\infty\})^{2}\geq c_5\cdot \left(U_{f_1}+n^{1/4}U_A\right)\cdot [\ln(n^{\frac{3}{8}}p)+\widetilde\zeta]\cdot s^2(\max\{1,\,\Vert \boldsymbol\beta_{\varepsilon_A'}^*\Vert_\infty\})^2$. Therefore, if \eqref{sample initial requirement nonsmooth} holds then $n>   c_6\cdot\left(\frac{\varepsilon_{A}}{\sigma}\right)^{1/(1-2\varrho)}+c_6\cdot a^{-1}\cdot [\ln(n^{\varrho}p)+\widetilde\zeta]\cdot s^{\max\left\{1,\frac{1}{2-4\varrho},\,\frac{1}{2\varrho}\right\}}(\max\{1,\,\Vert \boldsymbol\beta_{\varepsilon_A'}^*\Vert_\infty\})^{\max\left\{\frac{1}{2-4\varrho},\,\frac{1}{2\varrho}\right\}}$, which then means that \eqref{sample initial requirement} is verified.

We may now invoke Proposition \ref{second theorem}  with   ${\varepsilon_{A}}:=\frac{D^2}{2n^{\delta}}+{\varepsilon_{A}'}$ and $a:=\left[2(U_{f_1}+n^{\delta}U_A)\right]^{-1}$, respectively, in bounding  $\mathbb E[\widetilde{\mathcal L}_{n,\delta}(\widehat{\boldsymbol\beta},\mathbf Z_1^n)]-\inf_{\boldsymbol\beta}~\mathbb E[\widetilde{\mathcal L}_{n,\delta}({\boldsymbol\beta},\mathbf Z_1^n)]$. This proposition immediately leads to
\begin{multline}
\mathbb E\left[\widetilde{\mathcal L}_{n,\delta}(\widehat{\boldsymbol\beta},\mathbf Z_1^n)\right]-\inf_{\boldsymbol\beta}~\mathbb E\left[\widetilde{\mathcal L}_{n,\delta}(\boldsymbol\beta,\mathbf Z_1^n)\right]
\leq c_7\cdot\left[\frac{s\left(\ln(n^{\varrho}p)+\widetilde \zeta\right)}{n^{2\varrho}}+ \frac{1}{n^{\varrho}}+\frac{1}{n^{1-2\varrho}}\right]
\cdot \sigma 
\\+c_7\cdot \frac{s \cdot \max\left\{1,\,\Vert \boldsymbol\beta_{\varepsilon_A'}^*\Vert_\infty\right\}\cdot\sigma ^{3/4}}{\min\left\{[2(U_{f_1}+n^{\delta}U_A)]^{-\frac{1}{2}}n^\varrho,\,[2(U_{f_1}+n^{\delta}U_A)]^{-\frac{1}{4}}n^{\frac{1-\varrho}{2}}\right\}} \sqrt{\ln(n^{\varrho}p)+\widetilde \zeta}
\\+c_7\cdot\sqrt{\frac{\sigma \frac{D^2}{2n^{\delta}}+\sigma\cdot\varepsilon_{A}'}{n^{1-2\varrho}}}+\frac{D^2}{2n^{\delta}}+\varepsilon_{A}',
\label{bound 12 new}
\end{multline}
with probability at least $
1-2 (p+1) \exp(-\widetilde cn)-6\exp\left(-2cn^{4\varrho-1}\right)
$ for a universal constant $\widetilde c>0$.

Further notice that $\mathbb E[\widetilde{\mathcal L}_{n,\delta}({\boldsymbol\beta},\mathbf Z_1^n)]\leq \mathbb E[{\mathcal L}_{n}({\boldsymbol\beta},\mathbf Z_1^n)]\leq \mathbb E[\widetilde{\mathcal L}_{n,\delta}({\boldsymbol\beta},\mathbf Z_1^n)]+\frac{D^2}{2n^{\delta}}$ for any $\boldsymbol\beta\in\Re^p$ and $2(U_{f_1}+n^{1/4}U_A)\leq 2(U_{f_1}+U_A)\cdot n^{1/4}$. Also recall that  $\delta=\frac{1}{4}$, and $\varrho:=\frac{3}{8}$. We then can obtain from the above
\begin{multline}
\mathbb E[ {\mathcal L}_{n}(\widehat{\boldsymbol\beta},\mathbf Z_1^n)]-\inf_{\boldsymbol\beta}~\mathbb E[{\mathcal L}_{n}({\boldsymbol\beta},\mathbf Z_1^n)]
\leq c_8\cdot\left[\frac{s\left(\ln(n^{\varrho}p)+\widetilde \zeta\right)}{n^{3/4}}+\frac{1}{n^{1/4}}\right]
\cdot \sigma  
\\+c_8\cdot \frac{s\cdot \max\left\{1,\,\Vert \boldsymbol\beta_{\varepsilon_A'}^*\Vert_\infty\right\}\cdot \sigma ^{3/4}}{\min\left\{[2(U_{f_1}+U_A)]^{-\frac{1}{2}}n^{1/4},\,[2(U_{f_1}+U_A)]^{-\frac{1}{4}}n^{5/16}\right\}} \sqrt{\ln(n^{\frac{3}{8}}p)+\widetilde \zeta}
\\+c_8\cdot\sqrt{\frac{\sigma D^2}{n^{1/2}}}+\frac{D^2}{2n^{1/4}}+c_8\cdot\frac{\sqrt{\sigma\varepsilon_{A}'}}{n^{1/8}}+\varepsilon_{A}',
\label{bound 12 new 2}
\end{multline}
with probability $
1-2 (p+1) \exp(-\widetilde cn)-6\exp\left(-2cn^{1/2}\right)$ for a universal constant $\widetilde c>0$. The desired result  is then immediately implied from the above  after  some simplification under  $U_{f_1}\geq 1$ and $\sigma\geq 1$.\hfill \ensuremath{\Box}
\end{proof}

\subsection{Proof of generalizability for regularized NNs}\label{sec: proof regularized NNs}

\subsubsection{Proof of generalizability of regularized NNs in a generic case}\label{sec: proof of generalized NN in general}

\begin{proof}{Proof of Theorem \ref{general result theorem regularized NN}.}
We  denote by $c_1,c_2,...$   potentially different universal constants throughout this proof.

We  invoke Part (i) of Proposition \ref{comprehensive proposition} to show the desired result. To that end, we are to verify that all the conditions for the proposition are met.  We first recall that A-sparsity (as in Assumption \ref{Assumption A-sparsity}) holds as per \eqref{test inequality final result representability}
 with $L_g^*=0$,  ~~ $\varepsilon_{A}:=  \frac{1}{2}v^{-1} \ln n \cdot\Omega({s_A})+\frac{1}{\sqrt{n}}$, ~~$s:=s_A$,~~  and $R:=\frac{1}{2}v^{-1} \ln n\cdot R_{\Omega}$.

Secondly, because $\min\left\{\ln 2,\,\mathcal F\left(y\cdot F_{NN}(\mathbf x,\boldsymbol\beta)\right)\right\}\in(0,\,\ln2]$ with probability 1, we know that
$
\Vert\min\left\{\ln 2,\,\mathcal F\left(y\cdot F_{NN}(\mathbf x,\boldsymbol\beta)\right)\right\}\Vert_{\psi_1}\leq 1
$, which means that Assumption \ref{sub exponential condition} holds with $\sigma=1$. To see this, observe that $\sqrt{\min\left\{\ln 2,\,\mathcal F\left(y\cdot F_{NN}(\mathbf x,\boldsymbol\beta)\right)\right\}}\in\left(0,\,\sqrt{\ln2}\right]$, w.p.1. Thus, as per \cite{vershynin2018a} (Example 2.5.8.(c) therein), it holds that $\left\Vert \sqrt{\min\left\{\ln 2,\,\mathcal F\left(y\cdot F_{NN}(\mathbf x,\boldsymbol\beta)\right)\right\}}\right\Vert_{\psi_2}\leq \frac{1}{\sqrt{\ln 2}}\cdot \underset{\mathbf x,y,\boldsymbol\beta}{\text{ess\,sup}}\left\vert \sqrt{\min\left\{\ln 2,\,\mathcal F\left(y\cdot F_{NN}(\mathbf x,\boldsymbol\beta)\right)\right\}}\right\vert=1$. By the property that $\Vert X\Vert^2_{\psi_2}=\Vert X^2\Vert_{\psi_1}$, we thus know that 
\begin{align}\Vert\min\left\{\ln 2,\,\mathcal F\left(y\cdot F_{NN}(\mathbf x,\boldsymbol\beta)\right)\right\}\Vert_{\psi_1}\leq 1.\label{subexponential norm}
\end{align} 
 Notice that $\left\vert\frac{\partial^2 \mathcal F\left(y\cdot F_{NN}(\mathbf x,\boldsymbol\beta)\right)}{\partial \beta_j^2}\right\vert= \left\vert y^2\cdot \left[\frac{\partial^2 \mathcal F(z)}{\partial z^2}\right]_{z=y\cdot F_{NN}(\mathbf x,\boldsymbol\beta)}\cdot   \left[\frac{\partial F_{NN}(\mathbf x,\boldsymbol\beta)}{\partial \beta_j}\right]^2 +y\cdot \left[\frac{\partial \mathcal F(z)}{\partial z}\right]_{z=y\cdot F_{NN}(\mathbf x,\boldsymbol\beta)}\cdot \frac{\partial^2 F_{NN}(\mathbf x,\boldsymbol\beta)}{\partial^2\beta_j} \right\vert$. Because $\vert \frac{\partial^2 \mathcal F(z)}{\partial z^2}\vert\leq 1$, $\vert \frac{\partial \mathcal F(z)}{\partial z}\vert\leq 1$, and $\Vert \boldsymbol\beta\Vert\leq \sqrt{p}\cdot \frac{1}{2}\cdot R_{\Omega}\cdot v^{-1}\cdot \ln n$ for all $\boldsymbol\beta:\,\Vert\boldsymbol\beta\Vert_\infty\leq R_{\Omega}v^{-1}\ln n^{1/2}$, by Assumption \ref{new bound results 2.5}, we have $\left\vert\frac{\partial^2 \mathcal F\left(y\cdot F_{NN}(\mathbf x,\boldsymbol\beta)\right)}{\partial \beta_j^2}\right\vert\leq   2\exp\left\{2 \mathcal U_{NN} \cdot {\mathcal D}\cdot \ln\left(\mathcal U_{NN}\cdot R_{\Omega} \cdot {p}\cdot \frac{1}{2}\cdot v^{-1}\cdot \ln n+\mathcal U_{NN}\right)\right\}$. Because $a<\frac{1}{2}\cdot \exp\left\{- 2\mathcal U_{NN} \cdot {\mathcal D}\cdot \ln\left[p\cdot v^{-1}\cdot  \mathcal U_{NN} \cdot R_{\Omega}\cdot\ln n +\mathcal U_{NN}\right]\right\}$, the S$^3$ONC solution satisfies that $\widehat\beta_j\notin(0,\,a\lambda)$ for all $j$ with probability 1, as per Proposition \ref{Proposition 1}. 

Thirdly, we now show that $\mathcal F(y\cdot F_{NN}(\mathbf x,\cdot))$ obeys the Lipschitz-like condition as a special case to Assumption \ref{Lipschitz condition}. By Assumption \ref{new bound results 2.5},  we have $\left\Vert\nabla_{\boldsymbol\beta} \mathcal F\left(y\cdot F_{NN}(\mathbf x,\boldsymbol\beta)\right)\right\Vert=\left\Vert\left[y\cdot \frac{\partial \mathcal F(z)}{\partial z}\right]_{z=y\cdot F_{NN}(\mathbf x,\boldsymbol\beta)}\cdot  {\nabla_{\boldsymbol\beta} F_{NN}(\mathbf x,\boldsymbol\beta)} \right\Vert\leq \exp\left[\mathcal U_{NN}\cdot {\mathcal D}\cdot \ln\left(\mathcal U_{NN}\cdot \Vert \boldsymbol\beta\Vert +\mathcal U_{NN}\right)\right]\leq \exp\left[\mathcal U_{NN}\cdot {\mathcal D}\cdot \ln\left(p\cdot v^{-1}\cdot R_{\Omega}\cdot \mathcal U_{NN}\cdot \ln n+\mathcal U_{NN}\right)\right]$, which indicates that $\left\vert\mathcal F\left( y\cdot F_{NN}(\mathbf x,\boldsymbol\beta_1)\right)-\mathcal F\left( y\cdot F_{NN}(\mathbf x,\boldsymbol\beta_2)\right)\right\vert\leq  \exp\left[\mathcal U_{NN}\cdot {\mathcal D}\cdot \ln\left(p n\cdot v^{-1}\cdot R_{\Omega}\cdot \mathcal U_{NN}+\mathcal U_{NN}\right)\right]\cdot \Vert\boldsymbol\beta_1-\boldsymbol\beta_2\Vert \leq \frac{n}{\ln n}\exp\left[\mathcal U_{NN}\cdot {\mathcal D}\cdot \ln\left(p n\cdot v^{-1}\cdot R_{\Omega}\cdot \mathcal U_{NN}+\mathcal U_{NN}\right)\right]\cdot \Vert\boldsymbol\beta_1-\boldsymbol\beta_2\Vert$, for all $\boldsymbol\beta_1,\,\boldsymbol\beta_2\in \left[-\frac{1}{2}\cdot R_{\Omega}\cdot v^{-1}\cdot \ln n,~~\frac{1}{2}\cdot R_{\Omega}\cdot v^{-1}\cdot \ln n\right]^p$ and almost every $\mathbf x\in\mathcal X$.  Consequently, Assumption \ref{Lipschitz condition} holds with $\sigma_L=0$, $R:=\frac{1}{2}v^{-1} \ln n\cdot R_{\Omega}$, and $\mathcal C_\mu= \frac{n}{\ln n}\exp\left[\mathcal U_{NN}\cdot {\mathcal D}\cdot \ln\left(pn\cdot v^{-1}\cdot R_{\Omega}\cdot \mathcal U_{NN}+ \mathcal U_{NN}\right)\right]$. Thus, $\widetilde\zeta=\ln\left(3eR\cdot(\sigma_L+\mathcal  C_\mu)\right)=\ln\left( \frac{3}{2}eR_{\Omega}\cdot v^{-1}\cdot n\cdot \exp\left[\mathcal U_{NN}\cdot {\mathcal D}\cdot \ln\left(pn\cdot v^{-1}\cdot R_{\Omega}\cdot \mathcal U_{NN}+ \mathcal U_{NN}\right)\right]\right)=\ln(\frac{3}{2}eR_{\Omega} nv^{-1} )+ \mathcal U_{NN}\cdot {\mathcal D}\cdot \ln\left[\mathcal U_{NN} \cdot (1+ R_{\Omega}pnv^{-1}) \right]$.

So far, we have verified that all the conditions for Proposition \ref{comprehensive proposition} holds. 
Invoking this proposition with $\varrho=1/3$, we thus have, for any $\Gamma\geq 0$ and some    universal constant $c_2>0$, if
\begin{multline}
n > c_2\cdot \left[\left( {\Gamma+v^{-1}\cdot\Omega({s_A})\cdot \ln n  }\right)^{3}+{s_A}\cdot {\mathcal D}\cdot \mathcal U_{NN}\cdot \ln\left(\mathcal U_{NN}\cdot (n pR_{\Omega}v^{-1}+1)\right) \right]\nonumber
\\\Longrightarrow\nonumber
n > c_3\cdot \left[\vphantom{\left( \frac{\Gamma+n^{-1/2}+v^{-1}\cdot\Omega({s_A})\cdot \ln n^{1/2}  }{c_1}\right)^{3}}{s_A}\cdot\left(\ln(n^{1/3}p)+\ln\left(\frac{3}{2}eR_{\Omega} nv^{-1}\right)+ \mathcal U_{NN}\cdot {\mathcal D}\cdot \ln\left[\mathcal U_{NN} \cdot (1+ R_{\Omega}pnv^{-1}) \right]\right)\right.
\\\left.+\left( \frac{\Gamma+n^{-1/2}+v^{-1}\cdot\Omega({s_A})\cdot \ln n^{1/2}  }{1}\right)^{3}\right]\Longrightarrow \textnormal{Eq.\,}\eqref{sample initial requirement 2 ori test new}
\end{multline}
 and   $\mathcal L_{n,\lambda}(\widehat{\boldsymbol\beta},\,\mathbf Z_1^n)\leq \mathcal L_{n,\lambda}({\boldsymbol\beta}^{*}_{\varepsilon_{A}},\,\mathbf Z_1^n)+\Gamma$ almost surely, then we obtain the below by invoking  Proposition \ref{comprehensive proposition} (Part (i)) with $\varrho=1/3$ after some simplification:
 \begin{multline}
\mathbb E\left[\min\left\{\ln 2,\,\mathcal F\left(y\cdot F_{NN}(\mathbf x,\widehat{\boldsymbol\beta})\right)\right\}\right]-L_g^*
\\
\leq c_4\cdot\left(\frac{{s_A}\cdot {\mathcal D}\cdot \mathcal U_{NN}\cdot \ln\left(\mathcal U_{NN}\cdot e\cdot (1+n pR_{\Omega}v^{-1})\right)}{n^{2/3}}+\sqrt{\frac{{s_A}\cdot {\mathcal D}\cdot \mathcal U_{NN}\cdot \ln\left(\mathcal U_{NN}\cdot e\cdot (1+n pR_{\Omega}v^{-1})\right)}{n}}+ \frac{1}{n^{1/3}}\right)  \\+c_4\cdot\sqrt{\frac{\Gamma+v^{-1}\cdot\Omega({s_A})\cdot \ln n+\frac{1}{\sqrt{n}}}{n^{1/3}}}+\Gamma+v^{-1}\cdot\Omega({s_A})\cdot \ln n+\frac{1}{\sqrt{n}},\label{bound global 2}
\end{multline}
with probability at least 
$
1-2 (p+1) \exp(- n/c_4)-6\exp\left(-2cn^{1/3}\right).
$
%
Further noticing that $ \mathbb 1\left(t<0\right) \leq 2\cdot  \min\left\{\ln 2,\,\mathcal F\left(t\right)\right\} $ for all $t\in\Re$, we then have $\mathbb E\left[\mathbb 1(y\cdot F\left(\mathbf x,\widehat{\boldsymbol\beta})<0\right)\right]\leq 2\cdot \mathbb E\left[\min\left\{\ln 2,\,\mathcal F\left(y\cdot F_{NN}(\mathbf x,\widehat{\boldsymbol\beta})\right)\right\}\right]$, almost surely. This combined with \eqref{bound global 2}   immediately leads to the desired result.
\hfill \ensuremath{\Box}\end{proof}

\subsubsection{Proof of generalizability of a flexible set of NN architectures}\label{Proof ReLU arch result}
\begin{proof}{Proof of Corollary \ref{ReLU arch result}.}
Let $c_1,\,c_2$,... be universal constants. Because the output layer involves no nonlinear transformation, Assumption \ref{mild condition invariance} holds. Observe that, when $b_{l,{\mathcal D}}=0$, for $l=1,..., \mathcal D-1$, and  $\mathbf W_{l-1,l}=\mathbf 0$ and $\mathbf b_{l-1,l}=\mathbf 0$, for all  $l=2,...,{\mathcal D}-1$,  the NN defined as in \eqref{zero NN equality}-\eqref{second NN equality} can be reduced to $F_{NN}(\mathbf x,\boldsymbol\beta)=\sum_{l=1}^{{\mathcal D}-1}\left[\boldsymbol w_{l,{\mathcal D}}^\top\Psi\left( \mathbf W_{0,l} \mathbf x +\mathbf b_{0,1}\right)\right]$, which is essentially an NN with one hidden layer.
We therefore may  invoke Theorem 2.1 of  \cite{mhaskar1996a}, which is restated as Theorem \ref{useful theorem from M} in this paper for completeness. It establishes the representation error of a single-hidden-layer NN in approximating $g\in \mathbb F_{d,r}$ under Assumption \ref{activation function condition here}. As an immediate result of that theorem, if there are $\widetilde N$-many (active) hidden neurons in that single-hidden-layer NN, captured by $\widetilde{\boldsymbol w}^\top \Psi\left( \widetilde{\mathbf W} \mathbf x +\widetilde{\mathbf b}\right)$ for fitting parameters $\widetilde{\boldsymbol w}\in\Re^{{\widetilde N}}$, $\widetilde{\mathbf W}\in \Re^{{\widetilde N}\times d}$, and $\widetilde{\mathbf b}\in\Re^{\widetilde N}$, then the model misspecification error $\Omega(\widetilde N)$ is at most $\mathcal C_{NN}\cdot {\widetilde N}^{-r/d}$, where   $\mathcal C_{NN}>0$ is a quantity that depends only on $d$ and $r$; more formally,
\[\underset{\substack{\widetilde{\boldsymbol w}\in\Re^{1\times {\widetilde N}},\,\widetilde{\mathbf W}\in \Re^{{\widetilde N}\times d}
\\\widetilde{\mathbf b}\in\Re^{\widetilde N}}}{\inf}~\underset{\mathbf x\in[-1,1]^{d}}{\text{ess\,sup}}\left\vert \left[\widetilde{\boldsymbol w}^\top \Psi\left( \widetilde{\mathbf W} \mathbf x +\widetilde{\mathbf b}\right)\right] -g(\mathbf x)\right\vert\leq \mathcal C_{NN}\cdot {\widetilde N}^{-r/d}.\]

  Meanwhile, the total number of fitting parameters of this single-hidden-layer NN is $(d+2)\cdot \widetilde N$. Observing that this single-hidden-layer NN is a subnetwork of $F_{NN}(\mathbf x,\boldsymbol\beta)$ if $\widetilde N\leq  K\cdot  {\mathcal D}$, we obtain that
\begin{multline}
\inf_{\Vert\boldsymbol\beta\Vert_{0}\leq \widetilde {N}\cdot (d+2)}\mathbb E\left[ \left\vert g(\mathbf x)-F_{NN}(\mathbf x,\,\boldsymbol\beta)\right\vert\right]\leq\underset{\boldsymbol\beta:\,\Vert \boldsymbol\beta\Vert_0\leq (d+2)\cdot{\widetilde N}}{\inf}~\underset{\mathbf x\in[-1,1]^{d}}{\text{ess\,sup}}~ \vert g(\mathbf x)-F_{NN}(\mathbf x,\,\boldsymbol\beta)\vert
\\
\leq \underset{\substack{\widetilde{\boldsymbol w}\in\Re^{{\widetilde N}},\,\widetilde{\mathbf W}\in \Re^{{\widetilde N}\times d}
\\\widetilde{\mathbf b}\in\Re^{\widetilde N}}}{\inf}~\underset{\mathbf x\in[-1,1]^{d}}{\text{ess\,sup}}\left\vert  \widetilde{\boldsymbol w}^\top \Psi\left( \widetilde{\mathbf W} \mathbf x +\widetilde{\mathbf b}\right)  -g(\mathbf x)\right\vert \leq \mathcal C_{NN}\cdot ({\widetilde N})^{-r/d},
\end{multline}
 for any positive integers $\widetilde N:\,\widetilde N\leq K\cdot \mathcal D$. 

We now invoke Theorem \ref{general result theorem regularized NN} with   ${s_A}:=(d+2)\cdot\widetilde N$, where we let $\widetilde N= \min\{K\cdot {\mathcal D},\,n^{1/3}\}$, and 
$\Omega((d+2)\widetilde N):=\mathcal C_{NN}\cdot   ({\widetilde N})^{-r/d}\leq \mathcal C_{NN}\cdot \ \max\{n^{-\frac{r}{3d}},\,(K\cdot {\mathcal D})^{-r/d}\}\leq \mathcal C_{NN}.$
To satisfy \eqref{sample initial requirement 3}, it suffices to stipulate   both $n>c_1\cdot \left(\mathcal C_{NN}\cdot v^{-1}\ln n\right)^3+c_1\cdot(\Gamma+1)^3$ and
\begin{multline}n>\min\left\{c_2\cdot \left[ (d+2)\cdot {\mathcal D}\cdot \mathcal U_{NN}\cdot \ln \left(\mathcal U_{NN}\cdot\left(npR_\Omega v^{-1}+1\right)\right)\right]^{3/2},\right.
\\\left.\,c_2\cdot \left[ (d+2)\cdot  K\cdot {\mathcal D}^2\cdot \mathcal U_{NN}\cdot \ln \left(\mathcal U_{NN}\cdot\left(npR_\Omega v^{-1}+1\right)\right)\right]\vphantom{a^{3^3}}\right\},
\end{multline} 
  which are simultaneously satisfied by \eqref{sample initial requirement 3 bound here}.
Then, the desired result is   implied by Theorem \ref{general result theorem regularized NN}.
\hfill \ensuremath{\Box}\end{proof} 

\subsubsection{Proof of suboptimality-independent generalizability of NN}\label{proof of local solution results}
\begin{proof}{Proof of Theorem \ref{local solution results}.} We  first show Part (b). We  denote by $c_0,\,c_1,\,c_2,...$  potentially different universal constants throughout this proof. The general idea is (i) to first show that  Algorithm 1 always generates a sparse solution and that, with the initialization via Algorithm 2, the  suboptimality gap is well controlled, and (ii) then, to invoke Proposition \ref{test proposition important}, which provides generalization error bounds for sparse solutions with a small   suboptimality gap.  Accordingly, this proof is divided into three steps, with the analysis for (i) provided in Steps 1 and 2, and the details for (ii) provided in Step 3.

 Our proof relies on the analysis of the following hypothetical formulation:
$\inf_{\boldsymbol\beta\in\Re^p}\,\frac{1}{n}\sum_{i=1}^n\min\left\{\ln 2,\,\mathcal F\left(y_i\cdot F_{NN}(\mathbf x_i,{\boldsymbol\beta})\right)\right\}+\sum_{j=1}^pP_\lambda(\vert \beta_j\vert).$ Meanwhile, because of the termination criterion in \eqref{termination criterion Algorithm 1} (where  $\widetilde{f}(\,\cdot\,):=n^{-1}\sum_{i=1}^n\mathcal F\left(y_i F_{NN}(\mathbf x_i,\,\cdot\,)\right)$), we have, for all $k=1,...,k^*(\widehat{\boldsymbol\beta}^{initial},\mathbf X,\mathbf y)$, \begin{align}
n^{-1}\sum_{i=1}^n\mathcal F\left(y_i F_{NN}(\mathbf x_i,\,{\boldsymbol\beta}^{k}\,)\right)+\sum_{j=1}^pP_\lambda(\vert\beta_j^k\vert)\leq n^{-1}\sum_{i=1}^n\mathcal F\left(y_i F_{NN}(\mathbf x_i,\,\widehat{\boldsymbol\beta}^{initial}\,)\right)+\sum_{j=1}^pP_\lambda(\vert\widehat\beta_j^{initial}\vert)-k\frac{\gamma^2_{opt}}{2\mathcal M}.\label{descent algorithm inequality ensured}
\end{align}
 
{\bf Step 1.} For the above hypothetical problem, this step verifies that the conditions required by Proposition \ref{comprehensive proposition} are satisfied in the case where $k^*(\widehat{\boldsymbol\beta}^{initial},\mathbf X,\mathbf y)\geq 1$. We  divide this step into five sub-steps as below.
 
 {\it Step 1.1.} We  first verify Assumption \ref{Assumption A-sparsity}. Because of \eqref{sample size initial requirement 1}, it holds  that $K^*:=\left\lceil 10 n^{1/3}\cdot (\ln n)^{5/3}\right\rceil\geq  d\ln(dK^*)= d\ln ( \lceil10 n^{1/3}\cdot (\ln n)^{5/3}\rceil d)$. 
Thus, as a direct implication of Lemma \ref{useful here for local solution NN},  
\begin{multline}
\mathbb P\left[\sup_{(\mathbf x,\mathbf y)\in supp(\mathbb D)}\left\vert\frac{y\cdot\ln n}{v}\frac{1}{{K^*}}\sum_{k=1}^{K^*} C_g(\xi_k)\cdot \max\{0,\,\mathbf x^\top\xi_k\}-\frac{y\cdot\ln n}{v}g(\mathbf x) \right\vert\leq c_1 \cdot\frac{\ln n}{v}\cdot \sqrt{\frac{d\ln\left(d{K^*}\right)}{{K^*}}}\right]
\\
\geq  1-  2\exp\left(-d\ln\left(d{K^*}\right)\right)-\exp(-d\cdot {K^*}).\label{probability bound bernstein new to show here}
\end{multline}
where $\xi$   follow the same definition as in Lemma \ref{useful here for local solution NN} and $K^*$ is defined as in Algorithm 1.
Observe that, as per Assumption  \ref{Gaussian assumption NN}, 
$y\cdot g(\mathbf x)=y\cdot \mathbb{E}_{\xi}\left[C_g(\xi)\cdot \max\{0,\,\xi^\top\mathbf x\}\right]\geq v
\Longleftrightarrow \frac{\ln n}{v}y\cdot g(\mathbf x)\geq \ln n,$ 
 for all $(\mathbf x,y)\in supp(\mathbb D)$. Also observe that the first and second derivatives of $\mathcal F$ are calculated as $\mathcal F'(z)=-\frac{\exp(-z)}{1+\exp(-z)}$ and $\mathcal F''(z)=\frac{\exp(z)}{(1+\exp(z))^2}=\frac{\exp(z)}{1+\exp(2z)+2\exp(z)}$. Thus  $\mathcal F'$ is 0.5-Lipschitz continuous and hence a well-known inequality  yields that
$
\mathcal F(x_1)-\mathcal F(x_2)\leq   \mathcal F'(x_2)\cdot (x_1-x_2)+0.5/2\cdot (x_1-x_2)^2.$ In view of $\vert \mathcal F'(z)\vert=\frac{\exp(-z)}{1+\exp(-z)}\leq  \frac{1/n}{1+1/n} \leq \frac{1}{n}$ for all $z\geq \ln n$, we then obtain that $\vert \mathcal F'(v^{-1}\cdot y\cdot g(\mathbf x)\cdot \ln n)\vert \leq \frac{1}{n}$. This, combined with \eqref{probability bound bernstein new to show here}, yields that \begin{multline}
\sup_{(\mathbf x,\,y)\in supp(\mathbb D)}\left\{\mathcal F\left(\frac{y\cdot\ln n}{{K^*}v}\sum_{k=1}^{K^*}C_g(\xi_k)\cdot \max\left\{0,\,\xi_k^\top\mathbf x\right\}\right)  - \mathcal F\left(\frac{y\cdot\ln n}{v}g(\mathbf x)\right) \right\}
\\\leq c_2\cdot \frac{1}{n}\cdot \ln n\cdot   \sqrt{\frac{d\ln\left(d\cdot {K^*} \right)}{{K^*} \cdot v^2}}+c_2\cdot (\ln n)^2\frac{d\ln\left(d\cdot {K^*} \right)}{{K^*} \cdot v^2},\label{important inequality derived Generalizability}
\end{multline}
with probability 
$1-  2\exp\left(-d\ln\left(d {K^*}   \right)\right)-\exp(-d\cdot {K^*} )$.
Observe that $\frac{\ln n}{v}\frac{1}{{K^*}}\sum_{k=1}^{K^*} C_g(\xi_k)\cdot \max\left\{0,\,\xi_k^\top\mathbf x\right\}$ is representable by $F_{NN}(\mathbf x,\,\boldsymbol\beta)$ for some $\boldsymbol\beta:\,\Vert\boldsymbol\beta\Vert_0\leq K^*\cdot (d+1),\, \Vert\boldsymbol\beta\Vert_\infty\leq n$. To see this, we can assign the fitting parameters in \eqref{zero NN equality 2}-\eqref{first NN equality 3} to be  the following: (i) Let
${\boldsymbol w}_{1,\mathcal D}:= (\widetilde{\boldsymbol w}_{1,\mathcal D}^\top,\,\underbrace{0,\,...,\,0}_{\text{$(K-K^*)$-many 0's}} )^\top\in\Re^K$ and ${\mathbf W}_{0,1}:=\left[\begin{matrix}\widetilde{\mathbf W}^\top_{0,1},~\mathbf 0_{d\times (K-K^*)}
\end{matrix}\right]^\top\in\Re^{K\times d}$,
where $\widetilde{\boldsymbol w}_{1,\mathcal D}=(\frac{y\cdot\ln n}{{K^*}v}\cdot C_g(\xi_k):\,k=1,...,{K^*})$, $\widetilde{\mathbf W}_{0,1}=(\xi_k^\top:\,k=1,...,{K^*})$, and $\mathbf 0_{  d\times (K-K^*)}$ is a $d$-by-$(K-K^*)$ all-zero matrix.  (ii)  Let   the rest of the fitting parameters to be zero.  With the foregoing assignment of values,  no more than ${K^*}\cdot (d+1)$-many of the fitting parameters are nonzero.  Furthermore,
$\mathbb P\left[\max_{k\in\{1,...,{K^*}\}}\{\Vert \xi_k\Vert_\infty\}\leq n\right]\geq 1-d{K^*}\cdot \exp(-\frac{n^2}{2})$, by  the fact that each entry of $\xi_k$ is an i.i.d. standard Gaussian random variable. Meanwhile,  $\Vert\widetilde{\boldsymbol w}_{1,\mathcal D}\Vert_\infty\leq v^{-1}\ln n\leq n$ (because \eqref{sample size initial requirement 1} implies that $n\geq \frac{\ln n}{v}$). 

Consequently, \eqref{important inequality derived Generalizability} implies that
$
\,\min_{\substack{\boldsymbol\beta:\, \,\Vert\boldsymbol\beta\Vert_0\leq (d+1){K^*},\,
\\\Vert\boldsymbol\beta\Vert_\infty\leq n}}\,\mathbb E[\mathcal F(y\cdot F_{NN}(\mathbf x,\boldsymbol\beta))] -\mathbb E[\mathcal F(v^{-1}y\cdot g(\mathbf x)\cdot \ln n)]\nonumber
\leq\, \sup_{(\mathbf x,\,y)\in supp(\mathbb D)}\{\mathcal F(\frac{y\cdot\ln n}{{K^*}v}\sum_{k=1}^{K^*}C_g(\xi_k)\cdot \max\{0,\,\xi_k^\top\mathbf x\})  - \mathcal F(\frac{y\cdot\ln n}{v}g(\mathbf x)) \}\nonumber
\leq \,  c_2\cdot \frac{1}{n}\cdot \ln n\cdot   \sqrt{\frac{d\ln(d\cdot {K^*} )}{{K^*} \cdot v^2}}+c_2\cdot (\ln n)^2\frac{d\ln(d\cdot {K^*} )}{{K^*} \cdot v^2},$
with probability at least $1-  2\exp\left(-d\ln\left(d{K^*}\right)\right)-\exp(-d\cdot {K^*})-d{K^*}\cdot \exp(-\frac{n^2}{2})$.  Furthermore, because Assumption \ref{Gaussian assumption NN} and the definition of $\mathcal F$ (which is a decreasing function) imply that 
\begin{align}\mathcal F\left(v^{-1}y\cdot g(\mathbf x)\cdot \ln n\right)\leq \mathcal F\left(\ln n\right)=\ln\left(1+\exp(-\ln n)\right)\leq 1/n\leq \ln 2.\label{inequality as per definition of F}
\end{align}
for all $(\mathbf x,y)\in supp(\mathbb D)$, we may continue from the above to obtain that
$
\min_{\substack{\boldsymbol\beta:\, \,\Vert\boldsymbol\beta\Vert_0\leq (d+1){K^*},\,
\\\Vert\boldsymbol\beta\Vert_\infty\leq n}}\,\mathbb E\left[\min\left\{\ln 2,\,\mathcal F\left(y\cdot F_{NN}(\mathbf x,\boldsymbol\beta)\right)\right\}\right]-0
\leq   c_2\cdot \frac{1}{n}\cdot \ln n\cdot   \sqrt{\frac{d\ln\left(d\cdot {K^*} \right)}{{K^*} \cdot v^2}}+c_2\cdot (\ln n)^2\frac{d\ln\left(d\cdot {K^*} \right)}{{K^*} \cdot v^2}+\frac{1}{n},$
with probability at least $1-  2\exp\left(-d\ln\left(d{K^*}\right)\right)-\exp(-d\cdot {K^*})-d{K^*}\cdot \exp(-\frac{n^2}{2})$.
Because $\mathcal F(t)> 0$ for all $t\in\Re$, we thus know that A-sparsity as in Assumption \ref{Assumption A-sparsity} (while we let $\mathbb L(\cdot)$, $L_g^*$, $s$, $R$, and $\varepsilon_A$ from that definition to be $\mathbb L(\cdot):=\mathbb E\left[\min\left\{\ln 2,\,\mathcal F\left(y\cdot F_{NN}(\mathbf x,\,\cdot\,)\right)\right\}\right]$, $L_g^*:=0$, $s:=(d+1)\cdot K^*$, $R:=n$, and $\varepsilon_A:=c_2\cdot \frac{1}{n}\cdot \ln n\cdot   \sqrt{\frac{d\ln\left(d\cdot {K^*} \right)}{{K^*} \cdot v^2}}+c_2\cdot (\ln n)^2\frac{d\ln\left(d\cdot {K^*} \right)}{{K^*} \cdot v^2}+\frac{1}{n}$, respectively) holds with probability at least $1-  2\exp\left(-d\ln\left(d{K^*}\right)\right)-\exp(-d\cdot {K^*})-d{K^*}\cdot \exp(-\frac{n^2}{2})$. 
This completes Step 1.1.

 {\it Step 1.2.} Because  $\min\left\{\ln 2,\,\mathcal F\left(y_i\cdot F_{NN}(\mathbf x_i,\widehat{\boldsymbol\beta})\right)\right\}\in(0,\,\ln2]$, we thus know that  $\left\Vert \min\left\{\ln 2,\,\mathcal F\left(y_i\cdot F_{NN}(\mathbf x_i,\widehat{\boldsymbol\beta})\right)\right\}\right\Vert_{\psi_1}\leq  1$ from the same argument as in deriving \eqref{subexponential norm}.  Therefore, Assumption \ref{sub exponential condition} holds with $\sigma=1$.

 {\it Step 1.3.} To verify Assumption \ref{Lipschitz condition}, we observe   that $\Vert \mathbf W_{l-1,l}\Vert\leq \Vert \mathbf W_{l-1,l}\Vert_F\leq K\cdot R$ for all  $\boldsymbol\beta=vec((\mathbf W_{l-1,l}:\,2\leq l\leq \mathcal D-1),(\mathbf b_{l-1,l}:\,2\leq l\leq \mathcal D-1),\,\boldsymbol w_{\mathcal D-1,\mathcal D},\,\boldsymbol w_{1,\mathcal D},\,b_{\mathcal D-1,\mathcal D},\mathbf W_{0,1},\mathbf b_{0,1}):\,\Vert \boldsymbol\beta\Vert_\infty\leq R$, $\mathbf x:\,\Vert\mathbf x\Vert=1$ and $2\leq l\leq \mathcal D-1$ (because $\mathbf W_{l-1,l}$ has no more than $K^2$-many entries and the  absolute value of each entry has an upper bound of $R$).  Likewise, it also holds that $\Vert\mathbf W_{0,1} \Vert\leq \Vert\mathbf W_{0,1} \Vert_F\leq\sqrt{d\cdot K}\cdot R\leq KR$ (where the last inequality is due to  $K\geq d$)
 and $\Vert\mathbf b_{l-1,l}\Vert\leq \sqrt{K}\cdot R$ for all $l=1,...,\mathcal D$. Therefore, by \eqref{zero NN equality 2}-\eqref{first NN equality 3},
$\Vert f_{NN,l}(\mathbf x,\boldsymbol\beta)\Vert\leq \left[\prod_{l'=1}^{l} \Vert \mathbf W_{l'-1,l'}\Vert\right]\cdot\Vert\mathbf x\Vert+\sum_{\ell=2}^{l}\left[\prod_{l'=\ell}^{l} \Vert \mathbf W_{l'-1,l'}\Vert\right]\cdot\Vert\mathbf b_{\ell-2,\ell-1}\Vert + \Vert\mathbf b_{l-1,l}\Vert\leq ({K}\cdot R)^{l}+\sum_{\ell=2}^{l+1}\left({K}\cdot R\right)^{l-\ell+1}\cdot {K}\cdot R\leq ({K}\cdot R)^{l}+\frac{({K}R)^{l}}{1-({K}R)^{-1}}$. Since ${K}\geq 2$ and $R\geq 1$ we have  $\Vert f_{NN,l}(\mathbf x,\boldsymbol\beta)\Vert\leq 3\cdot ({K}\cdot R)^{l}$ for all $l:\,2\leq l\leq \mathcal D-1$. 

Based on the above, one may further verify that   
$\vert n^{-1}\sum_{i=1}^n\min\{\ln 2,\,\mathcal F(y_i\cdot F_{NN}(\mathbf x_i,\widetilde{\boldsymbol\beta}_1))\}-n^{-1}\sum_{i=1}^n\min\{\ln 2,\,\mathcal F(y_i\cdot F_{NN}(\mathbf x_i,\widetilde{\boldsymbol\beta}_2))\}\vert \leq 3\sqrt{p}\cdot (K\cdot R)^{\mathcal D}\cdot \Vert \widetilde{\boldsymbol\beta}_1-\widetilde{\boldsymbol\beta}_2\Vert,$
 for any $\widetilde{\boldsymbol\beta}_1,\,\widetilde{\boldsymbol\beta}_2\,\in\,\left\{{\boldsymbol\beta}:\,\Vert {\boldsymbol\beta}\Vert_\infty\leq R\right\}$.   
To see this, consider the case where  $\widetilde{\boldsymbol\beta}_2=\widetilde{\boldsymbol\beta}_1+e_j\cdot \delta$ for any $\delta\in\Re$  such that $\widetilde{\boldsymbol\beta}_1,\,\widetilde{\boldsymbol\beta}_2\in \{\boldsymbol\beta:\Vert{\boldsymbol\beta}\Vert_\infty\leq R\}$, it holds that
$\vert n^{-1}\sum_{i=1}^n\min\{\ln 2,\,\mathcal F(y_i\cdot F_{NN}(\mathbf x_i,\widetilde{\boldsymbol\beta}_1))\}-n^{-1}\sum_{i=1}^n\min\{\ln 2,\,\mathcal F(y_i\cdot F_{NN}(\mathbf x_i,\widetilde{\boldsymbol\beta}_2))\}\vert 
\leq  n^{-1}\sum_{i=1}^n \vert\min\{\ln 2,\,\mathcal F(y_i\cdot F_{NN}(\mathbf x_i,\widetilde{\boldsymbol\beta}_1))\}- \min\{\ln 2,\,\mathcal F(y_i\cdot F_{NN}(\mathbf x_i,\widetilde{\boldsymbol\beta}_2))\}\vert\nonumber
 \leq  n^{-1}\sum_{i=1}^n \vert  \mathcal F(y_i\cdot F_{NN}(\mathbf x_i,\widetilde{\boldsymbol\beta}_1)) - \mathcal F(y_i\cdot F_{NN}(\mathbf x_i,\widetilde{\boldsymbol\beta}_2))\vert.
$
Recall that $\vert\mathcal F'(z)\vert\leq 1$  for all $z\in\Re$ (from which we obtain that $\mathcal F(z)$ is 1-Lipscthiz continuous). Together with the fact that $y_i\in\{-1,1\}$ for all $i$, the above implies that
$\left\vert n^{-1}\sum_{i=1}^n\min\left\{\ln 2,\,\mathcal F\left(y_i\cdot F_{NN}(\mathbf x_i,\widetilde{\boldsymbol\beta}_1)\right)\right\}-n^{-1}\sum_{i=1}^n\min\left\{\ln 2,\,\mathcal F\left(y_i\cdot F_{NN}(\mathbf x_i,\widetilde{\boldsymbol\beta}_2)\right)\right\}\right\vert\nonumber
   \leq  n^{-1}\sum_{i=1}^n \left\vert   y_i\cdot F_{NN}(\mathbf x_i,\widetilde{\boldsymbol\beta}_1)  - y_i\cdot F_{NN}(\mathbf x_i,\widetilde{\boldsymbol\beta}_2) \right\vert\leq n^{-1}\sum_{i=1}^n \left\vert    F_{NN}(\mathbf x_i,\widetilde{\boldsymbol\beta}_1)  -  F_{NN}(\mathbf x_i,\widetilde{\boldsymbol\beta}_2) \right\vert.$ Recall that $\widetilde{\boldsymbol\beta}_2=\widetilde{\boldsymbol\beta}_1+e_j\cdot \delta$.
Let the $j$th fitting parameter be  the weight for the connection  between the $\iota_1$th neuron in Layer $(l-1)$ and the $\iota_2$th neuron in  Layer  $l$ for any $l:\,2\leq l\leq \mathcal D-1$. Then, \eqref{zero NN equality 2}-\eqref{first NN equality 3} and $\Vert f_{NN,l}(\mathbf x,\boldsymbol\beta)\Vert\leq 3\cdot ({K}\cdot R)^{l}$ lead to 
$
\left\vert    F_{NN}(\mathbf x_i,\widetilde{\boldsymbol\beta}_1)  -  F_{NN}(\mathbf x_i,\widetilde{\boldsymbol\beta}_2) \right\vert \leq \Vert \boldsymbol w_{\mathcal D-1,\mathcal D}\Vert\cdot \left(\prod_{\ell=l+1}^{\mathcal D-1}\Vert\mathbf W_{\ell-1,\ell}\Vert\right)\cdot  \delta\cdot \Vert f_{NN,l-1}(\mathbf x,\widetilde{\boldsymbol\beta}_1)\Vert\leq 3(KR)^{\mathcal D}\cdot \delta.$
We may generalize the above argument  to all the dimensions of $\boldsymbol\beta$.  Consequently, if $\widetilde{\boldsymbol\beta}_2=\widetilde{\boldsymbol\beta}_1+\sum_{j=1}^pe_j\cdot \delta_j$ for any $\{\delta_j\}\subset\Re:\widetilde{\boldsymbol\beta}_1,\,\widetilde{\boldsymbol\beta}_2\in \{\boldsymbol\beta:\Vert{\boldsymbol\beta}\Vert_\infty\leq R\}$, then
$\vert n^{-1}\sum_{i=1}^n\min\{\ln 2,\,\mathcal F(y_i\cdot F_{NN}(\mathbf x_i,\widetilde{\boldsymbol\beta}_1))\}-n^{-1}\sum_{i=1}^n\min\{\ln 2,\,\mathcal F(y_i\cdot F_{NN}(\mathbf x_i,\widetilde{\boldsymbol\beta}_2))\}\vert
 \leq  3(K\cdot R)^{\mathcal D}\sum_{j=1}^p\cdot  \vert\delta_j \vert
\leq 3\sqrt{p}\cdot (K\cdot R)^{\mathcal D}\cdot \sqrt{ \sum_{j=1}^{p}\vert\delta_j\vert^2}\leq 3\sqrt{p}\cdot (K\cdot R)^{\mathcal D}\cdot \Vert \widetilde{\boldsymbol\beta}_1-\widetilde{\boldsymbol\beta}_2\Vert.$
Thus, Assumption \ref{Lipschitz condition} holds with $\sigma_L=0$ and $\mathcal C_\mu =  3\sqrt{p}\cdot (K\cdot R)^{\mathcal D}$.

 {\it Step 1.4.} It is evident from the same argument as in proving Part (d) of Theorem \ref{Algorithm complexity theorem}  that $\widehat{\boldsymbol\beta}=(\widehat{\beta}_j)$, where we let $\widehat{\boldsymbol\beta}:=\boldsymbol\beta^{k^*(\widehat{\boldsymbol\beta}^{initial},\mathbf X,\mathbf y)}$, satisfies that   $\vert \widehat\beta_j\vert\notin(0,\,a\lambda)$ for all $j=1,...,p$, if $k^*(\widehat{\boldsymbol\beta}^{initial},\mathbf X,\mathbf y)\geq 1$. 

 {\it Step 1.5.} This sub-step is to derive an estimate on the suboptimality gap $\Gamma$ for the initial solution generated through  Algorithm 2.   As per Lemma \ref{useful here for local solution NN}, because $K^*\geq   d\cdot \ln(d\cdot K^*)$ and $\widetilde{\mathbf W}_{0,1}^{initial}=\left(\left(\mathbf w_{0,1,k}^{initial}\right)^\top:\,k=1,...,K^*\right)$ has i.i.d. standard normal entries (and thus $\mathbf w_{0,1,k}^{initial}$ follows the same distribution as both $\xi$ and $\xi_k$) it  holds that
\[
\sup_{(\mathbf x,\mathbf y)\in supp(\mathbb D)}\left\vert\frac{y\cdot\ln n}{v}\left(\frac{1}{{K^*}}\sum_{k=1}^{K^*} C_g(\mathbf w_{0,1,k}^{initial}) \max\{0,\,\mathbf x^\top\mathbf w_{0,1,k}^{initial}\}-g(\mathbf x)\right)\right\vert\vphantom{\sqrt{\frac{d\ln\left(d{K^*}\right)}{{K^*}}}}
\leq c_3 \cdot\frac{\ln n}{v}\cdot \sqrt{\frac{d\ln\left(d{K^*}\right)}{{K^*}}},\]
with probability at least $1-  2\exp\left(-d\ln\left(d{K^*}\right)\right)-\exp(-d\cdot {K^*})$.
Following the same argument as in deriving \eqref{important inequality derived Generalizability}, we  obtain
$
 n^{-1}\sum_{i=1}^n\mathcal F\left(\frac{y_i\ln n}{{K^*}v}\sum_{k=1}^{K^*}C_g(\mathbf w_{0,1,k}^{initial})\cdot \max\left\{0,\,\left(\mathbf w_{0,1,k}^{initial}\right)^\top\mathbf x_i\right\}\right) -n^{-1}\sum_{i=1}^n\mathcal F\left(\frac{y_i\cdot\ln n}{v} g(\mathbf x_i)\right) 
\leq c_4\cdot \frac{1}{n}\cdot \ln n\cdot   \sqrt{\frac{d\ln\left(d\cdot {K^*} \right)}{{K^*} \cdot v^2}}+c_4\cdot (\ln n)^2\frac{d\ln\left(d\cdot {K^*} \right)}{{K^*} \cdot v^2}$
with probability at least $1-  2\exp\left(-d\ln\left(d {K^*}   \right)\right)-\exp(-d\cdot {K^*} )$. As an immediate result,
 \begin{align}
 & \frac{1}{n}\sum_{i=1}^n\mathcal F\left(y_i\cdot F^{sub}_{NN}\left(\mathbf x_i,({\mathbf W}^{initial}_{0,1},\, {\boldsymbol w}^{initial}_{1,L})\right)\right)-n^{-1}\sum_{i=1}^n\mathcal F\left(\frac{y_i\cdot\ln n}{v} g(\mathbf x_i)\right) \nonumber
\\\leq & n^{-1}\sum_{i=1}^n\mathcal F\left(\frac{y_i\ln n}{{K^*}v}\sum_{k=1}^{K^*}C_g(\mathbf w_{0,l,k}^{initial})\cdot \max\left\{0,\,\left(\mathbf w_{0,l,k}^{initial}\right)^\top\mathbf x_i\right\}\right) -n^{-1}\sum_{i=1}^n\mathcal F\left(\frac{y_i\cdot\ln n}{v} g(\mathbf x_i)\right) \nonumber
\\\leq &c_4\cdot \frac{1}{n}\cdot \ln n\cdot   \sqrt{\frac{d\ln\left(d\cdot {K^*} \right)}{{K^*} \cdot v^2}}+c_4\cdot (\ln n)^2\frac{d\ln\left(d\cdot {K^*} \right)}{{K^*} \cdot v^2}.\label{important inequality derived Generalizability 2}
\end{align}
with probability at least $1-  2\exp\left(-d\ln\left(d {K^*}   \right)\right)-\exp(-d\cdot {K^*} )$.
 Further recall that $F^{sub}_{NN}\left(\,\cdot\,,({\mathbf W}^{initial}_{0,1},\, {\boldsymbol w}^{initial}_{1,L})\right) = F_{NN}(\,\cdot\,,\widehat{\boldsymbol\beta}^{initial})$. We thus have (combined with \eqref{inequality as per definition of F})
 $n^{-1}\sum_{i=1}^n\mathcal F\left(y_i\cdot F_{NN}(\mathbf x_i,\widehat{\boldsymbol\beta}^{initial})\right)=\frac{1}{n}\sum_{i=1}^n\mathcal F\left(y_i\cdot F^{sub}_{NN}\left(\mathbf x_i,({\mathbf W}^{initial}_{0,1},\, {\boldsymbol w}^{initial}_{1,L})\right)\right)
\leq  \frac{1}{n} +c_4\cdot \frac{1}{n}\cdot \ln n\cdot   \sqrt{\frac{d\ln\left(d\cdot {K^*} \right)}{{K^*}\cdot  v^2}}+c_4\cdot (\ln n)^2\frac{d\ln\left(d\cdot {K^*} \right)}{{K^*}\cdot  v^2},$
with probability at least $1- 2 \exp\left(-d\ln\left(d{K^*}\right)\right)-\exp(-d\cdot {K^*})$.
Because  $P_\lambda(\cdot)\leq\frac{a\lambda^2}{2}$ and $\Vert \widehat{\boldsymbol\beta}^{initial}\Vert_0\leq (d+1)\cdot K^*$, we further obtain
\begin{multline}
n^{-1}\sum_{i=1}^n\mathcal F\left(y_i\cdot F_{NN}(\mathbf x_i,\widehat{\boldsymbol\beta}^{initial})\right)+\sum_{j=1}^pP_\lambda(\vert\widehat\beta_j^{initial}\vert)\leq   \frac{1}{n} +c_3\cdot \frac{1}{n}\cdot \ln n\cdot   \sqrt{\frac{d\ln\left(d\cdot {K^*} \right)}{{K^*}\cdot  v^2}}
\\+c_3\cdot (\ln n)^2\frac{d\ln\left(d\cdot {K^*} \right)}{{K^*}\cdot  v^2} +{K^*}\cdot (d+1)\cdot \frac{a\lambda^2}{2},\label{suboptimality gap}
\end{multline}
with probability at least $1- 2 \exp\left(-d\ln\left(d{K^*}\right)\right)-\exp(-d\cdot {K^*})$.  Because of \eqref{descent algorithm inequality ensured}, we have $n^{-1}\sum_{i=1}^n\mathcal F\left(y_i\cdot F_{NN}(\mathbf x_i,\widehat{\boldsymbol\beta})\right)+\sum_{j=1}^pP_\lambda(\widehat\beta_j)\leq n^{-1}\sum_{i=1}^n\mathcal F\left(y_i\cdot F_{NN}(\mathbf x_i,\widehat{\boldsymbol\beta}^{initial})\right)+\sum_{j=1}^pP_\lambda(\widehat\beta_j^{initial})$. It thus holds that
$
n^{-1}\sum_{i=1}^n\min\left\{\ln 2,\,\mathcal F\left(y_i\cdot F_{NN}(\mathbf x_i,\widehat{\boldsymbol\beta})\right)\right\}+\sum_{j=1}^pP_\lambda(\vert\widehat\beta_j\vert)\leq   \frac{1}{n} +c_4\cdot \frac{1}{n}\cdot \ln n\cdot   \sqrt{\frac{d\ln\left(d\cdot {K^*} \right)}{{K^*}\cdot  v^2}}
+c_4\cdot (\ln n)^2\frac{d\ln\left(d\cdot {K^*} \right)}{{K^*}\cdot  v^2} +{K^*}\cdot (d+1)\cdot \frac{a\lambda^2}{2},$
with probability at least $1- 2 \exp\left(-d\ln\left(d{K^*}\right)\right)-\exp(-d\cdot {K^*})$. 
Further observing that $\inf_t\,\mathcal F(t)=0$ and $\inf_t\,P_\lambda(\vert t\vert)= 0$, we then have that $n^{-1}\sum_{i=1}^n\min\left\{\ln 2,\,\mathcal F\left(y_i\cdot F_{NN}(\mathbf x_i,\widehat{\boldsymbol\beta})\right)\right\}+\sum_{j=1}^pP_\lambda(\vert\widehat\beta_j \vert)\leq \inf_{\boldsymbol\beta}\left[n^{-1}\sum_{i=1}^n\min\left\{\ln 2,\,\mathcal F\left(y_i\cdot F_{NN}(\mathbf x_i, {\boldsymbol\beta})\right)\right\}+\sum_{j=1}^pP_\lambda(\vert \beta_j \vert)\right]+\Gamma$ with $\Gamma:=\frac{1}{n} +c_3\cdot \frac{1}{n}\cdot \ln n\cdot   \sqrt{\frac{d\ln\left(d\cdot {K^*} \right)}{{K^*}\cdot  v^2}}
+c_3\cdot (\ln n)^2\frac{d\ln\left(d\cdot {K^*} \right)}{{K^*}\cdot  v^2} +{K^*}\cdot (d+1)\cdot \frac{a\lambda^2}{2}$ with probability at least $1- 2 \exp\left(-d\ln\left(d{K^*}\right)\right)-\exp(-d\cdot {K^*})$.

{\bf Step 2.} 
 In this step, we are to derive an upper bound on $\Vert\widehat{\boldsymbol\beta}\Vert_0$. To that end, we differentiate the cases of $k^*(\widehat{\boldsymbol\beta}^{initial},\mathbf X,\mathbf y)=0$ and $k^*(\widehat{\boldsymbol\beta}^{initial},\mathbf X,\mathbf y)\geq 1$. 
 
{\it Case 2.1.} We first consider the case of $k^*(\widehat{\boldsymbol\beta}^{initial},\mathbf X,\mathbf y)=0$; that is, $\widehat{\boldsymbol\beta}=
\widehat{\boldsymbol\beta}^{initial}$. In such a case, recall that $K^*:=\left\lceil 10 n^{1/3}\cdot (\ln n)^{5/3}\right\rceil$.  By Algorithm 2, it is evident that 
$
\Vert\widehat{\boldsymbol\beta}\Vert_0\leq K^*\cdot (d+1)= \left\lceil 10 n^{1/3}\cdot (\ln n)^{5/3}\right\rceil\cdot (d+1).
$

{\it Case 2.2.}  In the next, we   consider the case where $k^*(\widehat{\boldsymbol\beta}^{initial},\mathbf X,\mathbf y)\geq 1$.
To that end, we may  invoke Proposition \ref{comprehensive proposition} to bound $\Vert\widehat{\boldsymbol\beta}\Vert_0$. According to Step 1, with probability at least $1-4  \exp\left(-d\ln\left(d{K^*}\right)\right)-2\exp(-d\cdot {K^*})-d{K^*}\cdot \exp(-\frac{n^2}{2})$, all the assumptions required by Proposition \ref{comprehensive proposition} are satisfied with the following configurations: $\mathcal L_{n,\lambda}(\widehat{\boldsymbol\beta},\mathbf Z_1^n):=\frac{1}{n}\sum_{i=1}^n\min\left\{\ln 2,\,\mathcal F\left(y_i\cdot F_{NN}(\mathbf x_i,\widehat{\boldsymbol\beta})\right)\right\}+\sum_{j=1}^pP_\lambda(\vert\widehat\beta_j \vert)$ and
\begin{align}
\begin{split}
&\sigma= 1,~~~~\mathcal C_\mu =  3\sqrt{p}\cdot (K\cdot R)^{\mathcal D},~~~~\sigma_L=0,~~~~\varrho:=1/3; 
\\&\lambda:=\sqrt{\frac{8\sigma }{ c\cdot  a\cdot n^{2/3}}[\ln(n^{1/3}p)+\widetilde\zeta]},~~\text{with}~~~ \widetilde\zeta:=\ln\left(9 e  R   \sqrt{p} \cdot ({K}  R)^{\mathcal D}\right)=\ln(9eR{\mathcal D}\sqrt{p} )+{\mathcal D}\ln({K}R); 
\\&{s}:={K^*}(d+1),
~~~~~L_g^*:=0,~~~~~ \varepsilon_{A}:=c_2\cdot \frac{1}{n}\cdot \ln n\cdot   \sqrt{\frac{d\ln\left(d\cdot {K^*} \right)}{{K^*} \cdot v^2}}+c_2\cdot (\ln n)^2\frac{d\ln\left(d\cdot {K^*} \right)}{{K^*} \cdot v^2}+\frac{1}{n};
\\
&\Gamma:=\frac{1}{{n}} +c_4\cdot \frac{1}{n}\cdot \ln n\cdot   \sqrt{\frac{d\ln\left(d\cdot {K^*} \right)}{{K^*}\cdot  v^2}}
+c_4\cdot (\ln n)^2\frac{d\ln\left(d\cdot {K^*} \right)}{{K^*}\cdot  v^2} +{K^*}\cdot (d+1)\cdot \frac{a\lambda^2}{2}.
\end{split}\label{test new to verify epsilon_a}
\end{align}
To satisfy \eqref{sample initial requirement 2 ori} as required by Proposition \ref{comprehensive proposition}, it suffices to stipulate \eqref{sample size initial requirement 1}. To see this, observe that $\frac{\Gamma+\varepsilon_A}{\sigma}\leq \frac{2}{{n}} +(c_2+c_4)\cdot \frac{1}{n}\cdot \ln n\cdot   \sqrt{\frac{d\ln\left(d\cdot {K^*} \right)}{{K^*}\cdot  v^2}}
+(c_2+c_4)\cdot (\ln n)^2\frac{d\ln\left(d\cdot {K^*} \right)}{{K^*}\cdot  v^2} +{K^*}\cdot (d+1)\cdot \frac{a\lambda^2}{2}\leq \frac{2}{{n}} +(c_2+c_4)\cdot \frac{1}{n}\cdot \ln n\cdot   \sqrt{\frac{d\ln\left(d\cdot {\lceil10n^{1/3}\cdot (\ln n)^{5/3}\rceil} \right)}{{\lceil10n^{1/3}\cdot (\ln n)^{5/3}\rceil}\cdot  v^2}}
+(c_2+c_4)\cdot (\ln n)^2\frac{d\ln\left(d\cdot {\lceil10n^{1/3}\cdot (\ln n)^{5/3}\rceil} \right)}{{\lceil10n^{1/3}\cdot (\ln n)^{5/3}\rceil}\cdot  v^2} +{\lceil10n^{1/3}\cdot (\ln n)^{5/3}\rceil}\cdot (d+1)\cdot \frac{8\sigma}{cn^{2/3}}\cdot [\ln(n^{1/3}p)+\widetilde\xi]<(\frac{n}{2})^{1/3}$ under \eqref{sample size initial requirement 1}. Meanwhile,  it holds that $s(\ln(np)+\widetilde \xi)\leq c_5\cdot d\cdot n^{1/3}\ln(9eR \mathcal D p)\cdot(\ln(n))^{7/3}+c_5\cdot dn^{1/3}\mathcal D\ln(KR)\cdot (\ln(n))^{5/3}<\frac{n}{2}$ under  \eqref{sample size initial requirement 1}. In view of the above (and also $\mathcal D\leq p$ and $K\leq p$), \eqref{sample initial requirement 2 ori} is satisfied. We may now invoke Part (i) in Proposition \ref{comprehensive proposition}, which implies that
$
\Vert \widehat{\boldsymbol\beta}\Vert_0
\leq\, \left\lceil\frac{2cn^{1/3}}{\ln(n^{\varrho}p)+\widetilde \zeta}+\frac{2cn^{2/3}}{\sigma \left(\ln(n^{\varrho}p)+\widetilde \zeta\right)}\cdot\left(\Gamma+\varepsilon_{A}+\frac{2}{n^{\varrho}}\right)+8s\right\rceil\nonumber
 \leq \, \frac{2cn^{1/3}+2cn^{2/3}\cdot \left(\frac{c_6}{{n^{1/3}}}+\frac{c_6}{{n}} +  \frac{c_6}{n}\cdot \ln n\cdot   \sqrt{\frac{d\ln\left(d\cdot {K^*} \right)}{{K^*}\cdot  v^2}}
 +c_6\cdot (\ln n)^2\frac{d\ln\left(d\cdot {K^*} \right)}{{K^*}\cdot  v^2} \right)}{\ln(n^{1/3}p)+\ln(9eR{\mathcal D}\sqrt{p} )
 +{\mathcal D}\ln({K}R)}+c_6\cdot s 
 =:p_{NN},$
with probability at least $1-c_6\cdot  p\cdot \exp(-n^{1/3}/c_6)-c_6\cdot(d\cdot n)^{-d/3}-c_6\cdot n^{1/3}(\ln n)^{5/3} d \exp(-n^2/c_6)\geq 1-c_7\cdot  p\cdot \exp(-n^{1/3}/c_7)-c_7\cdot(d\cdot n)^{-d/3}-c_7\cdot n^{1/3} d \exp(-n^2/c_7)$.

Combining the above two cases, we thus know that,  for all $k^*(\widehat{\boldsymbol\beta}^{initial},\mathbf X,\mathbf y)\geq 0$,
\begin{multline}
\mathbb P\left[\Vert \widehat{\boldsymbol\beta}\Vert_0=\Vert  {\boldsymbol\beta}^{k^*(\widehat{\boldsymbol\beta}^{initial},\mathbf X,\mathbf y)}\Vert_0\leq\max\left\{p_{NN},\,\left\lceil 10 n^{1/3}\cdot (\ln n)^{5/3}\right\rceil  (d+1)\right\}\right]\\\geq 1-c_7\cdot  p\cdot \exp(-n^{1/3}/c_7)-c_7\cdot(d\cdot n)^{-d/3}-c_7\cdot n^{1/3} d \exp(-n^2/c_7).\label{inequality for sparsity level here}
\end{multline}

{\bf Step 3.} This step  employs results from Step 2 and Proposition \ref{test proposition important} to show the desired generalizability of $\widehat{\boldsymbol\beta}$. 
By \eqref{descent algorithm inequality ensured}  (where we let $k=k^*(\widehat{\boldsymbol\beta}^{initial},\mathbf X,\mathbf y)$ and $\widehat{\boldsymbol\beta}={\boldsymbol\beta}^{k^*(\widehat{\boldsymbol\beta}^{initial},\mathbf X,\mathbf y)}$) and \eqref{suboptimality gap} (as well as $P_\lambda(t)\geq 0$ for any $t\geq 0$) together, we obtain that
$
n^{-1}\sum_{i=1}^n\min\left\{\ln 2,\,\mathcal F\left(y_i F_{NN}(\mathbf x_i,\,\widehat{\boldsymbol\beta}\,)\right)\right\} \leq \frac{1}{n} +c_3\cdot \frac{1}{n}\cdot \ln n\cdot   \sqrt{\frac{d\ln\left(d\cdot {K^*} \right)}{{K^*}\cdot  v^2}}
+c_3\cdot (\ln n)^2\frac{d\ln\left(d\cdot {K^*} \right)}{{K^*}\cdot  v^2} +{K^*}\cdot (d+1)\cdot \frac{a\lambda^2}{2}- \frac{\gamma^2_{opt}}{2\mathcal M}\cdot k^*(\widehat{\boldsymbol\beta}^{initial},\mathbf X,\mathbf y),
$
with probability $1- 2 \exp\left(-d\ln\left(d{K^*}\right)\right)-\exp(-d\cdot {K^*})$. Recall that $K^*:={\lceil10n^{1/3}\cdot (\ln n)^{5/3}\rceil}$.  Invoking \eqref{inequality for sparsity level here} in Step 2 and Proposition \ref{test proposition important} with the same $\sigma$, $\sigma_L$, $\widetilde\zeta$, and $\mathcal C_\mu$ as in \eqref{test new to verify epsilon_a},
 we  have
$
\mathbb E[\min\{\ln 2,\,\mathcal F(y_i F_{NN}(\mathbf x_i,\,\widehat{\boldsymbol\beta}\,))\}]-n^{-1}\sum_{i=1}^n\min\{\ln 2,\,\mathcal F(y_i F_{NN}(\mathbf x_i,\,\widehat{\boldsymbol\beta}\,))\}
\leq {\frac{1}{\sqrt{n}}}\sqrt{2 \cdot c^{-1}\cdot\max\{p_{NN},\,\lceil 10 n^{1/3}\cdot (\ln n)^{5/3}\rceil  (d+1)\}}\cdot\sqrt{[\ln(n^{1/3}p)+\widetilde\xi]} 
+\frac{2\sigma }{n} \cdot c^{-1}\cdot \max\{p_{NN},\,\lceil 10 n^{1/3}\cdot (\ln n)^{5/3}\rceil  (d+1)\}[\ln(n^{1/3}p)+\widetilde\xi]+\frac{1}{n^{1/3}},
$
with probability at least $1-c_7\cdot  p\cdot \exp(-n^{1/3}/c_7)-c_7\cdot(d\cdot n)^{-d/3}-c_7\cdot n^{1/3} d \exp(-n^2/c_7)-2\exp\left(- \max\left\{p_{NN},\,\left\lceil 10 n^{1/3}\cdot (\ln n)^{5/3}\right\rceil \right\}\left[\ln(n^{1/3}p)+\widetilde\xi\right])\right)-2\exp(-\widetilde cn)$. Combining the above, we then have
\begin{multline}
\mathbb E\left[\min\left\{\ln 2,\,\mathcal F\left(y_i F_{NN}(\mathbf x_i,\,\widehat{\boldsymbol\beta}\,)\right)\right\}\right]\leq  {\frac{1}{\sqrt{n}}}\sqrt{\frac{2 \max\left\{p_{NN},\,\left\lceil 10 n^{1/3}\cdot (\ln n)^{5/3}\right\rceil  (d+1)\right\}}{c}\cdot \left[\ln(n^{1/3}p)+\widetilde\xi\right]} 
\\+\frac{\sigma }{n}\cdot\frac{2 \max\left\{p_{NN},\,\left\lceil 10 n^{1/3}\cdot (\ln n)^{5/3}\right\rceil  (d+1)\right\}}{c}\left[\ln(n^{1/3}p)+\widetilde\xi\right]+\frac{1}{n^{1/3}}+\frac{1}{n} +c_3\cdot \frac{1}{n}\cdot \ln n\cdot   \sqrt{\frac{d\ln\left(d\cdot {K^*} \right)}{{K^*}\cdot  v^2}}
\\+c_3\cdot (\ln n)^2\frac{d\ln\left(d\cdot {K^*} \right)}{{K^*}\cdot  v^2} +{K^*}\cdot (d+1)\cdot \frac{a\lambda^2}{2}-k^*(\widehat{\boldsymbol\beta}^{initial},\mathbf X,\mathbf y)\cdot \frac{\gamma^2_{opt}}{2\mathcal M},
\end{multline}
with probability at least $1-c_7\cdot  p\cdot \exp(-n^{1/3}/c_7)-c_7\cdot(d\cdot n)^{-d/3}-c_7\cdot n^{1/3} d \exp(-n^2/c_7)-2\exp\left(- \max\left\{p_{NN},\,\left\lceil 10 n^{1/3}\cdot (\ln n)^{5/3}\right\rceil \right\}\left[\ln(n^{1/3}p)+\widetilde\xi\right])\right)-2\exp(-\widetilde cn)- 2 \exp\left(-d\ln\left(d{K^*}\right)\right)-\exp(-d\cdot {K^*})$.
Observing that $d\leq p$, $\mathcal D\leq p$, and $K\leq p$, we obtain after some reorganization   that
$
\mathbb E\left[\min\left\{\ln 2,\,\mathcal F\left(y\cdot F_{NN}(\mathbf x,\widehat{\boldsymbol\beta})\right)\right\}\right]\leq c_7\frac{d \cdot {\mathcal D}}{n^{1/3}v}\cdot \left[\left(\ln n\right)^{4/3}\cdot \ln(pR)\right]-k^*(\widehat{\boldsymbol\beta}^{initial},\mathbf X,\mathbf y)\cdot \frac{\gamma_{opt}^2}{2\mathcal M},
$
with probability $1-c_7\cdot  p\cdot \exp(-n^{1/3}/c_7)-c_7\cdot(d\cdot n)^{-d/3}-c_7\cdot n^{1/3} d \exp(-n^2/c_7)$. Finally, because $2\min\left\{\ln 2,\,\mathcal F\left(z\right)\right\}\geq \mathbb 1\{z<0\}$, $d\leq p$ and $\mathcal D\leq p$, we thus have
$
\mathbb E\left[\mathbb 1\left(y\cdot F_{NN}(\mathbf x,\widehat{\boldsymbol\beta})<0\right)\right]\leq  c_8\cdot \frac{d \cdot {\mathcal D}}{n^{1/3}v^2}\cdot \left[\left(\ln n\right)^{4/3}\cdot \ln(pR)\right]-\gamma_{opt}^2\cdot \frac{k^*(\widehat{\boldsymbol\beta}^{initial},\mathbf X,\mathbf y)}{2\mathcal M},
$
with probability $1-c_8\cdot  p\cdot \exp(-n^{1/3}/c_8)-c_8\cdot n^{1/3} d \exp(-n^2/c_8)-c_8\cdot(d\cdot n)^{-d/3}$.  This then leads to Part (b) of the theorem. 

To show Part (a), suppose that $k^*({\boldsymbol\beta}^{0},\mathbf X,\mathbf y)\geq \left(\left\lceil 2\mathcal M\cdot \frac{\mathcal T_{n,\lambda}({\boldsymbol\beta}^{0})}{\gamma_{opt}^2}\right\rceil+1\right)$ for the sake of contradiction. Then  \eqref{descent algorithm inequality ensured} would imply that $\mathcal T_{n,\lambda}(\boldsymbol\beta^{k^*({\boldsymbol\beta}^{0},\mathbf X,\mathbf y)})=n^{-1}\sum_{i=1}^n\mathcal F\left(y_i F_{NN}(\mathbf x_i,\,{\boldsymbol\beta}^{k^*({\boldsymbol\beta}^{0},\mathbf X,\mathbf y)}\,)\right)+\sum_{j=1}^pP_\lambda(\vert\beta_j^{k^*({\boldsymbol\beta}^{0},\mathbf X,\mathbf y)}\vert)\leq n^{-1}\sum_{i=1}^n\mathcal F\left(y_i F_{NN}(\mathbf x_i,\,{\boldsymbol\beta}^0\,)\right)+\sum_{j=1}^pP_\lambda(\vert\beta_j^0\vert)-\frac{\gamma^2_{opt}}{2\mathcal M}\cdot k^*({\boldsymbol\beta}^{0},\mathbf X,\mathbf y)\leq  \mathcal T_{n,\lambda}({\boldsymbol\beta}^{0})-\left(\left\lceil 2\mathcal M\cdot \frac{\mathcal T_{n,\lambda}({\boldsymbol\beta}^{0})}{\gamma_{opt}^2}\right\rceil+1\right)\cdot \frac{\gamma^2_{opt}}{2\mathcal M}<0$. This contradicts with $\mathcal T_{n,\lambda}(\boldsymbol\beta^{k^*({\boldsymbol\beta}^{0},\mathbf X,\mathbf y)})\geq 0$ (since $\inf_u\mathcal F(u)\geq 0$ and $\inf_uP_\lambda(\vert u\vert)\geq 0$). 
\hfill \ensuremath{\Box}
\end{proof}

\subsection{Proof of Computational Complexity of Algorithm 1} \label{Algorithm complexity proof}
\begin{proof}{Proof of Theorem \ref{Algorithm complexity theorem}.} Note that $\mathcal M\geq \widetilde U_{L,2}$. The following is a useful inequality well-known for a function with Lipschitz gradient:
\begin{align}\widetilde{f}(\boldsymbol\beta_1)-\widetilde{f}(\boldsymbol\beta_2)\leq \left\langle \nabla \widetilde{f}(\boldsymbol\beta_2),\,\boldsymbol\beta_1-\boldsymbol\beta_2\right\rangle+\frac{{\mathcal M}}{2}\Vert \boldsymbol\beta_1-\boldsymbol\beta_2\Vert^2,~~~~\boldsymbol\beta_1,\,\boldsymbol\beta_2\in\Re^p.\label{well known inequality}\end{align}
The KKT conditions for \eqref{first subproblem} in Step 2 of Algorithm 1 yield that
\begin{align}
\nabla \widetilde{f}(\boldsymbol\beta^k)+{\mathcal M}(\boldsymbol\beta^{k+\frac{1}{2}}-\boldsymbol\beta^k)+\left( P_{\lambda}'(\vert\beta^k_j\vert)\cdot \varkappa(\beta_j^{k+\frac{1}{2}}):\,j=1,...,p\right)=\mathbf 0,\label{test new kkt condition first sub problem}
\end{align}
where $\varkappa(\beta_j^{k+\frac{1}{2}})\in \partial\vert \beta_j^{k+\frac{1}{2}}\vert$ and $\partial\vert \beta_j^{k+\frac{1}{2}}\vert$ is the subdifferential  of $\vert\,\cdot\,\vert$ at $\beta_j^{k+\frac{1}{2}}$. Combining \eqref{test new kkt condition first sub problem} with the objective function of Eq.\,\eqref{first subproblem} yields that 
\begin{align}
&\left\langle \nabla \widetilde{f}(\boldsymbol\beta^k),\,\boldsymbol\beta^{k+\frac{1}{2}}-\boldsymbol\beta^k \right\rangle+\frac{{\mathcal M}}{2}\Vert \boldsymbol\beta^{k+\frac{1}{2}}-\boldsymbol\beta^k \Vert^2+\sum_{j=1}^pP_\lambda'(\vert \beta_j^k\vert)\cdot\vert\beta_j^{k+\frac{1}{2}}\vert\nonumber
\\\textcolor{black}{=}&
\left\langle -{\mathcal M}(\boldsymbol\beta^{k+\frac{1}{2}}-\boldsymbol\beta^k)-\left( P_{\lambda}'(\vert\beta^k_j\vert)\cdot \varkappa(\beta_j^{k+\frac{1}{2}}):\,j=1,...,p\right),\,\boldsymbol\beta^{k+\frac{1}{2}}-\boldsymbol\beta^k \right\rangle+\frac{{\mathcal M}}{2}\Vert \boldsymbol\beta^{k+\frac{1}{2}}-\boldsymbol\beta^k \Vert^2+\sum_{j=1}^pP_\lambda'(\vert \beta_j^k\vert)\cdot\vert\beta_j^{k+\frac{1}{2}}\vert\nonumber
\\=&
\left\langle  -\left( P_{\lambda}'(\vert\beta^k_j\vert)\cdot \varkappa(\beta_j^{k+\frac{1}{2}}):\,j=1,...,p\right),\,\boldsymbol\beta^{k+\frac{1}{2}}-\boldsymbol\beta^k \right\rangle-\frac{{\mathcal M}}{2}\Vert \boldsymbol\beta^{k+\frac{1}{2}}-\boldsymbol\beta^k \Vert^2+\sum_{j=1}^pP_\lambda'(\vert \beta_j^k\vert)\cdot\vert\beta_j^{k+\frac{1}{2}}\vert.\nonumber
\end{align}
By the convexity of   $P_\lambda'(\vert \beta_j^k\vert)\cdot \vert  t \vert$ in $t$ for all $t\in\Re$ and all $j$, we may continue the above to have
\begin{align}
&\left\langle \nabla \widetilde{f}(\boldsymbol\beta^k),\,\boldsymbol\beta^{k+\frac{1}{2}}-\boldsymbol\beta^k \right\rangle+\frac{{\mathcal M}}{2}\Vert \boldsymbol\beta^{k+\frac{1}{2}}-\boldsymbol\beta^k \Vert^2+\sum_{j=1}^pP_\lambda'(\vert \beta_j^k\vert)\cdot\vert\beta_j^{k+\frac{1}{2}}\vert\nonumber
\\\leq &-\sum_{j=1}^pP_\lambda'(\vert \beta_j^k\vert)\cdot\vert\beta_j^{k+\frac{1}{2}}\vert+\sum_{j=1}^pP_\lambda'(\vert \beta_j^k\vert)\cdot\vert\beta_j^{k}\vert-\frac{{\mathcal M}}{2}\Vert \boldsymbol\beta^{k+\frac{1}{2}}-\boldsymbol\beta^k \Vert^2+\sum_{j=1}^pP_\lambda'(\vert \beta_j^k\vert)\cdot\vert\beta_j^{k+\frac{1}{2}}\vert\nonumber
\\= &\sum_{j=1}^pP_\lambda'(\vert \beta_j^k\vert)\cdot\vert\beta_j^{k}\vert-\frac{{\mathcal M}}{2}\Vert \boldsymbol\beta^{k+\frac{1}{2}}-\boldsymbol\beta^k \Vert^2.\nonumber
\end{align}
Invoking \eqref{well known inequality} with $\boldsymbol\beta_1:=\boldsymbol\beta^k$ and $\boldsymbol\beta_2:=\boldsymbol\beta^{k+\frac{1}{2}}$, we  obtain from the above that
\begin{align}\left(\widetilde{f}(\boldsymbol\beta^{k+\frac{1}{2}})+\sum_{j=1}^pP_\lambda'(\vert \beta_j^k\vert)\cdot\vert\beta_j^{k+\frac{1}{2}}\vert\right) -\left(\widetilde{f}(\boldsymbol\beta^{k})+\sum_{j=1}^pP_\lambda'(\vert \beta_j^k\vert)\cdot\vert\beta_j^{k}\vert\right)\leq -\frac{{\mathcal M}}{2}\Vert \boldsymbol\beta^{k+\frac{1}{2}}-\boldsymbol\beta^k \Vert^2.\nonumber
\end{align}
Since $P_\lambda(t)$ is concave in $t$ for all $t\geq 0$, we know that $P_\lambda'(\vert\beta_j^k\vert)\cdot(\vert\beta_j^{k+\frac{1}{2}}\vert-\vert\beta_j^{k}\vert)\geq P_{\lambda}(\vert\beta_j^{k+\frac{1}{2}}\vert)-P_{\lambda}(\vert\beta_j^{k}\vert)$. Therefore,
\begin{align}
 \left(\widetilde{f}(\boldsymbol\beta^{k+\frac{1}{2}})+\sum_{j=1}^pP_\lambda(\vert \beta_j^{k+\frac{1}{2}}\vert)\right) -\left(\widetilde{f}(\boldsymbol\beta^{k})+\sum_{j=1}^pP_\lambda(\vert \beta_j^k\vert)\right)\leq -\frac{{\mathcal M}}{2}\Vert \boldsymbol\beta^{k+\frac{1}{2}}-\boldsymbol\beta^k \Vert^2.\label{first problem result}
\end{align}
Consider the second subproblem \eqref{second subproblem} in Step 3 of Algorithm 1. Again, because of the inequality in \eqref{well known inequality}, it holds that $ \widetilde{f}(\boldsymbol\beta^{k+1})-\widetilde{f}(\boldsymbol\beta^{k+\frac{1}{2}})+\sum_{j=1}^pP_\lambda(\vert \beta_j^{k+1}\vert)\leq\left\langle \nabla \widetilde{f}(\boldsymbol\beta^{k+\frac{1}{2}}),\,\boldsymbol\beta^{k+1}-\boldsymbol\beta^{k+\frac{1}{2}} \right\rangle+\frac{{\mathcal M}}{2}\Vert \boldsymbol\beta^{k+1}-\boldsymbol\beta^{k+\frac{1}{2}} \Vert^2+\sum_{j=1}^pP_\lambda(\vert \beta_j^{k+1}\vert)\leq  \left\langle \nabla \widetilde{f}(\boldsymbol\beta^{k+\frac{1}{2}}),\,\boldsymbol\beta^{k+\frac{1}{2}}-\boldsymbol\beta^{k+\frac{1}{2}} \right\rangle+\frac{{\mathcal M}}{2}\Vert \boldsymbol\beta^{k+\frac{1}{2}}-\boldsymbol\beta^{k+\frac{1}{2}} \Vert^2+\sum_{j=1}^pP_\lambda(\vert \beta_j^{k+\frac{1}{2}}\vert)=\sum_{j=1}^pP_\lambda(\vert \beta_j^{k+\frac{1}{2}}\vert)$, where the last inequality is due to the fact that $\boldsymbol\beta^{k+1}$ is the minimizer to the subproblem in \eqref{second subproblem}. By some reorganization, we obtain 
$ \widetilde{f}(\boldsymbol\beta^{k+1})+\sum_{j=1}^pP_\lambda(\vert \beta_j^{k+1}\vert)\leq \widetilde{f}(\boldsymbol\beta^{k+\frac{1}{2}})+\sum_{j=1}^pP_\lambda(\vert \beta_j^{k+\frac{1}{2}}\vert)$. Combining this with \eqref{first problem result}, we have that \begin{align}\widetilde{f}(\boldsymbol\beta^{k+1})+\sum_{j=1}^pP_\lambda(\vert \beta_j^{k+1}\vert)\leq \widetilde{f}(\boldsymbol\beta^{k})+\sum_{j=1}^pP_\lambda(\vert \beta_j^{k}\vert)-\frac{{\mathcal M}}{2}\Vert \boldsymbol\beta^{k+\frac{1}{2}}-\boldsymbol\beta^k\Vert^2.  \label{to check and compare with solution difference}
\end{align}

Before the termination criterion in \eqref{termination criterion Algorithm 1} is met, it must hold that
\[
\widetilde{f}(\boldsymbol\beta^{k+1})+\sum_{j=1}^pP_\lambda(\vert \beta_j^{k+1}\vert)\leq \widetilde{f}(\boldsymbol\beta^{k})+\sum_{j=1}^pP_\lambda(\vert \beta_j^{k}\vert)-\frac{\gamma_{opt}^2}{{2\mathcal M}}.
\]
Invoking the above recursively, we have
\begin{align}
 \widetilde{f}(\boldsymbol\beta^{k})+\sum_{j=1}^pP_\lambda(\vert \beta_j^{k}\vert) \leq \widetilde{f}(\boldsymbol\beta^{0})+\sum_{j=1}^pP_\lambda(\vert \beta_j^{0}\vert) -k\cdot \frac{\gamma_{opt}^2}{{2\mathcal M}}.\label{label inequality descent}
\end{align}
Therefore,  there must exist some $k^*:\,k^*\leq \left\lfloor 2\mathcal M\cdot  \frac{\left(\widetilde{f}(\boldsymbol\beta^0)+\sum_{j=1}^k P_\lambda(\vert \beta_j^0\vert)\right)-\widetilde f_{\lambda}^*}{\gamma_{opt}^2}\right\rfloor+1$ such that $\widetilde{f}(\boldsymbol\beta^{k+1})+\sum_{j=1}^pP_\lambda(\vert \beta_j^{k+1}\vert)>\widetilde{f}(\boldsymbol\beta^{k})+\sum_{j=1}^pP_\lambda(\vert \beta_j^{k}\vert)-\frac{\gamma_{opt}^2}{{2\mathcal M}}$.  This is because, otherwise, Algorithm 1 would keep reducing the objective  value as per \eqref{label inequality descent}. Consequently,  
\begin{align}
&\widetilde{f}(\boldsymbol\beta^{k^*})+\sum_{j=1}^pP_\lambda(\vert \beta_j^{k^*}\vert)\nonumber
\leq  \left(\widetilde{f}(\boldsymbol\beta^{0})+\sum_{j=1}^pP_\lambda(\vert \beta_j^{0}\vert)\right)-\left(\left\lfloor2\mathcal M\cdot \frac{\left(\widetilde{f}(\boldsymbol\beta^0)+\sum_{j=1}^k P_\lambda(\vert \beta_j^0\vert)\right)-\widetilde f_{\lambda}^*}{\gamma_{opt}^2}\right\rfloor+1\right)\cdot\frac{\gamma_{opt}^2}{{2\mathcal M}}\nonumber<\widetilde f_{\lambda}^*,
\end{align}
which contradicts with the definition of $\widetilde f_{\lambda}^*$. This completes the proof for Part (a).

Suppose that $j:\,\vert\beta_j^{k}\vert\in(0,\,a\lambda)$ for some $j=1,...,p$ and $k\geq 1$. Because a global minimal solution to \eqref{second subproblem} must obey the second-order necessary conditions, which imply that $\left[\frac{\partial^2\left(\frac{1}{2}\left\langle \nabla \widetilde{f}(\boldsymbol\beta^{k+\frac{1}{2}}),\,\boldsymbol\beta-\boldsymbol\beta^{k+\frac{1}{2}} \right\rangle+\frac{{\mathcal M}}{2}\Vert \boldsymbol\beta-\boldsymbol\beta^{k+\frac{1}{2}} \Vert^2\right)}{\partial \beta_j^2}+\frac{\partial^2P_\lambda(\vert\beta_j\vert)}{\partial \beta_j^2}\right]_{\beta_j:=\beta_j^{k}}\geq 0$. This inequality can be simplified equivalently into  ${\mathcal M}-\frac{1}{a}\geq 0$, which, however, contradicts our assumption of $a<\frac{1}{{\mathcal M}}$. As a result, it must hold that $\vert \beta_j^{k}\vert \notin(0,\,a\lambda)$ for all $j=1,...,p$ for all $k\geq 1$. This  proves Part (d).

Let $k^*$ be the iteration count when  the algorithm  terminates with   $\widetilde{f}(\boldsymbol\beta^{k^*+1})+\sum_{j=1}^pP_\lambda(\vert \beta_j^{k^*+1}\vert)>\widetilde{f}(\boldsymbol\beta^{k^*})+\sum_{j=1}^pP_\lambda(\vert \beta_j^{k^*}\vert)-\frac{\gamma_{opt}^2}{2\mathcal M}$ being satisfied for the first time. This, combined with \eqref{to check and compare with solution difference} and the assumption that $\gamma_{opt}\leq a\lambda\mathcal M$, implies that
\begin{align}\Vert \boldsymbol\beta^{k^*+\frac{1}{2}}-\boldsymbol\beta^{k^*}\Vert  <   \frac{{\gamma_{opt}}}{{\mathcal M}}\leq a\lambda.\label{testNew terminate}
\end{align}
Combining this with \eqref{test new kkt condition first sub problem},  we have 
\begin{align}
{\gamma_{opt}}>{\mathcal M}\Vert \boldsymbol\beta^{{k^*}+\frac{1}{2}}-\boldsymbol\beta^{k^*}\Vert =\left\Vert\nabla \widetilde{f}(\boldsymbol\beta^{k^*}) +\left( P_{\lambda}(\vert\beta^{k^*}_j\vert)\cdot \varkappa(\beta_j^{k^*+\frac{1}{2}}):\,j=1,...,p\right)\right\Vert.\label{test kkt condition here 1}
\end{align}

Part (d) indicates that $\beta^{k}_j \neq 0\Longrightarrow \vert\beta_j^k\vert\geq a\lambda$ for all $k\geq 1$. In view of \eqref{testNew terminate}, we then know that   $\vert\beta^{k^*+\frac{1}{2}}_j-\beta_j^{k^*}\vert <a\lambda$ for all $j$. Hence, $\beta_j^{k^*+\frac{1}{2}}>0$ if $\beta_j^{k^*}>0$ and   $\partial(\vert\beta_j^{k^*+\frac{1}{2}}\vert)= \partial(\vert\beta_j^{k^*}\vert)=\{1\}$ for all $j:\,\beta^{k^*}_j> 0$. Likewise, it also holds that   $\partial(\vert\beta_j^{k^*+\frac{1}{2}}\vert)= \partial(\vert\beta_j^{k^*}\vert)=\{-1\}$ for all $j:\,\beta^{k^*}_j< 0$. Furthermore, we also observe that    $\varkappa(\vert\beta_j^{k^*+\frac{1}{2}}\vert)\in[-1,\,1]=\partial (\vert\beta_j^{k^*}\vert)$ for all $j:\,\beta_j^{k^*}=0$. In view of \eqref{test kkt condition here 1}, we have
$
{\gamma_{opt}}> \left\Vert\nabla \widetilde{f}(\boldsymbol\beta^{k^*}) +\left(P'_{\lambda}(\vert\beta_j^{k^*}\vert)\cdot \widetilde{\varkappa}_j,\,j=1,...,p\right)\right\Vert,$ for some $\widetilde{\boldsymbol\varkappa}:=(\widetilde{\varkappa}_j)$ such that $\widetilde{\varkappa}_j\in\partial(\vert \beta_j^{k^*}\vert)$ for all $j$. We have now proven  the   satisfaction of the approximate first-order conditions in \eqref{desired first order conditions}. Further, Part (d) implies that  $\{(k,\,p):\,\vert \beta^{k}_j\vert\in(0,\,a\lambda),\,k\geq 1,\,p=1,...,p\}=\emptyset$. Therefore, as part of the S$^3$ONC, the necessary condition of optimality  that $
\widetilde U_{L,\infty}+P''_\lambda(\vert\beta^{k}_j\vert)\geq 0$ for any $(k,\,p):\,\vert \beta^{k}_j\vert\in(0,\,a\lambda),\,k\geq 1,\,p=1,...,p$ is satisfied. We have thus proven Part (b).

Finally, invoking \eqref{label inequality descent}, we have the desired inequality of $\widetilde  {f}_\lambda(\boldsymbol\beta^{k^*})\leq \widetilde  {f}_\lambda(\boldsymbol\beta^{0})$, as claimed in Part (c).
\hfill \ensuremath{\Box}\end{proof}

\subsection{Useful Lemmata}
 
\begin{lemma}\label{new lemma result}
Suppose that Assumption \ref{Lipschitz condition} holds and that $\epsilon>0$ is an arbitrary scalar. 
\begin{itemize}
\item[(a)]   For some universal constant $\widetilde c>0$,  
\begin{multline}
\mathbb P\left[\left\vert \mathcal L_n(\boldsymbol\beta_1,\mathbf Z_1^n)-\mathcal L_n(\boldsymbol\beta_2,\mathbf Z_1^n)\right\vert\leq \left(2 \sigma_L+\mathcal  C_\mu\right)\cdot\epsilon,\,\forall\,(\boldsymbol\beta_1,\,\boldsymbol\beta_2)\in\Re^p: \,\right.
\\
\left.\Vert \boldsymbol\beta_1\Vert_\infty\leq R,\,\Vert \boldsymbol\beta_2\Vert_\infty\leq R,\,\Vert \boldsymbol\beta_1-\boldsymbol\beta_2\Vert\leq \epsilon \right]\geq 1-2\exp(-\widetilde c\cdot n)
\end{multline}
\item[(b)]$
\left\vert \mathbb E[ \mathcal L_n(\boldsymbol\beta_1,\mathbf Z_1^n)]-\mathbb E[\mathcal L_n(\boldsymbol\beta_2,\mathbf Z_1^n)]\right\vert\leq \mathcal  C_\mu\cdot \epsilon$, for all $(\boldsymbol\beta_1,\,\boldsymbol\beta_2)\in\Re^p: \,\Vert \boldsymbol\beta_1\Vert_\infty\leq R,\,\Vert \boldsymbol\beta_2\Vert_\infty\leq R,\,\Vert \boldsymbol\beta_1-\boldsymbol\beta_2\Vert\leq \epsilon$.
\end{itemize}
\end{lemma}
 \begin{proof}{Proof.}  
 This  lemma and its proof are  straightforward modifications from \cite{shapiro2014a}.
 To show Part (a), we invoke  a Bernstein-like inequality under Assumption \ref{Lipschitz condition}. Consequently,  for all $\boldsymbol\beta\in\Re^p:\,\Vert \boldsymbol\beta\Vert_\infty\leq R$ and some universal constant $\widetilde c>0$, it holds that $
\mathbb P\left[\left\vert \sum_{i=1}^n \frac{1}{n}\left\{\mathcal C(Z_i)-\mathbb E[\mathcal C(Z_i)] \right\}\right\vert>\sigma_L\left(\frac{t}{n}+\sqrt{\frac{t}{n}}\right)\right]\leq 2\exp\left(-\widetilde c t\right),~\forall t\geq 0.$
With $t:=n$ and $\mathbb E[\mathcal C(Z_i)]\leq \mathcal  C_\mu$ (due to Assumption \ref{Lipschitz condition}), we immediately have 
\begin{align}
\mathbb P\left[\sum_{i=1}^n\frac{\mathcal C(Z_i)}{n}\leq 2 \sigma_L+\mathcal  C_\mu\right]\leq 1-2\exp\left(-\widetilde c n\right).\label{prob lipschitz bound}
\end{align}

If we invoke Assumption \ref{Lipschitz condition}  given the event $\{\sum_{i=1}^n\frac{\mathcal C(Z_i)}{n}\leq 2 \sigma_L+\mathcal  C_\mu\}$, we have that for any $(\boldsymbol\beta_1,\,\boldsymbol\beta_2)\in\Re^p: \,\Vert \boldsymbol\beta_1\Vert_\infty\leq R,\,\Vert \boldsymbol\beta_2\Vert_\infty\leq R,\,\Vert \boldsymbol\beta_1-\boldsymbol\beta_2\Vert_\infty\leq \epsilon$,
\begin{align}
\left\Vert  \frac{1}{n}\sum_{i=1}^n L(\boldsymbol\beta_1,\, Z_i)-\frac{1}{n}\sum_{i=1}^n L(\boldsymbol\beta_2,\, Z_i)\right\Vert\nonumber
&\leq \frac{1}{n}\sum_{i=1}^n\Vert L(\boldsymbol\beta_1,\, Z_i)-L(\boldsymbol\beta_2,\, Z_i)\Vert\nonumber
\\&\leq \frac{1}{n}\sum_{i=1}^n \mathcal C(Z_i)\Vert \boldsymbol\beta_1-\boldsymbol\beta_2\Vert\leq (2\sigma_L+ \mathcal  C_\mu)\Vert \boldsymbol\beta_1-\boldsymbol\beta_2\Vert\leq (2\sigma_L+ \mathcal  C_\mu)\cdot\epsilon.\nonumber
\end{align} 
This, combined with \eqref{prob lipschitz bound}, yields the desired result in Part (a).
 
To show Part (b), by Assumption \ref{Lipschitz condition}, it holds that $\mathbb E\left[\left\vert \mathcal L_n(\boldsymbol\beta_1,\mathbf Z_1^n)-\mathcal L_n(\boldsymbol\beta_2,\mathbf Z_1^n)\vphantom{\frac{1}{1}}\right\vert\right]\leq \mathbb E\left[ \sum_{i=1}^n\frac{\mathcal C(Z_i)}{n}\Vert\boldsymbol\beta_1-\boldsymbol\beta_2\Vert\right]. $
Due to the convexity of the function $\vert\cdot\vert$, it therefore holds that
\begin{align}
\left\vert \vphantom{\frac{1}{1}}\mathbb E\left[\mathcal L_n(\boldsymbol\beta_1,\mathbf Z_1^n)\right]-\mathbb E\left[\mathcal L_n(\boldsymbol\beta_2,\mathbf Z_1^n)\right]\right\vert&\leq \mathbb E\left[ \sum_{i=1}^n\frac{\mathcal C(Z_i)}{n}\Vert\boldsymbol\beta_1-\boldsymbol\beta_2\Vert\right] =  \mathbb E\left[ \sum_{i=1}^n\frac{\mathcal C(Z_i)}{n}\right]\cdot\Vert\boldsymbol\beta_1-\boldsymbol\beta_2\Vert.\label{to combine lemma lipschitz 2}
\end{align}
Invoking Assumption \ref{Lipschitz condition} again, it holds that $\mathbb E\left[\sum_{i=1}^n\frac{\mathcal C(Z_i)}{n}\right]= \frac{\sum_{i=1}^n\mathbb E[\mathcal C(Z_i)]}{n}\leq \mathcal  C_\mu$.
This combined with \eqref{to combine lemma lipschitz 2} immediately leads to the desired result in Part (b).
  \hfill \ensuremath{\Box}\end{proof} 
 
\begin{lemma}\label{initial gap}
For any fixed  $\mathbf Z_1^n\in\mathcal W^n$, if  $\widehat{\boldsymbol\beta}^{\ell_1}$ is a finite optimal solution to the minimization problem  $\min_{\boldsymbol\beta}\mathcal L_{n}(\boldsymbol\beta,\, \mathbf Z_1^n)+\lambda\vert \boldsymbol\beta \vert$, then  $\mathcal L_{n,\lambda}(\widehat{\boldsymbol\beta}^{\ell_1},\mathbf Z_1^n)\leq  \mathcal L_{n,\lambda}(\boldsymbol\beta_{\varepsilon_{A}}^*,\mathbf Z_1^n)+\lambda \vert \boldsymbol\beta_{\varepsilon_{A}}^* \vert$.
\end{lemma}
 \begin{proof}{Proof.}  
Let ${\beta}_{{\varepsilon_{A}},j}^{*}$ be the $j$-th dimension of $\boldsymbol{\beta}_{{\varepsilon_{A}}}^{*}$. By the definition of $\widehat{\boldsymbol\beta}^{\ell_1}$, it holds that
\begin{align}
 \mathcal L_{n}(\widehat{\boldsymbol\beta}^{\ell_1},\mathbf Z_1^n)+\lambda\vert \widehat{\boldsymbol\beta}^{\ell_1} \vert\leq  \mathcal L_{n}(\boldsymbol\beta^{*}_{\varepsilon_{A}},\mathbf Z_1^n)+\lambda\vert \boldsymbol\beta^{*}_{\varepsilon_{A}} \vert.\label{small lemma first inequality}
\end{align}
Now consider that, for $\beta_j$ (an arbitrarily chosen entry of $\boldsymbol\beta$), it holds that
$
P_\lambda(\vert\beta_j\vert)=\int_0^{\vert\beta_j\vert}\frac{[a\lambda-\theta]_+}{a}d\theta\leq\int_0^{\vert\beta_j\vert}\frac{a\lambda}{a}d\theta=\lambda\vert\beta_j\vert.$
This combined with \eqref{small lemma first inequality} implies that
$ \mathcal L_{n}(\widehat{\boldsymbol\beta}^{\ell_1},\mathbf Z_1^n)+\sum_{j=1}^pP_\lambda(\vert \widehat{\beta}_j^{\ell_1}\vert) \nonumber
 \leq \,\mathcal L_{n}(\boldsymbol\beta^{*}_{\varepsilon_{A}},\mathbf Z_1^n)+\lambda\vert \boldsymbol\beta_{\varepsilon_{A}}^{*} \vert \nonumber
 \leq \,\mathcal L_{n}(\boldsymbol\beta^{*}_{\varepsilon_{A}},\mathbf Z_1^n)+\sum_{j=1}^pP_\lambda(\vert  {\beta}_{{\varepsilon_{A}},j}^{*}\vert)+\lambda\vert \boldsymbol\beta_{\varepsilon_{A}}^{*} \vert,$ which is as claimed.
  \hfill \ensuremath{\Box}\end{proof}

\begin{theorem}\label{lipschitz}
\citep{nesterov2005a}
For any  convex and compact set ${\mathcal Q}\subset \Re^{\widetilde m}$ for an integer $\widetilde m>0$.
Consider a function $f_\mu(\boldsymbol\beta,\mathbf A):=\max_{\mathbf u}\{\langle \mathbf A\boldsymbol\beta,\,\mathbf u\rangle-\phi(\mathbf u)-\frac{1}{2}\mu \Vert \mathbf u-\mathbf u_0\Vert^2:\, \mathbf u\in {\mathcal Q}\}$ for any $\mathbf A\in\Re^{\widetilde m\times p}$, convex and continuous function $\phi:\,{\mathcal Q}\rightarrow \Re$, and scalar $\mu>0$. This function is well-defined, continuously differentiable, and convex. Its gradient given as $\nabla f_\mu (\boldsymbol\beta,\mathbf A)=\mathbf A^{\top}\mathbf u^*_\mu(\boldsymbol\beta)$ is Lipschitz continuous with constant $L_\mu(\mathbf A)=\frac{1}{\mu}\Vert \mathbf A\Vert^2_{1,2}$, where $\mathbf u^*_\mu(\boldsymbol\beta)=\arg\max_{\mathbf u}\{\langle \mathbf A\boldsymbol\beta,\,\mathbf u\rangle-\phi(u)-\frac{1}{2}\mu \Vert \mathbf u-\mathbf u_0\Vert^2:\, \mathbf u\in {\mathcal Q}\}$.
\end{theorem}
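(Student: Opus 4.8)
The plan is to reproduce Nesterov's smoothing argument in four steps. First I would fix $\boldsymbol\beta$ and observe that the inner objective $\mathbf u\mapsto \langle \mathbf A\boldsymbol\beta,\mathbf u\rangle-\hat\phi(\mathbf u)-\frac{1}{2}\mu\Vert\mathbf u-\mathbf u_0\Vert^2$ is the sum of a term affine in $\mathbf u$, the concave term $-\hat\phi$, and the strongly concave quadratic $-\frac{1}{2}\mu\Vert\mathbf u-\mathbf u_0\Vert^2$; hence it is strongly concave with modulus $\mu$ and continuous on the compact convex set $Q$. Therefore the maximizer $\mathbf u^*_\mu(\boldsymbol\beta)$ exists and is \emph{unique}, so $f_\mu$ is well defined. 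Moreover $f_\mu$ is convex, being a pointwise maximum over $\mathbf u\in Q$ of functions that are affine in $\boldsymbol\beta$.

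Second, I would invoke Danskin's (differentiable-envelope) theorem: since the inner maximizer is unique and the integrand is jointly continuous and affine in $\boldsymbol\beta$, $f_\mu$ is continuously differentiable with $\nabla f_\mu(\boldsymbol\beta)=\mathbf A^\top\mathbf u^*_\mu(\boldsymbol\beta)$, the $\boldsymbol\beta$-gradient of $\langle \mathbf A\boldsymbol\beta,\mathbf u\rangle$ evaluated at $\mathbf u=\mathbf u^*_\mu(\boldsymbol\beta)$. Continuity of this gradient will follow once the maximizer map is shown to be Lipschitz, which is the content of the next step.

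Third --- the crux --- I would show that $\mathbf u^*_\mu(\cdot)$ contracts perturbations by a factor $1/\mu$. Writing $\mathbf u_1^*=\mathbf u^*_\mu(\boldsymbol\beta_1)$ and $\mathbf u_2^*=\mathbf u^*_\mu(\boldsymbol\beta_2)$, the first-order optimality (variational) inequalities for the two concave programs give, for some subgradients $\mathbf g_i\in\partial\hat\phi(\mathbf u_i^*)$, that $\langle \mathbf A\boldsymbol\beta_i-\mathbf g_i-\mu(\mathbf u_i^*-\mathbf u_0),\,\mathbf u-\mathbf u_i^*\rangle\le 0$ for all $\mathbf u\in Q$. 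Testing the $i=1$ inequality at $\mathbf u=\mathbf u_2^*$ and the $i=2$ inequality at $\mathbf u=\mathbf u_1^*$, adding them, and using the monotonicity $\langle \mathbf g_1-\mathbf g_2,\,\mathbf u_1^*-\mathbf u_2^*\rangle\ge 0$ of the subdifferential of the convex function $\hat\phi$, I obtain
\begin{align}
\mu\Vert\mathbf u_1^*-\mathbf u_2^*\Vert^2\le \langle \mathbf A(\boldsymbol\beta_1-\boldsymbol\beta_2),\,\mathbf u_1^*-\mathbf u_2^*\rangle. \nonumber
\end{align}
Bounding the right-hand side through the duality that defines the operator norm $\Vert\mathbf A\Vert_{1,2}$ and cancelling one factor of $\Vert\mathbf u_1^*-\mathbf u_2^*\Vert$ yields $\Vert\mathbf u_1^*-\mathbf u_2^*\Vert\le \frac{1}{\mu}\Vert\mathbf A\Vert_{1,2}\Vert\boldsymbol\beta_1-\boldsymbol\beta_2\Vert$.

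Finally, I would combine the gradient formula with this contraction estimate:
\begin{align}
\Vert\nabla f_\mu(\boldsymbol\beta_1)-\nabla f_\mu(\boldsymbol\beta_2)\Vert=\Vert\mathbf A^\top(\mathbf u_1^*-\mathbf u_2^*)\Vert\le \Vert\mathbf A\Vert_{1,2}\cdot\frac{1}{\mu}\Vert\mathbf A\Vert_{1,2}\Vert\boldsymbol\beta_1-\boldsymbol\beta_2\Vert=\frac{1}{\mu}\Vert\mathbf A\Vert_{1,2}^2\,\Vert\boldsymbol\beta_1-\boldsymbol\beta_2\Vert, \nonumber
\end{align}
which is exactly the asserted constant $L_\mu(\mathbf A)=\frac{1}{\mu}\Vert\mathbf A\Vert_{1,2}^2$. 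The main obstacle I anticipate is the bookkeeping with the (generally different) norms on the primal $\boldsymbol\beta$-space and the dual $\mathbf u$-space, so that the single quantity $\Vert\mathbf A\Vert_{1,2}^2$ simultaneously governs the action of $\mathbf A$ in the contraction estimate and the action of $\mathbf A^\top$ in the gradient; a secondary subtlety is treating the subgradients of the possibly nonsmooth $\hat\phi$ rigorously, which the monotonicity inequality handles without ever requiring differentiability of $\hat\phi$.
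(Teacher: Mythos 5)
Your proof is correct and is, in substance, exactly the argument the paper points to: the paper does not prove this theorem itself but simply cites it as Theorem 1 of Nesterov (2005), whose proof proceeds precisely via your three ingredients (unique maximizer from $\mu$-strong concavity, a Danskin-type envelope theorem giving $\nabla f_\mu(\boldsymbol\beta)=\mathbf A^\top\mathbf u^*_\mu(\boldsymbol\beta)$, and the two summed optimality inequalities plus monotonicity of $\partial\hat\phi$ yielding $\mu\Vert\mathbf u_1^*-\mathbf u_2^*\Vert^2\le\langle\mathbf A(\boldsymbol\beta_1-\boldsymbol\beta_2),\,\mathbf u_1^*-\mathbf u_2^*\rangle$, hence the constant $\frac{1}{\mu}\Vert\mathbf A\Vert_{1,2}^2$). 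The only technical point worth recording is that the subgradients $\mathbf g_i\in\partial\hat\phi(\mathbf u_i^*)$ you invoke do exist even when $\mathbf u_i^*$ lies on the boundary of $Q$, because maximality of $\mathbf u_i^*$ rules out feasible directions along which $\hat\phi$ has directional derivative $-\infty$; alternatively one can avoid subgradients entirely by writing the optimality condition as $\langle \mathbf A\boldsymbol\beta_i-\mu(\mathbf u_i^*-\mathbf u_0),\,\mathbf u-\mathbf u_i^*\rangle\le\hat\phi(\mathbf u)-\hat\phi(\mathbf u_i^*)$ for all $\mathbf u\in Q$, which yields the same contraction estimate.
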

\begin{proof}{Proof.}
See Theorem 1  by \cite{nesterov2005a}.
\hfill \ensuremath{\Box}\end{proof} 

\begin{theorem}\label{useful theorem from M}
Consider an arbitrary function $g\in \mathbb F_{d,r}$. Let $\Psi$ be an activation function that satisfies Assumption \ref{activation function condition here}.  There exist   $\widetilde{\mathbf W}\in \Re^{\widetilde N\times d}$, $\widetilde{\boldsymbol w}\in\Re^{\widetilde N}$, and  $\widetilde{\mathbf b}\in\Re^{\widetilde N}$ such that 
\[
 \underset{\mathbf x\in[-1,1]^{d}}{\textnormal{ess\,sup}}\left\vert \left[\widetilde{\boldsymbol w}^\top \Psi\left( \widetilde{\mathbf W} \mathbf x +\widetilde{\mathbf b}\right)\right] -g(\mathbf x)\right\vert\leq \mathcal C_{NN}\cdot {\widetilde N}^{-r/d}.\]
\end{theorem}
\begin{proof}{Proof.}
The desired result is an immediate implication of Theorem 2.1 by  \cite{mhaskar1996a}, where we set the quantities ``$p$'', ``$d$'', ``$s$'', ``$r$'', and ``$W_{r,s}^p$'' in \cite{mhaskar1996a} to be $\infty$, $1$, $d$, $r$, and $\mathbb F_{d,r}$, respectively, in this paper.\hfill \ensuremath{\Box}
\end{proof}

\begin{lemma}\label{useful here for local solution NN}
Suppose that Assumption \ref{Gaussian assumption NN} holds. Let $K^*$ be any integer such that $K^*\geq   d\cdot \ln(d\cdot K^*)$, let $\xi$ follow the $d$-variate standard normal distribution, and let $\xi_k$, $k=1,...,K^*$, be a sequence of i.i.d. random samples of $\xi$. Then, 
\begin{multline}
\mathbb P\left[\sup_{(\mathbf x,\mathbf y)\in supp(\mathbb D)}\left\vert\frac{y\cdot\ln n}{v}\frac{1}{{K^*}}\sum_{k=1}^{K^*} C_g(\xi_k)\cdot \max\{0,\,\mathbf x^\top\xi_k\}-\frac{y\cdot\ln n}{v}\cdot g(\mathbf x) \right\vert\leq c_6 \cdot\frac{\ln n}{v}\cdot \sqrt{\frac{d\ln\left(d{K^*}\right)}{{K^*}}}\right]
\\
\geq  1-  2\exp\left(-d\ln\left(d{\cdot K^*}\right)\right)-\exp(-d\cdot {K^*}).\label{probability bound bernstein new to show}
\end{multline}
\end{lemma}
\begin{proof}{Proof.}   Our proof below is divided into two steps, where we let $c_0,\,c_1,...$ be some  universal constants.

{\bf Step 1.} 
For a fixed $\mathbf x\in\mathcal X$, consider  a  random variable defined as $\mathcal G_{\mathbf x}(\xi):= C_g(\xi)\cdot\max\{0,\,\mathbf x^\top\xi\}$, where $\xi$ is a $d$-variate standard normal random vector (and thus its entries are i.i.d.).   Apparently, by Assumption \ref{Gaussian assumption NN}, $g(\mathbf x)=\mathbb E_{\xi}[\mathcal G_{\mathbf x}(\xi)]$, where $\mathbb E_{\xi}$ denotes the expectation over $\xi$. We show in step 1 that $\mathcal G_{\mathbf x}(\xi)-g(\mathbf x)$ is a subexponential random variable.

 Because $\Vert\mathbf x\Vert=1$ and $\xi$ has i.i.d. standard normal entries, $\xi^\top\mathbf x$ is a standard normal random variable (and thus it is subgaussian).  By the properties of a subgaussian random variable, $\Vert \xi^\top\mathbf x\Vert_{\psi_2}\leq c_0$ and   $\mathbb P[\vert   \xi^\top\mathbf x \vert\geq t]\leq 2\exp(-c_1\cdot t^2/c_0)$, for any $t\geq 0$. Therefore, $\mathbb P\left[\left\vert  \max\left\{0,\,\xi^\top\mathbf x\right\} \right\vert\geq t\right] \leq 2\exp\left(-c_1\cdot t^2/ c_0\right)$, for any $t\geq 0$. By the definition of the subgaussian norm, we know that $\left\Vert \max\{0,\,\xi^\top\mathbf x\}\right\Vert_{\psi_2}\leq c_2$. Because $\sup_{\xi'}\vert C_g(\xi')\vert\leq 1$ according to Assumption \ref{Gaussian assumption NN}, invoking Lemma 2.7.7 of \cite{vershynin2018a},  we   have $\Vert C_g(\xi)\cdot \max\{0,\,\xi^\top\mathbf x\}\Vert_{\psi_1}\leq \Vert C_g(\xi)\Vert_{\psi_2}\cdot \left\Vert \max\{0,\,\xi^\top\mathbf x\}\right\Vert_{\psi_2}\leq c_3$, which further leads to $\left\Vert C_g(\xi)\cdot \max\{0,\,\xi^\top\mathbf x\}-\mathbb E_\xi[C_g(\xi)\cdot \max\{0,\,\xi^\top\mathbf x\}]\right\Vert_{\psi_1}=\left\Vert \mathcal G_{\mathbf x}(\xi)-g(\mathbf x)\right\Vert_{\psi_1}\leq c_4$. Thus,   $\mathcal G_{\mathbf x}(\xi)-g(\mathbf x)$ is subexponential for a fixed $\mathbf x\in\mathcal X$, as desired in this step.

{\bf Step 2.} This step combines the result from Step 1 and the $\epsilon$-net argument to prove \eqref{probability bound bernstein new to show} as desired.  In doing so, for any $\epsilon\in(0,\,1]$, we construct a net of grids $\mathcal B_{\epsilon}$ such that, for any $\mathbf x\in\mathcal X$, there exists $\mathbf z\in\mathcal B_{\epsilon}$: $\Vert \mathbf x-\mathbf z\Vert\leq \frac{\epsilon}{(\sqrt{5}+1)\cdot\sqrt{d}}$. To that end, it suffices to involve  as many as $\vert \mathcal B_{\epsilon}\vert:=\left\lceil\frac{(\sqrt{5}+1)d}{\epsilon}\right\rceil^d\leq  \left[\frac{2(\sqrt{5}+1)d}{\epsilon}\right]^d$ grids. 

Consider the following two sets
\begin{align}
&\mathcal E^1:= \left\{\max_{\mathbf x\in \mathcal B_\epsilon} \left\vert \frac{1}{K^*}\sum_{k=1}^{K^*} \mathcal G_{\mathbf x}(\xi_k)-g(\mathbf x) \right\vert\leq c_5\cdot  \left(\frac{1}{K^*}t+\sqrt{\frac{t}{K^*}} \right)\right \};\qquad\text{and}\qquad\mathcal E^2:=\left\{\frac{1}{K^*}\sum_{k=1}^{K^*}\left\Vert\xi_k\right\Vert^2\leq 5d\right\}.\nonumber
\end{align}
Because $ \left\vert \frac{1}{{K^*}}\sum_{k=1}^{K^*}\max\left\{0,\,\xi_k^\top\mathbf x_1\right\}-\frac{1}{{K^*}}\sum_{k=1}^{K^*}\max\left\{0,\,\xi_k^\top\mathbf x_2\right\}\right\vert\leq \frac{1}{K^*} \sum_{k=1}^{K^*}\Vert\xi_k\Vert\cdot \Vert\mathbf x_1-\mathbf x_2\Vert\leq \sqrt{\frac{1}{{K^*}}\sum_{k=1}^{K^*}\left\Vert\xi_k\right\Vert^2}\cdot \Vert\mathbf x_1-\mathbf x_2\Vert$ for any $\mathbf x_1,\mathbf x_2\in\mathcal X$, we have
\begin{align}
\mathcal E^2\subseteq \left\{\left\vert \frac{1}{{K^*}}\sum_{k=1}^{K^*}\max\left\{0,\,\xi_k^\top\mathbf x_1\right\}-\frac{1}{{K^*}}\sum_{k=1}^{K^*}\max\left\{0,\,\xi_k^\top\mathbf x_2\right\}\right\vert\leq \sqrt{5d}\cdot\Vert\mathbf x_1-\mathbf x_2\Vert,\,\forall \,\mathbf x_1,\,\mathbf x_2\in\mathcal X\right\}.\nonumber
\end{align}
Further noticing that $\sup_{\xi}\vert C_g(\xi)\vert\leq 1$ as per  Assumption \ref{Gaussian assumption NN}, we then have
\begin{align}
\mathcal E^2\subseteq \left\{\sup_{\xi}\vert C_g(\xi)\vert\cdot \left\vert \frac{1}{{K^*}}\sum_{k=1}^{K^*}\max\left\{0,\,\xi_k^\top\mathbf x_1\right\}-\frac{1}{{K^*}}\sum_{k=1}^{K^*}\max\left\{0,\,\xi_k^\top\mathbf x_2\right\}\right\vert\leq \sqrt{5d}\cdot\Vert\mathbf x_1-\mathbf x_2\Vert,\,\forall \,\mathbf x_1,\,\mathbf x_2\in\mathcal X\right\}.\label{set event relationship}
\end{align}

We may then continue with the $\epsilon$-net argument to obtain that, given the event $\mathcal E^1\cap\mathcal E^2$, for any $\mathbf x\in\mathcal X$, there exists $\mathbf z\in\mathcal B_{\epsilon}:\Vert\mathbf z-\mathbf x\Vert\leq \frac{\epsilon}{(\sqrt{5}+1)\cdot\sqrt{d}}$ such that 
\begin{align}
&  \left\vert\frac{1}{{K^*}}\sum_{k=1}^{K^*} \mathcal G_{\mathbf x}(\xi_k)-\mathbb E_{\xi}\left[\mathcal G_{\mathbf x}(\xi)\right]\right\vert\nonumber
\\\leq  &  \left\vert\frac{1}{{K^*}}\sum_{k=1}^{K^*} \mathcal G_{\mathbf x}(\xi_k)-\frac{1}{{K^*}}\sum_{k=1}^{K^*} \mathcal G_{\mathbf z}(\xi_k)\right\vert+\left\vert\frac{1}{{K^*}}\sum_{k=1}^{K^*} \mathcal G_{\mathbf z}(\xi_k)-g(\mathbf z)\right\vert +\left\vert\mathbb E_{\xi}\left[\mathcal G_{\mathbf x}(\xi)\right]-g(\mathbf z)\right\vert.\nonumber
\\{\leq}  &  \left\vert\frac{1}{{K^*}}\sum_{k=1}^{K^*} \mathcal G_{\mathbf x}(\xi_k)-\frac{1}{{K^*}}\sum_{k=1}^{K^*} \mathcal G_{\mathbf z}(\xi_k)\right\vert+\left\vert\frac{1}{{K^*}}\sum_{k=1}^{K^*} \mathcal G_{\mathbf z}(\xi_k)-g(\mathbf z)\right\vert +\sup_{\xi}\left\vert C_g(\xi)\right\vert\cdot \mathbb E_{\xi}\left[\left\vert \max\{0,\,\xi^\top\mathbf x\}- \max\{0,\,\xi^\top\mathbf z\} \right\vert\right].\nonumber
\\\leq  &  \sup_{\xi'}\left\vert C_g(\xi')\right\vert\cdot\left\vert\frac{1}{{K^*}}\sum_{k=1}^{K^*}  \max\{0,\,\xi_k^\top\mathbf x\}-\frac{1}{{K^*}}\sum_{k=1}^{K^*}   \max\{0,\,\xi_k^\top\mathbf z\}\right\vert+\left\vert\frac{1}{{K^*}}\sum_{k=1}^{K^*} \mathcal G_{\mathbf z}(\xi_k)-g(\mathbf z)\right\vert\nonumber
\\&\qquad\qquad\qquad\qquad+\sup_{\xi'}\left\vert C_g(\xi')\right\vert\cdot \mathbb E_{\xi}\left[\Vert\xi\Vert\right]\cdot \Vert \mathbf x-\mathbf z\Vert.\nonumber
\\{\leq}  & \sqrt{5d}\Vert\mathbf z-\mathbf x\Vert+\left\vert\frac{1}{{K^*}}\sum_{k=1}^{K^*} \mathcal G_{\mathbf z}(\xi_k)-g(\mathbf z)\right\vert+\sqrt{d}\Vert\mathbf z-\mathbf x\Vert\label{to work out detail here}
\end{align}
Here \eqref{to work out detail here} is due to \eqref{set event relationship} and the observation that $\left(\mathbb E_{\xi}\left[\left\Vert  \xi\right\Vert\right]\right)^2\leq \mathbb E_{\xi}\left[\left\Vert  \xi \right\Vert^2\right]=d$, where the latter is based on the fact that  $\Vert  \xi  \Vert^2$ follows the $\chi^2$ distribution with  the degree of freedom being $d$.  We may then continue to obtain that, given $\mathcal E_1\cap\mathcal E_2$, it holds that
$
\left\vert\frac{1}{{K^*}}\sum_{k=1}^{K^*} \mathcal G_{\mathbf x}(\xi_k)-\mathbb E_{\xi}\left[\mathcal G_{\mathbf x}(\xi)\right]\right\vert\nonumber
\leq c_5\cdot \left(\frac{t}{{K^*}}+\sqrt{\frac{t}{{K^*}}}\right)+(\sqrt{5}+1)\sqrt{d}\Vert\mathbf z-\mathbf x\Vert\leq c_5\cdot \left(\frac{t}{{K^*}}+\sqrt{\frac{t}{{K^*}}}\right)+\epsilon.$

We   now establish the probability for $\mathcal E^1\cap\mathcal E^2$. As an immediate implication of Step 1,   a  Bernstein-like inequality holds, for any fixed $\mathbf x\in\mathcal B_{\epsilon}$, as below:
\begin{align}
\mathbb P\left[\left\vert\frac{1}{{K^*}}\sum_{k=1}^{K^*} \mathcal G_{\mathbf x}(\xi_k)-g(\mathbf x)\right\vert\geq c_5\cdot \left(\frac{t}{{K^*}}+\sqrt{\frac{t}{{K^*}}}\right)\right]\leq 2\exp(-t).\label{subexponential result NN local}
\end{align}  Together with $\vert \mathcal B_{\epsilon}\vert:=\left[\frac{2\cdot(\sqrt{5}+1)d}{\epsilon}\right]^d$, the above inequality implies that
\begin{align}
\mathbb P[\mathcal E^1]=\mathbb P\left[\max_{\mathbf x\in \mathcal B_\epsilon}\left\vert\frac{\sum_{k=1}^{K^*} \mathcal G_{\mathbf x}(\xi_k)}{{K^*}}-g(\mathbf x)\right\vert\leq c_5\cdot \left(\frac{t}{{K^*}}+\sqrt{\frac{t}{{K^*}}}\right)\right]
\geq  1-\left[\frac{2\cdot(\sqrt{5}+1)d}{\epsilon}\right]^d\exp(-t).\label{bernstein test here}
\end{align}

In establishing the probability of $\mathcal E^2$, we observe that  $\xi_k$ follows the $d$-variate standard Gaussian distribution. Thus,  $\sum_{k=1}^{K^*}\Vert\xi_k\Vert^2$ is a $\chi^2$-distribution, whose degree of freedom is $d\cdot K^*$. A well-known tail bound for the $\chi^2$-distribution yields that $\mathbb P\left[\sum_{k=1}^{K^*}\Vert\xi_k\Vert^2\leq dK^*\cdot \left(1+2\sqrt{t}+2t\right)\right]\geq 1-\exp(-dtK^*)$. This further  implies that $\mathbb P[\mathcal E^2]=\mathbb P\left[\frac{1}{{{K^*}}}\sum_{k=1}^{K^*}\Vert\xi_k\Vert^2\leq 5d \right] \geq 1-\exp(-d\cdot {K^*})$.   Thus, combining the above by invoking the union bound and De Morgan's law, for any $\epsilon>0$, we have that    $\mathbb P[\mathcal E^1\cap\mathcal E^2]\geq 1-\left[\frac{2(\sqrt{5}+1)d }{\epsilon}\right]^d\cdot\exp(-t)-\exp(-d\cdot {K^*})$.
Therefore, for any $\epsilon>0$,
\begin{align}
&\mathbb P\left[\sup_{\mathbf x:\,\Vert \mathbf x\Vert=1}\left\vert\frac{1}{{K^*}}\sum_{k=1}^{K^*} \mathcal G_{\mathbf x}(\xi_k)-g(\mathbf x)]\right\vert\leq c_5\cdot \left(\frac{t}{{K^*}}+\sqrt{\frac{t}{{K^*}}}\right)+\epsilon\right]\nonumber
\geq \,1-\left[\frac{2\cdot(\sqrt{5}+1)d}{\epsilon}\right]^d\cdot\exp(-t)-\exp(-d\cdot {K^*}) \nonumber
\\
=\,&1-  \exp\left(-t+d\ln\left[\frac{2\cdot(\sqrt{5}+1)d}{\epsilon}\right]\right)-\exp(-d\cdot {K^*}).\label{eq 62}
\end{align}
We may as well let $\epsilon = 1/{K^*}$ and $t=2d\ln\left[\frac{2\cdot(\sqrt{5}+1)d}{\epsilon}\right]=2d\ln\left(2(\sqrt{5}+1)d\cdot {K^*}\right)$. Consequently (and in view of the assumption that $K^*\geq  d\ln (dK^*)$), \eqref{eq 62} is reduced to 
\begin{multline}
\mathbb P\left[\sup_{\mathbf x:\,\Vert \mathbf x\Vert=1}\left\vert\frac{1}{{K^*}}\sum_{k=1}^{K^*} \mathcal G_{\mathbf x}(\xi_k)-\mathbb E_{\xi}\left[\mathcal G_{\mathbf x}(\xi)\right]\right\vert\leq c_6\cdot \sqrt{\frac{d\ln\left(d{K^*}\right)}{{K^*}}}\right]
\\\geq  1-  2\exp\left(-d\ln\left(2\cdot(\sqrt{5}+1)d{K^*}\right)\right)-\exp(-d\cdot {K^*})
\\\geq  1-  2\exp\left(-d\ln\left(d{K^*}\right)\right)-\exp(-d\cdot {K^*}),\label{probability bound bernstein}
\end{multline}
which (combined with $y\in\{-1,\,1\}$) further leads to 
\begin{multline}
\mathbb P\left[\sup_{(\mathbf x,\mathbf y)\in supp(\mathbb D)}\left\vert\frac{y\cdot\ln n}{v}\frac{1}{{K^*}}\sum_{k=1}^{K^*} \mathcal G_{\mathbf x}(\xi_k)-\frac{y\cdot\ln n}{v} \mathbb E_{\xi}\left[\mathcal G_{\mathbf x}(\xi)\right]\right\vert\leq c_6 \cdot\frac{\ln n}{v}\cdot \sqrt{\frac{d\ln\left(d{K^*}\right)}{{K^*}}}\right]
\\\geq  1-  2\exp\left(-d\ln\left(d{K^*}\right)\right)-\exp(-d\cdot {K^*}),\label{probability bound bernstein new 2}
\end{multline}
which is the desired result. \hfill \ensuremath{\Box}
\end{proof}

\vskip 0.2in
 \bibliographystyle{abbrvnat}
 \bibliography{ref_OR}

\end{document}